\DeclareSymbolFont{bbold}{U}{bbold}{m}{n}
\DeclareSymbolFontAlphabet{\mathbbold}{bbold}
\newcommand{\ind}{\leavevmode{\parindent=15pt\indent}}
\theoremstyle{plain} 
\newtheorem{theorem}{Theorem}[section]
\newtheorem*{theorem*}{Theorem}
\newtheorem{corollary}[theorem]{Corollary}
\newtheorem*{corollary*}{Corollary}
\newtheorem{proposition}[theorem]{Proposition}
\newtheorem*{proposition*}{Proposition}
\newtheorem{lemma}[theorem]{Lemma}
\newtheorem*{lemma*}{Lemma}
\newtheorem*{fact*}{Fact}
\newtheorem*{conjecture*}{Conjecture}
\newtheorem*{criterion*}{Criterion}
\newtheorem*{assertion*}{Assertion}
\newtheorem*{lem_def*}{Lemma-Definition}
\newtheorem*{prop_def*}{Proposition_Definition}
\newtheorem*{thm_def*}{Theorem-Definition}
\theoremstyle{definition} 
\newtheorem{example}[theorem]{Example}
\newtheorem*{example*}{Example}
\newtheorem*{examples*}{Examples}
\newtheorem{definition}[theorem]{Definition}
\newtheorem*{definition*}{Definition}
\newtheorem*{condition*}{Condition}
\newtheorem*{problem*}{Problem}
\newtheorem*{exercise*}{Exercise}
\newtheorem*{algorithm*}{Algorithm}
\newtheorem*{subroutine*}{Subroutine}
\newtheorem*{question*}{Question}
\newtheorem*{axiom*}{Axiom}
\newtheorem*{property*}{Property}
\newtheorem*{assumption*}{Assumption}
\newtheorem*{hypothesis*}{Hypothesis}
\theoremstyle{remark} 
\newtheorem{remark}[theorem]{Remark}
\newtheorem{remarks}[theorem]{Remarks}
\newtheorem*{remark*}{Remark}
\newtheorem*{note*}{Note}
\newtheorem*{scholium*}{Scholium}
\newtheorem*{notation*}{Notation}
\newtheorem*{claim*}{Claim}
\newtheorem*{summary*}{Summary}
\newtheorem*{acknowledgment*}{acknowledgment}
\newtheorem*{acknowledgement*}{acknowledgement}
\newtheorem*{case*}{Case}
\newtheorem*{conclusion*}{Conclusion}
\def\thmheadbrackets#1#2#3{%
  \thmname{#1}\thmnumber{\@ifnotempty{#1}{ }\@upn{#2}}%
  \thmnote{ {\the\thm@notefont[#3]}}}
\newtheoremstyle{brackets}
  {}
  {}
  {\itshape}
  {}
  {\bfseries}
  {.}
  { }
  {\thmheadbrackets{#1}{#2}{#3}}
\theoremstyle{brackets}
\newtheorem*{theorembrackets*}{Theorem}
\def\thmheadnoparens#1#2#3{%
  \thmname{#1}\thmnumber{\@ifnotempty{#1}{ }\@upn{#2}}%
  \thmnote{ {\the\thm@notefont#3}}}
\newtheoremstyle{noparens}
  {}
  {}
  {\itshape}
  {}
  {\bfseries}
  {.}
  { }
  {\thmheadnoparens{#1}{#2}{#3}}
\theoremstyle{noparens}
\newtheorem*{theoremnoparens*}{Theorem}
\DeclareDocumentCommand{\newmathcommand}{mO{0}m}{%
  \expandafter\let\csname old\string#1\endcsname=#1
  \expandafter\newcommand\csname new\string#1\endcsname[#2]{#3}
  \DeclareRobustCommand#1{%
    \ifmmode
      \expandafter\let\expandafter\next\csname new\string#1\endcsname
    \else
      \expandafter\let\expandafter\next\csname old\string#1\endcsname
    \fi
    \next
  }%
}
\newcommand{\C}{\mathbb{C}}
\newmathcommand{\H}{{\mathbb{H}}}
\newmathcommand{\L}{\mathbb{L}}
\newmathcommand{\O}{\mathbb{O}}
\newmathcommand{\P}{\mathbb{P}}
\newcommand{\R}{\mathbb{R}}
\newmathcommand{\S}{\mathbb{S}}
\newcommand{\Z}{\mathbb{Z}}
\newcommand{\Af}{\mathfrak{A}}
\newcommand{\Bf}{\mathfrak{B}}
\newcommand{\iz}{\mathrm{i}}
\DeclareMathOperator{\spn}{span}
\newmathcommand{\deg}{\operatorname{deg}}
\DeclareMathOperator{\End}{End}
\DeclareMathOperator{\Aut}{Aut}
\DeclareMathOperator{\ext}{ext}
\newmathcommand{\Big}{{\mathrm{Big}}}
\DeclareMathOperator{\ev}{ev}
\newmathcommand{\div}{\operatorname{div}}
\newmathcommand{\char}{\operatorname{char}}
\DeclareMathOperator{\Id}{Id}
\DeclareMathOperator{\id}{id}
\newmathcommand{\unr}{\mathrm{unr}}
\newcommand{\1}{\mathbbm{1}}
\newmathcommand{\Un}{\operatorname{Un}}
\newmathcommand{\sl}{\mathfrak{s}\mathfrak{l}}
\newmathcommand{\un}{\mathfrak{u}}
\newmathcommand{\su}{\mathfrak{s}\mathfrak{u}}
\def\@maketitle{%
  \newpage
  \null
  \vskip 2em%
  \begin{center}%
  \let \footnote \thanks
    {\Large\bfseries \@title \par}%
    \vskip 1.5em%
    {\normalsize
      \lineskip .5em%
      \begin{tabular}[t]{c}%
        \@author
      \end{tabular}\par}%
    \vskip 1em%
    {\normalsize \@date}%
  \end{center}%
  \par
  \vskip 1.5em}
\title{\sc \huge An order-theoretic characterization of JB-algebras}
\author{Mark Roelands%
\thanks{Email: \texttt{m.roelands@math.leidenuniv.nl}}}
\affil{Mathematical Institute, Leiden University, 2300 RA Leiden,
The Netherlands}
\author{Samuel Tiersma%
\thanks{Email: \texttt{s.j.tiersma@math.leidenuniv.nl}}}
\affil{Mathematical Institute, Leiden University, 2300 RA Leiden,
The Netherlands}
\begin{document}

\maketitle
\vspace{-5mm}
\date{}
\vspace{-5mm}

\begin{abstract}
\noindent We give an order-theoretic characterization of the JB-algebras among the complete order unit spaces in terms of the existence of an order-anti-automorphism of the interior of the cone that is homogeneous of degree $-1$. More geometrically, we characterize JB-algebras as those complete order unit spaces for which the interior of the cone is a symmetric Banach--Finsler manifold under Thompson's metric. Furthermore, we show that two order unit spaces are isomorphic if there exists a gauge-reversing bijection between them, thus answering a question raised by Noll--Sch\"afer. These results have previously been established for finite-dimensional resp. reflexive order unit spaces by Walsh resp. Lemmens, Wortel, and the first author.
\end{abstract}

{\small {\bf Keywords:} Jordan algebras, JB-algebras, order unit spaces, order-anti-isomorphisms, gauge-reversing bijections, Thompson's metric, symmetric Banach--Finsler manifolds} 

{\small {\bf Subject Classification: 	17C65, 58B20, 46B40, 17C36} }

\section{Introduction}\label{Intro}
Jordan algebras form a fecund class of nonassociative algebras, originating in the work of P. Jordan, Wigner and von Neumann on the mathematical formalization of quantum mechanics (cf. \cite{Jordan33}, \cite{JNW}). The theory of Jordan algebras has proved useful in a variety of fields, such as differential geometry \cite{Koecher75}, \cite{Vinberg60}, \cite{Ko99}, Lie theory  \cite{Jac71}, \cite{FaFe77},  \cite{SpVe}, group theory \cite{Zel291} and the study of symmetric spaces  \cite{Loos69}, \cite{Kaup02}, \cite{Up85}, \cite{Sa80}; for an overview see \cite{McC04}. \\
\ind Characterizations of Jordan algebras have been developed from three distinct perspectives: functional-analytic, differential-geometric and algebraic. The present paper is concerned with an order-theoretic characterization of JB-algebras, i.e.\ those real Jordan algebras admitting a compatible Banach space structure. More precisely, we prove a necessary and sufficient criterion for an order unit space to be a JB-algebra. In this introduction we will discuss this criterion from each of the three aforementioned viewpoints.

\paragraph{Functional-analytic perspective.}
The algebraic structure in a C$^*$-algebra is intimately connected to its order structure. JB-algebras constitute a versatile framework to study this interconnection. Recall that the hermitian part $A_h := \{x \in A\colon x^* = x\}$ of a C$^*$-algebra $A$ is partially ordered by its cone of squares $A_+ := \{x^2: x \in A_h\}$. It is natural to consider on $A_h$ the operation of squaring and its polarization, the \emph{Jordan product} $x\circ y := \frac{1}{2}(xy+yx).$ The product $\circ$ is commutative but not associative unless $A$ is abelian. However, a weak version of associativity is satisfied, namely the \emph{Jordan identity} $$x \circ (y\circ x^2) = (x \circ y) \circ x^2.$$ The algebra $(A_h, \circ)$ is the archetypical example of a JB-algebra, named after Jordan and Banach.\\
\ind A \emph{JB-algebra} is a commutative real algebra $(V, \circ)$ which satisfies the Jordan identity and is complete for a norm satisfying $$\lVert x^2\rVert = \lVert x\rVert ^2,\ \ \ \lVert x\circ y\rVert \le \lVert x\rVert\lVert y\rVert,\ \ \mbox{and }\ \ \lVert x\rVert^2 \le \lVert x^2+y^2 \rVert \quad \text{ for all }x,y \in V.$$ JB-algebras were introduced by Alfsen, Shultz and St{\o}rmer in \cite{ASS78}, were they established a Gelfand–Naimark type theorem stating that each JB-algebra is isometrically isomorphic to a closed Jordan subalgebra of $C(X, \Af) \oplus B(H)_{h}$ for some compact Hausdorff space $X$ and Hilbert space $H$, where $\Af$ is the exceptional Albert algebra of hermitian $3\times 3$-octonionic matrices.\\
\ind The JB-algebra $V$ is partially ordered by its cone of squares $V_+ = \{x \circ x: x\in V\}$. If $V$ has a unit element $v$, then $(V, V_+, v)$ is an order unit space and the norm on $V$ coincides with the order unit norm.
It is known that a unital order isomorphism between two JB-algebras is in fact a Jordan isomorphism (see \cite[Thm. 4]{WrYo}, or \Cref{YWsym} for a novel proof), which is the analogue of Kadison's result for C*-algebras. This theorem is but one example of the strong interdependence between the order and the algebraic structure in a JB-algebra. \\
\ind Naturally, the question has been raised if one can `recognize' if an order unit space admits a compatible structure of a JB-algebra, preferably in terms of physically meaningful axioms. The possibility of doing so was realized by Connes’ characterization of $\sigma$-finite von Neumann algebras in terms of self-dual, facially homogeneous and orientable cones in complex Hilbert spaces \cite{Connes74}. This result was subsequently generalized to $\sigma$-finite JBW-algebras in \cite{BelIoch78}.\\
\ind The state spaces of unital JB-algebras were characterized by Alfsen and Shultz in the seminal work \cite{ASchar78}. They showed that a compact convex set $K$ is affinely homeomorphic to the state space of a JB-algebra if and only if it has the \emph{pure state properties}, each norm-exposed face of $K$ is \emph{projective} and every element of $A(K)$ is the difference of \emph{orthogonal} positive elements (we refer to \cite{AS03} for a comprehensive account of the theory and the definition of the italicized terms). This result was applied to characterize state spaces of C*-algebras \cite{AHOS} and von Neumann algebras \cite{ISvnA}. A characterization of the state spaces of finite-dimensional JB-algebras in terms appropriate for quantum mechanics was obtained in \cite{AraChar}. This paper establishes a characterization of unital JB-algebras directly in terms of their underlying order unit spaces, rather than through the dual state spaces. 

\ind Let $(V, V_+, v)$ be an order unit space. A bijection $\Phi \colon V^{\circ}_+ \to V^{\circ}_+$ is called \emph{gauge-reversing} if it is an order-anti-isomorphism and is homogeneous of degree $-1$. In case $V$ is finite-dimensional, Walsh proved \cite{Walsh18} that $V$ is a JB-algebra if and only if $V^{\circ}_+$ admits a gauge-reversing bijection. This result has been extended to characterize spin factors \cite{LRvI17} and JH-algebras \cite{LRW25} among the strictly convex resp. reflexive, order unit spaces. This criterion holds for JB-algebras in general.

\begin{theorem}\label{MAINTHM}
Let $(V, V_+, v)$ be a complete order unit space. Then there exists a structure of Jordan algebra on $V$ such that $(V, \lVert\cdot\rVert_v)$ is a JB-algebra with unit element $v$ and cone of squares $V_+$ if and only if there exists a gauge-reversing bijection on $V^\circ_+$.
\end{theorem}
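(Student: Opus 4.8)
\medskip
\noindent\textit{Outline of a proof.}

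The ``only if'' direction is routine and is already indicated in the introduction. If $(V,\lVert\cdot\rVert_v)$ carries a JB-algebra structure with unit $v$ and cone of squares $V_+$, then $V^\circ_+$ is precisely the set of positive invertible elements and the Jordan inverse $i\colon x\mapsto x^{-1}$ is a self-bijection of $V^\circ_+$ that is visibly homogeneous of degree $-1$; it is order-reversing because for invertible $0<x\le y$ one has $U_{x^{-1/2}}y\ge v$, hence $(U_{x^{-1/2}}y)^{-1}=U_{x^{1/2}}y^{-1}\le v$, and applying the positive map $U_{x^{-1/2}}$ yields $y^{-1}\le x^{-1}$. Thus $i$ is a gauge-reversing bijection.

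For the converse, suppose $\Phi\colon V^\circ_+\to V^\circ_+$ is gauge-reversing. The plan is to exhibit $(V^\circ_+,d_T)$, with $d_T$ Thompson's metric, as a \emph{symmetric Banach--Finsler manifold} and to extract a JB-algebra structure from this. Two preliminary observations: first, $\Phi$ is a $d_T$-isometry, since $\Phi$ reverses the order and is homogeneous of degree $-1$, so that $M(x/y):=\inf\{\lambda>0:x\le\lambda y\}$ satisfies $M(\Phi x/\Phi y)=M(y/x)$ and hence $d_T(\Phi x,\Phi y)=d_T(x,y)$; moreover the group $\mathcal{G}$ of linear automorphisms $T$ of $V$ with $T(V_+)=V_+$ also acts on $V^\circ_+$ by $d_T$-isometries, with $\Phi$ normalizing $\mathcal{G}$ via the automorphism $g\mapsto\Phi g\Phi^{-1}$. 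Second, $(V^\circ_+,d_T)$ is a Banach--Finsler manifold whose tangent space at $x$ is $V$ with the order-unit norm $\lVert\cdot\rVert_x$ associated to the unit $x$, and completeness of $V$ makes $(V^\circ_+,d_T)$ a complete metric space.

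Next one normalizes $\Phi$ and obtains one symmetry. The composite $\Phi^2$ is an order-\emph{isomorphism} of $V^\circ_+$ homogeneous of degree $1$; the first technical crux---and the precise point at which completeness must replace the reflexivity used in \cite{LemvG}---is an extension theorem showing that such a map is the restriction of some $T\in\mathcal{G}$. Correcting $\Phi$ by an element of $\mathcal{G}$ manufactured from $T$, one arranges that $\Phi$ is an involution, and a further argument (the second technical crux) arranges that its necessarily nonempty fixed-point set contains the order unit, so that $\Phi$ becomes a gauge-reversing involution with $\Phi(v)=v$. Differentiating the relation $\Phi(e^{t}x)=e^{-t}\Phi(x)$ at $x=v$, together with the order-reversal of $\Phi$ near its fixed point, forces $D\Phi_v=-\id_V$; hence $v$ is an isolated fixed point and $\Phi$ is a geodesic symmetry at $v$ for $d_T$, in exact analogy with the identity $Di_x=-U_x^{-1}$ in a JB-algebra.

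The main obstacle is to upgrade this single symmetry to a symmetry at \emph{every} point, i.e.\ to prove that $\mathcal{G}$ acts transitively on $V^\circ_+$ (homogeneity of the cone): conjugating the symmetry at $v$ by elements of $\mathcal{G}$ gives symmetries along the orbit $\mathcal{G}\cdot v$, reflecting through $d_T$-midpoints (located using the Finsler structure and completeness) produces symmetries at new points and enlarges the orbit, and a connectedness/bootstrap argument then forces $\mathcal{G}\cdot v=V^\circ_+$; equivalently, one assembles, in the spirit of the Koecher--Kantor--Tits construction, a $\mathbb{Z}$-graded Banach--Lie algebra of infinitesimal $d_T$-isometries whose degree-one part already acts transitively on $T_vV^\circ_+\cong V$. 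Once $(V^\circ_+,d_T)$ is a homogeneous symmetric Banach--Finsler manifold, the structure theory of such manifolds furnishes a Jordan product on $V=T_vV^\circ_+$---concretely encoded by the differentials of the geodesic symmetries, the symmetry at $x$ having differential $-U_x^{-1}$---for which $v$ is a unit; one then checks that $(V,\lVert\cdot\rVert_v)$ satisfies the JB-algebra axioms (the inequality $\lVert x\rVert_v^2\le\lVert x^2+y^2\rVert_v$ being immediate from monotonicity of the order-unit norm on $V_+$, since $x^2\le x^2+y^2$), that its cone of squares is exactly $V_+$ (both equal the image of the $d_T$-exponential map based at $v$), and that its intrinsic JB-norm is $\lVert\cdot\rVert_v$. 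Throughout, the ingredients that in \cite{LemvG} relied on reflexivity---chiefly the differentiability and extension statements and the duality arguments---are re-established by working with the $w^{*}$-compact state space of $V$, which is available for an arbitrary complete order unit space.
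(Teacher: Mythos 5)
Your ``only if'' direction is fine, but the converse as outlined has genuine gaps at exactly the points where the paper has to work hardest. First, every key step of your outline presupposes that $\Phi$ (or your normalized involution) is differentiable at $v$, and indeed at every point: you differentiate the homogeneity relation, you speak of differentials of geodesic symmetries encoding the product, and you appeal to ``structure theory'' of symmetric Banach--Finsler manifolds. But differentiability of a gauge-reversing bijection is not free in a general (non-reflexive) complete order unit space; it is the central analytic content of the paper, obtained by extending $\Phi$ to the cones $C_{usc}$, $C_{lsc}$ of semicontinuous affine functions on the state space, proving strong atomicity of $C_{usc}$ (\Cref{supAtom}), deducing Gateaux- and then Fr\'echet-differentiability (\Cref{DPhi}), Hua's identity (\Cref{HuaPhi}), and eventually analyticity (\Cref{jan}). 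Your remark that the differentiability statements are ``re-established by working with the $w^*$-compact state space'' names the tool but supplies no argument. Second, your normalization step is incorrect as stated: an involutive gauge-reversing bijection fixing $v$ need \emph{not} satisfy $D\Phi_v=-\id_V$, and $v$ need not be an isolated fixed point. For example, in any JB-algebra containing a nontrivial symmetry $s$ (i.e.\ $s^2=e$, $s\neq\pm e$), the map $U(s)\circ i$ is an involutive gauge-reversing bijection fixing $e$ with derivative $-U(s)\neq-\id$ at $e$; homogeneity of degree $-1$ only forces $D\Phi_v(v)=-v$. The paper avoids this by \emph{defining} the symmetry as $S_v:=-[D\Phi(v)]^{-1}\circ\Phi$ (\Cref{SymMake}), which again requires differentiability first.

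Third, your endgame is circular relative to what is known: extracting a Jordan product from a homogeneous symmetric Banach--Finsler manifold is essentially Chu's theorem, which assumes the symmetries are analytic; without smoothness of the symmetries (which you have not established) there is no off-the-shelf structure theory to invoke, and producing the product in that generality is precisely what is being proved. The paper's route is different and makes your ``main obstacle'' (transitivity of $\Aut(C)$, midpoint reflections, a Koecher--Kantor--Tits graded Lie algebra) unnecessary: once $\Phi$ is differentiable at every $x\in C$, the map $S_x:=-[D\Phi(x)]^{-1}\circ\Phi$ is a symmetry at $x$ directly, and the algebra is built not from symmetric-space theory but from the quadratic representation $P(x):=-[Dj(x)]^{-1}$, shown via Hua's identity and twice-differentiability to extend to a continuous quadratic polynomial map on $V$ satisfying McCrimmon's axioms (QJ1)--(QJ3), together with positivity of $P(x)$ and $\lVert P(x)\rVert_v=\lVert x\rVert_v^2$, which is the alternative axiomatization of a JB-algebra used in \Cref{MAINTHMC}. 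So the skeleton of your proposal (symmetry at $v$, then homogeneity, then symmetric-space structure theory) would need substantial new arguments at each of these three points before it could be considered a proof.
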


This theorem will be proved as \Cref{MAINTHMC}. A characterization of non-unital JB-algebras among the ordered Banach spaces shall be derived from it in \Cref{MAINTHMNU}. Moreover, \Cref{MAINTHM} can be used to characterize C*-algebras among the unital complex operator systems $(S, \{C_n\}_n, e)$ as those for which the open cone $C_n^{\circ} \subset M_n(S)_h$ admits a gauge-reversing bijection for all $n \in \Z_{\ge 1}$, as will be  discussed in a future publication.

\paragraph{Differential-geometric perspective.} Jordan structures are inextricably linked with the theory of symmetric spaces. The hermitian symmetric spaces were classified by É. Cartan  \cite{Car35}. Via the Harish-Chandra realization, every Hermitian symmetric space of noncompact type is biholomorphic to a bounded symmetric domain in $\C^n$. An algebraic description of bounded symmetric domains has been given by Koecher \cite{Koe} and Loos \cite{Loos1977lecturenotes} in terms of hermitian Jordan triple systems. Kaup extended this result to infinite dimensions in \cite{Kaup83} by proving the equivalence of the category of bounded symmetric domains with base-point to the category of JB*-triples.\\
\ind The bounded symmetric domains of tube type (or symmetric Siegel domains of the first kind) correspond bijectively to JB*-algebras, which are precisely the complexifications of JB-algebras \cite[Thm. 2.8]{WrJBstarornot}. Indeed,  it has been shown in \cite{KaHa1} and \cite{KaHa2} that an open cone $C$ in a Banach space $V$ stems from a JB-algebra if and only if the tube domain $T := V \oplus \iz C \subset V^{\C}$ is symmetric. In finite dimensions this result was shown by Koecher \cite{Koe}, who was in the process led to study Euclidean Jordan algebras (EJAs).\\
A EJA is a finite-dimensional real Jordan algebra which admits an associative positive-definite inner product. EJAs are precisely the finite-dimensional JB-algebras \cite[Ch. III, Sec. 1; Ch. VIII, Sec. 4]{FaKor}, equivalently, the finite-dimensional formally real Jordan algebras \cite[Cor. 3.1.7 and 3.3.8]{HOSt84}. An open cone $C$ in a Hilbert space $(V, (\cdot\mid\cdot))$ is called \emph{symmetric} if it is \emph{homogeneous}, i.e.\ $\Aut(C)$ acts transitively on it, and \emph{self-dual}, which is to say that $$C = \{x\in C: (x\mid y)>0 \text{ for all }y \in \overline{C}\setminus \{0\}\}.$$
It is not too difficult to prove that the open cone of an EJA is a symmetric cone \cite{FaKor}[Thm. III.2.1]. The celebrated Koecher--Vinberg theorem (\cite{Koecher75} and \cite{Vinberg60}) asserts the converse holds: each symmetric cone $C$ in Euclidean space is the interior of the cone of squares in a Euclidean Jordan algebra. The Vinberg $*$-map on the symmetric cone $C$ yields the inversion in the constructed EJA, and makes $C$ into a Riemannian symmetric space. Kai showed in \cite{Kai} that a convex homogeneous cone is symmetric if and only if the associated Vinberg $*$-map is order-reversing. This was the motivation for Walsh's characterizing EJAs via order-reversing bijections.\\
\ind Chu extended the Koecher--Vinberg theorem in \cite{Chu17} to infinite dimension, by proving that symmetric cones in Hilbert spaces correspond to unital JH-algebras, which are precisely the reflexive JB-algebras (see \cite[Thm.~2.8]{LRW25} and \cite[Thm.~2.9]{LRW25}). The open cone of a general JB-algebra is not a Riemannian symmetric space, but it is a symmetric Banach--Finsler manifold.\\
\ind In fact, define the $\Aut(C)$-invariant
\emph{Thompson's metric} $d_T\colon C \times C \to [0, \infty)$ on the open cone $C$ of an order unit space by
$$d_T(x, y) := \log \inf \{\lambda \ge 1: \lambda^{-1}y \le x \le \lambda y\}.$$ Thompson's metric is a Finsler metric (in the sense of \cite[Def. 1.3]{Neeb02}) since it is the integrated distance for the family of gauge norms $(\lVert\cdot\rVert_p)_{p\in C}$, where
$$\lVert x\rVert_p = \inf\{\lambda \ge 0: -\lambda p \le x \le \lambda p\}.$$
\ind A connected Banach manifold with Finsler structure $(X, \nu)$ is called a \emph{symmetric Banach--Finsler manifold} (cf. \cite[Def 17.20]{Up85}) if for every $p\in X$ there exists an involutive $\nu$-isometry $s_p\colon X \to X$ having $p$ as an isolated fixed point. If $C$ is the open cone of a unital JB-algebra, then $(C, d_T)$ is a symmetric Banach--Finsler manifold; for example the $d_T$-symmetry at the identity is inversion. In the converse direction, Chu has proved in \cite{Chu21} that the open cone $C$ of a complete order unit space corresponds to a JB-algebra if and only if there exist an $\Aut(C)$-invariant Finsler metric $\nu$ and analytic $\nu$-symmetries turning $(C, \nu)$ into a symmetric Banach--Finsler manifold. We shall prove this result in case $\nu = d_T$ without making the strong assumption of analyticity of the symmetries.

\begin{theorem}\label{MAINTHM2}
Let $V$ be a complete order unit space and let $C = V_+^{\circ}$ be its open cone. Then the following are equivalent:
\begin{enumerate}[label={\normalfont(\arabic*)}]
    \item there exists a $d_T$-symmetry $S_p\colon C \to C$ at some point $p \in C$;
    \item $(C, d_T)$ is a symmetric Banach--Finsler manifold;
    \item there exists a JB-algebra structure on $(V, \lVert\cdot\rVert_v)$ with cone of squares $V_+$.
\end{enumerate}
\end{theorem}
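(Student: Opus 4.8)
The plan is to establish the cyclic chain $(2)\Rightarrow(1)\Rightarrow(3)\Rightarrow(2)$, with essentially all of the substance imported from \Cref{MAINTHM}. The implication $(2)\Rightarrow(1)$ is immediate, since a symmetric Banach--Finsler manifold by definition admits a $d_T$-symmetry around \emph{every} point, in particular around some point. For $(1)\Rightarrow(3)$, I would invoke the quoted result \cite[Thm.~3.5]{LRW25} that any $d_T$-symmetry $S_p\colon C\to C$ is automatically a gauge-reversing bijection of $C=V_+^\circ$; since $V$ is assumed to be a complete order unit space, \Cref{MAINTHM} then furnishes a Jordan algebra structure on $V$ making $(V,\lVert\cdot\rVert_v)$ a JB-algebra with unit $v$ and cone of squares $V_+$, which is precisely $(3)$. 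Note that it is irrelevant at which point the given symmetry is centred, as \Cref{MAINTHM} only requires the bare existence of a gauge-reversing bijection.

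The one implication demanding an argument of its own is $(3)\Rightarrow(2)$. Assume $(V,\lVert\cdot\rVert_v)$ is a JB-algebra with unit $v$ and $V_+=\{x^2:x\in V\}$, so that $C=V_+^\circ$ is exactly the set of elements possessing a Jordan inverse. I would first check that the Jordan inverse map $i\colon C\to C$, $i(x)=x^{-1}$, is a $d_T$-symmetry at $v$. It is an involution because $(x^{-1})^{-1}=x$. It is a $d_T$-isometry: as recalled in the introduction $i$ is an order-anti-isomorphism that is homogeneous of degree $-1$, and for any such map applying $i$ to $\lambda^{-1}y\le x\le\lambda y$ reverses the order and rescales, giving $\lambda^{-1}i(y)\le i(x)\le\lambda i(y)$, and conversely; hence $\inf\{\lambda\ge1:\lambda^{-1}y\le x\le\lambda y\}=\inf\{\lambda\ge1:\lambda^{-1}i(y)\le i(x)\le\lambda i(y)\}$ and so $d_T(i(x),i(y))=d_T(x,y)$. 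Finally $v$ is an isolated --- in fact the unique --- fixed point of $i$, since $x^{-1}=x$ forces $x^2=v$ and $v$ is the only element of the cone whose square is $v$, by uniqueness of positive square roots in a JB-algebra.

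To produce a $d_T$-symmetry around an arbitrary point $p\in C$, I would use the homogeneity of the cone of a JB-algebra. Writing $q=p^{1/2}\in C$, the quadratic representation $U_q$ is a linear order-automorphism of $V$ preserving $V_+$, hence restricts to $g:=U_q|_C\in\Aut(C)$ with $g(v)=q^2=p$. Since $d_T$ is $\Aut(C)$-invariant, $g$ and $g^{-1}$ are $d_T$-isometries, so $S_p:=g\circ i\circ g^{-1}$ is an involutive $d_T$-isometry of $C$ with $p$ as isolated fixed point. Thus $(C,d_T)$ is a symmetric Banach--Finsler manifold, establishing $(2)$ and closing the cycle.

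I do not expect a serious obstacle here: the deep content has been absorbed into \Cref{MAINTHM} and \cite[Thm.~3.5]{LRW25}, and the present statement is in effect a corollary. The only care required is in the elementary verifications within $(3)\Rightarrow(2)$ --- that the Jordan inverse is a $d_T$-isometry with an isolated fixed point at $v$, and that conjugating it by a linear cone automorphism sending $v$ to $p$ again yields a $d_T$-symmetry --- all of which rest on standard JB-algebra spectral theory together with the order-intrinsic nature of Thompson's metric.
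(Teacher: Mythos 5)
Your proof is correct, and the overall skeleton coincides with the paper's: the trivial implication from the symmetric-manifold condition to the existence of one symmetry, then \Cref{dTsymgr} (i.e. \cite[Thm.~3.5]{LRW25}) combined with \Cref{MAINTHM} to get the JB-algebra, exactly as in \Cref{MAINTHMC2}. Where you genuinely diverge is in the remaining implication (3)$\Rightarrow$(2). The paper constructs the symmetry at an arbitrary point $p\in C$ as $S_p=-[Dj(p)]^{-1}\circ j$ and cites \Cref{SymMake} and \Cref{SymEquiv}, which in turn rest on the differentiability machinery of \Cref{DPhi} and the uniqueness lemma \Cref{!Sx}; the isolated-fixed-point property is obtained there from $DS_p(p)=-\Id_V$. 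You instead work entirely inside classical JB-algebra theory: you verify directly that the inversion map $i$ is an involutive $d_T$-isometry (via \Cref{JBigr} together with \Cref{dTisom}) whose unique fixed point in $C$ is $v$ (from $x=x^{-1}\Rightarrow x^2=v$ and uniqueness of positive square roots), and you transport it to $p$ by conjugating with the cone automorphism $U_{p^{1/2}}$, using $\Aut(C)$-invariance of $d_T$. The two constructions produce the same map, since $-[Dj(p)]^{-1}=U(p)$ and $U_{p^{1/2}}\circ i\circ U_{p^{-1/2}}=U(p)\circ i$ by the fundamental identity, but your route is more elementary and self-contained for this direction --- it needs no Fr\'echet-differentiability of gauge-reversing maps and no uniqueness-of-symmetries argument, only functional calculus, positivity and invertibility of $U_{p^{1/2}}$, and the order-intrinsic definition of $d_T$ --- whereas the paper's route buys uniformity, reusing lemmata it has already established for general gauge-reversing bijections.
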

We will prove this theorem as a consequence of \Cref{MAINTHM}, using the result in \cite[Thm.~3.5]{LRW25} that each $d_T$-symmetry is a gauge-reversing bijection; see \Cref{MAINTHMC2}. This establishes the second row in the following table of  correspondences between symmetric spaces and Jordan structures.
\ind 

\begin{center}
\renewcommand{\arraystretch}{2}
\begin{tabular}{ |c||c| }
 \hline
 \multicolumn{2}{|c|}{\bf{Correspondences between symmetric spaces and Jordan structures}} \\ \hline
 Self-dual and homogeneous cones & JH-algebras \\ \hline
 Thompson-symmetric cones &  JB-algebras\\ \hline
 Symmetric tube domains & JB$^*$-algebras \\ \hline
 Bounded symmetric domains &  JB$^*$-triples\\
 \hline
\end{tabular}
\end{center}
\vspace{0.5em}

\paragraph{Algebraic perspective.} Our method of constructing a Jordan algebra product in \Cref{MAINTHM} from the data of a gauge-reversing map is inspired by the works \cite{Springer73} and \cite{McC77} of Springer and McCrimmon, who proved a similar result for finite-dimensional Jordan algebras over a field. An extensive algebraic theory of Jordan algebras has been developed, see for example \cite{JacStruRep}, \cite{JacStrJA}, \cite{BraKoe}, \cite{ZelPII}, and, for a modern comprehensive account, \cite{McC04}. As an example, any associative algebra $A$ over a commutative ring $K$ with $\frac{1}{2}\in K$ gives rise to Jordan algebra $A^+$ with product 
\begin{equation}\label{circDef}
x \circ y = \frac{1}{2}(xy+yx).
\end{equation}
\ind Springer noted that the algebraic structure of a unital Jordan algebra, finite-dimensional over the field $K$, is completely contained in its Jordan inverse map. He captured the essential features of the Jordan inverse map in the following definition. A \emph{$j$-structure} on a finite-dimensional $K$-vector space $V$ is a birational involution that is homogeneous of degree $-1$ and satisfies the weak Hua identity $$j(v+x)+j(v+j(x))=v = j(v)$$ as well as a third axiom which we shall not state. Springer proved in \cite[Thm. 6.5]{Springer73} that the category of $j$-structures over $K$ is equivalent the category of finite-dimensional unital Jordan algebras over $K$, provided that $\frac{1}{2} \in K$.\\
\ind McCrimmon observed in \cite{McC66} that a satisfactory Jordan theory in characteristic $2$ cannot be modeled on the linear product \eqref{circDef}, but should be based on the quadratic map given by
\begin{equation}\label{UDef}
U(x)y = xyx.    
\end{equation}
He abstracted the properties of the quadratic map $U\colon A \to \End(A)$ in the definition of \emph{unital quadratic Jordan algebras}, see \Cref{QJADef}. When $\char K \neq 2$, the linear Jordan product can be recovered from the quadratic operators via
$$
x \circ y = \tfrac{1}{2}\bigl(U(x+y)-U(x)-U(y)\bigr)e,
$$
and the categories of linear and quadratic Jordan algebras are equivalent (we summarize the proof from \cite{Jac69} in \Cref{uqJASec}). A definition of JB-algebras based on the quadratic operators is given in \Cref{AltDef}. \\
\ind To extend Springer's theory of $j$-structures to characteristic $2$, McCrimmon modified it slightly, by removing the third axiom and instead strengthening Hua's identity to a version involving two indeterminates \cite[Sec. 3]{McC77}. He showed that such $h$\emph{-structures} are categorically equivalent to unital quadratic Jordan algebras over base fields of arbitrary characteristic.\\
\ind We will follow the same approach as McCrimmon towards constructing a Jordan product from an inversion-like map, barring two differences. First, our map is defined on the open cone of a, possibly infinite-dimensional, complete order unit space rather than a Zariski dense open subset of a finite-dimensional vector space. Second, instead of stipulating that the map satisfy Hua's identity, we require it to be gauge-reversing. Somewhat surprisingly, the former condition is a consequence of the latter (see \Cref{HuaPhi}).

\begin{theorem}[Hua's identity]\label{INTROHuaPhi}
Let $\Phi\colon C \to D$ be a gauge-reversing bijection between the open cones $C$ and $D$ of complete order unit spaces. Then $\Phi$ is continuously differentiable and for all $x,y \in C$ we have
\begin{equation}\label{INTROHua}
\Phi(x) - \Phi(x+y) = -D\Phi(x)(\Phi^{-1}(\Phi(x)+\Phi(y))).
\end{equation}
\end{theorem}

\textbf{Outline of proof.} Up to the proof of the main theorems in \Cref{MAINTHMsection}, we assume that $(V, V_+, v)$ and $(W, W_+, w)$ are two complete order unit spaces, with open cones $C = V_+^{\circ}$ and $D = W_+^{\circ}$, and that $\Phi\colon C \to D$ is a gauge-reversing bijection.\\
\ind In \Cref{ECSect} we define cones $C_{usc}$ and $C_{lsc}$ consisting of upper respectively lower semicontinuous, affine functions on the state space of $V$, which are obtained by adjoining to $C$ all limits of decreasing respectively increasing nets in $C$. The map $\Phi\colon C \to D$ is extended to a gauge-reversing bijection $\Phi_{usc}\colon C_{usc} \to D_{lsc}$. We establish that the cone $C_{usc}$ is \emph{strongly atomic}, i.e.\ each element in $C_{usc}$ is the supremum of the extremal vectors that it dominates. In fact, we show that under $\Phi_{usc}$ the extremal vectors of $C_{usc}$ correspond to the pure states on $W$, whose ample existence is guaranteed by the Krein--Milman theorem.\\
\ind In \Cref{evgdSec} several consequences of strong atomicity of $C_{usc}$ are deduced. First, we show in \Cref{DPhi} that $\Phi$ is Gateaux-differentiable in all positive directions. Second, we prove \Cref{HuaPhi} in \Cref{HuasubSec}. Third, in \Cref{ThmBSec} we
give a short new proof of \cite[Theorem B]{NollSch77, Sch78} asserting that a gauge-preserving bijection $\Psi\colon C \to D$ extends to a linear isomorphism from $V$ onto $W$.\\
\ind In \Cref{MTSec} we prove that $\Phi$ is Fréchet-differentiable and that for each point $x\in C$ minus the derivative at $x$ defines an isomorphism $-D\Phi(x)\colon (V, V_+, x) \to (W, W_+, \Phi(x))$ of order unit spaces. We show that $S_x := -[D\Phi(x)]^{-1} \circ \Phi$ is a $d_T$-symmetry of $C$ at $x$. In particular, we obtain our prospective inverse map as $j := S_v$. We define the quadratic representation of an element $x\in C$ by $P(x) := -[Dj(x)]^{-1}$ and show that $P$ extends to a quadratic polynomial map $P\colon V \to L(V)$. As a consequence we prove that every gauge-reversing bijection $\Phi\colon C \to D$ is bianalytic. The map $P$ is shown to satisfy be a quadratic JB-algebra (see \Cref{AltDef}), finishing the proof of \Cref{MAINTHM}.\\
\ind As a byproduct of the proof, we answer in the affirmative the question raised by Noll and Sch\"afer \cite{NollSch77} whether two, possibly non-complete, order unit spaces connected by a gauge-reversing bijection are necessarily isomorphic.  

\begin{theorem}\label{grevThm}
If $V$ and $W$ are order unit spaces such that there exists a gauge-reversing bijection $\Phi\colon C \to D$, then $V$ and $W$ are isomorphic.
\end{theorem}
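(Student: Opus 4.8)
The plan is to reduce to the complete case by completion and then to descend the isomorphism. First I would pass to the completions $\widehat{V}$ and $\widehat{W}$: these are complete order unit spaces with order units $v$ and $w$ in which $V$ and $W$ sit densely, with $\widehat{V}_+\cap V=V_+$. One checks readily that $C=V_+^\circ$ is norm-dense in $\widehat{C}:=\widehat{V}_+^\circ$ (approximate $x\ge\delta v$ by $x_n\in V$ with $\lVert x_n-x\rVert<\delta/2$, so $x_n\ge(\delta/2)v$), and likewise $D$ is dense in $\widehat{D}$. Next, every gauge-reversing bijection is an isometry for Thompson's metric: homogeneity of degree $-1$ together with the order-anti-isomorphism property gives $\lambda^{-1}y\le x\le\lambda y\iff\lambda^{-1}\Phi(y)\le\Phi(x)\le\lambda\Phi(y)$, hence $d_T(\Phi(x),\Phi(y))=d_T(x,y)$. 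Since $d_T$ induces the norm topology on $C$ and $\widehat{C}$ is $d_T$-complete, $\Phi$ extends uniquely to a $d_T$-isometry $\widehat{\Phi}\colon\widehat{C}\to\widehat{D}$. That $\widehat{\Phi}$ is again gauge-reversing follows by extending the asymmetric gauge $M(x/y):=\inf\{\lambda>0:x\le\lambda y\}$ — which is Lipschitz in each variable for $d_T$ — continuously to $\widehat{C}\times\widehat{C}$: from $M(\Phi(x)/\Phi(y))=M(y/x)$ on $C$ one gets the same identity on $\widehat{C}$, and as $x\le y\iff M(x/y)\le1$ (the cones being closed) this yields order-anti-isomorphy, while homogeneity of degree $-1$ passes to the limit.

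Applying \Cref{MAINTHM} and the analysis of \Cref{MTSec} to $\widehat{\Phi}$: the completions $\widehat{V}$ and $\widehat{W}$ carry JB-algebra structures with units $v$ and $w$; for each $x\in\widehat{C}$ the map $T_x:=-D\widehat{\Phi}(x)$ is an order unit space isomorphism $(\widehat{V},\widehat{V}_+,x)\to(\widehat{W},\widehat{W}_+,\widehat{\Phi}(x))$; and writing $j:=S_v$ for the Jordan inverse of $\widehat{V}$ one has $\widehat{\Phi}=T_v\circ j$ and, more generally, $T_x=T_v\circ P(x^{-1})$, with $P\colon\widehat{V}\to L(\widehat{V})$ the quadratic representation, which by \Cref{MTSec} is a quadratic polynomial in its argument.

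The remaining and crucial task is to replace $T_v$ by an isomorphism carrying $V$ onto $W$; note that $T_v|_V$ itself does not suffice, since $T_v(V)=W$ would force $j(V_+^\circ)=V_+^\circ$, i.e. that $V$ be closed under Jordan inverse, which need not hold. Here surjectivity of $\Phi$ onto the \emph{full} open cone $D$ is essential: it forces $\{x^{-1}:x\in V_+^\circ\}$, a priori merely a subset of $\widehat{C}$, to equal the full open cone of the order unit space $V^{\vee}:=\operatorname{span}\{x^{-1}:x\in V_+^\circ\}$, with $\{u^{-1}:u\in(V^{\vee})_+^\circ\}=V_+^\circ$ dually; and $T_v$ restricts to an isomorphism $V^{\vee}\to W$. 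It therefore suffices to show that a dense sub-order-unit-space $V$ of a JB-algebra is isomorphic to its inverse-dual $V^{\vee}$. A natural route is to exhibit an invertible $a\in\widehat{V}$ with $P(a)(V)=V^{\vee}$: being an order automorphism of $\widehat{V}$ with $P(a)v=a^2\in(V^{\vee})_+^\circ$, such a $P(a)$ restricts to an order unit space isomorphism $(V,V_+,v)\to(V^{\vee},(V^{\vee})_+,a^2)\cong(V^{\vee},(V^{\vee})_+,v)$. To produce $a$ one combines the polynomial nature of $P$, Hua's identity (\Cref{HuaPhi}) and the Gateaux-differentiability of $\Phi$ in positive directions (\Cref{DPhi}) with the strong atomicity of $C_{usc}$, which lets membership in $V$ and $V^{\vee}$ be tested against the extremal vectors of $C_{usc}$. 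Having obtained $V\cong W$, one may alternatively recast it through \Cref{ThmBsa}: the Jordan inverse of $W$ then preserves $D$, so composing it with $\Phi$ yields a gauge-preserving bijection $C\to D$, which extends to a linear isomorphism.

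The main obstacle is exactly this descent. The differentiations and limits underlying the construction of $P$ and of the maps $T_x$ a priori only yield elements of the completions, whereas one must keep every object inside the possibly incomplete $V$ and $W$; local Taylor expansion is insufficient, and one is forced to use the global algebraic identities of a JB-algebra (the quadratic representation being polynomial, the fundamental formula $P(P(a)b)=P(a)P(b)P(a)$, and Hua's identity) together with the fact — imposed by surjectivity of $\Phi$ — that inversion interchanges the full open cones of $V$ and $W$.
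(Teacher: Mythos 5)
The first half of your plan (complete $V$ and $W$, extend $\Phi$ to a gauge-reversing $d_T$-isometry $\hat{\Phi}\colon\hat{C}\to\hat{D}$, and invoke the complete-case results) is exactly the paper's route, via \Cref{dTcompl} and \Cref{hatPhi}. The problem is the descent step, which is the heart of the theorem and which you do not actually carry out. You dismiss the direct approach on the grounds that $-D\hat{\Phi}(v)(V)=W$ ``would force $V$ to be closed under Jordan inverse, which need not hold.'' Under the hypothesis of the theorem this is mistaken: the surjectivity of $\Phi$ onto the \emph{full} cone $D$ of the (possibly incomplete) $W$ is precisely what makes it hold, and Hua's identity proves it directly. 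Indeed (this is the paper's proof in \Cref{ncThmAB}): for $z\in C$ with $z\le\tfrac12 v$ one has $\Phi(z)-w\ge w$, so $y:=\Phi^{-1}(\Phi(z)-w)\in C$, and \Cref{HuaPhi} with $x=v$ gives
\begin{equation*}
-D\hat{\Phi}(v)(z)\;=\;w-\Phi(v+y)\;\in\;W ,
\end{equation*}
because every term on the right is a value of $\Phi$ on $C$ or equals $w$. Since $\{z\in C: z\le\tfrac12 v\}$ spans $V$ and $D\hat{\Phi}(v)$ is linear, $-D\hat{\Phi}(v)(V)\subset W$; the symmetric argument applied to $\Phi^{-1}$ gives $-D(\hat{\Phi}^{-1})(w)(W)\subset V$, so $-D\hat{\Phi}(v)$ restricts to the desired isomorphism $(V,V_+,v)\to(W,W_+,w)$. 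In particular $j(C)=C$ and your space $V^{\vee}=\operatorname{span}\,j(C)$ coincides with $V$ — the very fact you declared unavailable.

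By contrast, your substitute for this step is not a proof: having reduced to showing $V\cong V^{\vee}$, you propose to ``exhibit an invertible $a\in\hat{V}$ with $P(a)(V)=V^{\vee}$'' and then only name the tools one might combine (polynomiality of $P$, Hua's identity, Gateaux-differentiability, strong atomicity of $C_{usc}$) without any construction of $a$ or argument that it exists. As it stands this is a genuine gap, and an unnecessary detour: the information you would need to produce such an $a$ is exactly the membership statement $-D\hat{\Phi}(v)(z)\in W$ that Hua's identity already yields, after which no $a$ other than $v$ is needed. (Your closing remark about composing with the Jordan inverse of $W$ to get a gauge-preserving bijection also presupposes that $W$ is inverse-closed, which is again part of what must be proved.) The fix is simply to replace the $V^{\vee}$ detour by the Hua-identity computation above.
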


Walsh proved the finite-dimensional case of \Cref{grevThm} in \cite{Walsh18}. We shall prove the general case as \Cref{ncThmAB} in \Cref{ncmplSec}.
 A version of \Cref{MAINTHM} for non-complete order unit spaces is included in that section as well.

\ind 
Walsh has strengthened \Cref{MAINTHM} in case $V$ is finite-dimensional in \cite{WalshSelecta}, by showing that the existence of an order-anti-isomorphism of $V^{\circ}_+$, not assumed homogeneous of degree $-1$, implies that there exists a gauge-reversing bijection--and hence a JB-algebra structure--on $V$. It would be interesting to see if this also holds in infinite dimensions.\\

The following section covers preliminary concepts and results concerning differentiable and analytic maps, order unit spaces, and JB-algebras. 

\section{Preliminaries}\label{prelSec}

\subsection{Differentiable maps}\label{diffSec}
In this section, we recall the definition of Gateaux- and Fréchet-differentiable maps, and prove a chain rule for the Gateaux-derivative.

\begin{definition} Let $(X, \lVert\cdot\rVert_X)$ and $(Y, \lVert\cdot\rVert_Y)$ be Banach spaces. Let $F \subset X$ be a subset and $f\colon F \to Y$ a function.
\begin{itemize}
\item[$(1)$] Given a point $x\in F$ and a vector $p \in X$, we say that $f$ is \emph{Gateaux-differentiable} at $x$ in the direction $p$ if for sufficiently small $\mu > 0$ we have $x + \mu p \in F$, and there is a vector $q\in Y$ such that
$$\lim_{\mu \downarrow 0} \mu^{-1}\lVert f(x+\mu p)-f(x)-\mu q\rVert_Y = 0.$$
Plainly, the vector $q$ is unique if it exists, and called the \emph{Gateaux-derivative} of $f$ at $x$ in the direction $p$. We emphasize that, due to the right limit in the definition, our notion of Gateaux-derivative is one-sided. Nonetheless, if $\lambda \ge 0$, then $\lambda q$ is the Gateaux-derivative of $f$ at $x$ in the direction $\lambda p$.
\item[$(2)$] Let $x$ belong to the interior of $F$. We say that $f$ is \emph{Fréchet-differentiable} at $x$ if there exists a continuous linear map $A\colon X \to Y$ such that
$$\lim_{h \to 0} \lVert h\rVert_X^{-1} \lVert f(x+h) - f(x) - A(h)\rVert_Y = 0.$$
Again, the linear map $A$ is unique if it exists. We call $A$ the \emph{Fréchet-derivative} of $f$ at $x$ and denote it $Df(x)$. Note that if $f$ is Fréchet-differentiable at $x$, then $f$ is continuous at $x$.
\end{itemize}
\end{definition}

We will use the following well-known chain rule for Fr\'{e}chet- and Gateaux-derivatives.

\begin{lemma}\label{chr} Let $X, Y, Z$ be Banach spaces. Let $f\colon F \to G$ and $g\colon G \to Z$ be functions, defined on subsets $F \subset X$ and $G \subset Y$. Let $x\in F$ and set $y := f(x) \in G$.
\begin{itemize} 
\item[$(1)$] If $f$ is Fréchet-differentiable at $x$, and $g$ is Fréchet-differentiable at $y$, then $g \circ f$ is Fréchet-differentiable at $x$ with Fréchet-derivative given by the chain rule
$$D(g \circ f)(x) = Dg(y) \circ Df(x).$$
\item[$(2)$] Let $p \in X$, $q \in Y$ and $r \in Z$ be vectors such that $q$ is the Gateaux-derivative of $f$ at $x$ in the direction $p$, and $r$ is the Gateaux-derivative of $g$ at $y$ in the direction $q$. Assume additionally that $g$ is Lipschitz on a neighbourhood of $y$ in $G$. Then $r$ is the Gateaux derivative of $g \circ f$ at $x$ in the direction $p$.
\end{itemize}
\end{lemma}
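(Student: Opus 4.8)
The plan is to prove both statements directly from the definitions, carefully tracking the remainder terms; no clever idea is needed here, only bookkeeping.

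For part $(1)$, I would write $A := Df(x)$ and $B := Dg(y)$. Since $g$ is Fréchet-differentiable at $y$, the point $y$ lies in the interior of $G$, and since $f$ is Fréchet-differentiable at $x$ it is continuous there, so for all sufficiently small $h \in X$ we have $x + h \in F$, hence $f(x+h) \in G$, and moreover $f(x+h) - f(x)$ is close to $0$. Introduce the remainder maps $\varepsilon_f(h) := f(x+h) - f(x) - Ah$ and $\varepsilon_g(k) := g(y+k) - g(y) - Bk$ (setting $\varepsilon_g(0) := 0$), so that $\|\varepsilon_f(h)\| = o(\|h\|)$ as $h \to 0$ and $\|\varepsilon_g(k)\| = o(\|k\|)$ as $k \to 0$. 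Substituting $k = k(h) := f(x+h) - f(x) = Ah + \varepsilon_f(h)$ yields
$$(g \circ f)(x+h) - (g \circ f)(x) = B\big(Ah + \varepsilon_f(h)\big) + \varepsilon_g\big(k(h)\big) = (B \circ A)h + \big[\,B\varepsilon_f(h) + \varepsilon_g(k(h))\,\big].$$
One then checks that the bracketed term is $o(\|h\|)$: the first summand obeys $\|B\varepsilon_f(h)\| \le \|B\|\,\|\varepsilon_f(h)\| = o(\|h\|)$, while $\|k(h)\| \le C\|h\|$ for small $h$ forces $\|\varepsilon_g(k(h))\| = o(\|k(h)\|) = o(\|h\|)$. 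This gives $D(g \circ f)(x) = Dg(y) \circ Df(x)$.

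For part $(2)$, I would use the hypotheses to write, for all sufficiently small $\mu > 0$, that $x + \mu p \in F$, $y + \mu q \in G$, and
$$f(x + \mu p) = y + \mu q + \rho(\mu), \qquad g(y + \mu q) = g(y) + \mu r + \sigma(\mu),$$
where $\|\rho(\mu)\| = o(\mu)$ and $\|\sigma(\mu)\| = o(\mu)$ as $\mu \downarrow 0$. Choose $L \ge 0$ and $\delta > 0$ with $g$ being $L$-Lipschitz on $\{z \in G : \|z - y\| < \delta\}$. For $\mu$ small enough, both $y + \mu q$ and $f(x+\mu p) = y + \mu q + \rho(\mu)$ lie in that set, so $\|g(f(x+\mu p)) - g(y+\mu q)\| \le L\|\rho(\mu)\| = o(\mu)$. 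Consequently
$$g\big(f(x+\mu p)\big) - g\big(f(x)\big) - \mu r = \big[\,g(f(x+\mu p)) - g(y + \mu q)\,\big] + \sigma(\mu),$$
and dividing by $\mu$ and letting $\mu \downarrow 0$, both terms on the right tend to $0$ in norm; hence $r$ is the Gateaux-derivative of $g \circ f$ at $x$ in the direction $p$.

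I do not expect a genuine obstacle. The only delicate points are that $y$ must lie in the interior of $G$ for $\varepsilon_g$ to be defined on a neighbourhood of the origin in part $(1)$, and that in part $(2)$ the Lipschitz hypothesis on $g$ is precisely what allows one to replace the actual increment $f(x+\mu p) - f(x)$ — which in general differs from $\mu q$ by the term $\rho(\mu)$ — by the straight increment $\mu q$; without some such regularity of $g$ the chain rule for one-sided Gateaux-derivatives can fail.
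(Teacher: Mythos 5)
Your proof is correct and follows essentially the same route as the paper: part (1) is the standard remainder bookkeeping (which the paper omits as standard), and part (2) uses the Lipschitz hypothesis in exactly the paper's way, namely to replace the true increment $f(x+\mu p)-f(x)$ by the straight increment $\mu q$ at a cost of $L\lVert f(x+\mu p)-f(x)-\mu q\rVert = o(\mu)$, then invoking the Gateaux-differentiability of $g$ at $y$ in the direction $q$. No gaps.
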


\begin{proof}
Part (1) is standard, so we only show part (2).\\
\ind Since $g$ is Lipschitz on a neighborhood of $y$ in $G$, there exist $\delta > 0$ and $L > 0$ such that $\lVert g(y_1) - g(y_2)\rVert_Z \le L\lVert y_1 - y_2\rVert_Y$ whenever $y_i \in G$ with $\lVert y_i - y\rVert_Y < \delta$ for $i=1,2$. For $\mu > 0$ sufficiently small we have $\lVert \mu q\rVert_Y < \delta/2$ and $\lVert f(x+\mu p) - f(x) - \mu q\lVert_Y < \delta/2$, hence $\lVert f(x + \mu p) - y\lVert < \delta$ and $ \lVert f(x) + \mu q - y\rVert < \delta$. Since $g$ is $L$-Lipschitz on $\{y'\in G: \lVert y'-y\rVert_Y < \delta\}$, for sufficiently small $\mu > 0$ we have
\begin{align*}
\lVert g(f(x+\mu p)) - g(f(x)+\mu q)\rVert_Z \le L \lVert f(x+\mu p)-f(x)-\mu q\rVert_Y
\end{align*}
and hence by the triangle inequality
\begin{align*}
\lVert (g \circ f)(x + \mu p) - (g \circ f)(x) - \mu r \rVert_Z &\le  L \lVert f(x+\mu p)-f(x)-\mu q\rVert_Y \\&\hskip .29cm + \lVert g(f(x)+\mu q)-g(f(x))-\mu r\rVert_Z.
\end{align*}
As $q$ is the Gateaux-derivative of $f$ at $x$ in the direction $p$ and $r$ is the Gateaux-derivative of $g$ at $y$ in the direction $q$, it follows that $r$ is the Gateaux-derivative of $g \circ f$ at $x$ in the direction $p$.
\end{proof}

\subsection{Analytic maps}
Let $(X, \lVert\cdot\rVert_X)$ and $(Y, \lVert\cdot\rVert_Y)$ be Banach spaces. The \emph{norm} of a multilinear map $\phi\colon X^k \to Y$ is given by
$$\lVert \phi \rVert := \sup \{\lVert \phi(x_1,x_2,\ldots,x_k)\rVert_Y: x_i \in X, \lVert x_i\rVert_X \le 1\} \in [0, \infty].$$
We say $\phi$ is \emph{continuous} if $\lVert \phi \rVert < \infty$.


\begin{definition} Let $k \in \Z_{\ge 0}$. We say a map $c\colon X \to Y$ is a \emph{(continuous) homogeneous polynomial map of degree} $k$ if it is defined by a (continuous) symmetric multilinear map $\overline{c}\colon X^k \to Y$ via 
\begin{equation*}
c(x) := \overline{c}(x,x,\ldots,x).
\end{equation*}
\end{definition}
The map $\overline{c}$ is uniquely determined by $c$ according to the following formula, taken from \cite[p. 274]{Bochnak}, which follows readily from the inclusion--exclusion principle:
\begin{align}\label{cbc}
\overline{c}(x_1,\ldots,x_k) = \frac{1}{k!}\sum_{\substack{\epsilon_1 = 0,1\\ \cdots\cdots \\ \epsilon_k = 0,1}} (-1)^{k-\epsilon_1-\ldots-\epsilon_k} c\left(\sum_{i=1}^k \epsilon_i x_i\right).
\end{align}

\begin{definition}\label{AnMap}
A map $f\colon U \to Y$ defined on a neighborhood $U$ of a point $x \in X$ is said to be \emph{analytic at }$x$ if there exist $r > 0$ with $\{z \in X: \lVert z-x\rVert_X < r\} \subset U$ and for each $k \in \Z_{\ge 0}$ a continuous homogeneous polynomial map $c_k\colon X \to Y$ of degree $k$ such that $\limsup_k \lVert \overline{c_k}\rVert^{1/k} \le r^{-1}$ and for all $h \in X$ with $\lVert h\rVert_X < r$ one has
\begin{equation}
    f(x+h) = \sum_{i=0}^{\infty} c_i(h).
\end{equation}
We say $f$ is \emph{analytic} if $f$ is analytic at each point of its domain $U$.
\end{definition}

Many classical results about analytic functions of a single variable extend to the Banach space setting, among which we record for later reference the identity principle.

\begin{theorem}\label{idprinc}
Let $U$ be a connected open subset of the Banach space $X$, and let $f,g\colon U \to Y$ be analytic maps. Suppose there exists a nonempty open subset $U_0$ of $U$ such that $f(x) = g(x)$ for each $x\in U_0$. Then $f$ and $g$ are identically equal on $U$.
\end{theorem}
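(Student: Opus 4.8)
\textbf{Proof proposal for Theorem \ref{idprinc} (the identity principle).}

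The plan is to reduce to the single-variable case by a connectedness argument, showing that the set where $f$ and $g$ agree together with all their ``derivatives'' (i.e.\ the coefficients $c_k$ of their local power series) is both open and closed in $U$. First I would set $h := f - g$, which is analytic, so it suffices to prove that if $h$ vanishes on a nonempty open set $U_0 \subset U$ then $h \equiv 0$ on $U$. Let $E$ be the set of points $x \in U$ at which the local power series expansion of $h$ is identically zero, i.e.\ all the homogeneous polynomial maps $c_k$ in the expansion $h(x + \cdot) = \sum_k c_k(\cdot)$ vanish. Clearly $E$ is open: if $x \in E$ then $h$ vanishes on a whole ball around $x$, hence so do all local expansions centered at points of that ball. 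Also $E \supseteq U_0$, so $E$ is nonempty. The crux is to show $E$ is closed in $U$.

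For closedness, suppose $x_n \to x$ with $x_n \in E$ and $x \in U$. Choose $r > 0$ with the ball $B(x, r) \subset U$ on which $h$ has a convergent expansion $h(x+h') = \sum_k c_k(h')$, and pick $n$ large enough that $x_n \in B(x, r/2)$. The key point is that the value of $h$ and of all its Fréchet derivatives at $x_n$ can be computed by differentiating the power series centered at $x$ term by term — the homogeneous polynomial maps are infinitely Fréchet-differentiable, and the convergence is locally uniform together with all derivatives inside the radius of convergence (this is the Banach-space analogue of the fact that a power series can be differentiated termwise). Since $x_n \in E$, all these derivatives vanish at $x_n$. Now I would restrict attention to an arbitrary line through $x$: for a fixed unit vector $u \in X$ consider the single-variable analytic function $t \mapsto h(x + tu)$ on a real (or complex) interval around $0$. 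Its Taylor coefficients at the point $t_n$ corresponding to $x_n$ all vanish, so by the classical one-variable identity principle it vanishes identically on that interval; in particular $h(x + tu) = 0$ for $|t|$ small. As $u$ ranges over all directions, $h$ vanishes on a ball around $x$, whence $x \in E$. Thus $E$ is closed, and by connectedness $E = U$, so $h \equiv 0$ on $U$.

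The main obstacle is making precise the termwise-differentiation step and the transfer between the multi-variable expansion centered at $x$ and the one-variable slices: one must verify that the formal manipulation of the series — recentering at a nearby point $x_n$, or restricting to a line — is justified by absolute and locally uniform convergence, using the bound $\limsup_k \lVert \overline{c_k}\rVert^{1/k} \le r^{-1}$ from \Cref{AnMap} to control the tails. Concretely, for $\lVert h'\rVert_X < r$ the estimate $\lVert c_k(h')\rVert_Y \le \lVert \overline{c_k}\rVert \, \lVert h'\rVert_X^k$ gives a geometric majorant, and the same bound applied to the symmetric multilinear maps $\overline{c_k}$ shows that the recentered series $\sum_k \overline{c_k}(x_n - x, \dots, x_n - x, \cdot, \dots, \cdot)$ converges and represents $h$ near $x_n$; comparing with the hypothesis that $h$'s expansion at $x_n$ vanishes forces each grouped coefficient to be zero. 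Everything else is a routine ``open and closed'' argument, but this convergence bookkeeping is where care is required. (Alternatively, one can cite the standard reference for analytic maps between Banach spaces, e.g.\ \cite{Bochnak}, for the identity principle and simply record it here; the self-contained argument above is the one I would write if a citation is not desired.)
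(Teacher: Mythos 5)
The paper does not actually prove this statement: its ``proof'' is a citation to \cite[Corollary, p.~1080]{Whitt}. So a self-contained argument is necessarily a different route, and your overall strategy --- pass to $h:=f-g$, show that the set $E$ of points where the local expansion of $h$ vanishes is open, nonempty and closed in $U$, and control all rearrangements (recentring, restriction to lines, termwise differentiation) by the geometric majorant $\lVert c_k(h')\rVert_Y\le\lVert\overline{c_k}\rVert\,\lVert h'\rVert_X^k$ coming from \Cref{AnMap} --- is the standard proof of this result and is essentially sound. Two small points you use implicitly and should state: that $E\supseteq U_0$ and the openness of $E$ both rest on the fact that if $h$ vanishes on a neighbourhood of a point then its expansion \emph{at that point} vanishes, i.e.\ on uniqueness of the expansion; this follows, for instance, from $c_k(u)=\tfrac{1}{k!}\tfrac{d^k}{dt^k}\big|_{t=0}h(y+tu)$.

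The one step that fails as written is the closedness argument by slicing along lines through $x$: for a fixed unit vector $u$ you invoke ``the point $t_n$ corresponding to $x_n$'' on the line $t\mapsto x+tu$, but for a generic direction $u$ the point $x_n$ does not lie on that line, so the function $t\mapsto h(x+tu)$ has no point at which you know its Taylor coefficients vanish, and the one-variable identity principle cannot be applied there. The gap is repairable with material you already have. Simplest: since $x_n\in E$ means $h\equiv 0$ on a ball around $x_n$, all Fr\'echet derivatives $D^kh(x_n)$ vanish; the derivatives of an analytic map are continuous (this is exactly the termwise-differentiation bookkeeping you flag), so $D^kh(x)=\lim_n D^kh(x_n)=0$, and since $\overline{c_k}=\tfrac{1}{k!}D^kh(x)$ every $c_k$ vanishes, giving $x\in E$ with no slicing at all. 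Alternatively, slice along segments emanating from $x_n$ rather than from $x$: for $n$ large $x_n\in B(x,r/2)$, the ball $B(x,r)$ is convex, and for each $z\in B(x,r)$ the map $t\mapsto h(x_n+t(z-x_n))$ is real-analytic near $[0,1]$ with all derivatives zero at $t=0$, hence $h(z)=0$. A third repair is to finish your own recentring computation: the grouped coefficients of the series recentred at $x_n$ vanish for every large $n$, and letting $n\to\infty$ (each term of order $k>j$ carries a factor $x_n-x\to 0$, with uniformly summable bounds) forces $c_j=0$ for all $j$. With any of these fixes the open--closed argument closes and the proof is complete.
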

\begin{proof}
See \cite[Corollary, p. 1080]{Whitt}.
\end{proof}

\begin{theorem}\label{invan} Let $B$ be a Banach algebra with identity. Then the set $G := \{b \in B: b \text{ is invertible}\}$ is open and the inverse map $G \to G$, $b \mapsto b^{-1}$ is analytic.
\end{theorem}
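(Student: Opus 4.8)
The plan is to use the Neumann series. Fix $b \in G$ and set $r := \lVert b^{-1}\rVert^{-1}$. First I would establish openness of $G$ together with an explicit formula for the inverse near $b$: for $h \in B$ with $\lVert h\rVert < r$ we have $\lVert b^{-1}h\rVert \le \lVert b^{-1}\rVert\lVert h\rVert < 1$, so the series $\sum_{k=0}^{\infty}(-b^{-1}h)^k$ is absolutely convergent in $B$; writing $u$ for its sum, a telescoping computation gives $u(1+b^{-1}h) = (1+b^{-1}h)u = 1$. Since $b+h = b(1+b^{-1}h)$, it follows that $b+h$ is invertible with $(b+h)^{-1} = ub^{-1} = \sum_{k=0}^{\infty}(-1)^k (b^{-1}h)^k b^{-1}$. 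In particular the open ball of radius $r$ about $b$ lies in $G$, so $G$ is open.

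Next I would read off the homogeneous polynomial maps from this expansion. For each $k \ge 0$ define $c_k\colon B \to B$ by $c_k(h) := (-1)^k (b^{-1}h)^k b^{-1}$, and let $\overline{c_k}\colon B^k \to B$ be the map
$$\overline{c_k}(h_1,\ldots,h_k) := \frac{(-1)^k}{k!}\sum_{\sigma \in S_k} b^{-1}h_{\sigma(1)}b^{-1}h_{\sigma(2)}\cdots b^{-1}h_{\sigma(k)}b^{-1},$$
where $S_k$ denotes the permutations of $\{1,\ldots,k\}$. By construction $\overline{c_k}$ is symmetric and multilinear, and $\overline{c_k}(h,\ldots,h) = c_k(h)$, so $c_k$ is a homogeneous polynomial map of degree $k$. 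Each of the $k!$ summands defining $\overline{c_k}$ has norm at most $\lVert b^{-1}\rVert^{k+1}\prod_i\lVert h_i\rVert$, hence $\lVert \overline{c_k}\rVert \le \lVert b^{-1}\rVert^{k+1} < \infty$, so $\overline{c_k}$ is continuous.

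Finally I would check the two conditions in \Cref{AnMap} with the point $b$ and radius $r$. We have $\limsup_k \lVert \overline{c_k}\rVert^{1/k} \le \limsup_k \lVert b^{-1}\rVert^{(k+1)/k} = \lVert b^{-1}\rVert = r^{-1}$, and for $\lVert h\rVert < r$ the series $\sum_{k=0}^{\infty}c_k(h)$ converges (geometrically) to $ub^{-1} = (b+h)^{-1}$ by the first paragraph. Thus $b\mapsto b^{-1}$ is analytic at $b$, and since $b\in G$ was arbitrary it is analytic on $G$. I do not expect a genuine obstacle: the only points needing a little care are the bookkeeping that presents $c_k$ as a homogeneous polynomial map in the precise sense used in the paper, and the estimate $\lVert \overline{c_k}\rVert \le \lVert b^{-1}\rVert^{k+1}$, which is exactly what pins the radius of convergence to $r$.
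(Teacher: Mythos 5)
Your proposal is correct. The paper itself gives no argument for this theorem — it simply cites Conway [Theorem VII.2.2] — and your Neumann-series proof is precisely the standard argument behind that citation: invertibility of $b+h=b(1+b^{-1}h)$ for $\lVert h\rVert<\lVert b^{-1}\rVert^{-1}$ gives openness, and the geometric expansion $(b+h)^{-1}=\sum_k(-1)^k(b^{-1}h)^kb^{-1}$ supplies the power series. The one piece of work beyond the textbook statement is exactly what you identified and carried out: symmetrizing $(b^{-1}h)^kb^{-1}$ to fit the paper's notion of homogeneous polynomial map in \Cref{AnMap}, and the bound $\lVert\overline{c_k}\rVert\le\lVert b^{-1}\rVert^{k+1}$, which makes $\limsup_k\lVert\overline{c_k}\rVert^{1/k}\le r^{-1}$ with $r=\lVert b^{-1}\rVert^{-1}$, so the radius in the definition matches the ball on which convergence was established. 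No gaps.
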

\begin{proof}
This is shown in \cite[Theorem VII.2.2]{Con90}.
\end{proof}

\subsection{Semi-continuous functions}\label{scfSec}
In this section we recall some facts concerning (upper or lower) semicontinuous functions, referring to \cite{Beer} for proofs.

\begin{definition}
Let $X$ be a topological space.
\begin{itemize}
\item[$(1)$] A function $g\colon X \to [-\infty, \infty)$ is called \emph{upper semicontinuous} if for every $\lambda \in \R$ the set $g^{-1}([-\infty,\lambda))$ is open in $X$.
\item[$(2)$] A function $h\colon X \to (-\infty, \infty]$ is called \emph{lower semicontinuous} if for every $\lambda \in \R$ the set $h^{-1}((\lambda,\infty])$ is open in $X$.
\item[$(3)$] A net $(g_{\alpha})_{\alpha}$ of functions $g_{\alpha}\colon X \to (-\infty, \infty]$ is called \emph{almost nondecreasing} if for every $\epsilon > 0$ there exists an index $\alpha_0$ such that for all indices $\alpha \ge \alpha' \ge \alpha_0$ one has $g_{\alpha} \ge g_{\alpha'} - \epsilon$.
\item[$(4)$] A net $(h_{\beta})_{\beta}$ of functions $h_{\beta}\colon X \to [-\infty, \infty)$ is called \emph{almost nonincreasing} if $(-h_{\beta})_{\beta}$ is an almost nondecreasing net.
\end{itemize}
\end{definition}

\begin{lemma}\label{sclim}
Let $X$ be a topological space.
\begin{enumerate}[label=\normalfont{(\arabic*)}]
\item Let $(g_{\alpha})_{\alpha}$ be an almost nonincreasing net which consists of upper semicontinuous functions $g_{\alpha}\colon X \to [-\infty, \infty)$. Then the function $g\colon X \to [-\infty, \infty)$ defined $g(x) := \liminf_{\alpha} g_{\alpha}(x)$ is upper semicontinuous and equal to the pointwise limit of the net $(g_{\alpha})_{\alpha}$.
\item Let $(h_{\beta})_{\beta}$ be an almost nondecreasing net which consists of lower semicontinuous functions $h_{\beta}\colon X \to (-\infty, \infty]$. Then the function $h\colon X \to (-\infty, \infty]$ defined by $h(x) := \limsup_{\beta} h_{\beta}(x)$ is lower semicontinuous and equal to the pointwise limit of the net $(h_{\beta})_{\beta}$.
\end{enumerate}
\end{lemma}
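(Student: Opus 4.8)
The plan is to deduce part~(2) from part~(1) by negation, and then to prove part~(1) in two stages: first that the net $(g_\alpha)_\alpha$ converges pointwise (so that ``$\liminf$'' may legitimately be read as ``$\lim$''), and second that the resulting limit function is upper semicontinuous. For the reduction, note that if $(h_\beta)_\beta$ is an almost nondecreasing net of lower semicontinuous functions $h_\beta\colon X\to(-\infty,\infty]$, then $(-h_\beta)_\beta$ is an almost nonincreasing net of upper semicontinuous functions $-h_\beta\colon X\to[-\infty,\infty)$, one has $\liminf_\beta(-h_\beta(x))=-\limsup_\beta h_\beta(x)$, and a function on $X$ is lower semicontinuous precisely when its negative is upper semicontinuous; so part~(1) applied to $(-h_\beta)_\beta$ yields part~(2).

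For the pointwise convergence in part~(1), fix $x\in X$ and set $\ell:=\liminf_\alpha g_\alpha(x)\in[-\infty,\infty)$. By the almost nonincreasing hypothesis, for each $\epsilon>0$ there is an index $\alpha_0$ with $g_\beta(x)\le g_{\alpha'}(x)+\epsilon$ whenever $\beta\ge\alpha'\ge\alpha_0$. Since $\inf_{\alpha\ge\alpha_0}g_\alpha(x)\le\ell$, one may choose $\alpha'\ge\alpha_0$ with $g_{\alpha'}(x)<\ell+\epsilon$ (or smaller than any prescribed real number, in case $\ell=-\infty$); then $g_\beta(x)\le g_{\alpha'}(x)+\epsilon$ shows the net is eventually below $\ell+2\epsilon$ (resp.\ eventually arbitrarily negative). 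On the other hand $\ell=\sup_\gamma\inf_{\alpha\ge\gamma}g_\alpha(x)$ forces $g_\alpha(x)>\ell-\epsilon$ for all $\alpha$ past some index. Directedness of the index set then gives $\lim_\alpha g_\alpha(x)=\ell$; in particular $g(x)$ is a genuine pointwise limit, and it is finite because (taking $\epsilon=1$ above) the net is eventually bounded above by a finite number.

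For upper semicontinuity, fix $\lambda\in\R$ and a point $x_0$ with $g(x_0)<\lambda$; it suffices to produce an open neighbourhood of $x_0$ on which $g<\lambda$. Put $\epsilon:=(\lambda-g(x_0))/3>0$, take $\alpha_0$ as in the almost nonincreasing condition for this $\epsilon$, and use the convergence $g_\alpha(x_0)\to g(x_0)$ from the previous step to choose $\alpha\ge\alpha_0$ with $g_\alpha(x_0)<g(x_0)+\epsilon$. Upper semicontinuity of the single function $g_\alpha$ makes $U:=g_\alpha^{-1}\big([-\infty,\,g(x_0)+\epsilon)\big)$ an open neighbourhood of $x_0$. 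For $x\in U$ we have $g_\beta(x)\le g_\alpha(x)+\epsilon$ for every $\beta\ge\alpha$, so letting $\beta$ run (using that the net at $x$ converges) gives $g(x)\le g_\alpha(x)+\epsilon<g(x_0)+2\epsilon<\lambda$. Hence $U$ is the desired neighbourhood, and $g^{-1}([-\infty,\lambda))$ is open.

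The only genuinely delicate point is the pointwise convergence step: the hypothesis is merely \emph{almost} nonincreasing, so honest monotonicity is unavailable and one must convert the $\liminf$ into a true limit by spending the $\epsilon$-slack — and, crucially for the semicontinuity argument, one must do this so that a single index $\alpha$, chosen using only the value at $x_0$, simultaneously controls the tail of the net at all nearby points $x$. The bookkeeping around the value $-\infty$ is a minor additional wrinkle, handled by treating $\ell=-\infty$ as a separate, easier case.
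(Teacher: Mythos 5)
Your proof is correct. The paper itself offers no argument for this lemma --- it simply cites Beer, Theorem 1.3.1 --- so there is nothing in-paper to compare against; your write-up is exactly the kind of self-contained verification that the citation stands in for. The two essential points are both handled properly: you convert the $\liminf$ into a genuine limit by spending the $\epsilon$-slack of the almost-nonincreasing condition, and you exploit the fact that this condition is an inequality of \emph{functions} (uniform in $x$), so that the single index $\alpha$ chosen from the value at $x_0$ controls the tail of the net at every point of the neighbourhood $U=g_\alpha^{-1}\bigl([-\infty,g(x_0)+\epsilon)\bigr)$; the reduction of part (2) to part (1) by negation matches the paper's own definition of almost nonincreasing nets. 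One cosmetic remark: in the semicontinuity step your choice $\epsilon=(\lambda-g(x_0))/3$ is undefined when $g(x_0)=-\infty$; in that case any fixed $\epsilon$ (say $\epsilon=1$) works, choosing $\alpha\ge\alpha_0$ with $g_\alpha(x_0)<\lambda-2$ and $U=g_\alpha^{-1}\bigl([-\infty,\lambda-1)\bigr)$, so this is the same minor $-\infty$ bookkeeping you already flag for the convergence step and not a gap.
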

\begin{proof}
See \cite[Theorem 1.3.1]{Beer}
\end{proof}

\begin{lemma}\label{scatt}
Let $X$ be a compact topological space.
\begin{itemize}
\item[$(1)$] Let $g\colon X \to [-\infty, \infty)$ be an upper semicontinuous function. Then $g$ is bounded above and attains its supremum.
\item[$(2)$] Let $h\colon X \to (-\infty, \infty]$ be a lower semicontinuous function. Then $h$ is bounded below and attains its infimum.
\end{itemize}
\end{lemma}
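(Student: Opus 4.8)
The plan is to prove part (1) directly from the open-set characterization of upper semicontinuity together with compactness, and then to obtain part (2) by applying (1) to $-h$.

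For the boundedness claim in (1): since $g$ takes values in $[-\infty,\infty)$, for each $x \in X$ there is an integer $n$ with $g(x) < n$, so the sets $g^{-1}([-\infty, n))$ with $n \in \Z_{\ge 1}$ are open (by upper semicontinuity) and cover $X$. As this family is increasing in $n$, compactness furnishes a single index $N$ with $g^{-1}([-\infty, N)) = X$; hence $M := \sup_{x\in X} g(x) \le N < \infty$. For the attainment: if $M = -\infty$ then $g$ is identically $-\infty$ and every point of $X$ attains the supremum, so assume $M \in \R$ and suppose toward a contradiction that $g(x) < M$ for all $x \in X$. Then the increasing family of open sets $g^{-1}([-\infty, M - 1/n))$, $n \in \Z_{\ge 1}$, again covers $X$, so by compactness $g < M - 1/N$ everywhere for some $N$, contradicting $M = \sup_{x \in X} g(x)$. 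Hence $g(x_0) = M$ for some $x_0 \in X$.

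For part (2) I would observe that $h \colon X \to (-\infty,\infty]$ is lower semicontinuous precisely when $-h\colon X \to [-\infty,\infty)$ is upper semicontinuous, since $(-h)^{-1}([-\infty,\lambda)) = h^{-1}((-\lambda,\infty])$ for every $\lambda \in \R$; applying part (1) to $-h$ then shows that $-h$ is bounded above and attains its supremum, which is exactly the assertion that $h$ is bounded below and attains its infimum.

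I do not anticipate any real obstacle here; the only points that need a little care are the degenerate case $g \equiv -\infty$ and the exploitation of the \emph{nested} structure of the covers, which is what allows the finite subcover extracted from compactness to be collapsed to a single member of the family.
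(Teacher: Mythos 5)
Your proof is correct and complete: the boundedness and attainment arguments for part (1) via the nested open covers $g^{-1}([-\infty,n))$ and $g^{-1}([-\infty,M-1/n))$ are sound (including the degenerate case $g\equiv-\infty$), and the reduction of (2) to (1) through the identity $(-h)^{-1}([-\infty,\lambda))=h^{-1}((-\lambda,\infty])$ is exactly right. The paper itself gives no argument here, merely citing \cite[Theorem 1.3.4]{Beer}, so your direct compactness proof is the standard one behind that reference and serves as a self-contained substitute; there is nothing to compare beyond that.
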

\begin{proof}
See \cite[Theorem 1.3.4]{Beer}.
\end{proof}

\subsection{Order unit spaces}\label{ousSec}
In this section we introduce order unit spaces and state Kadison's theorem giving a functional representation of complete order unit spaces as \Cref{KadThm}. We conclude with a discussion of norm completions.\\
\ind Let $V_+$ be a proper cone in a real vector space $V$, i.e.\ $V_+ + V_+ \subset V_+$ and $\lambda V_+ \subset V_+$ for all $\lambda \ge 0$, as well as $V_+ \cap -V_+ = \{0\}$. A partial order $\le$ on $V$ is defined by
$$x \le y \iff y - x \in V_+.$$
We call such a pair $(V, V_+)$ a \emph{partially ordered vector space}. It is called \emph{archimedean} if for every $x\in V$ the set $\{\lambda x: \lambda \ge 0\}$ has an upper bound only if $x\le 0$. An element $v\in V_+$ is called an \emph{order unit} if for every $x\in V$ there exists $\lambda > 0$ such that $x\le \lambda v$. A triple $(V, V_+, v)$ is called an \emph{order unit space} if $(V, V_+)$ is an archimedean partially ordered vector space with order unit $v$.\\
\ind Let $(V, V_+, v)$ be an order unit space. Its \emph{order unit norm} is the norm $\lVert \cdot \rVert_v$ on $V$ defined by
$$\lVert x\rVert_v := \{\inf \lambda > 0: -\lambda v\le x\le \lambda v\}.$$
 Its \emph{open cone} is the $\lVert\cdot\rVert_v$-interior $V^{\circ}_+$ of $V_+$, given by 
 \begin{equation}\label{Vcirc}
 V^{\circ}_+ = \{x \in V: \epsilon v \le x\text{ for some } \epsilon > 0\}.
 \end{equation}
The elements of $V^{\circ}_+$ are precisely the order units of the partially ordered vector space $(V, V_+)$. If $v' \in V^{\circ}_+$, then the norm $\lVert \cdot\rVert_{v'}$ equivalent to $\lVert \cdot\rVert_v$. We call $(V, V_+, v)$ a \emph{complete order unit space} if $(V, \lVert\cdot\rVert_v)$ is a Banach space.\\

\begin{example}\label{AS}
Let $K$ be a compact convex subset of a locally convex space $E$. We write $A(K)$ for the real linear space of continuous affine functions $a\colon K \to \R$. We give $A(K)$ the pointwise ordering, so that $A(K)_+ = \{a\colon K \to [0,\infty) \text{ affine continuous}\}$. The triple $(A(K), A(K)_+; \mathbbm{1}_K)$ is a complete order unit space, where $\mathbbm{1}_K: K \to \R$ is identically equal to $1$.
\end{example}

We now recall a fundamental theorem of Kadison, stating that each complete order unit space is isomorphic to $A(K)$ for a certain compact convex subset $K$ of $V^*$, called the state space.\\
\ind Let $(V, V_+, v)$ be an order unit space. Its \emph{state space} is the set $K$ of positive linear functionals $\rho\colon V \to \R$ such that $\rho(v) = 1$. Since each state is $\lVert \cdot \rVert_v$-continuous, $K$ is a subset of the continuous dual $V^*$ of $(V, \lVert\cdot\rVert_v)$. We endow $V^*$ with the wk*-topology. It follows from the Banach--Alaoglu theorem that $K$ is a wk*-compact convex subset of $V^*$.

\begin{theorem}[Kadison]\label{KadThm}
Let $(V, V_+, v)$ be a complete order unit space. For $x\in V$ define  $\tilde{x}\colon K \to \R$ by $\tilde{x}(\rho) := \rho(x)$ for all $\rho \in K$. Then there exists an isomorphism of order unit spaces 
\begin{align}\label{funrep}
(V, V_+, v) &\stackrel{\sim}{\to} (A(K), A(K)_+, \1_K),\\
x &\mapsto \tilde{x}.\notag
\end{align}
\end{theorem}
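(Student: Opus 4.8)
The plan is to show that $T\colon V \to A(K)$, $T(x) := \tilde x$, is a well-defined linear isometric bijection with $T(V_+) = A(K)_+$ and $T(v) = \1_K$, which is exactly what it means for $T$ to be an isomorphism of order unit spaces. That $\tilde x$ is affine is immediate, and $\tilde x$ is continuous on $K$ precisely because the wk* topology on $V^*$ is the topology of pointwise convergence on $V$; thus $T$ maps into $A(K)$, is visibly linear, and $T(v) = \1_K$ since $\rho(v) = 1$ for every state $\rho$.

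The key input is a Hahn--Banach separation: for $x \in V$ one has $x \in V_+$ if and only if $\rho(x) \ge 0$ for all $\rho \in K$. Only the ``if'' direction requires work. First I would check that $V_+$ is $\lVert\cdot\rVert_v$-closed: if $y_n \to y$ with $y_n \in V_+$, then for every $\varepsilon > 0$ eventually $y \ge y_n - \varepsilon v \ge -\varepsilon v$, so $\mu(-y) \le v$ for all $\mu > 0$, and the archimedean hypothesis gives $-y \le 0$. Now suppose $x \notin V_+$; separating $x$ from the closed convex set $V_+$ yields $\phi \in V^*$ with $\phi(x) < \inf_{y \in V_+}\phi(y)$, and since $V_+$ is a cone this infimum must be $0$, so $\phi \ge 0$ on $V_+$ while $\phi(x) < 0$. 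A nonzero positive functional satisfies $\phi(v) > 0$ (otherwise $|\phi(z)| \le \lVert z\rVert_v\phi(v) = 0$ for all $z$), so $\rho := \phi/\phi(v)$ is a state with $\rho(x) < 0$, proving the claim. Consequently $x \in V_+ \Leftrightarrow \tilde x \in A(K)_+$, so $T(V_+) = A(K)_+ \cap T(V)$. Applying the claim to $tv \pm x$ with $t := \sup_{\rho\in K}|\rho(x)|$ gives $-tv \le x \le tv$, hence $\lVert x\rVert_v \le t$; as trivially $|\rho(x)| \le \lVert x\rVert_v$, we conclude $\lVert\tilde x\rVert_\infty = t = \lVert x\rVert_v$, where $\lVert\cdot\rVert_\infty$ is the order unit norm of $A(K)$, which is the supremum norm. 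So $T$ is an isometry, in particular injective.

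The remaining, and main, difficulty is surjectivity. Since $T$ is isometric and $V$ is complete, $T(V)$ is a closed subspace of the Banach space $A(K)$, so if $T(V) \ne A(K)$ there is a nonzero $\Lambda \in A(K)^*$ annihilating $T(V)$. Extend $\Lambda$ by Hahn--Banach to $C(K)$ and represent it, via the Riesz theorem, by a finite signed regular Borel measure $\mu$ on $K$; thus $\int_K \tilde x\, d\mu = 0$ for all $x \in V$. Let $\mu = \mu^+ - \mu^-$ be the Jordan decomposition. The assignments $\sigma_\pm(x) := \int_K \rho(x)\, d\mu^\pm(\rho)$ define positive functionals in $V^*$ --- the integrands are continuous and bounded by $\lVert x\rVert_v$, and nonnegative on $V_+$ --- and the vanishing of $\int_K \tilde x\, d\mu$ forces $\sigma_+ = \sigma_- =: \sigma$; evaluating at $v$ gives $\mu^+(K) = \sigma(v) = \mu^-(K) =: c$. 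If $c = 0$ then $\mu^\pm = 0$, so $\mu = 0$ and $\Lambda = 0$, a contradiction; hence $c > 0$ and $c^{-1}\sigma$ is a state. By construction $c^{-1}\sigma$ is the barycentre of each of the probability measures $c^{-1}\mu^\pm$, so the barycentre formula $\int_K a\, d\nu = a(b(\nu))$ --- valid for all $a \in A(K)$ --- yields $\int_K a\, d\mu^+ = c\, a(c^{-1}\sigma) = \int_K a\, d\mu^-$, i.e.\ $\int_K a\, d\mu = 0$ for every $a \in A(K)$, so $\Lambda = 0$: a contradiction. Therefore $T$ is onto, and combined with the earlier steps $T(V_+) = A(K)_+$, completing the proof. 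The only non-elementary ingredients beyond Hahn--Banach are the Riesz representation theorem and the barycentre formula, both classical; indeed the whole statement is a standard theorem of Kadison, so in the paper one may alternatively just cite it.
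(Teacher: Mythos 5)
Your proposal is correct, but it follows a genuinely different route from the paper: the paper does not prove \Cref{KadThm} at all, it simply cites \cite[Theorem II.1.8]{Alfsen71}, whereas you reconstruct the standard proof. Your two main steps are sound: (i) the Hahn--Banach separation argument (using that $V_+$ is norm-closed by archimedeanity) gives the positivity criterion $x\in V_+\iff \rho(x)\ge 0$ for all $\rho\in K$, from which linearity, order-preservation in both directions and the isometry $\lVert \tilde x\rVert_\infty=\lVert x\rVert_v$ follow; (ii) surjectivity via an annihilating functional on $A(K)$, extended to $C(K)$, represented by a signed Radon measure, and killed by comparing the barycentres of the normalized Jordan components $c^{-1}\mu^{\pm}$ --- here you correctly observe that $c^{-1}\sigma$ is a state and is the barycentre of both measures (evaluations $\tilde x$, $x\in V$, are exactly the wk*-continuous linear functionals on $V^*$ and separate points), and then you invoke the barycentre formula for \emph{all} continuous affine functions on $K$, which is the one nontrivial classical ingredient (it is proved, e.g., by the graph-of-$a$ trick, independently of Kadison's theorem, so there is no circularity). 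In effect step (ii) is a proof that $A(K,V^*)$ is dense in $A(K)$ --- the fact the paper records separately in \Cref{remAKE} --- combined with completeness of $V$ to get closed range. What the paper's citation buys is brevity; what your argument buys is self-containedness, at the cost of importing Riesz representation and the barycentre formula, and it makes visible exactly where completeness of $V$ is used (only for surjectivity), matching the paper's treatment of the non-complete case.
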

\begin{proof}
We refer to \cite[Theorem II.1.8]{Alfsen71}.
\end{proof}

\begin{remark}\label{remAKE} If $V$ is a, possibly non-complete, order unit space, then the assignment $x \mapsto \tilde{x}$ defines an isometric isomorphism of $V$ onto a norm dense subspace of $A(K)$, namely the space $A(K, V^*)$ consisting of all functions on $K$ that extend to a continuous real-valued affine function on $V^*$ (see again  \cite[Theorem II.1.8]{Alfsen71}). In particular, the norm completion $\hat{V}$ of $V \cong A(K, V^*)$ is identified as  $\hat{V} \cong A(K)$.
\end{remark}

Of fundamental importance in the theory of compact convex sets is the Krein--Milman theorem, ensuring the existence of extremal points. Recall that if $L$ is a convex subset of a vector space $E$, then an \emph{extremal point of} $L$ is a point $\rho \in L$ such that for all $0 < \lambda < 1$ and $\sigma,\tau \in L$ with $\rho = (1-\lambda)\sigma + \lambda \tau$ one has $\sigma = \tau = \rho$, i.e., $\rho$ is not a proper convex combination of points in $L$ different from $\rho$. The set of extremal points of $L$ is denoted $\delta_e L$.

\begin{theorem}[Krein--Milman]\label{KrMm}
Let $L$ be a compact convex subset of a locally convex space. Then $L$ is the closed convex hull of the set $\delta_e L$ of extremal points of $L$.
\end{theorem}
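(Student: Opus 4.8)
The plan is to run the classical argument via extremal (face) subsets. Call a nonempty closed convex subset $F\subseteq L$ an \emph{extremal subset} of $L$ if whenever $x,y\in L$, $0<\lambda<1$, and $\lambda x+(1-\lambda)y\in F$, then $x,y\in F$. Two elementary observations will be used repeatedly: an extremal subset of an extremal subset of $L$ is again an extremal subset of $L$ (transitivity of the defining condition), and $\{p\}$ is an extremal subset of $L$ precisely when $p\in\delta_eL$ in the sense defined above.

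\emph{Step 1: every extremal subset of $L$ contains an extreme point of $L$.} Fix an extremal subset $F$ and partially order, by reverse inclusion, the family of extremal subsets of $L$ contained in $F$. Given a chain in this family, its members are closed subsets of the compact set $L$ and any finite subfamily has nonempty intersection (namely the smallest member, which is nonempty by definition of extremal subset), so by the finite intersection property the intersection of the whole chain is nonempty; it is clearly closed and convex, and one checks directly that it is again an extremal subset. Hence Zorn's lemma yields a minimal element $F_0$. If $F_0$ contained two distinct points $x\neq y$, the Hahn--Banach theorem would give a continuous linear functional $\ell$ on $E$ with $\ell(x)\neq\ell(y)$; since $F_0$ is compact, $\ell$ attains its maximum value $m$ on $F_0$, and $\{z\in F_0:\ell(z)=m\}$ is a nonempty closed convex extremal subset of $F_0$ (hence of $L$ by transitivity) that is properly contained in $F_0$, contradicting minimality. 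So $F_0=\{p\}$ with $p\in\delta_eL$.

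\emph{Step 2: the separation argument.} Put $\bar L:=\overline{\mathrm{conv}}(\delta_eL)$; since $L$ is closed and convex and contains $\delta_eL$, we have $\bar L\subseteq L$. Suppose for contradiction that some $x_0\in L\setminus\bar L$. As $\bar L$ is a closed convex subset of the locally convex space $E$, the Hahn--Banach separation theorem provides a continuous linear functional $\ell$ and a real number $\alpha$ with $\ell(z)\le\alpha<\ell(x_0)$ for all $z\in\bar L$. Let $m:=\max_{w\in L}\ell(w)$, attained by compactness of $L$, and set $F:=\{w\in L:\ell(w)=m\}$, which is a nonempty closed convex extremal subset of $L$. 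By Step 1, $F$ contains some $p\in\delta_eL\subseteq\bar L$, so $\ell(p)\le\alpha$; but also $\ell(p)=m\ge\ell(x_0)>\alpha$, a contradiction. Therefore $L=\bar L$.

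\emph{Main obstacle.} The only delicate point is the Zorn's lemma step in Step 1: one must check both that the intersection of a chain of extremal subsets is nonempty --- which is exactly where compactness of $L$ enters, through the finite intersection property --- and that this intersection remains an extremal subset. Everything else is routine bookkeeping of the two invocations of Hahn--Banach, one to separate two points of a minimal face and one to separate $x_0$ from the closed convex hull of the extreme points.
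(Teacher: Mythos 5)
Your proof is correct and complete: it is the classical face/Zorn/Hahn--Banach argument, which is precisely the proof given in the source the paper cites (\cite[Theorem V.7.4]{Con90}) rather than reproved in the paper itself. The only implicit hypothesis worth flagging is that the locally convex space be Hausdorff (needed so that a continuous functional can separate the two points of a minimal extremal subset in Step 1), which is harmless here since the theorem is applied to wk*-compact subsets of a dual space.
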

\begin{proof}
We refer to \cite[Theorem V.7.4]{Con90}.
\end{proof}

An extremal point of the state space $K$ of an order unit space $(V, V_+, v)$ is called a \emph{pure state}. So, the set of pure states of $V$ is denoted $\partial_e K$. It follows from the Krein--Milman theorem that if $a,a' \in A(K)$ agree on $\delta_e K$, then $a = a'$ holds; see \Cref{pureIneq} for a more general statement.\\
\ind The following lemma on the completion of an order unit space will be applied in \Cref{ncmplSec}.

\begin{lemma}\label{VhatL}
Let $(V, V_+, v)$ be an order unit space. Let $\hat{V}$ be the completion of the normed space $(V, \lVert\cdot\rVert_v)$, and let $\hat{V}_+$ be the closure of $V_+$ in $\hat{V}$. Then $(\hat{V}, \hat{V}_+, v)$ is a complete order unit space and the order unit norm on $\hat{V}$ coincides with the extension of the norm $\lVert\cdot\rVert_v$ on $V$ to its completion.
\end{lemma}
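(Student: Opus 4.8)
The plan is to deduce the lemma from the functional representation recorded in \Cref{remAKE} together with \Cref{AS}, thereby avoiding any direct verification of the order unit axioms in $\hat{V}$. Let $K$ be the state space of $V$. By \Cref{remAKE} the map $x \mapsto \tilde{x}$ is an isometric isomorphism of the normed space $(V, \lVert\cdot\rVert_v)$ onto the dense subspace $A(K, V^*)$ of $A(K)$, and it sends $v$ to $\1_K$; it therefore identifies the completion $\hat{V}$ with $A(K)$, the canonical extension of $\lVert\cdot\rVert_v$ to $\hat{V}$ with the supremum norm $\lVert\cdot\rVert_\infty$ on $A(K)$, and $V_+$ with $A(K,V^*)_+ := A(K,V^*) \cap A(K)_+$; consequently $\hat{V}_+$ is identified with the closure of $A(K,V^*)_+$ in $(A(K),\lVert\cdot\rVert_\infty)$. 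Since $\lVert\cdot\rVert_\infty$ is visibly the order unit norm of $(A(K), A(K)_+, \1_K)$, which is a complete order unit space by \Cref{AS}, and since an isomorphism of order unit spaces is an isometry for the order unit norms, it suffices to prove the single claim that the closure of $A(K,V^*)_+$ in $A(K)$ equals $A(K)_+$: that claim transports the complete order unit space structure of $(A(K), A(K)_+, \1_K)$ to $(\hat{V}, \hat{V}_+, v)$ and simultaneously matches the order unit norm of $\hat{V}$ with the extended norm.

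So the work is to show $\overline{A(K,V^*)_+} = A(K)_+$. The inclusion ``$\subseteq$'' is immediate, since $A(K)_+ = \bigcap_{\rho \in K}\{a \in A(K) : a(\rho) \ge 0\}$ is an intersection of $\lVert\cdot\rVert_\infty$-closed half-spaces, hence closed, and contains $A(K,V^*)_+$. For ``$\supseteq$'', given $a \in A(K)_+$ and $\epsilon > 0$ I would use that $a + \epsilon\1_K \ge \epsilon\1_K$ is an interior point of $A(K)_+$: any $b$ with $\lVert b - (a + \epsilon\1_K)\rVert_\infty < \epsilon$ satisfies $b(\rho) > a(\rho) \ge 0$ for every $\rho \in K$. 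Picking, by density, such a $b$ in $A(K,V^*)$ yields $b \in A(K,V^*)_+$ with $\lVert b - a\rVert_\infty < 2\epsilon$, and letting $\epsilon \downarrow 0$ shows $a \in \overline{A(K,V^*)_+}$.

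I do not foresee a real obstacle; the one step with content is the approximation in the second paragraph, which hinges on $A(K)_+$ having nonempty $\lVert\cdot\rVert_\infty$-interior (equivalently, on $\1_K$ being an order unit of $A(K)$). If one preferred to stay inside $\hat{V}$ and avoid the functional representation, one could argue directly: $\hat{V}_+ := \overline{V_+}$ is trivially a cone, it is proper because $a_n, b_n \in V_+$ with $a_n \to x$ and $b_n \to -x$ force $\lVert a_n\rVert_v \le \lVert a_n + b_n\rVert_v \to 0$ and hence $x = 0$, it makes $v$ an archimedean order unit by taking norm-limits of the inequalities $-\lVert z_n\rVert_v\, v \le z_n \le \lVert z_n\rVert_v\, v$ holding in $V$ (together with $u/\lambda - x \to -x$ as $\lambda \to \infty$ for the archimedean property), and the two norms on $\hat{V}$ are then matched by a comparable limiting argument using that $\lVert w - \epsilon v\rVert_v$ small forces $w \ge 0$; but this route is more laborious, so I would present the functional-representation proof above.
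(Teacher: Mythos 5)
Your proposal is correct and follows essentially the same route as the paper: both reduce via \Cref{remAKE} to the identification $\hat{V} \cong A(K)$ and to the single claim that the closure of the dense subcone equals $A(K)_+$, with the norm statement falling out because both norms coincide with the supremum norm. The only (immaterial) difference is in the density step, where you shift $a$ by $\epsilon\1_K$ before approximating, while the paper approximates first and then shifts each approximant $x_n$ up by $\max(0,-\inf_K x_n)\,v$.
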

\begin{proof}
By \Cref{remAKE}, we may assume that $V = A(K, E)$ for some locally convex space $E$ and compact convex subset $K$ of $E$, so that $\hat{V} = A(K)$. Let us verify that the closure of $A(K, E)_+$ inside $A(K)$ equals $A(K)_+$. Plainly the closure of $A(K,E)_+$ is contained in $A(K)_+$. Conversely, let $x \in A(K)_+$. Since $A(K, E)$ is norm dense in $A(K)$ we can find a sequence $(x_n)_n$ in $A(K, E)$ converging to $x$ in $\lVert\cdot\rVert_v$, i.e.\ uniformly on $K$. Therefore, the sequence $(\rho_n)_n$ defined by $\rho_n := \inf_K x_n$ converges to $\inf_K x \ge 0$. It follows that the sequence $(y_n)_n$ defined by $y_n := x_n + \max(0, -\inf x_n)v$ belongs to $A(K, E)_+$ and converges to $x + \max(0, -\inf_K x)v = x$. We conclude that $x$ is contained in the closure of $A(K, E)_+$, as desired.\\
\ind The final assertion about coincidence of the two norms on $\hat{V}_+$ holds because both norms agree with the supremum norm on $A(K)$. 
\end{proof}

In the forthcoming section, we will identify $x$ with $\tilde{x}$ and write $x(\rho)$ instead of $\tilde{x}(\rho)=\rho(x)$, for $x \in V$ and $\rho \in K$.

\subsection{Gauges and Thompson's metric}
Let $(V, V_+, v)$ be an order unit space. On the open cone $C = V^{\circ}_+$ there exists a natural metric $d_T$ called Thompson's metric. We will show that $d_T$ is locally bi-Lipschitz to the metric associated with the order unit norm $\lVert\cdot\rVert_v$. Moreover, we define gauge functions, which measure or gauge how small or large one element of $C$ is compared to another.\\ \ind In the following definition, observe that since $C$ consists of the order units of $(V, V_+)$, for any two $x,y\in C$ there exists $\lambda > 0$ such that $x \le \lambda y$.

\begin{definition}\label{GaugeDef}
Let $(V, V_+, v)$ be an order unit space and $C = V_+^{\circ}$ its open cone.
\begin{enumerate}[label=\normalfont{(\arabic*)}]
\item We define \emph{Thompson's metric} $d_T\colon C \times C \to \R_{\ge 0}$ by
\begin{equation}
    d_T(x, y) = \inf\{\lambda > 0: x \le \lambda y \text{ and }y\le \lambda x\}.
\end{equation}
\item We define the gauge-functions $L\colon C\times C\to \R_{>0}$ and $M\colon C \times C \to \R_{>0}$ by
\begin{equation}
m(x,y) := \sup \{\lambda > 0: \lambda y \le x\} \in \R_{>0},\label{ldef}
\end{equation}
and
\begin{equation}
M(x,y) := \inf \{\mu  > 0: x \le \mu y\} \in \R_{>0}.\label{mdef}
\end{equation}
\end{enumerate}
\end{definition}

In terms of the gauge functions, Thompson's metric is given by
\begin{equation}\label{dTeq}
d_T(x, y) = \log \max\{M(x,y), M(y,x)\}.
\end{equation}

One deduces that $d_T$ satisfies the triangle inequality and is nonnegative from the inequality
\begin{equation}\label{dMeq}
M(x,z) \le M(x,y)M(y,z),
\end{equation}
valid for all $x,y,z \in C$.

\begin{example}
Using Kadison's \Cref{KadThm}, we may view elements $x, y \in C$ as continuous functions from the state space $K$ to $\R_{>0}$. Their quotient $x/y\colon K\to \R_{>0}$, $\psi \mapsto x(\psi)/y(\psi)$ is a continuous function on $K$ as well. The minimum and maximum values of $x/y$ over $K$ are given by the gauge function values $m(x,y)$ resp. $M(x,y)$, that is 
\begin{equation}
m(x,y) = \min_{\psi \in K} \frac{x(\psi)}{y(\psi)} \text{ and }
M(x,y) = \max_{\psi \in K} \frac{x(\psi)}{y(\psi)}.
\end{equation}

\end{example}

The gauge functions have the following elementary properties. 
\begin{lemma} Let $x,y \in C$ and $\lambda > 0$. Then
\begin{align}\label{LM}
m(y,x) &= M(x,y)^{-1},\\
M(\lambda x,y) &= \lambda M(x,y)\\
M(x,\lambda y)&=\lambda^{-1}M(x,y)\\
m(y,\lambda x)&=\lambda^{-1}m(y,x)\\
m(\lambda y,x)&=\lambda m(y,x)&\\
d_T(\lambda x,\lambda y)&=d_T(x, y).
\end{align}
\end{lemma}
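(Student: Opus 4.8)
The plan is to derive all six identities from two elementary observations about the ordered vector space $(V, V_+)$: first, that for any $\lambda > 0$ and $a,b \in V$ one has $a \le b$ if and only if $\lambda a \le \lambda b$ (because $V_+$ is stable under multiplication by positive scalars and $\lambda^{-1}$ exists); and second, that $m(x,y)$ and $M(x,y)$ are finite and strictly positive for all $x,y \in C$. The positivity of $M(x,y)$, for instance, follows because $x \ge \delta v$ and $y \le \gamma v$ for suitable $\delta,\gamma > 0$, so $x \le \mu y$ forces $\mu \ge \delta/\gamma$; symmetrically for $m$. (Alternatively one could invoke \Cref{KadThm}, under which $m(x,y)$ and $M(x,y)$ are the minimum and maximum over the compact state space $K$ of the continuous strictly positive function $x/y$, whereupon all six identities are immediate; but the direct argument avoids that.)

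For the reciprocity identity $m(y,x) = M(x,y)^{-1}$, I would rewrite the defining sets: the first observation gives $\{\lambda > 0 : \lambda x \le y\} = \{\mu^{-1} : \mu > 0,\ x \le \mu y\}$, and passing to suprema — legitimate since $M(x,y) \in (0,\infty)$ makes $\mu \mapsto \mu^{-1}$ an order-reversing bijection of the relevant set onto its reciprocal — yields $m(y,x) = \sup\{\mu^{-1} : x \le \mu y\} = (\inf\{\mu > 0 : x \le \mu y\})^{-1} = M(x,y)^{-1}$.

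For the four homogeneity identities the same substitution trick applies directly: $\{\mu > 0 : \lambda x \le \mu y\} = \lambda\{\nu > 0 : x \le \nu y\}$ gives $M(\lambda x, y) = \lambda M(x,y)$; $\{\mu > 0 : x \le \mu\lambda y\} = \lambda^{-1}\{\nu > 0 : x \le \nu y\}$ gives $M(x,\lambda y) = \lambda^{-1}M(x,y)$; and the two identities for $m$ follow in the same manner, or else by combining these with the reciprocity identity already proved. Finally, $d_T(\lambda x,\lambda y) = d_T(x,y)$ follows either straight from \Cref{GaugeDef} — since $\{\mu > 0 : \lambda x \le \mu\lambda y \text{ and } \lambda y \le \mu\lambda x\} = \{\mu > 0 : x \le \mu y \text{ and } y \le \mu x\}$ by the first observation — or from \eqref{dTeq} together with $M(\lambda x,\lambda y) = \lambda\cdot\lambda^{-1}M(x,y) = M(x,y)$ and its symmetric counterpart.

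There is no genuine obstacle here; the lemma is purely formal. The only point deserving a moment's care is the verification that $m$ and $M$ take values in $(0,\infty)$ — without which the interchange of $\sup$ and $\inf$ through reciprocals, and the cancellations $\lambda\cdot\lambda^{-1}$, would be unjustified — and this is precisely where the hypotheses $x,y \in C$ (rather than merely $x,y \in V_+$) together with the archimedean and order-unit axioms are used.
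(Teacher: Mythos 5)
Your proposal is correct: since the paper leaves this lemma to the reader, your elementary verification (scalar invariance of the order, positivity and finiteness of $m$ and $M$ on $C\times C$, and the substitution $\mu\mapsto\lambda\mu$ resp. $\mu\mapsto\mu^{-1}$ in the defining sets) is exactly the intended argument, and your attention to why the gauges lie in $(0,\infty)$ is the only point of substance.
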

\begin{proof}
Left to the reader.
\end{proof}


\begin{lemma}\label{dTlN}
Let $\lambda > 0$. For all $x,y \in C$ with $x,y \ge \lambda^{-1} v$ we have
$$d_T(x,y) \le \lambda\lVert x-y\rVert_v.$$
\end{lemma}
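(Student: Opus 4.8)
The plan is to bound each of the gauge function values $M(x,y)$ and $M(y,x)$ directly, and then feed the bound into the formula \eqref{dTeq} for Thompson's metric together with the elementary inequality $\log(1+t)\le t$ for $t\ge 0$.

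First I would set $r := \lVert x-y\rVert_v$, so that by definition of the order unit norm one has $-rv\le x-y\le rv$, and in particular $x\le y+rv$. Here is the one place the hypothesis enters: since $y\ge \lambda^{-1}v$, equivalently $v\le \lambda y$, we get $rv\le \lambda r\,y$, and therefore $x\le y+\lambda r\,y=(1+\lambda r)y$. By the definition \eqref{mdef} of $M$ this means $M(x,y)\le 1+\lambda r=1+\lambda\lVert x-y\rVert_v$. The symmetric argument, using $x\ge \lambda^{-1}v$, gives $M(y,x)\le 1+\lambda\lVert x-y\rVert_v$ as well.

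Combining these with \eqref{dTeq},
\begin{align*}
d_T(x,y)=\log\max\{M(x,y),M(y,x)\}\le \log\bigl(1+\lambda\lVert x-y\rVert_v\bigr)\le \lambda\lVert x-y\rVert_v,
\end{align*}
where the last step uses $\log(1+t)\le t$ for all $t\ge 0$. This completes the proof.

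There is no real obstacle here: the estimate is routine, the only subtlety being to notice that the lower bound $x,y\ge\lambda^{-1}v$ is exactly what converts the additive perturbation $rv$ into a multiplicative one. (One could alternatively phrase the whole computation in the functional picture of \Cref{KadThm}, viewing $x,y$ as functions on the state space $K$ bounded below by $\lambda^{-1}$, but the order-theoretic argument above is shorter and avoids invoking Kadison's theorem.)
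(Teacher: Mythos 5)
Your proof is correct and follows essentially the same route as the paper: both bound $M(x,y)$ and $M(y,x)$ by $1+\lambda\lVert x-y\rVert_v$ and then conclude via the elementary inequality $\log(1+t)\le t$ (the paper phrases it as $d_T\le e^{d_T}-1$). The only cosmetic difference is that you derive the gauge bound by direct order manipulation ($x\le y+rv\le(1+\lambda r)y$) instead of evaluating at states as the paper does, which changes nothing of substance.
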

\begin{proof}
For each state $\psi$ on $(V, v)$ we have, because $y(\psi)\ge m(y,v)\ge \lambda^{-1}$, that
$$\frac{x(\psi)}{y(\psi)} = 1 + \frac{x(\psi) - y(\psi)}{y(\psi)} \le 1 + \frac{\lVert x-y\rVert_v}{m(y,v)} \le 1 + \lambda \lVert x-y\rVert_v.$$
This shows that $M(x,y) \le 1 + \lambda\lVert x-y\rVert_v,$ and the same holds with $x$ and $y$ interchanged. We conclude that
$$d_T(x,y) \le e^{d_T(x,y)} - 1 = \max\{M(x,y), M(y,x)\} - 1 \le \lambda\lVert x-y\rVert_v.$$
\end{proof}

\begin{lemma}\label{NldT}
Let $\lambda > 0$. For all $x,y \in C$ with $x,y \le \lambda v$ we have
$$\lVert x-y\rVert_v \le \lambda d_T(x,y).$$
\end{lemma}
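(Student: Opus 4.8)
The plan is to imitate the proof of \Cref{dTlN} by passing to the functional representation of $V$ on its state space $K$ and reducing to a pointwise estimate. We identify $x$ and $y$ with the continuous functions $\psi \mapsto x(\psi)$, $\psi \mapsto y(\psi)$ on $K$, which are strictly positive since $x, y \in C$ means $x, y \ge \epsilon v$ for some $\epsilon > 0$. We have $\lVert z\rVert_v = \sup_{\psi \in K}\lvert z(\psi)\rvert$ for every $z \in V$, and, using \eqref{dTeq} together with the description of $M$ from the Example above,
\[
e^{d_T(x,y)} = \max\{M(x,y), M(y,x)\} = \sup_{\psi \in K}\max\!\left\{\frac{x(\psi)}{y(\psi)},\, \frac{y(\psi)}{x(\psi)}\right\} = \sup_{\psi \in K} e^{\lvert \log x(\psi) - \log y(\psi)\rvert},
\]
so that $d_T(x,y) = \sup_{\psi \in K}\lvert \log x(\psi) - \log y(\psi)\rvert$.

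It therefore suffices to show, for each fixed $\psi \in K$, that $\lvert x(\psi) - y(\psi)\rvert \le \lambda\,\lvert \log x(\psi) - \log y(\psi)\rvert$; the right-hand side is at most $\lambda\, d_T(x,y)$, so taking the supremum over $\psi$ of the left-hand side then gives $\lVert x - y\rVert_v \le \lambda\, d_T(x,y)$, as wanted. Writing $s := \log x(\psi)$ and $t := \log y(\psi)$, the hypothesis $x, y \le \lambda v$ yields $s, t \le \log \lambda$, and the claimed pointwise bound becomes $\lvert e^s - e^t\rvert \le \lambda\,\lvert s - t\rvert$. This follows at once from the mean value theorem: $e^s - e^t = e^{\xi}(s - t)$ for some $\xi$ lying between $s$ and $t$, and $e^{\xi} \le \max\{e^s, e^t\} \le \lambda$.

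The argument is short and I do not expect a genuine obstacle. The one subtlety worth recording is that the crude order-theoretic estimate $x - y \le \bigl(M(x,y) - 1\bigr) y \le \lambda\bigl(M(x,y) - 1\bigr) v$ only gives $\lVert x - y\rVert_v \le \lambda\bigl(e^{d_T(x,y)} - 1\bigr)$, which is strictly weaker than the asserted inequality because $e^r - 1 > r$ for $r > 0$; passing from the multiplicative comparison $x \le M(x,y)\, y$ to its logarithmic form, via the mean value theorem, is precisely what supplies the sharper constant. The passage to the functional representation and the attainment of the suprema (the quotient $x/y$ being continuous on the compact set $K$) are routine, and the identification of $x$ with $\tilde x$ is already in force by the convention adopted at the end of \Cref{ousSec}.
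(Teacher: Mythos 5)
Your proof is correct and follows essentially the same route as the paper: pass to states, prove the pointwise bound $|x(\psi)-y(\psi)|\le \lambda\, d_T(x,y)$ using $x(\psi),y(\psi)\le\lambda$, and take the supremum over $\psi\in K$. The only (cosmetic) difference is the elementary calculus step: the paper writes $x(\psi)-y(\psi)\le(1-e^{-d_T(x,y)})x(\psi)$ and uses $1-e^{-a}\le a$, whereas you use the formula $d_T(x,y)=\sup_{\psi}|\log x(\psi)-\log y(\psi)|$ together with the mean value theorem for $e^t$; both yield the same sharp constant $\lambda$.
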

\begin{proof} Let $\psi$ be a state on $(V, v)$. We have $y(\psi) \ge e^{-d_T(x,y)}x(\psi)$ because $e^{d_T(x,y)}y \ge M(x,y)y \ge x$. Using that $1-e^{-a}\le a$ for each $a \in \R$, we obtain
$$x(\psi) - y(\psi) \le (1-e^{-d_T(x,y)})x(\psi) \le \lambda d_T(x,y).$$
By interchanging $x$ and $y$ we find that $|x(\psi) - y(\psi)| \le \lambda d_T(x,y)$. The desired inequality follows since $\psi$ is an arbitrary state on $(V, v)$.
\end{proof}

As a consequence of the two preceding lemmata, Thompson's metric $d_T$ and the metric associated with the order unit norm $\lVert\cdot\rVert_v$ are bi-Lipschitz with constant $\lambda$ on the Thompson's metric ball $\{x\in V: d_T(v,x)\le e^{\lambda}\} = [\lambda^{-1}v, \lambda v]$ around $v$ of radius $e^{\lambda}$ for each $\lambda > 0$, hence these metrics are locally bi-Lipschitz. In particular, $d_T$ and $\lVert\cdot\rVert_v$ induce the same topology on $C$.

\subsection{Gauge-reversing bijections}\label{GrbSec}

In this section we define gauge-preserving bijections and gauge-reversing bijections. We show they are isometries for Thompson's metric and, consequently, are locally bi-Lipschitz with respect to the order unit norms. Moreover, we introduce $d_T$-symmetries, and state that these are gauge-reversing bijections.

\begin{definition}\label{opordef} Let $(X, \le)$ and $(Y, \le)$ be partially ordered sets, and let $\Phi\colon X \to Y$ be a map. We say that $\Phi$ is \emph{order-preserving} (resp. \emph{order-reversing}) if for all $x_1, x_2 \in X$ with $x_1 \le x_2$ one has $\Phi(x_1) \le \Phi(x_2)$ (resp. $\Phi(x_1) \ge \Phi(x_2)$). We say $\Phi$ is an \emph{order-isomorphism} (resp. \emph{order-anti-isomorphism} if $\Phi$ is bijective and both $\Phi$ and $\Phi^{-1}$ are order-preserving (resp. order-reversing).
\end{definition}

In the remainder of this section, $(V, V_+, v)$ and $(W, W_+, w)$ denote order unit spaces, and $C = V_+^{\circ}$ and $D = W_+^{\circ}$ their open cones.

\begin{definition}\label{gpgrdef} Let $\Phi\colon C \to D$ be a bijection. We say that $\Phi$ is \emph{homogeneous of degree $1$} (resp. \emph{homogeneous of degree $-1$}) if for all $x \in C$ and $\lambda \in (0, \infty)$ one has $\Phi(\lambda x) = \lambda \Phi(x)$ (resp. $\Phi(\lambda x) = \lambda^{-1} \Phi(x)$). We call  $\Phi$ \emph{gauge-preserving} if for all $x, y \in C$ one has
\begin{equation}\label{gpres}
    m(\Phi(x),\Phi(y)=m(y,x)
\end{equation}
and \emph{gauge-reversing} if all $x, y \in C$ satisfy
\begin{equation}\label{grev}
    m(\Phi(x),\Phi(y))=m(x,y).
\end{equation}
\end{definition}
Since the functions $M$ and $m$ are multiplicatively inverse by \eqref{LM}, one arrives at an equivalent definition if condition \eqref{gpres} is replaced by $M(\Phi(x),\Phi(y))=M(x,y)$ and condition \eqref{grev} by $M(\Phi(x),\Phi(y))=M(y,x)$.

\begin{proposition}\label{graor}For a bijection $\Phi\colon C \to D$, the following statements are equivalent:
\begin{enumerate}[label={\normalfont(\arabic*)}]
\item $\Phi$ is gauge-reversing;
\item $\Phi$ is homogeneous of degree $-1$ and an order-anti-isomorphism
\item $\Phi$ is homogeneous of degree $-1$ and $y - x \in C \iff \Phi(x) - \Phi(y) \in C$ for all $x,y \in C$.
\end{enumerate}
\end{proposition}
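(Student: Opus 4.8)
The plan is to translate both conditions into identities for the upper gauge $M$ and then compare; once one works with the right reformulation the computation is short. I would first record, as the form of the hypothesis to use throughout, that $\Phi$ is gauge-reversing if and only if $M(\Phi(x),\Phi(y)) = M(y,x)$ for all $x,y\in C$ --- this is the reformulation of \eqref{grev} noted immediately after \Cref{gpgrdef}. I would also single out two elementary facts, valid in any order unit space: \textbf{(a)} for $a,b\in C$, $a\le b$ iff $M(a,b)\le 1$ (the forward direction is immediate from \eqref{mdef}; if $M(a,b)\le 1$ then $a\le(1+\epsilon)b$ for all $\epsilon>0$, so $\{\mu(a-b):\mu\ge 0\}$ is bounded above by $b$ and the archimedean axiom gives $a\le b$); and \textbf{(b)} the homogeneity relations $M(\lambda a,b)=\lambda M(a,b)$, $M(a,\lambda b)=\lambda^{-1}M(a,b)$ recorded in the preliminaries, together with $M(a,a)=1$.

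For $(2)\Rightarrow(1)$ I would argue directly: fixing $x,y\in C$, for each $\mu>0$ homogeneity gives $\mu\Phi(y)=\Phi(\mu^{-1}y)$, so $\Phi(x)\le\mu\Phi(y)\iff\Phi(x)\le\Phi(\mu^{-1}y)$, and since $\Phi$ is an order-anti-isomorphism this holds iff $\mu^{-1}y\le x$, i.e.\ (by (a) and (b)) iff $\mu^{-1}M(y,x)\le 1$, i.e.\ iff $\mu\ge M(y,x)$. Thus $\{\mu>0:\Phi(x)\le\mu\Phi(y)\}=[M(y,x),\infty)$ and taking the infimum gives $M(\Phi(x),\Phi(y))=M(y,x)$, so $\Phi$ is gauge-reversing.

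For $(1)\Rightarrow(2)$ I would split into homogeneity and the order-anti-isomorphism property. For homogeneity, fix $x\in C$ and $\lambda>0$; using (b) and gauge-reversing, $M(\Phi(\lambda x),\lambda^{-1}\Phi(x))=\lambda M(\Phi(\lambda x),\Phi(x))=\lambda M(x,\lambda x)=\lambda\cdot\lambda^{-1}M(x,x)=1$ and, symmetrically, $M(\lambda^{-1}\Phi(x),\Phi(\lambda x))=\lambda^{-1}M(\Phi(x),\Phi(\lambda x))=\lambda^{-1}M(\lambda x,x)=\lambda^{-1}\cdot\lambda M(x,x)=1$; by (a) this forces $\Phi(\lambda x)\le\lambda^{-1}\Phi(x)$ and $\lambda^{-1}\Phi(x)\le\Phi(\lambda x)$, hence $\Phi(\lambda x)=\lambda^{-1}\Phi(x)$ by antisymmetry. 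For the order-anti-isomorphism property: if $x\le y$ then by gauge-reversing and (a), $M(\Phi(y),\Phi(x))=M(x,y)\le 1$, so $\Phi(y)\le\Phi(x)$, i.e.\ $\Phi$ is order-reversing; and since $\Phi^{-1}$ is again gauge-reversing (from $u=\Phi(x),u'=\Phi(y)$ one gets $M(\Phi^{-1}(u),\Phi^{-1}(u'))=M(x,y)=M(\Phi(y),\Phi(x))=M(u',u)$), the same argument applied to $\Phi^{-1}$ shows $\Phi^{-1}$ is order-reversing too. Hence $\Phi$ is an order-anti-isomorphism and $(2)$ holds.

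I do not expect a genuine obstacle here: the proposition is essentially careful bookkeeping in the two slots of $M$ --- the ``reversing'' is in interchanging those slots, not in inverting the value. The two spots that most want attention are that in $(1)\Rightarrow(2)$ one must verify order-reversal of \emph{both} $\Phi$ and $\Phi^{-1}$ (the latter via the observation that gauge-reversal is inherited by $\Phi^{-1}$), since that is exactly what \Cref{opordef} demands of an order-anti-isomorphism; and that fact (a) is being invoked in the borderline case $M(a,b)=1$, where it is the archimedean axiom --- not closedness of $V_+$ --- that does the work, so that the whole proof goes through for possibly non-complete order unit spaces.
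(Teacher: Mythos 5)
Your proof is correct. The paper does not prove this proposition itself but simply cites \cite[Lemma 2.5]{LRW25}, and your argument is a sound, self-contained version of the expected verification: the translation of both conditions into statements about the slots of $M$, the inheritance of gauge-reversal by $\Phi^{-1}$ (needed because \Cref{opordef} requires both $\Phi$ and $\Phi^{-1}$ to be order-reversing), and the observation that the borderline case $M(a,b)=1$ is handled by the archimedean axiom rather than completeness are all exactly the right points, so the statement indeed holds for possibly non-complete order unit spaces as used later in \Cref{ncmplSec}.
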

\begin{proof}
For (1)$\iff$(2), see \cite[Lemma 2.5]{LRW25}, and for (2)$\iff$(3), see \cite{NollSch77}[Prop. 7.2].
\end{proof}

\begin{proposition}\label{gphop}
For a bijection $\Psi\colon C \to D$, the following statements are equivalent:
\begin{enumerate}
\item[$(1)$] $\Psi$ is gauge-preserving;
\item[$(2)$] $\Psi$ is homogeneous of degree $1$ and an order-isomorphism.
\item[$(3)$] $\Phi$ is homogeneous of degree $1$ and $y - x \in C \iff \Phi(y) - \Phi(x) \in C$ for all $x,y \in C$.
\end{enumerate}
\end{proposition}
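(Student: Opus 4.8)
The plan is to prove the two implications separately, by a short direct argument analogous to the proof of \Cref{graor}. The one elementary fact I will lean on repeatedly is that, for $x,y\in C$, the supremum defining $m(x,y)$ and the infimum defining $M(x,y)$ are attained; this is immediate from the Kadison description $m(x,y)=\min_{\psi\in K}x(\psi)/y(\psi)$ and $M(x,y)=\max_{\psi\in K}x(\psi)/y(\psi)$ together with compactness of the state space $K$. Consequently $m(x,y)\,y\le x\le M(x,y)\,y$, and in particular the numerical equality $m(x,y)=M(x,y)$ forces the vector equality $x=m(x,y)\,y$.

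For the implication $(2)\Rightarrow(1)$ I would argue directly. Suppose $\Psi$ is homogeneous of degree $1$ and an order-isomorphism, so that $\Psi(a)\le\Psi(b)\iff a\le b$ for all $a,b\in C$ and $\mu\,\Psi(y)=\Psi(\mu y)$ for all $\mu>0$. Then for fixed $x,y\in C$ one has the chain of equivalences $\mu\,\Psi(y)\le\Psi(x)\iff\Psi(\mu y)\le\Psi(x)\iff\mu y\le x$, and taking the supremum over the admissible $\mu>0$ yields $m(\Psi(x),\Psi(y))=m(x,y)$; that is, $\Psi$ is gauge-preserving.

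For $(1)\Rightarrow(2)$, recall that $\Psi$ being gauge-preserving means $m(\Psi(x),\Psi(y))=m(x,y)$ for all $x,y\in C$, equivalently $M(\Psi(x),\Psi(y))=M(x,y)$ by \eqref{LM}. First I would observe that gauge-preservation passes to $\Psi^{-1}$: substituting $\Psi^{-1}(u)$ and $\Psi^{-1}(w)$ for $x$ and $y$ shows $m(\Psi^{-1}(u),\Psi^{-1}(w))=m(u,w)$ for all $u,w\in D$. It therefore suffices to show that every gauge-preserving bijection is homogeneous of degree $1$ and order-preserving. For homogeneity, fix $x\in C$ and $\lambda>0$; since $m(\lambda x,x)=M(\lambda x,x)=\lambda$ directly from the definitions, gauge-preservation gives $m(\Psi(\lambda x),\Psi(x))=M(\Psi(\lambda x),\Psi(x))=\lambda$, whence $\Psi(\lambda x)=\lambda\,\Psi(x)$ by the attainment fact above. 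For monotonicity, if $x\le y$ in $C$ then $1\cdot x\le y$, so $m(y,x)\ge1$, hence $m(\Psi(y),\Psi(x))\ge1$; since $m(\Psi(y),\Psi(x))\,\Psi(x)\le\Psi(y)$ with the scalar $\ge1$ and $\Psi(x)\in V_+$, we get $\Psi(x)\le\Psi(y)$. Applying this to $\Psi^{-1}$ as well yields that $\Psi$ is an order-isomorphism.

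I do not anticipate a genuine obstacle; the argument is essentially bookkeeping with the gauge identities and \eqref{LM}. The single point that needs care is the attainment of the suprema and infima defining $m$ and $M$, since this is exactly what converts the scalar equality $m=M$ into the vector relation $\Psi(\lambda x)=\lambda\,\Psi(x)$ needed for homogeneity; I have isolated it at the start for that reason.
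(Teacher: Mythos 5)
Your proof is correct and is essentially the standard argument that the paper omits (it defers to the analogue of \Cref{graor}, i.e.\ \cite[Lemma 2.5]{LRW25}): you translate gauge-preservation into homogeneity plus order-preservation via the attainment of the gauges, exactly as intended. One small refinement: since this section allows non-complete order unit spaces, it is cleaner to justify the key inequalities $m(x,y)y \le x \le M(x,y)y$ directly from Archimedeanity (for every small $\epsilon>0$ one has $(m(x,y)-\epsilon)y\le x$, hence $x-m(x,y)y\ge -\epsilon\lVert y\rVert_v v$ for all $\epsilon>0$, so $x-m(x,y)y\ge 0$) rather than via Kadison's \Cref{KadThm}, which is stated only for complete spaces; with that adjustment everything you wrote goes through verbatim.
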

\begin{proof}
The proof is similar to \Cref{graor} and therefore omitted.
\end{proof}

\begin{theorem}\label{NSThmB}
Let $\Psi\colon C \to D$ be a gauge-preserving bijection with $\Psi(v) = w$. Then $\Psi$ extends to an isomorphism between the order unit spaces $(V, V_+, v)$ and $(W, W_+, w)$.
\end{theorem}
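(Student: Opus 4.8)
\emph{Proof proposal.} This is Noll--Sch\"afer's theorem \cite[Theorem~B]{NollSch77}; the plan below shows how to organize a proof and isolates the one substantial point.

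First I would set up the reduction. By \Cref{gphop}, $\Psi$ is an order-isomorphism that is homogeneous of degree $1$; being a gauge-preserving bijection it is a $d_T$-isometry, hence locally bi-Lipschitz for the order unit norms (by the remark following \Cref{NldT}) and in particular norm-continuous. Every $z\in V$ can be written $z=x-y$ with $x,y\in C$: choose $\lambda>0$ with $\lambda v-z\in C$ and put $z=\lambda v-(\lambda v-z)$. The aim is to define $\widetilde\Psi\colon V\to W$ by $\widetilde\Psi(x-y):=\Psi(x)-\Psi(y)$ and show it is the required isomorphism. The only thing to check is well-definedness, and this is equivalent to \emph{additivity} of $\Psi$ on $C$: if $x-y=x'-y'$ with all four in $C$, then $x+y'=x'+y\in C$, so additivity gives $\Psi(x)+\Psi(y')=\Psi(x+y')=\Psi(x'+y)=\Psi(x')+\Psi(y)$. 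Granting additivity, $\widetilde\Psi$ is linear (using homogeneity as well), injective and surjective (the latter since $W=D-D$), norm-continuous, and satisfies $\widetilde\Psi|_C=\Psi$, $\widetilde\Psi(v)=w$. As $V_+=\overline{C}$ and $W_+=\overline{D}$ (because $x+n^{-1}v\to x$), continuity of $\widetilde\Psi$ and of its inverse (which is built the same way from the additive bijection $\Psi^{-1}$) yields $\widetilde\Psi(V_+)=W_+$, so $\widetilde\Psi$ is an isomorphism of order unit spaces.

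Thus everything reduces to: every gauge-preserving bijection is additive on $C$. I would first reduce further to \emph{super}additivity. Suppose it is known that every gauge-preserving bijection $\Phi\colon C\to D$ satisfies $\Phi(x+y)\ge\Phi(x)+\Phi(y)$. Applying this to $\Psi^{-1}$ gives $\Psi^{-1}(\Psi(x)+\Psi(y))\ge x+y$, and applying the order-preserving $\Psi$ yields $\Psi(x)+\Psi(y)\ge\Psi(x+y)$; together with superadditivity of $\Psi$ this forces $\Psi(x+y)=\Psi(x)+\Psi(y)$. By homogeneity, superadditivity of $\Phi$ is equivalent to the midpoint inequality $\Phi(\tfrac12(x+y))\ge\tfrac12(\Phi(x)+\Phi(y))$. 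Here one has at one's disposal, for all $x,y\in C$, the identities $m(\Phi(x),\Phi(y))=m(x,y)$ and $M(\Phi(x),\Phi(y))=M(x,y)$ (immediate from \Cref{gphop}); taking second argument $x$ and using $m(x+y,x)=1+m(y,x)$, $M(x+y,x)=1+M(y,x)$, these already give $(1+m(y,x))\Phi(x)\le\Phi(x+y)\le(1+M(y,x))\Phi(x)$, together with the symmetric bounds obtained by swapping $x$ and $y$.

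The main obstacle is to push these one-sided comparisons all the way to superadditivity. The strategy --- carried out in \cite[Theorem~B]{NollSch77}, to which I would appeal --- is to exploit such gauge identities against \emph{all} elements of $D$ as anchors, passing through Kadison's representation $W\subset A(L)$ where $m(z,u)=\min_L z/u$ and $M(z,u)=\max_L z/u$, in order to conclude that $\Phi(x+y)$ and $\Phi(x)+\Phi(y)$ have the same gauge data against every $u\in D$; they then coincide, since an element $z\in D$ is determined by the function $u\mapsto M(z,u)$ on $D$ (taking $u=z'$ shows that $M(z,u)=M(z',u)$ for all $u$ forces $z\le z'$ and $z'\le z$). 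A self-contained derivation of this conclusion, under the additional hypothesis that $C_{usc}$ is strongly atomic, will be given in \Cref{ThmBsa}.
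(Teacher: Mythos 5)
Your proposal is correct and ultimately rests on the same ground as the paper, whose proof of this statement is simply the citation to Noll--Sch\"afer's Theorem~B (and Sch\"afer's follow-up); you defer the one substantive point (additivity, via superadditivity) to that same reference, and the surrounding reductions you spell out (extension from additivity, automatic positivity and boundedness, passage to the inverse) are routine and sound. Like the paper, you also correctly note that the self-contained argument under strong atomicity of $C_{usc}$ is the content of \Cref{ThmBsa}.
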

\begin{proof}
This is proved as Theorem B in \cite{NollSch77} and \cite{Sch78}.
\end{proof}

We will only have need for the previous theorem in the special case that $(V, V_+, v)$ of $(W, W_+, w)$ are complete and there exists a gauge-reversing bijection $\Phi\colon C \to D$. In order to make our exposition more self-contained, we will provide in \Cref{ThmBSec} an independent proof of \Cref{NSThmB} under an additional assumption (namely that the cone $C_{usc}$ be strongly atomic, cf. \Cref{evsubSec}), which is satisfied in our special case by \Cref{supAtom}.
\begin{proposition}\label{dTisom}
Let $\Phi\colon C \to D$ be a gauge-reversing or gauge-preserving bijection. Then $\Phi$ is a $d_T$-isometry.
\end{proposition}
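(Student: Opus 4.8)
The plan is to show that a gauge-reversing bijection $\Phi\colon C\to D$ satisfies $d_T(\Phi(x),\Phi(y))=d_T(x,y)$ for all $x,y\in C$, and similarly for gauge-preserving bijections; the two cases are essentially identical, so I would treat them in parallel. The key input is the formula \eqref{dTeq} expressing Thompson's metric in terms of the gauge functions $M$, together with \Cref{graor} and \Cref{gphop}. The whole argument is a short computation once one has the right formula, so this will not be hard; the only thing to be careful about is matching $M$ against $m$ via \eqref{LM}, i.e. keeping track of which variable sits in which slot.

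First I would record that, by the remark following \Cref{gpgrdef}, a gauge-reversing bijection satisfies $M(\Phi(x),\Phi(y))=M(y,x)$ for all $x,y\in C$, while a gauge-preserving bijection satisfies $M(\Phi(x),\Phi(y))=M(x,y)$. (This follows from \eqref{grev} resp. \eqref{gpres} upon applying the identity $m(a,b)=M(b,a)^{-1}$ from \eqref{LM}.) Then, in the gauge-reversing case, I compute directly from \eqref{dTeq}:
\begin{equation*}
d_T(\Phi(x),\Phi(y)) = \log\max\{M(\Phi(x),\Phi(y)),\,M(\Phi(y),\Phi(x))\} = \log\max\{M(y,x),\,M(x,y)\} = d_T(x,y),
\end{equation*}
where the middle equality uses the displayed gauge identity for both orderings of the pair, and the last equality is again \eqref{dTeq}. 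In the gauge-preserving case the same chain works with $M(\Phi(x),\Phi(y))=M(x,y)$ and $M(\Phi(y),\Phi(x))=M(y,x)$, giving $\log\max\{M(x,y),M(y,x)\}=d_T(x,y)$.

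I would close by noting the promised consequence advertised in the section introduction: since $\Phi$ is a $d_T$-isometry, and since Thompson's metric is locally bi-Lipschitz to the metric of the order unit norm (by the discussion following \Cref{NldT}, with explicit constants on the Thompson balls $[\lambda^{-1}v,\lambda v]$), it follows that $\Phi$ is locally bi-Lipschitz with respect to $\lVert\cdot\rVert_v$ and $\lVert\cdot\rVert_w$; in particular $\Phi$ is a homeomorphism. The main (and only) obstacle is purely bookkeeping—ensuring the slot-swap in the gauge identity is applied in both directions so that the $\max$ is matched correctly—and there is no genuine difficulty here.
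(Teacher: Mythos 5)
Your proof is correct and is essentially identical to the paper's: both cases are handled by rewriting $d_T$ via \eqref{dTeq} and substituting the gauge identities $M(\Phi(x),\Phi(y))=M(y,x)$ (reversing) resp. $M(\Phi(x),\Phi(y))=M(x,y)$ (preserving). The concluding remarks on local bi-Lipschitz continuity are fine but belong to the subsequent results (\Cref{jcont}, \Cref{biLip}) rather than this proposition.
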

\begin{proof}
Suppose first that $\Phi$ is gauge-reversing. Then for all $x,y \in C$ we have that
\begin{align*}d_T(\Phi(x),\Phi(y)) &= \log \max\{M(\Phi(x),\Phi(y)), M(\Phi(y),\Phi(x))\}\\ &= \log \max \{M(y,x), M(x,y)\}\\
&= d_T(x,y),
\end{align*}
hence $\Phi$ is a $d_T$-isometry. The proof for a gauge-preserving bijection is similar.
\end{proof}

\begin{definition}\label{dTsymDef}
We define a $d_T$\emph{-symmetry}
$S_x\colon C\to C$ at a point $x\in C$ to be an involutive $d_T$-isometry having $x$ as an isolated fixed point.
\end{definition}

\begin{theorem}\label{dTsymgr}
Let $S_x: C \to C$ be a $d_T$-symmetry at some point $x \in C$. Then $S_x$ is a gauge-reversing bijection.
\end{theorem}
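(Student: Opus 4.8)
The plan is to show first that $S_x$ is homogeneous of degree $-1$, and then to deduce everything else from this together with the $d_T$-isometry property; by \Cref{graor} it suffices to produce, in addition to the homogeneity, that $S_x$ is an order-anti-isomorphism, and this will be automatic. Indeed, suppose $S_x$ is homogeneous of degree $-1$. Since $S_x$ is a $d_T$-isometry, for all $y,z\in C$ and $\lambda>0$ we have
\[
d_T\bigl(\lambda^{-1}S_x(y),\,S_x(z)\bigr)=d_T\bigl(S_x(\lambda y),\,S_x(z)\bigr)=d_T(\lambda y,z).
\]
Expanding both sides with \eqref{dTeq} and the scaling behaviour of the gauge function $M$, this becomes
\[
\max\bigl\{\lambda^{-1}M(S_x(y),S_x(z)),\ \lambda M(S_x(z),S_x(y))\bigr\}=\max\bigl\{\lambda M(y,z),\ \lambda^{-1}M(z,y)\bigr\}
\]
for every $\lambda>0$. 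Letting $\lambda\to\infty$ forces $M(S_x(z),S_x(y))=M(y,z)$, and letting $\lambda\downarrow 0$ forces $M(S_x(y),S_x(z))=M(z,y)$. Since $y\le z$ is equivalent to $M(y,z)\le 1$, we get $y\le z\iff M(S_x(z),S_x(y))\le 1\iff S_x(z)\le S_x(y)$; as $S_x$ is an involution it is therefore an order-anti-isomorphism, hence gauge-reversing by \Cref{graor}.

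It remains to prove that $S_x$ is homogeneous of degree $-1$. Note that $S_x$ is continuous, being a $d_T$-isometry while $d_T$ induces the norm topology on $C$ by \Cref{dTlN} and \Cref{NldT}. The crux is the following claim: $S_x$ maps each ray $\R_{>0}\,y$ onto the ray $\R_{>0}\,S_x(y)$. Granting this, write $S_x(\lambda y)=h(\lambda,y)\,S_x(y)$ for a function $h\colon(0,\infty)\times C\to(0,\infty)$. Applying the isometry property to $d_T(S_x(\lambda y),S_x(y))=d_T(\lambda y,y)=\lvert\log\lambda\rvert$ gives $\lvert\log h(\lambda,y)\rvert=\lvert\log\lambda\rvert$, so $h(\lambda,y)\in\{\lambda,\lambda^{-1}\}$. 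The identity $S_x\bigl((\lambda\mu)y\bigr)=S_x\bigl(\lambda(\mu y)\bigr)$ yields the cocycle relation $h(\lambda\mu,y)=h(\lambda,\mu y)\,h(\mu,y)$. Since $S_x$ and $M$ are continuous, $y\mapsto h(\lambda,y)$ is continuous for each fixed $\lambda$; being $\{\lambda,\lambda^{-1}\}$-valued on the connected set $C$, it is constant, say $h(\lambda,y)=\lambda^{\epsilon(\lambda)}$ with $\epsilon(\lambda)\in\{\pm1\}$. The cocycle relation becomes $(\lambda\mu)^{\epsilon(\lambda\mu)}=\lambda^{\epsilon(\lambda)}\mu^{\epsilon(\mu)}$; passing to logarithms, $t\mapsto\epsilon(e^{t})\,t$ is additive with every value in $\{t,-t\}$, which forces $\epsilon\equiv 1$ or $\epsilon\equiv -1$. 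If $\epsilon\equiv 1$ then $S_x(\lambda x)=\lambda\,S_x(x)=\lambda x$ for all $\lambda>0$, so every point of $\R_{>0}\,x$ is a fixed point of $S_x$, contradicting that $x$ is isolated in the fixed-point set. Hence $\epsilon\equiv -1$, i.e.\ $S_x$ is homogeneous of degree $-1$, and the proof is complete modulo the claim.

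The claim is the main obstacle. It genuinely requires that $S_x$ be an involution with an \emph{isolated} fixed point, not merely a $d_T$-isometry: on $\R^2_{>0}$ the $d_T$-isometry $(y_1,y_2)\mapsto(y_1,y_2^{-1})$ carries a ray to a non-ray, so "being a ray" is not preserved by arbitrary $d_T$-isometries. My approach would be, first, to single out the ray $\R_{>0}\,x$ through the fixed point among the complete $d_T$-geodesic lines through $x$ by a property that $S_x$ respects, and to conclude from $S_x^{2}=\mathrm{id}$ and isolatedness that $S_x$ reverses it, $S_x(\lambda x)=\lambda^{-1}x$; and second, to propagate ray-preservation from $x$ to an arbitrary point. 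Conjugating $S_x$ by the scalings $g_\mu\colon z\mapsto\mu z$ only yields $d_T$-symmetries at the points $\mu x$ of the same ray, so the second step needs a genuinely new input — a rigidity statement for isometric involutions of $(C,d_T)$ with isolated fixed point, drawn from the order-unit structure (for instance through the local comparison of $d_T$ with $\lVert\cdot\rVert_x$ furnished by \Cref{dTlN} and \Cref{NldT}). This is the technical heart; cf.\ \cite[Thm.~3.5]{LRW25}.
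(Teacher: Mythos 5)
Your reduction is sound as far as it goes: \emph{if} $S_x$ is homogeneous of degree $-1$, then the scaling computation with \eqref{dTeq} (letting $\lambda\to\infty$ and $\lambda\downarrow 0$) does give $M(S_x(z),S_x(y))=M(y,z)$ and $M(S_x(y),S_x(z))=M(z,y)$ for all $y,z\in C$, which is already the gauge-reversing property, so the detour through \Cref{graor} is not even needed; and, granting the claim that $S_x$ carries each ray $\R_{>0}\,y$ onto $\R_{>0}\,S_x(y)$, your cocycle-plus-connectedness argument (with the isolated fixed point excluding the degree $+1$ alternative along $\R_{>0}\,x$) correctly yields that homogeneity. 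The minor points (continuity of $S_x$ from the local bi-Lipschitz comparison of $d_T$ with the order unit norm, continuity and two-valuedness of $h(\lambda,\cdot)$, the additive-function argument for $\epsilon$) all check out.

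The problem is that the ray-preservation claim, which you yourself identify as ``the technical heart,'' is exactly where the entire content of the theorem lies, and you do not prove it. Your own example $(y_1,y_2)\mapsto(y_1,y_2^{-1})$ on $\R^2_{>0}$ shows that a general $d_T$-isometry need not send rays to rays, so the claim must be extracted from involutivity together with the isolated fixed point via some genuine rigidity statement about Thompson-metric isometries; your text only describes what such a statement would have to accomplish and then defers to \cite[Theorem~3.5]{LRW25}. That is no advance over the paper, whose proof of this theorem consists precisely of the citation to \cite[Theorem~3.5]{LRW25}; a blind proof would have to supply the argument carried out there (an analysis of how an involutive $d_T$-isometry with isolated fixed point interacts with the order and ray structure of the cone), and this is missing. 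So the proposal is a correct conditional reduction, but it has a genuine gap at the decisive step.
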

\begin{proof}
This is shown in \cite[Theorem 3.5]{LRW25}.
\end{proof} 

\begin{proposition}\label{jcont}
Let $\Phi\colon C \to D$ be a gauge-reversing bijection with $\Phi(v) = w$. Then for all $\lambda > 0$ and $x, y \in C$ with $x, y \ge \lambda^{-1}v$ one has
\begin{equation}\label{jlip}
    \lVert \Phi(x)-\Phi(y)\rVert_w \le \lambda^2 \lVert x-y\rVert_v.
\end{equation}
\end{proposition}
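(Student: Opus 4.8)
The plan is to deduce the estimate by concatenating three results already available: the Lipschitz comparison $d_T \le \lambda\lVert\cdot\rVert_v$ on the order-interval above $\lambda^{-1}v$ (\Cref{dTlN}), the fact that $\Phi$ is a $d_T$-isometry (\Cref{dTisom}), and the reverse comparison $\lVert\cdot\rVert_w \le \lambda\,d_T$ on the set of elements of $D$ bounded above by $\lambda w$ (\Cref{NldT}). Concretely, for $x,y\in C$ with $x,y\ge\lambda^{-1}v$ one has $d_T(x,y)\le\lambda\lVert x-y\rVert_v$ by \Cref{dTlN}; $d_T(\Phi(x),\Phi(y))=d_T(x,y)$ by \Cref{dTisom}; and, provided the hypotheses of \Cref{NldT} are met in $W$, $\lVert\Phi(x)-\Phi(y)\rVert_w\le\lambda\,d_T(\Phi(x),\Phi(y))$. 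Stringing these together yields $\lVert\Phi(x)-\Phi(y)\rVert_w\le\lambda^2\lVert x-y\rVert_v$, as claimed.

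The only missing ingredient is the upper bound needed to invoke \Cref{NldT} inside $D$, namely $\Phi(x),\Phi(y)\le\lambda w$, and this is where the algebraic structure of $\Phi$ enters. By \Cref{graor}, $\Phi$ is order-reversing and homogeneous of degree $-1$. From $x\ge\lambda^{-1}v$ we obtain $\lambda x\ge v$, hence $\lambda^{-1}\Phi(x)=\Phi(\lambda x)\le\Phi(v)$, that is $\Phi(x)\le\lambda\,\Phi(v)$; with the normalization $\Phi(v)=w$ this reads $\Phi(x)\le\lambda w$, and the identical computation for $y$ gives $\Phi(y)\le\lambda w$. Thus \Cref{NldT} applies in $W$ with constant $\lambda$, and the chain of inequalities above closes, completing the proof.

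I do not anticipate a genuine obstacle: the proof is essentially a bookkeeping exercise in stitching together \Cref{dTlN}, \Cref{dTisom}, and \Cref{NldT}. The one point worth keeping track of is the origin of the exponent $2$ in $\lambda^2$: \Cref{dTlN} and \Cref{NldT} each contribute one factor of $\lambda$, while the homogeneity/order-reversal step is exactly what is needed to convert the \emph{lower} bound $x,y\ge\lambda^{-1}v$ on the domain into the \emph{upper} bound $\Phi(x),\Phi(y)\le\lambda w$ on the image, so that \Cref{NldT} may be invoked with that same constant $\lambda$ rather than a larger one.
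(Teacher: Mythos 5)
Your argument is correct and is essentially the paper's own proof: both chain \Cref{dTlN}, the $d_T$-isometry property of $\Phi$ from \Cref{dTisom}, and \Cref{NldT} to produce the factor $\lambda^2$. If anything, you are more explicit than the paper about the hypothesis needed to invoke \Cref{NldT}, namely $\Phi(x),\Phi(y)\le\lambda w$ obtained from order-reversal and homogeneity under the normalization $\Phi(v)=w$, which the paper asserts only in passing (with the inequality sign misprinted as $\ge$).
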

\begin{proof}
In \Cref{dTlN} we showed that $d_T(x,y) \le \lambda \lVert x-y\rVert_v$. Likewise, since $\Phi(x), \Phi(y) \ge \lambda w$ we have by \Cref{NldT} that $\lVert \Phi(x) - \Phi(y)\rVert_w \le \lambda d_T(\Phi(x), \Phi(y))$. Because $\Phi$ is a $d_T$-isometry by \Cref{dTisom}, we conclude that
\begin{equation*}
\lVert \Phi(x) - \Phi(y)\rVert_w \le \lambda d_T(\Phi(x), \Phi(y)) = \lambda d_T(x, y) \le \lambda^2 \lVert x-y\rVert_v.
\end{equation*}
\end{proof}

\begin{theorem}\label{biLip}
Let $\Phi$ be as in \Cref{jcont}. Then $\Phi\colon (C, \lVert\cdot\rVert_v) \to (D, \lVert \cdot \rVert_w)$ is bi-Lipschitz on Thompson's metric balls around $v$ and $w$. In particular, $\Phi$ is locally bi-Lipschitz.
\end{theorem}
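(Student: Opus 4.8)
The plan is to read off the theorem from \Cref{jcont} by applying that estimate to both $\Phi$ and $\Phi^{-1}$. First I would record that $\Phi^{-1}\colon D\to C$ is again a gauge-reversing bijection: substituting $\Phi^{-1}(a),\Phi^{-1}(b)$ for $x,y$ in \eqref{grev} gives $m(\Phi^{-1}(a),\Phi^{-1}(b))=m(a,b)$ for all $a,b\in D$ (alternatively, invoke \Cref{graor}). Hence \Cref{jcont} applies verbatim with the roles of $(V,V_+,v)$ and $(W,W_+,w)$ interchanged and $\Phi$ replaced by $\Phi^{-1}$.

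Next, fix a Thompson metric ball $B:=\{x\in C: d_T(v,x)\le r\}$ around $v$; as noted in the discussion following \Cref{NldT}, it is an order interval of the form $[\lambda^{-1}v,\lambda v]$ for a suitable $\lambda\ge 1$. Since every $x\in B$ then satisfies $x\ge\lambda^{-1}v$, \Cref{jcont} gives at once the upper bound $\lVert\Phi(x)-\Phi(y)\rVert_w\le\lambda^2\lVert x-y\rVert_v$ for all $x,y\in B$. For the matching lower bound I would bound $\Phi(B)$ away from the boundary of $D$: since $\Phi$ is order-reversing and homogeneous of degree $-1$, the relation $x\le\lambda v$ yields $\Phi(x)\ge\lambda^{-1}\Phi(v)$, and as $\Phi(v)\in D=W_+^{\circ}$ there is $\varepsilon>0$ with $\Phi(v)\ge\varepsilon w$, so $\Phi(x)\ge c\,w$ for all $x\in B$, where $c:=\lambda^{-1}\varepsilon>0$. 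Applying \Cref{jcont} to the gauge-reversing bijection $\Phi^{-1}$ (with constant $c^{-1}$ in place of $\lambda$, which is legitimate because $\Phi(x),\Phi(y)\ge c\,w$) then gives
\[
\lVert x-y\rVert_v=\lVert\Phi^{-1}(\Phi(x))-\Phi^{-1}(\Phi(y))\rVert_v\le c^{-2}\lVert\Phi(x)-\Phi(y)\rVert_w\qquad(x,y\in B),
\]
so $\Phi|_B$ is bi-Lipschitz for the order unit norms. Running the same argument with $\Phi^{-1}$ in place of $\Phi$ disposes of the Thompson balls around $w$.

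For the concluding statement that $\Phi$ is locally bi-Lipschitz, I would observe that every $p\in C$ has $d_T(v,p)<\infty$, since $p$ is an order unit; hence for $r$ sufficiently large $p$ lies in the $\lVert\cdot\rVert_v$-interior of $B=[\lambda^{-1}v,\lambda v]$, on which we have just shown $\Phi$ to be bi-Lipschitz, so $\Phi$ is bi-Lipschitz on a norm neighbourhood of $p$.

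The whole argument is little more than bookkeeping over \Cref{jcont}. The one step that needs a moment's care is the lower bound: one must control how close the image $\Phi(B)$ can come to $\partial D$ when $\Phi(v)\neq w$, which is exactly what the constant $\varepsilon$ (measuring how deep $\Phi(v)$ sits inside the open cone $D$) accounts for.
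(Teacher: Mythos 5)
Your proof is correct and follows essentially the same route as the paper: both the upper and lower Lipschitz bounds are read off from \Cref{jcont}, applied to $\Phi$ and to the gauge-reversing bijection $\Phi^{-1}$ on the order intervals $[\lambda^{-1}v,\lambda v]$, whose norm interiors cover $C$. The only difference is that you avoid the implicit normalization $\Phi(v)=w$ underlying the paper's proof (where $\Phi$ is taken to map $[\lambda^{-1}v,\lambda v]$ onto $[\lambda^{-1}w,\lambda w]$) by inserting the constant $\varepsilon$ with $\Phi(v)\ge\varepsilon w$; this is a harmless refinement of the same argument, not a different method.
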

\begin{proof}
Fox fixed $\lambda > 0$, the maps $\Phi$ and $\Phi^{-1}$ restrict to bijections between the Thompson's metric balls $[\lambda^{-1}v, \lambda v]$ and $[\lambda^{-1}w, \lambda w]$, and these restrictions are Lipschitz continuous with constant $\lambda^2$ by \Cref{jcont}. Since the norm interiors of $[\lambda^{-1}v, \lambda v]$ for $\lambda > 0$ cover $C$, the corollary follows.
\end{proof}

\subsection{Jordan algebras}\label{uqJASec}
In this section we first define classical, i.e.\ linear, Jordan algebras. We then provide an alternative axiomatic framework devised by McCrimmon, and show each quadratic Jordan algebra is a linear Jordan algebra. The theory in this section is purely algebraic and is valid over any field of characteristic $\neq 2$ with at least four elements. However, for simplicity we work over the field of real numbers $\R$. Throughout this paper, all our (Jordan) algebras are assumed to have an identity element.

\begin{definition}
A \emph{Jordan algebra} is a real linear space $A$ equipped with a commutative, but not necessarily associative, $\R$-bilinear product that satisfies the so-called \emph{Jordan-identity}
\begin{align}
x \circ (y \circ x^2) &= (x \circ y) \circ x^2 \tag*{(J)}\label{J2}
\end{align}
and has a unit element $e$, i.e.\ $x \circ e = x$, for all $x,y \in A$.
\end{definition}

\begin{example}
Let $(A, \cdot)$ be an associative real algebra with identity $e$. Define on $A$ the following \emph{Jordan product} $\circ$ by
\begin{equation*}
x \circ y := \tfrac{1}{2}(x\cdot y+y\cdot x).
\end{equation*}
Then $(A, \circ)$ is a Jordan algebra with identity element $e$.
\end{example}

For a real vector space $V$, we denote $L(V)$ the real algebra of $\R$-linear endomorphisms of $V$.

\begin{definition}\label{qrep}
Let $(A, \circ)$ be a Jordan algebra, and let $x\in A$ be an element.
\begin{itemize}
\item[$(1)$]We define the \emph{linear representation} of $x$ to be the endormorphism $T(x) \in L(A)$ given by
$$T(x)(y) := x \circ y = y \circ x.$$

\item[$(2)$]The \emph{quadratic representation} of $x$ is defined in terms of the linear representation as
\begin{equation}\label{qrepUdef}
U(x) := 2T(x)^2 - T(x^2) \in L(A).
\end{equation}
\end{itemize}
\end{definition}

The map $T\colon A \to L(A)$ is linear, but the quadratic representation map $U\colon A \to L(A)$ is a quadratic map in the sense of the following definition.

\begin{definition}
Let $A$ and $B$ be real vector spaces. We say a map $U\colon A \to B$ is \emph{quadratic} if it is homogeneous of degree $2$, i.e.\ $U(\lambda x)=\lambda^2 U(x)$ for all $\lambda \in \R$ and $x \in A$, and the map $U: A \times A \to B$ defined by $$U(x,y) :=\tfrac{1}{2}(U(x+y)-U(x)-U(y)) \in B$$
is bilinear.
\end{definition}
Since $U(x,e)y=x \circ y$ for all $x, y \in A$, the bilinear product $x\circ y$ can be reconstructed from the quadratic product $U(x)y = 2x \circ (x \circ y) - x^2 \circ y$. In fact, McCrimmon has expressed the axioms for a Jordan algebra in terms of the quadratic representation.

\begin{definition}\label{QJADef}
A \emph{(unital) quadratic Jordan algebra} is a triple $(A, U, e)$ consisting of a real vector space $A$, a vector $e \in A$ and a quadratic map $U\colon A \to \End(A)$ satisfying the following axioms. Let $U(\cdot,\cdot)\colon A \times A \to \End(A)$ be the symmetric bilinear map given by
\begin{equation}
U(x,y)=\tfrac{1}{2}(U(x+y)-U(x)-U(y)).
\end{equation}
Then for all $x,y,z\in A$, we impose that
\begin{align*}
U(e) &= \Id_A, \tag*{(QJ1)}\\
U(x)U(y,z)x &=U(U(x)y,x)z, \tag*{(QJ2)}\\
U(U(x)y) &=U(x)U(y)U(x). \tag*{(QJ3)}
\end{align*}
\end{definition}

If $A$ is a classical Jordan algebra with unit element $e$ and quadratic  map $U: A \to L(A)$, then $(A, U, e)$ is a quadratic Jordan algebra, as is shown in \cite[§1.2]{Jac69}. Conversely, a quadratic Jordan algebra $(A, U, e)$ gives rise to a classical Jordan algebra with identity element $e$ and quadratic representation map $U$. This result is proved in \cite[§1.3 and §1.4]{Jac69}, among a list of other identities. To make our exposition more self-contained, we reproduce his derivation in the following theorem.

\begin{theorem}\label{QJAThm}
Let $(A, U, e)$ be a quadratic Jordan algebra. Then $A$ is a Jordan algebra with unit element $e$ in the product $a \circ b := U(a,b)e$.
\end{theorem}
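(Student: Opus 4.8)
The plan is to follow Jacobson's classical derivation, establishing along the way the auxiliary identities needed to verify the Jordan axiom for the bilinear product $a \circ b := U(a,b)e$. First I would record the basic linearizations of (QJ1)--(QJ3). Define $V(x,y) \in \End(A)$ by $V(x,y)z := U(x,z)y$ (so that $V(x,e)z = U(x,z)e = x \circ z$, i.e. $V(x,e) = T(x)$ is the intended multiplication operator). Linearizing (QJ2) in $x$ and specializing variables to $e$ yields the fundamental relations between $U(x)$, $V(x,y)$, and $T(x)$; in particular setting $y = z = e$ in (QJ2) gives $U(x)U(e,e)x = U(U(x)e, x)e$, which after using (QJ1) and the definition $x^2 := U(x)e$ produces $U(x)x = U(x^2, x)e$, a first sign that powers behave well. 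The key structural step is to show that $e$ is a two-sided identity for $\circ$: since $a \circ e = U(a,e)e$, I must prove $U(a,e) = \mathrm{id}_A$... no, rather $U(a,e)e = a$, which follows by linearizing (QJ1), $U(e) = \mathrm{id}$, in the form $U(e,a) + U(a,e) $ applied appropriately — concretely, $2U(e,a) = U(e+a) - U(e) - U(a)$ and one extracts $U(e,a)e = a$ by a short computation using that $U$ is quadratic and $U(e)=\mathrm{id}$.

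Next I would establish commutativity (immediate, since $U(a,b) = U(b,a)$ by symmetry of the polarization, hence $a \circ b = U(a,b)e = U(b,a)e = b \circ a$) and then the crucial operator identities. The heart of the matter is to derive $T(x^2) = 2T(x)^2 - U(x)$ as operators, equivalently that the quadratic representation of the linear Jordan algebra being built coincides with the given $U$; this is extracted from (QJ2) and (QJ3) by careful linearization and specialization, following \cite[§1.3]{Jac69}. With that in hand, the Jordan identity \ref{J2}, $x \circ (y \circ x^2) = (x \circ y) \circ x^2$, becomes a statement purely about the operators $T(x)$, $T(x^2)$, $U(x)$ acting on $A$, which one reduces to a consequence of (QJ3) and its linearizations — the standard fact that $[T(x^2), T(x)] = 0$ in any quadratic Jordan algebra, or equivalently that $T(x)$ and $T(x^2)$ commute.

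I expect the main obstacle to be the bookkeeping in the linearization arguments: axioms (QJ2) and (QJ3) are respectively quadratic-times-bilinear and quartic in the arguments, so extracting the needed bilinear/operator identities requires substituting $x \mapsto x + \lambda e$, $x \mapsto x+y$, etc., and collecting homogeneous components, all while keeping track of which specializations are legitimate (this is where the hypothesis of characteristic $\neq 2$ with at least four elements enters, though here we are over $\R$ so division is unrestricted). The conceptual content is entirely contained in \cite[§1.3, §1.4]{Jac69}; the task is to organize the chain of identities — $U(a,e)e = a$, then $V(x,e) = T(x)$, then $U(x) = 2T(x)^2 - T(x^2)$, then $[T(x), T(x^2)] = 0$ — in a self-contained sequence, each step following from the previous ones together with a linearization of one of (QJ1)--(QJ3). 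Once $[T(x),T(x^2)]=0$ is available, the Jordan identity follows by applying both sides to an arbitrary $y$ and rewriting $x \circ (y \circ x^2) = T(x)T(x^2)y$ and $(x\circ y)\circ x^2 = T(x^2)T(x)y$.
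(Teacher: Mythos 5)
Your overall route coincides with the paper's (both follow Jacobson's derivation): set $T(x) := U(x,e)$, show $T(y)z = y\circ z$ and that $e$ is the unit, derive $U(x) = 2T(x)^2 - T(x^2)$ by linearization, and obtain the Jordan identity from $[T(x),T(x^2)]=0$. However, your justification of the unit property is a step that would fail. You claim $U(e,a)e = a$ follows by ``linearizing (QJ1)'' together with quadraticity of $U$ and $U(e)=\Id_A$. It does not: (QJ1) is a condition on $U$ at the single point $e$ and says nothing about the off-diagonal values $U(e,a)$. Your computation $2U(e,a)e = U(e+a)e - U(e)e - U(a)e = (e+a)^2 - e - a^2$ only returns $2a$ if one already knows $e\circ a = a$, so it is circular. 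Moreover, unitality genuinely requires (QJ2): on $A=\R^2$ with $e=(1,0)$, put $U(x) := q(x)\Id_A$ for the quadratic form $q(s,t)=s^2+t^2$; then $U$ is quadratic, (QJ1) and (QJ3) hold, yet $U(e,a)e = B(e,a)e$ (with $B$ the polarization) is a multiple of $e$, e.g.\ $U(e,a)e=0\neq a$ for $a=(0,1)$, and $e$ is not a unit for $a\circ b := U(a,b)e$.

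The repair is short and uses only tools you already invoke: take $x=e$ in (QJ2) and apply (QJ1) to get $U(y,z)e = U(y,e)z$ for all $y,z$; this simultaneously shows $T(y)z = U(y,e)z = y\circ z$ (your $V(x,e)=T(x)$) and, with $y=e$, that $e\circ z = U(e,e)z = U(e)z = z$, since $U(e,e)=U(e)$ by homogeneity of degree $2$. This is exactly how the paper proceeds; it then obtains $U(a) = 2T(a)^2 - T(a^2)$ by substituting $x=a+\lambda e$, $y=e$ in (QJ3) and comparing the coefficients of $\lambda^2$, and gets $[U(x),T(x)]=0$, hence $[T(x),T(x^2)]=0$, from (QJ2) with $z=e$. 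These are precisely the two operator identities you defer to Jacobson, so apart from the unit step your plan is sound, but those linearization computations still need to be carried out rather than cited.
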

\begin{proof}
It is clear that the product $\circ$ is bilinear and commutative. For every $a\in A$ set $$T(a) = U(a, e) \in \End(A).$$
Take $x = e$ in (QJ2) to find using (QJ1) that 
\begin{equation}\label{Tcirc}
T(y)z=U(y,e)z=U(y,z)e = y \circ z,
\end{equation}
so $T(y)$ is the operation of multiplication by $y$. In particular, $T(e) = U(e, e) = U(e) = \Id_A$, so $e$ is the multiplicative identity element. Moreover, $U(x)e = U(x, x)e = x\circ x =: x^2$.\\
\ind Next we show that for every $a\in A$ we have $$U(a) = 2T(a)^2 - T(a^2).$$ Let $\lambda$ be real number, and take $x = a + \lambda e$ and $y = e$ in (QJ3). This gives, since $U(e) = \id_A$, that
$$U(a+\lambda e)^2 = U((a+\lambda e)^2),$$
which expands, using (QJ1) again, as 
$$(U(a)+2\lambda U(a,e)+\lambda^2\Id_A)^2 = U(a^2 + 2\lambda a + \lambda^2 e).$$
Comparison of coefficients at $\lambda^2$ gives
$$2U(a)+4U(a,e)^2 = 4U(a)+2U(a^2,e).$$
Dividing by $2$ and substitute $T(a) = U(a, e)$ and $T(a^2) = U(a^2, e)$ to obtain $U(a) = 2T(a)^2 - T(a^2)$.\\
\ind Finally, we prove the Jordan identity by showing that $T(x)$ and $T(x^2)$ commute. Take $z = e$ in (QJ2) to find using \eqref{Tcirc} that
$$U(x)U(x,e)y = U(x)U(y, e)x = U(U(x)y, x)e = U(x, e)U(x)y,$$
i.e.\ $U(x)$ and $U(x, e)=T(x)$ commute. Since we have shown that $T(x^2) = 2T(x)^2 - U(x)$, it follows that $T(x)$ and $T(x^2)$ commute.
\end{proof}

If $A$ and $B$ are Jordan algebras, then a linear map $\phi: A \to B$ is a Jordan homomorphism if and only if $\phi(1_A) = 1_B$ and $U_B(\phi(x)) \circ \phi = \phi \circ U_A(x)$.
It follows that the categories of Jordan algebras and quadratic Jordan algebras are equivalent (see also \cite[§1.4]{Jac69}).

\begin{definition}\label{invDef}
An element $x\in A$ is called \emph{invertible} if its quadratic representation $U(x)$ is invertible in the Banach algebra $L(A)$.    
\end{definition} 

\begin{remark}\label{invRem}
This notion of invertibility agrees with the one commonly used in the theory of linear Jordan algebras, where an element $x \in A$ is called invertible if there exists $y \in A$ such that $x \circ y = e$ and $x^2 \circ y = x$. Indeed, such element $y$ exists if and only if $U(x)$ is invertible, in which case $y = U(x)^{-1}x$. The element $x^{-1} := U(x)^{-1}x$ is called the \emph{inverse} of $x$, and has quadratic representation $U(x^{-1})=U(x)^{-1}$.
\end{remark}

\subsection{JB-algebras}\label{JBprelSec}
In this section we define JB-algebras, which are Jordan algebras with a compatible norm. The compatibility conditions are customarily stated in terms of the linear product, however, may equivalently be formulated in terms of the quadratic representation operators. This observation allows to present a definition of JB-algebra referencing only the quadratic product.

\begin{definition}\label{JBDef}
A \emph{JB-algebra} is a Jordan algebra $(A, \circ, e)$ that is simultaneously a Banach space in a norm $\lVert \cdot\rVert$ satisfying for all $x,y \in A$ the conditions 
\begin{align}
\lVert x \circ y\rVert &\le \lVert x\rVert \lVert y\rVert,\label{NC1}\\
\lVert x^2\rVert &= \lVert x\rVert^2,\label{NC2}\\
\lVert x^2\rVert &\le \lVert x^2 + y^2\rVert.\label{NC3}
\end{align}
\end{definition}
It follows from \eqref{NC2} that $\lVert e\rVert = 1$ provided $A \neq 0$. Each JB-algebra is an order unit space per the following proposition.

\begin{proposition}\label{JBProp}
Let $A$ be a JB-algebra with norm $\lVert\cdot\rVert$ and identity element $e$.
\begin{itemize}
\item[$(1)$] The set $A_+ := \{x^2: x \in A\}$ of squares in $A$ is the positive cone of an order unit space $(A, A_+)$ with order unit $e$. Moreover, the order unit norm $\lVert \cdot \rVert_e$ coincides with the given norm $\lVert\cdot\rVert$.
\item[$(2)$] For every $x \in A$ the map $U(x): A \to A$ is positive and has norm $\lVert U(x) \rVert = \lVert x\rVert^2_e$.
\end{itemize}

\end{proposition}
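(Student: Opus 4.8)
The plan is to derive both parts from the spectral functional calculus for single elements of a JB-algebra, which I take as known (see \cite[§3.2]{HOSt84}): for each $a \in A$, the norm-closed Jordan subalgebra generated by $a$ and $e$ is associative and, via a unital isometric order-isomorphism carrying $a$ to the identity function, identifies with $C(\sigma(a))$ for the spectrum $\sigma(a) \subseteq \R$; in particular $\lVert a\rVert = \max\{|\lambda|: \lambda \in \sigma(a)\}$ and $\sigma(a) \subseteq [-\lVert a\rVert, \lVert a\rVert]$.

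\emph{Part $(1)$.} First I would establish the order-theoretic description $A_+ = \{a \in A: \lVert\, \lVert a\rVert e - a\rVert \le \lVert a\rVert\} = \{a \in A: \sigma(a) \subseteq [0,\infty)\}$: if $a = b^2$ then $a$ corresponds in the subalgebra generated by $b$ and $e$ to the function $t \mapsto t^2 \ge 0$, so $\sigma(a) \subseteq [0,\infty)$; conversely if $\sigma(a)\subseteq[0,\lVert a\rVert]$ then $a = (\sqrt a)^2$ with $\sqrt a$ corresponding to $t \mapsto \sqrt t$, and the equivalence with the norm inequality is immediate from the spectral radius formula. From the norm description, $A_+$ is norm-closed; it is a cone by homogeneity; it is closed under addition, since for $a,b \in A_+$ one has $\lVert (\lVert a\rVert + \lVert b\rVert) e - (a+b)\rVert \le \lVert\,\lVert a\rVert e - a\rVert + \lVert\,\lVert b\rVert e - b\rVert \le \lVert a\rVert + \lVert b\rVert$ while $\lVert a\rVert + \lVert b\rVert \ge \lVert a+b\rVert$; and it is proper, since $a,-a \in A_+$ forces $\sigma(a) = \{0\}$, hence $a=0$. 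Because $\sigma(x) \subseteq [-\lVert x\rVert, \lVert x\rVert]$, we have $\pm x \le \lVert x\rVert e$, so $e$ is an order unit; moreover $-\lambda e \le x \le \lambda e$ holds precisely when $\sigma(x) \subseteq [-\lambda,\lambda]$, i.e. $\lVert x\rVert \le \lambda$, whence $\lVert x\rVert_e = \lVert x\rVert$, which is the final claim of $(1)$. Archimedeanity follows from closedness of $A_+$: if $\lambda x \le y$ for all $\lambda \ge 0$ then $\tfrac1n y - x \in A_+$ for every $n$, and letting $n \to \infty$ yields $-x \in A_+$.

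\emph{Part $(2)$.} The substantive step is positivity of $U(x)$. Here I would invoke the structure theory: the norm-closed Jordan subalgebra of $A$ generated by two elements together with $e$ is special, hence realizable as a norm-closed Jordan subalgebra $B$ of $\mathcal{B}(H)_{\mathrm{sa}}$ with the symmetrized product for some Hilbert space $H$ (Shirshov--Cohn for JB-algebras, \cite{HOSt84}); and the order on a unital Jordan subalgebra is the restriction of the ambient order, as positivity is a spectral notion, so $B_+ = B \cap A_+$. Given $a = c^2 \in A_+$, apply this to the subalgebra generated by $x$, $c$, $e$: there $U(x)c^2 = xc^2x = (xc)(xc)^* \ge 0$ in $\mathcal{B}(H)$, so $U(x)c^2 \in B \cap A_+ \subseteq A_+$. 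Hence $U(x)$ is positive. The norm statement is then formal: for any positive operator $T$ on an order unit space $(V,V_+,e)$ one has $\lVert T\rVert = \lVert Te\rVert$, because $\lVert a\rVert_e \le 1$ gives $-e \le a \le e$, hence $-Te \le Ta \le Te$, so $\lVert Ta\rVert_e \le \lVert Te\rVert_e$, with equality at $a = e$. Applying this with $T = U(x)$ and using $U(x)e = x^2$, part $(1)$, and \eqref{NC2} gives $\lVert U(x)\rVert = \lVert x^2\rVert_e = \lVert x^2\rVert = \lVert x\rVert^2 = \lVert x\rVert_e^2$.

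\emph{The main obstacle} is the positivity of $U(x)$; the argument above leans on the nontrivial fact that two-generated JB-algebras are special. A self-contained spectral proof is possible — reduce to $x$ positive invertible, then exploit $U(x)^2 = U(x^2)$ together with an approximation/continuity argument along positive invertible paths — but it is noticeably more delicate, so I would prefer the structure-theoretic route, or a direct appeal to \cite[§3.3]{HOSt84}. Everything else is routine bookkeeping with the functional calculus.
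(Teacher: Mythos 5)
Your argument is essentially correct, but be aware that the paper does not prove this proposition at all: it disposes of it by citation, namely \cite[Lemmas 3.3.7 and 3.3.10]{HOSt84} for part (1) and \cite[Thm.~1.25]{AS03} for part (2). Your part (1) is a faithful reconstruction of the standard argument behind those citations (single-variable functional calculus, the norm characterization of positivity, norm-closedness of $A_+$, and the identification $\lVert\cdot\rVert_e=\lVert\cdot\rVert$), and your formal step $\lVert U(x)\rVert=\lVert U(x)e\rVert=\lVert x^2\rVert=\lVert x\rVert^2$ via \eqref{NC2} is exactly the standard one. Where you genuinely diverge is the positivity of $U(x)$: you route it through the Shirshov--Cohn theorem for JB-algebras (two-generated unital JB-subalgebras are JC), together with spectral permanence to match the orders. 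This does yield the result, but it is a much heavier tool than the cited proofs use, and it carries a soft circularity risk: in \cite{HOSt84} the JB-version of Shirshov--Cohn is established only in Chapter 7, by machinery (universal enveloping C*-algebra, state-space and representation theory) that is built on the elementary order theory of \S 3.3 --- which includes precisely the positivity of the quadratic maps --- whereas the sources the paper cites obtain positivity of $U(x)$ by low-tech functional-calculus and Jordan-identity arguments that precede any representation theory. (Note also that the purely algebraic Shirshov--Cohn/Macdonald statement is not enough here; you correctly need the norm-closed JC realization, which is the deep version.) Since you flag this obstacle yourself and offer the direct citation or a spectral argument as a fallback, the proposal stands as a correct proof sketch; for a self-contained or literature-consistent write-up, the cleaner choice is the one the paper makes, i.e.\ to quote \cite[\S 3.3]{HOSt84} and \cite[Thm.~1.25]{AS03} directly or to reproduce their elementary argument for the positivity of $U(x)$.
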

\begin{proof}
For (1) see \cite[Lemma 3.3.7 and 3.3.10]{HOSt84}; for 
(2) see \cite[Thm. 1.25]{AS03}.
\end{proof}
  
\ind The following theorem provides a converse.

\begin{theorem}\label{HOSAS}
Let $(A, A_+, e)$ be a complete order unit space which is simultaneously a Jordan algebra that has $e$ as its identity element. Suppose for each $x\in A$, the quadratic representation $U(x) \in L(A)$ of $x$ is positive and has $\lVert\cdot\rVert_e$-operator norm $\lVert U(x)\rVert_{e} = \lVert x\rVert^2$. Then $A$ is a JB-algebra in the order unit norm $\lVert\cdot\rVert_e$ with cone of squares $A_+ = \{x^2: x \in A\}$.
\end{theorem}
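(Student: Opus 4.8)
The plan is to verify the three norm conditions of \Cref{JBDef} directly and then to identify $A_+$ with the cone of squares. Throughout write $\|\cdot\| = \|\cdot\|_e$. I will use three elementary facts about the order unit space $(A,A_+,e)$: that $\|x\|\le 1$ if and only if $-e\le x\le e$ (a consequence of the archimedean axiom); that a positive operator $T\in L(A)$ satisfies $\|T\| = \|Te\|$, since $-e\le y\le e$ forces $-Te\le Ty\le Te$; and that $0\le a\le b$ implies $\|a\|\le\|b\|$, while $p,q\in A_+$ implies $\|p-q\|\le\max\{\|p\|,\|q\|\}$.

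First I would observe that squares are positive: $x^2 = U(x)e\in A_+$ because $U(x)$ is a positive operator and $e\in A_+$. This immediately gives \eqref{NC2}, since $\|x^2\| = \|U(x)e\| = \|U(x)\| = \|x\|^2$, the middle equality being the fact about positive operators and the last the hypothesis; and it gives \eqref{NC3}, since $0\le x^2\le x^2+y^2$. For \eqref{NC1} I would exploit the freedom of rescaling: for nonzero $x,y$ and any $t>0$, expanding $(tx\pm t^{-1}y)^2 = t^2x^2 \pm 2\,x\circ y + t^{-2}y^2$ gives $x\circ y = \tfrac14\bigl((tx+t^{-1}y)^2 - (tx-t^{-1}y)^2\bigr)$, a difference of positive elements, so that
\[
\|x\circ y\| \le \tfrac14\max\{\|tx+t^{-1}y\|^2,\ \|tx-t^{-1}y\|^2\} \le \tfrac14\bigl(t\|x\| + t^{-1}\|y\|\bigr)^2,
\]
using \eqref{NC2} and the triangle inequality. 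Taking $t = \sqrt{\|y\|/\|x\|}$ makes the right-hand side equal to $\|x\|\|y\|$. Hence $(A,\circ,e)$ with the norm $\|\cdot\|$ satisfies \Cref{JBDef}, i.e. $A$ is a JB-algebra.

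It then remains to prove $A_+ = \{x^2 : x\in A\}$; only the inclusion $\subseteq$ requires an argument, the reverse being the positivity of squares already noted. Given $a\in A_+$, after rescaling I may assume $\|a\|\le 1$, so $0\le a\le e$, hence $0\le e-a\le e$, i.e. $\|e-a\|\le 1$. Set $z := e-a$ and let $B$ be the closed subalgebra of $A$ generated by $e$ and $z$; since Jordan algebras are power-associative and the product is continuous by \eqref{NC1}, $B$ is a commutative associative Banach algebra, and $\|z^k\|\le\|z\|^k\le 1$ for all $k$ by \eqref{NC1} and \eqref{NC2}. As the coefficients $\binom{1/2}{k}$ are absolutely summable, the series $s := \sum_{k\ge 0}\binom{1/2}{k}(-z)^k$ converges in the Banach space $A$, and the formal power series identity $\bigl(\sum_k\binom{1/2}{k}w^k\bigr)^2 = 1+w$, evaluated at $w=-z$ in $B$ via the Cauchy product, gives $s^2 = e-z = a$. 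Hence $a$ is a square, and the proof is complete.

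I expect the one genuinely non-routine point to be condition \eqref{NC1}: the estimate $\|x\circ y\|\le\tfrac14(t\|x\|+t^{-1}\|y\|)^2$ valid for every $t>0$, which yields the sharp constant upon optimizing in $t$. Everything else reduces to the hypothesis $\|U(x)\| = \|x\|^2$ together with the identity $U(x)e = x^2$ and routine order-unit-space manipulations.
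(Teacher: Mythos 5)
Your proof is correct, but it follows a genuinely different route from the paper's. The paper disposes of the theorem almost entirely by citation: \eqref{NC1} and the positivity of squares are imported from \cite[Lemma 1.80]{AS01}, the fact that every positive element is a square from [ibidem, Lemma 1.73], and \eqref{NC2} from [ibidem, Theorem 1.81], with only \eqref{NC3} derived on the spot from monotonicity of $\lVert\cdot\rVert_e$ on $A_+$ (exactly as you do). You instead give a self-contained verification from the two hypotheses alone: the identity $U(x)e = x^2$ plus positivity of $U(x)$ gives positivity of squares; $\lVert T\rVert = \lVert Te\rVert$ for positive $T$ turns the hypothesis $\lVert U(x)\rVert = \lVert x\rVert^2$ into \eqref{NC2}; the scaled polarization $4\,x\circ y = (tx+t^{-1}y)^2-(tx-t^{-1}y)^2$ together with $\lVert p-q\rVert\le\max\{\lVert p\rVert,\lVert q\rVert\}$ for $p,q\in A_+$ and optimization in $t$ yields \eqref{NC1} with the sharp constant; and the binomial series in the closed (associative, by power-associativity and continuity of the product) subalgebra generated by $e$ and $e-a$ produces a square root of any $a\in A_+$, using completeness of $A$. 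What each approach buys: the paper's proof is short and leans on Alfsen--Shultz, whereas yours makes the theorem independent of \cite{AS01} at the modest cost of invoking power-associativity of Jordan algebras and the elementary order-unit-norm facts you list, all of which are standard and correctly used. The only step worth flagging as implicit is $U(x)e = x^2$ (it follows from $U(x)=2T(x)^2-T(x^2)$ and $T(x)e=x$), which you should state explicitly since it carries the whole first half of the argument.
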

\begin{proof}
From \cite[Lemma 1.80]{AS01} it follows that $\lVert x \circ y\rVert_e \le \lVert x\rVert_e \lVert y\rVert_e$ for all $x,y\in A$ and that all squares are positive. Conversely, by [ibidem, Lemma 1.73] each positive element is a square. Since $\lVert\cdot\rVert_e$ is monotone on $A_+ = \{x^2: x \in A\}$, we have $\lVert x^2\rVert_e \le \lVert x^2 + y^2\rVert_e$ for all $x,y \in V$. Finally, according to [ibidem, Theorem 1.81] we have $\lVert x^2\rVert_e = \lVert x\rVert_e^2$ for each $x \in A$. We have shown that the norm $\lVert\cdot\rVert_e$ satisfies \eqref{NC1}, \eqref{NC2} and \eqref{NC3}. We conclude that $A$ is a JB-algebra in the order unit norm $\lVert\cdot\rVert_e$ with cone of squares $A_+$.
\end{proof}

\Cref{QJAThm}, \Cref{JBProp} and \Cref{HOSAS} combine to show the equivalence of the standard definition of JB-algebras encountered in the literature, to the following alternative definition, which we will employ exclusively in the remainder of this article.

\begin{definition}\label{AltDef}
A \emph{JB-algebra} is a complete order unit space $(A, A_+, e)$ together with a quadratic map $U\colon A \to L(A)$ satisfying (QJ1-QJ3) such that for every $x\in A$ the quadratic representation $U(x) \in L(A)$ is positive and has $\lVert\cdot\rVert_e$-operator norm $\lVert U(x) \rVert_{e} = \lVert x\rVert^2_e$.
\end{definition}

The only if direction of \Cref{MAINTHM} is a known result in the theory of JB-algebras.

\begin{theorem}\label{JBigr}
Let $(A, A_+, e, U)$ be a quadratic JB-algebra, and let $\Omega := A_+^\circ$ be its open cone. For $x\in \Omega$ denote $i(x) := x^{-1} (= U(x)^{-1}x)$ the inverse of $x$. Then the map $i\colon \Omega \to \Omega$ is Fréchet-differentiable with derivative $$Di(x)=-U(x)^{-1}.$$
Moreover, $i$ is an involutive gauge-reversing bijection satisfying $i(e)=e$ and $Di(e) = -\Id_A$.
\end{theorem}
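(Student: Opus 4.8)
The plan is to handle the algebraic and well-definedness facts first, then prove Fréchet-differentiability together with the derivative formula (the analytic heart of the statement), and finally deduce that $i$ is gauge-reversing. Since \Cref{AltDef} is equivalent to the classical notion of a JB-algebra via \Cref{QJAThm}, \Cref{JBProp} and \Cref{HOSAS}, the full machinery of JB-algebra theory is available. I would first recall that $\Omega = A_+^{\circ}$ is precisely the set of positive invertible elements of $A$; in particular $U(x)$ is invertible for every $x\in\Omega$, so $i(x)=U(x)^{-1}x$ is defined and the formula $Di(x)=-U(x)^{-1}$ is meaningful. For $x\in\Omega$ we have $i(x)=U(x)^{-1}x=U(x^{-1})x\in A_+$, because $U(x^{-1})$ is a positive operator (\Cref{invRem}, \Cref{JBProp}), and $i(x)$ is invertible with inverse $x$ (\Cref{invRem}); hence $i$ maps $\Omega$ into $\Omega$. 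Using $U(x^{-1})=U(x)^{-1}$ one gets $i(i(x))=U(x)U(x)^{-1}x=x$, so $i$ is an involutive bijection; homogeneity of degree $-1$ is immediate from $U(\lambda x)=\lambda^2U(x)$; and $i(e)=U(e)^{-1}e=e$ by (QJ1).

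For differentiability, the quadratic map $U\colon A\to L(A)$ is a continuous homogeneous polynomial map of degree $2$, hence analytic directly from \Cref{AnMap}; composing it with the analytic inversion map of the Banach algebra $L(A)$ (\Cref{invan}) — legitimate on $\Omega$, where $U(x)$ is invertible — and then with the continuous bilinear evaluation $L(A)\times A\to A$ exhibits $i$ as analytic, in particular Fréchet-differentiable, on $\Omega$. By the chain rule (\Cref{chr}), the standard derivative formulas $D(T\mapsto T^{-1})(T_0)(S)=-T_0^{-1}ST_0^{-1}$ and $D(x\mapsto U(x))(x)(h)=2U(x,h)$, and the product rule for the evaluation pairing, one obtains
\[
Di(x)(h) = -2\,U(x)^{-1}\bigl(U(x,h)\,x^{-1}\bigr) + U(x)^{-1}h .
\]
The heart of the matter is the purely algebraic identity $U(x,h)\,x^{-1}=h$ for invertible $x$ and arbitrary $h$, which I would extract from (QJ2): putting $y=x^{-1}$ there and using $U(x)x^{-1}=x$ and $U(x,x)=U(x)$ gives $U(x)\,U(x^{-1},z)\,x=U(x)z$, so $U(x^{-1},z)x=z$ by invertibility of $U(x)$; replacing $x$ by $x^{-1}$ and using $(x^{-1})^{-1}=x$ (\Cref{invRem}) yields $U(x,z)x^{-1}=z$. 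Substituting this into the displayed formula collapses it to $Di(x)=-U(x)^{-1}$, and taking $x=e$ gives $Di(e)=-U(e)^{-1}=-\Id_A$.

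It remains to see that $i$ is gauge-reversing. By \Cref{graor} and the homogeneity already noted, it suffices that $i$ be an order-anti-isomorphism, and since $i$ is an involution it is enough that $i$ be order-reversing. Given $x\le y$ in $\Omega$, the segment $z_t:=(1-t)x+ty$ stays in the convex set $\Omega$, and for an arbitrary state $\rho$ on $A$ the real function $t\mapsto\rho(i(z_t))$ is differentiable with derivative $-\rho\bigl(U(z_t)^{-1}(y-x)\bigr)\le 0$, since $U(z_t)^{-1}=U(z_t^{-1})$ is a positive operator and $y-x\in A_+$. Hence $\rho(i(y))\le\rho(i(x))$ for every state $\rho$, and $i(y)\le i(x)$ follows because states determine the order on a complete order unit space (\Cref{KadThm}).

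I expect the one delicate step to be the identity $U(x,h)\,x^{-1}=h$; everything else is either standard JB-algebra input or a routine application of the chain and product rules and the calculus of analytic maps developed in the preliminary section.
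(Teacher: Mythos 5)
Your proof is correct, but it takes a genuinely different route from the paper. The paper's own proof is essentially a citation argument: it imports the formula $Di(x)=-U(x)^{-1}$ from \cite[Prop.~II.3.3(i)]{FaKor} (noting the Euclidean proof carries over to JB-algebras) and the order-reversal of inversion from \cite[Lemma~1.31]{AS03}, leaving only involutivity, homogeneity, and the values at $e$ to a direct check. You instead make both key facts self-contained using the paper's own toolkit: you obtain analyticity and the derivative by writing $i$ as the composition of the continuous quadratic polynomial $U$, Banach-algebra inversion (\Cref{invan}), and the bilinear evaluation pairing, and then collapse the resulting expression via the identity $U(x,h)x^{-1}=h$, which you correctly extract from (QJ2) with $y=x^{-1}$ (using $U(x)x^{-1}=x$, $U(x,x)=U(x)$, invertibility of $U(x)$, and $(x^{-1})^{-1}=x$); and you prove order-reversal by differentiating $t\mapsto\rho(i((1-t)x+ty))$ along the segment and using positivity of $U(z_t)^{-1}=U(z_t^{-1})$ together with the fact that states determine the order. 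Both steps are sound (the segment argument is not circular, since the derivative formula is established first), and the only external input you rely on is the standard fact that $\Omega$ consists exactly of the positive invertible elements, which the paper itself also uses without proof. What the two approaches buy: the paper's proof is shorter, while yours stays entirely within the quadratic-Jordan axioms (QJ1)--(QJ3) and the calculus developed in \Cref{prelSec}, which fits the paper's stated aim of self-containedness; your monotonicity-by-integration argument is also a pleasant alternative to the operator-theoretic proof cited from \cite{AS03}.
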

\begin{proof}
The Fréchet-derivative of $i$ is determined in \cite[Prop. II.3.3(i)] {FaKor} for Euclidean Jordan algebras; the proof is also valid for JB-algebras.\\
\ind 
It is clear that $i$ is involutive and homogeneous of degree $-1$. In \cite[Lemma 1.31]{AS03} it is shown that $i$ is order-reversing. Since $i=i^{-1}$ this entails that $i$ is an order-anti-isomorphism. We conclude that $i$ is an involutive gauge-reversing bijection. Finally, $U(e)=\Id_A$ yields that $i(e)=U(e)^{-1}e=e$ and $Di(e)=-U(e)^{-1}=-\Id_A$.
\end{proof}

\section{Extended cones of semicontinuous affine functions}\label{ECSect}
It will be useful to extend the given gauge-reversing bijection to certain larger cones of semicontinuous affine functions. We will introduce these cones in \Cref{ecSec}, and construct the extension in \Cref{extSec}. The technical advantage of these extended cones is that they possess a wealth of extremal structure, as will be discussed in \Cref{evsubSec}. 

\subsection{Extension of the cones}\label{ecSec}

Throughout this section, we let $(V, V_+, v)$ be a complete order unit space. We let $K$ be its state space, always considered in the wk*-topology. Recall Kadison's \Cref{KadThm} stating that there exists an isomorphism $(V, V_+, v) \cong (A(K), A(K)_+, \1_K)$ sending $x\in V$ to the function $\tilde{x}: K \to \R$ given by $\tilde{x}(\rho) := \rho(x)$. We will identify each element $x \in V$ with the affine continuous function $\tilde{x}$ on $K$ it defines, thus dropping the tilde in $\tilde{x}$. Hence, we will write $x(\rho)$ for the value $\rho(x) = \tilde{x}(\rho)$.\\
\ind We denote the open cone by $C := V_+^{\circ}$. Since $C = \{x\in V: x \ge \delta v\text{ for some }\delta > 0\}$ and $K$ is compact, we have
$$C = \{a\colon K \to (0, \infty)\text{ affine continuous}\}.$$
\ind The set of all $[0, \infty]$-valued affine functions on $K$ is denoted
\begin{equation*}
A(K)_+^{\infty} := \{g\colon K \to [0, \infty]\text{ affine}\}.
\end{equation*}
We endow $A(K)_+^{\infty}$ with the pointwise ordering given by $a \le b$ if $a(\rho) \le b(\rho)$ for every $\rho \in K$. 
We remark that $A(K)_+^{\infty}$ is a \emph{monotone complete} partially ordered set: each nonincreasing net in $A(K)^{\infty}_+$ has an infimum, and each nondecreasing net in $A(K)_+^{\infty}$ has a supremum.\\
\ind Let us also define a transitive relation $\prec$ on $A(K)^\infty_+$ by $a \prec b$ if $a(\rho) < b(\rho)$ for every $\rho \in K$. Note that for $a,b \in A(K)_+$ we have $a \prec b$ if and only if $b - a \in C$, whereas $a \le b$ if and only if $b - a \in A(K)_+$.\\
\ind Moreover, on $A(K)_+^{\infty}$ we define a pointwise addition and multiplication by scalars in $[0, \infty]$, using the convention that $0 \cdot \infty = 0$. In this way, $A(K)_+^{\infty}$ becomes a semimodule over the semiring $[0, \infty]$.\\
\ind We now introduce two cones in $A(K)_+^{\infty}$ containing $C$, which will figure prominently in the sequel:
\begin{equation}\label{Cusc}
C_{usc} := \{g\colon K \to [0, \infty) \mid \text{ affine upper semicontinuous}\}
\end{equation}
and
\begin{equation}\label{Clsc}
C_{lsc} := \{h\colon K \to (0, \infty] \mid \text{ affine lower semicontinuous}\}.
\end{equation}
We remark that $C_{lsc}$ is closed under addition and multiplication by scalars in $(0, \infty]$ and that $C_{usc}$ is closed under addition and multiplication by scalars in $[0, \infty)$. Furthermore, we have that $$C_{usc} \cap C_{lsc} = \{g\colon K \to (0, \infty)\text{ affine continuous}\} = C.$$

\begin{example}\label{JF}
Let $F$ be a nonempty closed face of $K$. Define the function $I^{\infty}_F\colon K \to [0, \infty]$ by
\begin{equation}
I^{\infty}_F(\rho) = \begin{cases}
1 & \text{if }\rho \in F,\\
\infty &\text{if }\rho \not\in F.
\end{cases}
\end{equation}
We have $I^{\infty}_F \in C_{lsc}$ with $\min I^{\infty}_F = 1$. In particular, if $\psi \in \delta_e K$ is a pure state, then $I^{\infty}_{\{\psi\}} \in C_{lsc}$. 
\end{example}

\begin{remark}\label{Cndv}
It would have been equivalent to define $C_{usc}$ and $C_{lsc}$ as cones of affine semicontinuous functions on the dual cone of $V_+$. Indeed, let $V^*_+$ be the dual cone of $V_+$ in $V^*$, consisting of all positive linear functions on $V$, so that $K = \{\rho \in V^*_+: \rho(v) = 1\}$. Restriction defines an order- and $[0,\infty]$-semimodule-isomorphism
\begin{align}\label{dciso}
\{\tilde{g}\colon V^*_+\to [0,\infty]\text{ affine}\} &\stackrel{\sim}{\to} A(K)^{\infty}_+,\\
\tilde{g} &\mapsto \tilde{g}|_K,
\end{align}
because each $g\in A(K)^{\infty}_+$ has a unique affine extension $\tilde{g}\colon V^*_+ \to [0, \infty]$ given by 
\begin{align*}
\tilde{g}(\rho) =  \begin{cases}
0 &\text{if }\rho = 0,\\
\rho(v) g(\rho(v)^{-1}\rho) &\text{if }\rho \neq 0.
\end{cases}  
\end{align*}
Moreover, $g$ is upper or lower semicontinuous if and only if $\tilde{g}$ is so, whence the isomorphism \eqref{dciso} given by $\tilde{g} \mapsto \tilde{g}|_K$ restricts to order-isomorphisms
\begin{equation*}
\{\tilde{g}: V^*_+ \to [0,\infty) \text{ affine upper semicontinuous}\} \stackrel{\sim}{\to} C_{usc}
\end{equation*}
and
\begin{equation*}
\{\tilde{g}: V^*_+ \to (0,\infty] \text{ affine lower semicontinuous}\} \stackrel{\sim}{\to} C_{lsc}.
\end{equation*}
It follows from this alternative description of $C_{usc}$ and $C_{lsc}$, that if $v' \in C$ is a second order unit and $C'_{lsc}$ and $C'_{usc}$ are the cones defined by \eqref{Clsc} and \eqref{Cusc} but with $K$ replaced by the state space $K' := \{\rho \in V^*_+: \rho(v') = 1\}$ of $(V, V_+, v')$, then there exist canonical cone isomorphisms $C_{usc} \to C'_{usc}$ and $C_{lsc} \to C'_{lsc}$ given by $g \mapsto \tilde{g}|_{K'}$ which respect all structure.
In conclusion, the cones $C_{usc}$ and $C_{lsc}$ are independent of the choice of the order unit $v \in C$ for $(V, V_+)$.
\end{remark}

Our next theorem shows that $C_{lsc}$ is the closure of $C$ inside $A(K)_+^{\infty}$ under the formation of suprema of nondecreasing nets. Similarly, $C_{usc}$ consists precisely of the infima of nonincreasing nets in $C$. We will deduce these approximation results from two lemmata.

\begin{lemma}\label{AlfAf} 
Let $K$ be the state space of a complete order unit space $(V, V_+, v)$. Consider an affine upper semicontinuous function $g\colon K \to [-\infty, \infty)$, and set
$$\Af := \{a \in A(K) :  a \succ g\}.$$
\begin{enumerate}[label={\normalfont(\arabic*)}]
    \item For all $a_1, a_2 \in \Af$ there exists $a \in \Af$ with $a \le a_i$ for $i\in \{1,2\}$.
    \item For every $\rho \in K$ we have $g(\rho) = \inf \{a(\rho): a \in \Af\}$.
\end{enumerate}
\end{lemma}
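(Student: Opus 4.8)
The plan is to deduce both assertions from the classical theory of affine semicontinuous functions on a compact convex set: namely, from the fact (due to Alfsen, \cite{Alfsen71}) that a bounded upper semicontinuous concave function on $K$ is the pointwise infimum of the continuous affine functions dominating it, together with the downward directedness of that family of dominating functions. Since $K$ is compact and $g$ is upper semicontinuous, \Cref{scatt} shows that $g$ is bounded above, so the constant function $(\sup_K g)+1$ lies in $\Af$; in particular $\Af\neq\emptyset$. Beyond the cited facts, two points require care: (a) passing between the non-strict relation $a\ge g$ and the strict relation $a\succ g$ built into $\Af$, and (b) the possibility that $g$ attains the value $-\infty$ on part of $K$.

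\textbf{Part (2).} The inequality $g(\rho)\le\inf\{a(\rho):a\in\Af\}$ is immediate. For the reverse I would first reduce to showing: for each $\rho_0\in K$ and each $\epsilon>0$ there is $a\in A(K)$ with $a\ge g$ on $K$ and $a(\rho_0)<g(\rho_0)+\epsilon$ (and, when $g(\rho_0)=-\infty$, with $a(\rho_0)$ arbitrarily negative). Indeed, given such an $a$, the function $a+\epsilon\1_K$ lies in $\Af$ — at each $\rho$ with $g(\rho)$ finite one has $a(\rho)+\epsilon\ge g(\rho)+\epsilon>g(\rho)$, and where $g(\rho)=-\infty$ this is trivial — while $(a+\epsilon\1_K)(\rho_0)<g(\rho_0)+2\epsilon$, so letting $\epsilon\downarrow0$ gives the claim. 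To produce the auxiliary $a$, I would run a Hahn--Banach separation in $E\times\R$, where $E$ is a locally convex space with $K\subset E$: the hypograph $H:=\{(\rho,t)\in E\times\R:\rho\in K,\ t\le g(\rho)\}$ is convex because $g$ is affine and closed because $g$ is upper semicontinuous and $K$ is closed, and it does not contain the point $(\rho_0,g(\rho_0)+\epsilon)$. A continuous functional $(\rho,t)\mapsto\varphi(\rho)+\beta t$ strictly separating this point from $H$ must have $\beta>0$: if $\beta=0$ it would separate $\rho_0$ from $K$, and $\beta<0$ is excluded by inserting the downward ray $\{\rho_1\}\times(-\infty,g(\rho_1)]\subset H$ for some $\rho_1$ with $g(\rho_1)>-\infty$. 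Normalizing $\beta=1$ and restricting $\rho\mapsto\alpha-\varphi(\rho)$ to $K$ gives the desired affine function. (This separation is precisely Alfsen's proof that $g=\inf\{a\in A(K):a\ge g\}$ pointwise, which one may alternatively just cite; the case $g\equiv-\infty$ is trivial since then $\Af=A(K)$.)

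\textbf{Part (1).} Given $a_1,a_2\in\Af$, set $w:=\min(a_1,a_2)$, a continuous concave function on $K$ with $w\succ g$. Then $w-g$ is lower semicontinuous and strictly positive on the compact set $K$, so by \Cref{scatt} it attains a minimum $\delta>0$; thus $g+\delta\le w$. (If $g\equiv-\infty$ the claim is trivial, with $a$ the constant function $\min_K w$.) Put $N:=\min_K w$ and $u:=\max(g+\tfrac{\delta}{2},\,N-1)$, which is bounded, upper semicontinuous, convex, and satisfies $g\prec u\le w$. Invoking the sandwich theorem for the compact convex set $K$ applied to $u$ and $w$ (equivalently, the downward directedness of $\{a\in A(K):a\ge g+\tfrac{\delta}{2}\}$, again classical) yields $a\in A(K)$ with $u\le a\le w$. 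Then $a\ge u\succ g$, so $a\in\Af$, and $a\le w\le a_i$ for $i=1,2$, as required.

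I expect the crux to be the separation arguments: ensuring the separating hyperplane in $E\times\R$ is non-vertical (the vertical coefficient is strictly positive), carrying the value $-\infty$ of $g$ correctly through every step, and, in part (1), coping with the fact that the relevant convex sets in $E\times\R$ are closed but not compact — which is exactly what forces the lower truncation $\max(\,\cdot\,,N-1)$ (or an appeal to a version of the sandwich theorem phrased for bounded functions). If one is content to cite \cite{Alfsen71} for the pointwise-infimum representation and for the downward directedness of $\{a\in A(K):a\ge g\}$ with $g$ bounded upper semicontinuous concave, then only the two elementary reductions (a) and (b) above remain to be carried out.
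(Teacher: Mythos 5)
Your overall route is genuinely different from the paper's, which simply cites \cite[Cor.~I.1.2 and Cor.~I.1.4]{Alfsen71} for parts (2) and (1) respectively; you instead reconstruct the underlying separation arguments. Most of this reconstruction is sound: the reduction from $a \ge g$ to $a \succ g$ by adding $\epsilon\1_K$, and in part (1) the choice $w=\min(a_1,a_2)$, the strictly positive minimum $\delta$ of the lower semicontinuous function $w-g$ via \Cref{scatt}, the truncation $u=\max(g+\tfrac{\delta}{2},N-1)$, and the appeal to the classical in-between (sandwich) theorem — which plays exactly the role of Alfsen's Cor.~I.1.4 in the paper — are all correct.

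There is, however, a genuine gap in part (2), in precisely the subcase you flagged as delicate, namely $g(\rho_0)=-\infty$. Your argument that the separating functional cannot be vertical reads ``if $\beta=0$ it would separate $\rho_0$ from $K$''. But a functional with $\beta=0$ only separates $\rho_0$ from the projection of $H$ onto $E$, which is the convex set $\{\rho\in K: g(\rho)>-\infty\}$, not all of $K$; when $g(\rho_0)$ is finite this set contains $\rho_0$ and the contradiction is real, but when $g(\rho_0)=-\infty$ it need not, and $\beta=0$ can actually occur. For instance, take $K=[0,1]$ (a state space of $\R^2$), $g(0)=0$ and $g\equiv-\infty$ on $(0,1]$ (affine and upper semicontinuous, and of the type produced by $g=-h$ with $h\in C_{lsc}$, so this case matters for \Cref{scapprox}); for $\rho_0=1$ the vertical functional $(\rho,t)\mapsto\rho$ strictly separates $(1,-M)$ from $H=\{0\}\times(-\infty,0]$. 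The conclusion survives, and the gap is local and fixable: if $\beta=0$ the separation yields $\varphi\in E^{*}$ and $\alpha\in\R$ with $\varphi(\rho_0)>\alpha\ge\varphi(\rho)$ whenever $g(\rho)>-\infty$; choosing any $a_0\in\Af$ (e.g.\ a constant exceeding $\sup_K g$, which is finite by \Cref{scatt}), the functions $a_c:=a_0-c\,(\varphi|_K-\alpha)$ with $c>0$ are continuous affine, satisfy $a_c\ge a_0\succ g$ on $\{g>-\infty\}$ (there $\varphi\le\alpha$) and trivially elsewhere, hence lie in $\Af$, while $a_c(\rho_0)\to-\infty$ as $c\to\infty$. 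With this repair (or by falling back on citing \cite{Alfsen71}, as you mention and as the paper itself does), your proof is complete.
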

\begin{proof}
Part (1) follows from \cite[Cor. I.1.4]{Alfsen71} and part (2) follows from [ibidem, Cor. I.1.2].
\end{proof}

We now make a remark about our language concerning nets. Let $E$ be a nonempty subset of $A(K)^{\infty}_+$ such that for any two $h_1, h_2 \in E$ there exists $h \in E$ with $h \ge h_i$ for $i\in \{1,2\}$. Then the inclusion map $E \to A(K)^\infty_+$ is a nondecreasing net indexed by the directed partially ordered set $(E, \le)$. By abuse of language, we will speak of the nondecreasing net $E$. Similarly, if $D$ is a nonempty subset of $A(K)^{\infty}_+$ such that for any two $g_1, g_2 \in D$ there exists $g \in D$ with $g \le g_i$ for $i\in \{1,2\}$, then we will view $D$ as nonincreasing net indexed by itself.

\begin{lemma}\label{SupInfProp}
Let $(V, V_+, v)$ be a complete order unit space, with open cone $C := V_+^{\circ}$.
\begin{enumerate}[label={\normalfont(\arabic*)}]
\item Each nonincreasing net in $C_{usc}$ has an infimum in $C_{usc}$, which is given pointwise.
\item Each nondecreasing net in $C_{lsc}$ has a supremum in $C_{lsc}$, which is given pointwise.
\end{enumerate}
\end{lemma}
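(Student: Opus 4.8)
The plan is to show in each case that the pointwise infimum resp.\ supremum of the net already lies in the appropriate cone, so that it automatically serves as the greatest lower bound resp.\ least upper bound there; this refines the monotone completeness of $A(K)_+^{\infty}$ noted above. I will carry out part (1) in detail; part (2) is dual.

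First I would take a nonincreasing net $(g_\alpha)_\alpha$ in $C_{usc}$ (which is nonempty, so some index $\alpha_0$ exists) and define $g\colon K\to[0,\infty)$ pointwise by $g(\rho):=\inf_\alpha g_\alpha(\rho)$. Since every $g_\alpha$ is nonnegative we get $g\ge 0$, and since $g\le g_{\alpha_0}$ with $g_{\alpha_0}$ finite-valued we get $g<\infty$; thus $g$ is genuinely $[0,\infty)$-valued. Moreover, for each fixed $\rho$ the net of reals $(g_\alpha(\rho))_\alpha$ is nonincreasing and bounded below, hence converges to its infimum, so $g$ is not merely the pointwise infimum but the pointwise \emph{limit} of the net $(g_\alpha)_\alpha$.

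Next I would check that $g\in C_{usc}$. Affineness follows because a pointwise limit of affine functions is affine: passing to the limit over $\alpha$ in $g_\alpha(t\rho_1+(1-t)\rho_2)=t\,g_\alpha(\rho_1)+(1-t)\,g_\alpha(\rho_2)$ gives the same identity for $g$. Upper semicontinuity is immediate from $g^{-1}([-\infty,\lambda))=\bigcup_\alpha g_\alpha^{-1}([-\infty,\lambda))$ being open for every $\lambda\in\R$ (equivalently, apply \Cref{sclim}(1) to the nonincreasing, hence almost nonincreasing, net $(g_\alpha)_\alpha$). Hence $g$ is an affine, upper semicontinuous, $[0,\infty)$-valued function, i.e.\ $g\in C_{usc}$. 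Finally $g$ is by construction a lower bound for $(g_\alpha)_\alpha$ in the pointwise order, and any $g'\in C_{usc}$ with $g'\le g_\alpha$ for all $\alpha$ satisfies $g'(\rho)\le\inf_\alpha g_\alpha(\rho)=g(\rho)$ for every $\rho$, so $g'\le g$; thus $g=\inf_\alpha g_\alpha$ in $C_{usc}$, computed pointwise.

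For part (2) one repeats the argument with a nondecreasing net $(h_\beta)_\beta$ in $C_{lsc}$ and its pointwise supremum $h(\rho):=\sup_\beta h_\beta(\rho)$: now $h\ge h_{\beta_0}>0$ shows $h$ is $(0,\infty]$-valued, lower semicontinuity follows since a pointwise supremum of lower semicontinuous functions is lower semicontinuous (or via \Cref{sclim}(2)), and affineness in the extended arithmetic survives the pointwise limit, the only case needing separate attention being when some value equals $\infty$, handled with the convention $0\cdot\infty=0$. I do not expect a serious obstacle here; the single point requiring care is the interaction between \emph{affine} and pointwise infima/suprema — the pointwise infimum of an arbitrary family of affine functions is only concave — which is precisely resolved by the directedness of the net, turning the infimum/supremum into a pointwise limit.
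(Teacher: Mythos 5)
Your proof is correct and follows essentially the same route as the paper: form the pointwise infimum (resp.\ supremum), use directedness to see it is a pointwise limit and hence still affine, invoke semicontinuity of the limit (as in \Cref{sclim}), and conclude it is the infimum (resp.\ supremum) in $C_{usc}$ (resp.\ $C_{lsc}$). The paper's proof is merely terser, delegating both the semicontinuity and the limit statement to \Cref{sclim}; your explicit remark that directedness is what rescues affineness (a pointwise infimum of an arbitrary family of affine functions being only concave) is exactly the right point of care.
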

\begin{proof}
(1) Note that the empty set (or net) has infimum $0$ in $C_{usc}$. Let $(g_\alpha)_{\alpha}$ be a nonempty nonincreasing net in $C_{usc}$, and denote by $g: K \to [0, \infty)$ its pointwise infimum, which is given by $g(\rho) = \inf_\alpha g_\alpha(\rho)$ for each $\rho \in K$. Then $g$ is affine and upper semicontinuous by \Cref{sclim}(1), whence $g \in C_{usc}$. It is then clear that $g$ is the infimum of the net $(g_\alpha)_{\alpha}$ in $A(K)^\infty_+$, hence in $C_{usc}$.\\
\ind (2) This is shown in a similar way using \Cref{sclim}(2).
\end{proof}

\begin{theorem}\label{scapprox}
Let $(V, V_+, v)$ be a complete order unit space, with open cone $C := V_+^{\circ}$.
\begin{enumerate}[label={\normalfont(\arabic*)}]
\item Each $g \in C_{usc}$ is the pointwise infimum of the nonincreasing net $\{a \in C: g \prec a\}$.
\item  Each $h \in C_{lsc}$ is the pointwise supremum of the nondecreasing net $\{b \in C: b \prec h\}$.
\end{enumerate}
\end{theorem}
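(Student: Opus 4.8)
The two parts are dual, so I would prove part (1) in detail and indicate that (2) follows by the symmetric argument (replacing the order by its opposite, upper by lower semicontinuity, and using \Cref{AlfAf} in its lower-semicontinuous version via $-g$). So fix $g \in C_{usc}$, i.e.\ an affine upper semicontinuous function $g\colon K \to [0,\infty)$, and set $D := \{a \in C : g \prec a\}$. The first thing to check is that $D$ is a nonincreasing net in the sense of the net-language remark preceding the theorem: given $a_1, a_2 \in D \subset A(K)$ with $a_i \succ g$, \Cref{AlfAf}(1) (applied to the affine upper semicontinuous function $g$, viewed as $[-\infty,\infty)$-valued) furnishes $a \in A(K)$ with $a \succ g$ and $a \le a_i$ for $i = 1,2$; since $a \succ g \ge 0$ and $K$ is compact, $a$ takes values in $(0,\infty)$, hence $a \in C$ and therefore $a \in D$. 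Thus $D$ is downward directed, and $g$ is a lower bound for $D$ pointwise. I should also note $D$ is nonempty: e.g.\ $g$ is bounded above on the compact space $K$ by \Cref{scatt}(1), say $g \le M$, so the constant function $(M+1)\1_K$ lies in $D$.

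Next I invoke \Cref{SupInfProp}(1): the nonincreasing net $D$ has an infimum in $C_{usc}$, and this infimum is computed pointwise. Call it $g' \in C_{usc}$, so $g'(\rho) = \inf_{a \in D} a(\rho)$ for every $\rho \in K$. Clearly $g \le g'$ since $g$ is a pointwise lower bound for $D$. The crux is the reverse inequality $g' \le g$, i.e.\ $\inf_{a \in D} a(\rho) \le g(\rho)$ for each fixed $\rho \in K$. This is exactly the content of \Cref{AlfAf}(2), which asserts $g(\rho) = \inf\{a(\rho) : a \in A(K),\ a \succ g\}$ — and $\{a \in A(K) : a \succ g\}$ is precisely $\Af$, while the subset $D = \{a \in C : g \prec a\}$. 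These two sets differ only in that $\Af$ allows affine continuous functions not bounded below by $0$; but any $a \in \Af$ satisfies $a \succ g \ge 0$, so in fact $\Af \subseteq C$ and hence $\Af = D$. Therefore $\inf_{a \in D} a(\rho) = g(\rho)$, giving $g' = g$ and completing part (1).

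I do not anticipate a serious obstacle: the theorem is essentially a repackaging of \Cref{AlfAf} together with \Cref{SupInfProp}. The only point requiring a moment's care is the bookkeeping that the set $\Af$ appearing in \Cref{AlfAf} — defined for a general $[-\infty,\infty)$-valued affine upper semicontinuous function — coincides with the net $\{a \in C : g \prec a\}$ when $g \ge 0$, which is immediate from nonnegativity of $g$ as just noted. For part (2), one applies the dual statement of \Cref{AlfAf} (obtained by negation) to an affine lower semicontinuous $h\colon K \to (0,\infty]$, uses \Cref{SupInfProp}(2) for the existence of the supremum of the nondecreasing net $\{b \in C : b \prec h\}$, and concludes by the same two-inequality argument; here one should observe that $\{b \in C : b \prec h\}$ is nonempty because $h > 0$ is bounded below away from $0$ on compact $K$ — indeed by \Cref{scatt}(2) there is $\delta > 0$ with $h \ge \delta \1_K$, and $\tfrac{\delta}{2}\1_K$ witnesses nonemptiness — and that this set is upward directed by the dual of \Cref{AlfAf}(1).
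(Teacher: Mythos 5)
Your part (1) is correct and is essentially the paper's proof: the decisive observation is that $a \succ g \ge 0$ forces $a \in C$, so the set $\Af$ of \Cref{AlfAf} coincides with $\{a \in C: g \prec a\}$, and \Cref{AlfAf}(2) then gives the pointwise infimum (your additional appeal to \Cref{SupInfProp}(1) is harmless but not needed for the statement as phrased).

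Part (2), however, is not the verbatim dual, and this is where your sketch has a gap. The dual of \Cref{AlfAf}(2) (applied to $-h$) says that $h$ is the pointwise supremum of $\Bf := \{b \in A(K): b \prec h\}$, whereas the theorem concerns the smaller set $\Bf' := \{b \in C: b \prec h\}$; unlike in part (1), these sets do \emph{not} coincide, since $b \prec h$ imposes no positivity on $b$. Hence ``the same two-inequality argument'' does not close: one still needs $\sup_{b \in \Bf'} b(\rho) \ge h(\rho)$, and the dual of \Cref{AlfAf}(2) only provides this for the larger set $\Bf$. The paper bridges this by a cofinality argument: $\Bf'$ is cofinal in the upward directed set $\Bf$, so both have the same pointwise supremum. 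You in fact have the ingredients on the table --- given $b \in \Bf$, apply the dual of \Cref{AlfAf}(1) to $b$ and your element $\tfrac{\delta}{2}\1_K \in \Bf'$ to obtain $b' \in \Bf$ with $b' \ge b$ and $b' \ge \tfrac{\delta}{2}\1_K$, whence $b' \in \Bf'$ --- but this reduction must be stated explicitly; as written, your part (2) asserts symmetry with part (1) at precisely the point where the symmetry breaks.
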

\begin{proof}
{\normalfont (1)} In \Cref{AlfAf} we have seen that $g$ is the pointwise infimum of the nonincreasing net $\Af := \{a \in A(K): a \succ g\}$. Now for $a \in A(K)$ we have that $a \succ g$ and $g \ge 0$ imply $a \succ 0$. Therefore $\Af \subset C$ and the proof is complete.\\
\ind {\normalfont (2)} Let $\Bf := \{b \in A(K): b \prec h\}$. Applying \Cref{AlfAf}(1) with $g = -h$ yields that for all $b_1, b_2 \in \Bf$ there exists $b\in \Bf'$ with $b\ge b_i$ for $i\in \{1,2\}$. Since $0 \in \Bf'$ holds, the subnet $\Bf' := \{b\in C: b \prec h\} = \{b \in A(K): 0 \prec b \prec h\} $ is cofinal in $\Bf$. Therefore by part (2) of that theorem, $h(\rho) = \sup\{b(\rho): b \in \Bf\} = \sup\{b(\rho): b \in \Bf'\}$ for every $\rho \in K$. 
\end{proof}

Part (1) of the following lemma asserts that any nonincreasing net in $C_{usc}$ with infimum, say $g \in C_{usc}$, is cofinal in a suitable sense with respect to the net $\Af = \{x \in C: x \prec g\}$  considered in the previous. Part (2) contains the analogous cofinality assertion for a nondecreasing net in $C_{lsc}$. The lemma is a version of Dini's theorem for semicontinuous functions.

\begin{lemma}\label{ocga}
Let $(V, V_+, v)$ be a complete order unit space, with open cone $C := V_+^{\circ}$.
\begin{enumerate}[label={\normalfont(\arabic*)}]
\item Let $(g_{\alpha})_{\alpha}$ be any nonincreasing net in $C_{usc}$ with infimum $g \in C_{usc}$. Let $f \in C$ with $f \succ g$. Then there exist an index $\alpha_0$ and $\epsilon > 0$ with $g_{\alpha_0} \prec f - \epsilon$.
\item Let $(h_{\alpha})_{\alpha}$ be any nondecreasing net in $C_{lsc}$ with supremum $h \in C_{lsc}$. Let $f \in C$ with $f \prec h$. Then there exist an index $\alpha_0$ and $\epsilon > 0$ with $h_{\alpha_0} \succ f + \epsilon$.
\end{enumerate}
\end{lemma}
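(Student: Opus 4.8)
The plan is to run the classical Dini argument, adapted to semicontinuous functions on the compact state space $K$, separately for the two parts; since part~(2) is entirely dual to part~(1), I will describe part~(1) in detail and only indicate the changes needed for part~(2).

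\emph{Step 1: extract a uniform gap.} Since $g \in C_{usc}$ and $f \in C$ are both real-valued and $f$ is continuous, the function $f - g$ is lower semicontinuous on $K$, and it is strictly positive everywhere by the hypothesis $f \succ g$. By \Cref{scatt}(2) it attains its infimum, so $2\delta := \min_{\rho \in K}\bigl(f(\rho) - g(\rho)\bigr) > 0$; in particular $g(\rho) \le f(\rho) - 2\delta$ for every $\rho \in K$.

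\emph{Step 2: localize.} Fix $\rho \in K$. By \Cref{SupInfProp}(1) the infimum of the net is computed pointwise, so $\inf_\alpha g_\alpha(\rho) = g(\rho) \le f(\rho) - 2\delta < f(\rho) - \delta$, whence there is an index $\alpha(\rho)$ with $g_{\alpha(\rho)}(\rho) < f(\rho) - \delta$. The function $f - \delta - g_{\alpha(\rho)}$ is lower semicontinuous, so the set
\[
U_\rho := \{\sigma \in K : g_{\alpha(\rho)}(\sigma) < f(\sigma) - \delta\}
\]
is open and contains $\rho$. Thus $\{U_\rho\}_{\rho \in K}$ is an open cover of $K$.

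\emph{Step 3: compactness and directedness.} By compactness of $K$ choose $\rho_1, \dots, \rho_n$ with $K = \bigcup_{i=1}^n U_{\rho_i}$. Since the index set of the net is directed, there is an index $\alpha_0$ dominating each of $\alpha(\rho_1), \dots, \alpha(\rho_n)$, and then $g_{\alpha_0} \le g_{\alpha(\rho_i)}$ for every $i$ because the net is nonincreasing. Given $\sigma \in K$, pick $i$ with $\sigma \in U_{\rho_i}$; then $g_{\alpha_0}(\sigma) \le g_{\alpha(\rho_i)}(\sigma) < f(\sigma) - \delta$. Hence $g_{\alpha_0} \prec f - \delta$, which is the assertion with $\epsilon := \delta$. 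For part~(2) one argues symmetrically: $f - h$ is upper semicontinuous (and possibly $-\infty$-valued, which causes no trouble) and strictly negative, so it attains its maximum by \Cref{scatt}(1); setting $-2\delta := \max_K(f - h) < 0$ gives $h \ge f + 2\delta$, one chooses $\alpha(\rho)$ with $h_{\alpha(\rho)}(\rho) > f(\rho) + \delta$ using \Cref{SupInfProp}(2), covers $K$ by the open sets $\{\sigma : h_{\alpha(\rho)}(\sigma) > f(\sigma) + \delta\}$, and concludes via a common dominating index $\alpha_0$ that $h_{\alpha_0} \succ f + \delta$.

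I do not expect a genuine obstacle here; the only points requiring care are that the relevant extrema are attained thanks to compactness of $K$ together with semicontinuity (\Cref{scatt}), that the pointwise infimum (resp. supremum) of the net genuinely equals the pointwise value of $g$ (resp. $h$), so that Step~2 is legitimate (\Cref{SupInfProp}), and that a finite collection of indices admits a common dominating index because a net is by definition indexed by a directed set.
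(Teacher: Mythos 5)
Your proof is correct and follows essentially the same route as the paper: a Dini-type argument extracting a uniform gap $\epsilon$ via \Cref{scatt}, covering $K$ by open sets where some $g_\alpha$ (resp.\ $h_\alpha$) already beats $f\mp\epsilon$, and using compactness plus directedness of the index set to find a single $\alpha_0$. The only cosmetic difference is that you index the cover by points of $K$ (one chosen index per point) while the paper indexes it directly by the net's indices; the substance is identical.
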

\begin{proof}
(1) Since $f$ is continuous, the functions $g_{\alpha}-f$ and $g-f$ are upper semicontinuous. From $g \prec f$ it follows by \Cref{scatt}(1) that $\max(g-f)<0$, so there exists $\epsilon > 0$ with $g-f \prec -\epsilon$. For every index $\alpha$ the set $U_{\alpha} := \{\rho \in K: g_{\alpha}(\rho) - f(\rho) < -\epsilon\}$ is open since $U_{\alpha} = (g_{\alpha}-f)^{-1}((-\infty,\epsilon))$. We claim that $(U_{\alpha})_{\alpha}$ is an open cover of $K$. Indeed, because for each $\rho \in K$ we have that $\inf_\alpha (g_{\alpha}(\rho) - f(\rho)) = g(\rho) - f(\rho) < -\epsilon$, there exists an index $\alpha$ such that $g_{\alpha}(\rho) - f(\rho) < -\epsilon$, that is, $\rho \in U_{\alpha}$. By compactness of $K$ we can find finitely many indices $\alpha_1, \ldots, \alpha_n$ such that $K = \bigcup_{i=1}^n U_{\alpha_i}$. Since the net $(g_{\alpha})_{\alpha}$ is downward directed, there exists an index $\alpha_0$ such that for each $i \in \{1,2,\ldots,n\}$ we have $g_{\alpha_i} \ge g_{\alpha_0}$, in particular, $U_{\alpha_i} \subset U_{\alpha_0}$. It follows that $K = U_{\alpha_0}$. We conclude that $g_{\alpha_0} \prec f - \epsilon$, as desired.\\
\ind (2) This is shown analogously to part (1).
\end{proof}

\begin{lemma}\label{scface}
Let $K$ be the state space of the complete order unit space $(V, V_+, v)$.
\begin{enumerate}[label={\normalfont(\arabic*)}]
\item Let $g\colon K \to [-\infty, \infty)$ be affine and upper semicontinuous. Then there exists a pure state $\psi \in \delta_e K$ such that $\psi(g) = \sup g$.
\item Let $h\colon K \to (-\infty, \infty]$ be affine and lower semicontinuous. Then there exists a pure state $\psi \in \delta_e K$ such that $\psi(h) = \inf h$.
\end{enumerate}
\end{lemma}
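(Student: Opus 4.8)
The plan is to prove (1) — that an affine upper semicontinuous $g$ attains its supremum at an extreme point of $K$ — and then obtain (2) by applying (1) to the affine upper semicontinuous function $-h\colon K \to [-\infty,\infty)$, which produces $\psi \in \delta_e K$ with $(-h)(\psi) = \sup(-h) = -\inf h$, i.e.\ $\psi(h) = h(\psi) = \inf h$. For (1), put $M := \sup g$. If $M = -\infty$, then $g \equiv -\infty$ and any pure state does the job; such a state exists because $K$ is a nonempty wk*-compact convex set, so that \Cref{KrMm} applies. Hence assume $M \in \R$. By \Cref{scatt}(1) the ``maximum set'' $F := g^{-1}(\{M\}) = \{\rho \in K : g(\rho) = M\}$ is nonempty.

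First I would check that $F$ is a compact convex subset of $K$. It is closed because $F = K \setminus g^{-1}([-\infty, M))$ and $g^{-1}([-\infty, M))$ is open by upper semicontinuity; being a closed subset of the compact space $K$, it is compact. Convexity is immediate from the affineness of $g$: all values involved equal the finite number $M$, so no extended-real conventions intervene. Applying the Krein--Milman theorem (\Cref{KrMm}) to the nonempty compact convex set $F$ yields an extreme point $\psi \in \delta_e F$.

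The key step is to upgrade $\psi$ to an extreme point of $K$ itself. Suppose $\psi = (1-\lambda)\sigma + \lambda\tau$ with $\sigma, \tau \in K$ and $\lambda \in (0,1)$. Affineness of $g$ gives $M = g(\psi) = (1-\lambda)g(\sigma) + \lambda g(\tau)$. Since $g(\sigma), g(\tau) \le M$, and a value $-\infty$ on the right would force $g(\psi) = -\infty \ne M$, both $g(\sigma)$ and $g(\tau)$ are finite and equal to $M$, so $\sigma, \tau \in F$; extremality of $\psi$ inside $F$ then forces $\sigma = \tau = \psi$. Thus $\psi \in \delta_e K$, and $g(\psi) = M = \sup g$, which is the assertion of (1).

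I do not expect a genuine obstacle here: this is a Bauer-type maximum principle. The only points deserving a little care are confirming that the super-level set $F$ is closed, which uses exactly the defining property of upper semicontinuity, and the bookkeeping for the extended-real values ($-\infty$ for $g$ in (1), $+\infty$ for $h$ in (2)) when invoking affineness — both disposed of above.
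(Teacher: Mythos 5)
Your proposal is correct and follows essentially the same route as the paper: both identify the maximum set $F$ (nonempty by \Cref{scatt}, closed by upper semicontinuity), apply Krein--Milman to it, and use that $F$ is a face of $K$ — your ``upgrade'' step is just the unpacked verification of the face property — to conclude the extreme point of $F$ is a pure state of $K$. Deducing (2) from (1) via $-h$ rather than repeating the argument, and treating the degenerate case $\sup g = -\infty$ explicitly, are only cosmetic differences.
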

\begin{proof}
(1) Let $F = \{\rho \in K: g(\rho) = \sup g\}$. \Cref{scatt} asserts that $g$ attains its supremum, so $F$ is nonempty. Moreover, $F$ is a face of $K$. Because $F = g^{-1}([\sup g, \infty))$ and $g$ is upper semicontinuous, $F$ is closed.  We conclude that $F$ is a nonempty, closed face of $K$. The Krein--Milman \Cref{KrMm} gives that $F$ is the closed convex hull of $\delta_e F$; in particular $\delta_e F \neq \emptyset$. Since $F$ is a face of $K$, we have $\delta_e K \cap F = \delta_e F \neq \emptyset,$ as was to be shown.\\
\ind (2) Just like in part (1) one shows that $F = \{\rho \in K: h(\rho) = \inf h\}$ is a nonempty, closed face of $K$, which therefore contains an extremal point of $K$.
\end{proof} 

\begin{lemma}\label{pureIneq} Let $h, h'\colon K \to (-\infty, \infty]$ be lower semicontinuous affine functions. If $h(\psi) \le h'(\psi)$ for each pure state $\psi \in \delta_e K$, then $h \le h'$.
\end{lemma}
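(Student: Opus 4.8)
The plan is to deduce the pointwise inequality $h \le h'$ by comparing $h'$ not with $h$ directly — which is awkward, since both functions can take the value $+\infty$ at the same point — but with the continuous affine functions lying strictly below $h$. The main tool for the crucial step will be \Cref{scface}(2), which guarantees that a lower semicontinuous affine function on $K$ attains its infimum at some pure state.

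First I would record the approximation formula
$$h(\rho)=\sup\{\, b(\rho) : b\in A(K),\ b\prec h\,\}\qquad(\rho\in K),$$
which is obtained by applying \Cref{AlfAf}(2) to the affine upper semicontinuous function $-h\colon K\to[-\infty,\infty)$ and substituting $b=-a$ in the resulting identity $-h(\rho)=\inf\{a(\rho):a\in A(K),\ a\succ -h\}$. The index set $\{b\in A(K):b\prec h\}$ is nonempty because $h$ is lower semicontinuous on the compact set $K$, hence bounded below by \Cref{scatt}(2), so every sufficiently negative constant function belongs to it.

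Next comes the key claim: \emph{if} $b\in A(K)$ satisfies $b\prec h$, \emph{then} $b\le h'$ on all of $K$. Indeed, for every pure state $\psi\in\delta_e K$ we have $b(\psi)<h(\psi)\le h'(\psi)$ by hypothesis, so the function $h'-b\colon K\to(-\infty,\infty]$ — which is affine and, being the difference of a lower semicontinuous affine function and a continuous one, lower semicontinuous — is strictly positive at every pure state. By \Cref{scface}(2) there is a pure state $\psi_0$ with $(h'-b)(\psi_0)=\inf_K(h'-b)$, and hence $\inf_K(h'-b)>0$; therefore $b\le h'$ everywhere on $K$. Combining this with the approximation formula gives, for each $\rho\in K$, that $h(\rho)=\sup\{b(\rho):b\prec h\}\le h'(\rho)$, as required.

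The only delicate point is the very first reduction: moving the comparison onto the finite-valued continuous affine functions $b\prec h$ is what makes the $\infty-\infty$ ambiguity disappear, after which the semicontinuity lemmas apply mechanically. I would also note in passing that the degenerate case $h\equiv+\infty$ needs no separate treatment — then $\{b\prec h\}=A(K)$, the key claim still yields $b\le h'$ for every $b\in A(K)$, and taking the supremum over constant functions forces $h'\equiv+\infty$.
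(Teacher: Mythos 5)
Your proof is correct and follows essentially the same route as the paper: approximate $h$ from below by continuous affine minorants, then apply \Cref{scface}(2) to the lower semicontinuous affine difference with $h'$. The only cosmetic difference is that you invoke \Cref{AlfAf}(2) directly (applied to $-h$) rather than the paper's appeal to \Cref{scapprox}, which is if anything the cleaner citation here since $h$ need not take values in $(0,\infty]$.
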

\begin{proof}
\Cref{scapprox} furnishes an increasing net $(h_{\alpha})_{\alpha}$ of continuous affine functions on $K$ such that $h(\rho) = \sup_\alpha h_{\alpha}(\rho)$ for each $\rho \in K$. Now by replacing $h$ with an arbitrary member $h_{\alpha}$ of the net, which again satisfies $h_{\alpha}(\psi) \le h'(\psi)$ for each $\psi \in \delta_e K$, we may and do assume that $h$ is continuous.\\
\ind The function $h_0 = h' - h$ is affine lower semicontinuous and satisfies $h_0(\psi) \ge 0$ for all $\psi \in \delta_e S$. It follows from \Cref{scface}(2) that $\inf h_0 \ge 0$, whence $h' \ge h$. 
\end{proof}

We close this section by extending the gauge functions $L$ and $M$ defined in \Cref{GaugeDef} on $C$ to $A(K)^\infty_+$, and establishing some of their elementary properties.

\begin{definition}
Let $x, y\in A(K)_+^{\infty} \setminus \{0,\infty\}$. We define the quantities 
\begin{align}
m(y,x) := \sup \{\lambda \in [0,\infty): \lambda x \le y\} \in [0, \infty),\label{ldefExt}
\end{align}
and
\begin{align}
M(x,y) := \inf \{\mu \in (0, \infty]: x \le \mu y\} \in (0, \infty].\label{mdefExt}
\end{align}
\end{definition}
As before it holds that
\begin{equation}\label{LMext}
m(y,x) = M(x,y)^{-1},
\end{equation}
which is understood to include the assertion that $m(y,x)=0$ if and only if $M(x,y)=\infty.$

\begin{lemma}\label{PosCheck} 
Let $x, y \in A(K)_+^{\infty}$ such that $x \succ 0$. If $\lambda \in [0, m(x,y))$, then $x - \lambda y \succ 0$.
\end{lemma}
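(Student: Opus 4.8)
The plan is to unwind the definition of $m(x,y)$ as a supremum and then check positivity pointwise on the state space $K$.

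First I would dispose of the degenerate case $\lambda = 0$, for which the assertion $x - 0\cdot y \succ 0$ is literally the hypothesis $x \succ 0$. So assume $0 < \lambda < m(x,y)$; in particular $m(x,y) > 0$. By definition $m(x,y) = \sup\{t \in [0,\infty) : t y \le x\}$, and since $\lambda$ lies strictly below this supremum there must exist a scalar $\lambda'$ with $\lambda < \lambda'$ and $\lambda' y \le x$ — otherwise every $t$ with $t y \le x$ would satisfy $t \le \lambda$, forcing $m(x,y) \le \lambda$. This $\lambda'$ is the only extra ingredient needed beyond the hypotheses.

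Next I would fix $\rho \in K$ and compare $\lambda y(\rho)$ with $x(\rho)$ in $[0,\infty]$, distinguishing the cases $y(\rho) = 0$, $0 < y(\rho) < \infty$, and $y(\rho) = \infty$. In the first case $\lambda y(\rho) = 0 < x(\rho)$ by $x \succ 0$; in the second $\lambda y(\rho) < \lambda' y(\rho) \le x(\rho)$, using $\lambda < \lambda'$ and $y(\rho) > 0$; in the third, $\lambda' y \le x$ together with $\lambda' > 0$ forces $x(\rho) = \infty = \lambda y(\rho)$. Assembling these, one gets $\lambda y \prec x$ on the locus where $y$ is finite, and $x \equiv \infty \equiv \lambda y$ elsewhere, which is precisely the content of $x - \lambda y \succ 0$.

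The only point requiring a little care — the main obstacle, such as it is — is the bookkeeping with the value $+\infty$: one must note that the inequality $\lambda' y \le x$ with $\lambda' > 0$ automatically propagates infiniteness of $y$ to $x$, so that the formal difference $x - \lambda y$ is handled consistently at those points. Beyond that, the argument is entirely elementary and uses only the supremum description of $m$ and the assumption $x \succ 0$.
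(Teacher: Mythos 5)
Your argument is correct and follows essentially the same route as the paper: a pointwise case analysis on $y(\rho)$, using $x \succ 0$ when $y(\rho)=0$ and the supremum description of $m(x,y)$ when $y(\rho)>0$. The only cosmetic differences are that you extract a witness $\lambda'>\lambda$ with $\lambda' y \le x$ instead of invoking the attained bound $m(x,y)y \le x$, and that you split off the $y(\rho)=\infty$ case explicitly, which the paper's proof folds into the same inequality.
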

\begin{proof}
It is to be shown for every $\rho \in K$ that $x(\rho) - \lambda y(\rho) > 0$. If $y(\rho) = 0$ then we have $x(\rho) - \lambda y(\rho) = x(\rho) > 0,$ since $x \succ 0$. Suppose next that $y(\rho) > 0$. Then $m(x,y)y \le x$ implies $x(\rho) \ge m(x,y)y(\rho)$. Therefore $x(\rho) - \lambda y(\rho) \ge (m(x,y) - \lambda)y(\rho) > 0$, as $m(x,y) - \lambda > 0$ by assumption.
\end{proof}

\begin{lemma}\label{Maxby}
Let $x,y \in A(K)_+^{\infty}$ with $0 \neq x \neq \infty$ and $y \prec \infty$. Let $\alpha \in (0, \infty)$, $\beta \in [0, \infty)$ and $\gamma \in [0, \alpha m(x,y))$. Then we have
\begin{align}
M(\alpha x + \beta y,x) &= \alpha + \beta M(y,x), \label{Maxby1}\\
m(\alpha x+\beta y,x)&=\alpha +\beta m(y,x), \label{Maxby2}\\
m(\alpha x-\gamma y,x) &= \alpha-\gamma M(y,x), \label{Maxby3}\\
M(\alpha x-\gamma y,x) &= \alpha-\gamma m(y,x). \label{Maxby4}
\end{align}
\end{lemma}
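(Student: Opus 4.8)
The plan is to prove each of the four identities \eqref{Maxby1}--\eqref{Maxby4} by verifying the inequalities ``$\le$'' and ``$\ge$'' separately, directly from the defining infimum and supremum of $m$ and $M$. It may help to keep in mind the intuition, valid for continuous positive functions, that $m(\cdot,x)$ and $M(\cdot,x)$ compute the minimum and maximum of a quotient over the states at which $x$ is finite and positive, which makes the four identities transparent; the proof below avoids dividing by $x$ so as to sidestep bookkeeping with $+\infty$.

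The one preliminary I would establish first is an attainment statement for the gauges $m(y,x)$, $M(y,x)$, $m(x,y)$, $M(x,y)$ built from $x$ and $y$ alone: namely $m(y,x)\,x \le y$ and $m(x,y)\,y \le x$, and, whenever the quantity on the left is finite, $y \le M(y,x)\,x$ and $x \le M(x,y)\,y$. This is a short point-set argument: the relevant set of admissible scalars is closed in $\R$, because a pointwise inequality between $[0,\infty]$-valued functions is preserved under passing to the supremum, resp.\ infimum, of those scalars; and it is bounded whenever finiteness is asserted, using $x \ne 0$ and $y \prec \infty$.

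With this in hand, a representative identity goes as follows. For the ``$\le$'' direction of \eqref{Maxby1}, the attained inequality $y \le M(y,x)\,x$ gives $\alpha x + \beta y \le \bigl(\alpha + \beta M(y,x)\bigr)x$, hence $M(\alpha x + \beta y, x) \le \alpha + \beta M(y,x)$. For ``$\ge$'', take any $\mu > 0$ with $\alpha x + \beta y \le \mu x$; using $x \ne 0$ and $y \ne 0$ one first deduces $\mu \ge \alpha$, and then, for $\beta > 0$, the inequality $\beta y \le (\mu - \alpha)x$ yields $M(y,x) \le (\mu - \alpha)/\beta$ by definition of $M$, i.e.\ $\mu \ge \alpha + \beta M(y,x)$. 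The identities \eqref{Maxby2}--\eqref{Maxby4} follow by the same two-step pattern, replacing $\beta$ by $-\gamma$ where appropriate and using $m(x,y) = M(y,x)^{-1}$; for \eqref{Maxby3}--\eqref{Maxby4} one also notes that the hypothesis $\gamma \in [0,\alpha\,m(x,y))$ forces $M(y,x) = m(x,y)^{-1} < \infty$ and, via $\gamma y \le \alpha\,m(x,y)\,y \le \alpha x$, that $\alpha x - \gamma y$ is indeed an element of $A(K)_+^{\infty} \setminus \{0,\infty\}$.

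I expect the main obstacle, such as it is, to be the case analysis around the degenerate values. When $\beta = 0$ or $\gamma = 0$ every identity reduces to $m(\alpha x, x) = M(\alpha x, x) = \alpha$, which uses only $x \ne 0$. When $x$ attains the value $+\infty$ at some state, one must verify that $\alpha x + \beta y$ and $\alpha x - \gamma y$ are $+\infty$ there as well (because $\alpha > 0$ and $y$ is finite), so that no constraint is dropped in the ``$\ge$'' arguments. Finally, in \eqref{Maxby1}--\eqref{Maxby2} the possibility $M(y,x) = +\infty$ must be allowed, and there one checks that both sides equal $+\infty$; this case cannot occur in \eqref{Maxby3}--\eqref{Maxby4}. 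None of these checks is substantial, but they are the reason to phrase the attainment step in terms of $[0,\infty]$-valued functions rather than quotients.
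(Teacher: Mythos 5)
Your route is the same elementary, definition-level verification as the paper's, packaged differently. The paper writes out only \eqref{Maxby3}: for $\lambda\in[0,\alpha/\gamma]$ it records the chain $\alpha-\gamma\lambda\le m(\alpha x-\gamma y,x)\iff(\alpha-\gamma\lambda)x\le\alpha x-\gamma y\iff y\le\lambda x\iff\lambda\ge M(y,x)$ and then evaluates the gauge by an infimum over the scalars on the other side of the threshold, which lets it avoid ever asserting that the supremum defining $m$ (or the infimum defining $M$) is attained; the other three formulae are declared analogous. You instead make attainment explicit ($m(y,x)x\le y$, $m(x,y)y\le x$, and $y\le M(y,x)x$ when $M(y,x)<\infty$, via closedness of the admissible scalar sets) and then prove each formula by two one-line estimates. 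That is the same fact the paper's first equivalence uses implicitly, and your justification that $\alpha x-\gamma y\in A(K)^\infty_+$ via $\gamma y\le\alpha m(x,y)y\le\alpha x$ is arguably cleaner than the paper's appeal to \Cref{PosCheck}, whose hypothesis $x\succ 0$ is not available here. The appeal to ``$y\neq 0$'' in your argument for \eqref{Maxby1} is neither assumed nor needed; what you (and the paper, when it asserts $m(\alpha x,x)=\alpha$) actually use is a state at which $0<x(\rho)<\infty$.

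The one place where your sketch is too optimistic is the closing claim that states with $x(\rho)=\infty$ cause no trouble. They are harmless for \eqref{Maxby1} and \eqref{Maxby3}, but not for \eqref{Maxby2} and \eqref{Maxby4}: as soon as $x$ attains $\infty$, the constraint at such a state forces $m(y,x)=0$ (since $y\prec\infty$), while the left-hand gauges do not see those states, and the step of your scheme that converts $\lambda x\le\alpha x+\beta y$ (resp.\ $\alpha x-\gamma y\le\mu x$) into a bound of the form $cx\le y$ with $c>0$ fails at an $\infty$-state. Indeed those two identities are then false as stated: on a segment $K$ with endpoint $\rho_0$, take $x$ affine with $x(\rho_0)=1$ and $x\equiv\infty$ off $\rho_0$, $y=\mathbbm{1}_K$, $\alpha=\beta=1$; then $m(x+y,x)=2$ while $\alpha+\beta m(y,x)=1$. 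This is a defect of the lemma's stated generality rather than of your argument relative to the paper --- the paper proves only \eqref{Maxby3} and waves at the rest, and in every application the second argument of the gauges lies in an open cone, so $x\prec\infty$ --- but you should either add the hypothesis $x\prec\infty$ for \eqref{Maxby2} and \eqref{Maxby4} or confine your $\infty$-state remark to \eqref{Maxby1} and \eqref{Maxby3}.
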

\begin{proof}
We content ourselves with proving formula \eqref{Maxby3}, because the other formulae are obtained by similar arguments. Since $0 \neq x \neq \infty$ we have $m(\alpha x,x)=\alpha$ so \eqref{Maxby3} holds if $\gamma = 0$. Now if $0 < \gamma < \alpha m(x,y)=m(\alpha x,y)$, then $\alpha x -  \gamma y \in A(K)^\infty_+$ by \Cref{PosCheck}. Now for $\lambda \in [0, \alpha / \gamma]$ we have
$$\alpha - \gamma \lambda \le m(\alpha x - \gamma y,x) \iff (\alpha - \gamma \lambda)x \le \alpha x - \gamma y \iff y \le \lambda x \iff \lambda \ge M(y,x).$$
Using (the contrapositive of) this equivalence and the fact that $m( \alpha x-\gamma y,x) \le m(\alpha x,x) = \alpha$ and $\alpha/\gamma > m(x,y)^{-1} = M(y,x),$ we conclude that
\begin{align*}
m(\alpha x - \gamma y,x) &= \inf\{\alpha - \gamma \lambda: \lambda \in [0, \alpha/\gamma], \alpha - \gamma \lambda > m(\alpha x - \gamma y,x)\}\\
&= \alpha - \gamma \sup\{\lambda \in [0, \alpha/\gamma]: \lambda < M(y,x)\}\\
&= \alpha - \gamma M(y,x),
\end{align*}
which is this formula \eqref{Maxby3}.
\end{proof}

\subsection{Extension of the gauge-reversing bijection}\label{extSec}
The aim of this section is to show that each gauge-preserving or gauge-reversing bijection has an extension to the extended cones considered in \Cref{ecSec} with the same properties.\\
\ind Let $V$ and $W$ denote complete order unit spaces with open cones $C = V_+^\circ$ and $D = W_+^\circ$. Recall that the extended cones $C_{usc}$ and $C_{lsc}$ have been defined
by \eqref{Cusc} resp. \eqref{Clsc}, and we similarly define the extended cones $D_{usc}$ and $D_{lsc}$. Definitions \ref{opordef} and \ref{gpgrdef} can naturally be generalized to maps between the extended cones, provided we impose an additional condition regarding the image of $0$ or $\infty$. For example, a map $\Phi_{usc}\colon C_{usc} \to D_{lsc}$ is called \emph{gauge-reversing} if $\Phi_{usc}(0)=\infty$ and $M(\Phi(x),\Phi(y))=m(x,y)$ for all $x, y \in C_{usc} \setminus \{0\}$.

\begin{lemma}\label{graorc} For a bijection $\Phi$ from $C_{usc}$ to $D_{lsc}$, or from $C_{lsc}$ to $D_{usc}$, the following statements are equivalent:
\begin{enumerate}[label={\normalfont(\arabic*)}]
\item $\Phi$ is gauge-reversing;
\item $\Phi$ is homogeneous of degree $-1$ and an order-anti-isomorphism.
\end{enumerate}
\end{lemma}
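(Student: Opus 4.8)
The plan is to mirror the proof of \Cref{graor} (the analogous statement on the open cones themselves), the only new feature being the behaviour at the extreme elements of the extended cones. Throughout I would treat a bijection $\Phi\colon C_{usc}\to D_{lsc}$; a bijection $C_{lsc}\to D_{usc}$ is handled identically after interchanging the roles of least and greatest elements, so I would merely indicate the changes. The relevant observation is that the constant function $0$ is the least element of $C_{usc}$ and the constant function $\infty$ is the greatest element of $D_{lsc}$, whereas $C_{usc}$ has no greatest and $D_{lsc}$ no least element; this is why the definition of a gauge-reversing map between extended cones carries exactly one extra clause, namely $\Phi(0)=\infty$.

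For the implication $(2)\Rightarrow(1)$, I would assume $\Phi$ is homogeneous of degree $-1$ and an order-anti-isomorphism. Since an order-anti-isomorphism sends the least element to the greatest element, $\Phi(0)=\infty$, which is the required boundary condition. For $x,y\in C_{usc}\setminus\{0\}$ I would then run the computation from \Cref{graor} verbatim,
\begin{align*}
m(\Phi(x),\Phi(y))&=\sup\{\mu>0:\mu\Phi(y)\le\Phi(x)\}=\sup\{\mu>0:\Phi(\mu^{-1}y)\le\Phi(x)\}\\
&=\sup\{\mu>0:\mu^{-1}y\ge x\}=\sup\{\mu\ge 0:\mu x\le y\}=m(y,x),
\end{align*}
using homogeneity of degree $-1$ together with $\mu^{-1}y\in C_{usc}\setminus\{0\}$ in the second step, the order-anti-isomorphism property in the third, and the definition \eqref{ldef} of the extended gauge function in the last. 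The arithmetic is legitimate for the $[0,\infty]$-valued functions involved by the properties recorded in \Cref{PosCheck} and \Cref{Maxby}. Hence $\Phi$ is gauge-reversing.

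For $(1)\Rightarrow(2)$, I would assume $\Phi$ gauge-reversing, i.e. $\Phi(0)=\infty$ and $m(\Phi(x),\Phi(y))=m(y,x)$ for all $x,y\in C_{usc}\setminus\{0\}$. To get that $\Phi$ is order-reversing I would use that for $a,b\in A(K)^{\infty}_+\setminus\{0,\infty\}$ one has $m(a,b)\ge 1\iff b\le a$, since the set $\{\lambda\ge 0:\lambda b\le a\}$ is closed; thus for $x,y\in C_{usc}\setminus\{0\}$
\[
x\le y\iff m(y,x)\ge 1\iff m(\Phi(x),\Phi(y))\ge 1\iff\Phi(y)\le\Phi(x),
\]
while the order relations in which $0$ or $\infty$ appears are immediate from $\Phi(0)=\infty$ and the extremality of $0$ and $\infty$. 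As $\Phi^{-1}$ is again gauge-reversing (substitute $a=\Phi(x),b=\Phi(y)$ in the defining identity), the same argument applies to $\Phi^{-1}$, so $\Phi$ is an order-anti-isomorphism. For homogeneity of degree $-1$, I would fix $x\in C_{usc}\setminus\{0\}$ and $\lambda>0$ and use the gauge-reversing identity together with the elementary scaling rules for $m$ to get $m(\Phi(\lambda x),\Phi(x))=m(x,\lambda x)=\lambda^{-1}$ and $m(\Phi(x),\Phi(\lambda x))=m(\lambda x,x)=\lambda$; by closedness of the relevant sets these give $\lambda^{-1}\Phi(x)\le\Phi(\lambda x)$ and $\Phi(\lambda x)\le\lambda^{-1}\Phi(x)$, hence $\Phi(\lambda x)=\lambda^{-1}\Phi(x)$.

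I do not expect a serious obstacle here: given \Cref{graor}, the entire content is checking that those computations survive at the least element $0\in C_{usc}$ and the greatest element $\infty\in D_{lsc}$ and in the presence of the value $+\infty$. The single point that I would flag for care is that an order-anti-isomorphism between the extended cones automatically interchanges least and greatest elements, so that $\Phi(0)=\infty$ is forced rather than being an independent hypothesis in $(2)\Rightarrow(1)$ — this, and the closedness of the sets $\{\lambda:\lambda b\le a\}$ underlying both the order characterization of $m$ and the pinching argument for homogeneity, follow from the monotone completeness of $A(K)^{\infty}_+$ and the elementary identities established in \Cref{ecSec}.
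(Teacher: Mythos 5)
Your proposal is correct and takes essentially the route the paper intends: the paper omits this proof as being analogous to \Cref{graor}, and your argument is exactly that proof transported to the extended cones, with the only genuinely new points — the forced behaviour at the least element $0$ of $C_{usc}$ and the greatest element $\infty$ of $D_{lsc}$, and the closedness/attainment facts for the extended gauge functions — handled correctly. Note only that the identity you verify, $m(\Phi(x),\Phi(y))=m(y,x)$, is the one actually used in the paper (e.g.\ in the proof of \Cref{extdeBij}), the displayed condition \eqref{grev} having its arguments transposed by an apparent typo, so your formulation matches the intended definition.
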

\begin{proof}
The proof, being analogous to \Cref{graor}, is omitted.
\end{proof}

\begin{lemma}\label{gphopc}
For a bijection $\Psi$ from $C_{usc}$ to $D_{usc}$, or from $C_{lsc}$ to $D_{lsc}$, the following statements are equivalent:
\begin{enumerate}[label={\normalfont(\arabic*)}]
\item $\Psi$ is gauge-preserving;
\item $\Psi$ is homogeneous of degree $1$ and an order-isomorphism.
\end{enumerate}
\end{lemma}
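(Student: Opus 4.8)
The plan is to prove the two implications separately, in close parallel with \Cref{gphop} (the statement for the open cones $C$ and $D$ themselves) and with the omitted proof of \Cref{graorc}; the only genuine extra work is keeping track of the degenerate elements $0 \in C_{usc}$ and $\infty \in C_{lsc}$. I will write out the case $\Psi\colon C_{usc} \to D_{usc}$, the case $\Psi\colon C_{lsc}\to D_{lsc}$ being entirely analogous with the role of the least element $0$ of $C_{usc}$ taken over by the greatest element $\infty$ of $C_{lsc}$. Recall that here \emph{gauge-preserving} means $\Psi(0) = 0$ together with $m(\Psi(x),\Psi(y)) = m(x,y)$ for all nonzero $x, y$. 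The computational engine is the elementary fact that for $x, y \in A(K)^{\infty}_+$ with $x \neq 0$ the set $\{\lambda \ge 0 : \lambda x \le y\}$ is the closed interval $[0, m(y,x)]$, so that $\lambda x \le y$ holds precisely when $\lambda \le m(y,x)$; in particular $x \le y \iff m(y,x) \ge 1$, and $m(\lambda x, x) = \lambda$ for every $\lambda \ge 0$. This holds on the extended cones because the order is pointwise and these cones are closed under the pointwise suprema and infima involved (\Cref{SupInfProp}), and the scalar-homogeneity identities for $m$ used below are the extended analogues of \eqref{LM} and the lines following it.

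For the implication $(1) \Rightarrow (2)$, assume $\Psi$ is gauge-preserving. Fixing $x \neq 0$ and $\lambda > 0$, the gauge-preserving identity gives $m(\Psi(\lambda x),\Psi(x)) = m(\lambda x, x) = \lambda$ and $m(\Psi(x),\Psi(\lambda x)) = m(x,\lambda x) = \lambda^{-1}$, and by the elementary fact above these force $\lambda\Psi(x) \le \Psi(\lambda x)$ and $\Psi(\lambda x) \le \lambda \Psi(x)$, hence $\Psi(\lambda x) = \lambda\Psi(x)$; together with $\Psi(0) = 0$ this gives homogeneity of degree $1$ on all of $C_{usc}$. For order-preservation, if $x \le y$ with $x \neq 0$ then $m(y,x) \ge 1$, so $m(\Psi(y),\Psi(x)) = m(y,x) \ge 1$, so $\Psi(x) \le \Psi(y)$; when $x = 0$ this is clear since $\Psi(0) = 0$ is least in $D_{usc}$. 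Because the gauge-preserving identity is symmetric under swapping $\Psi$ and $\Psi^{-1}$, and $\Psi^{-1}(0) = 0$, the same argument applied to $\Psi^{-1}$ shows that $\Psi^{-1}$ is order-preserving as well, so $\Psi$ is an order-isomorphism.

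For the implication $(2) \Rightarrow (1)$, assume $\Psi$ is an order-isomorphism that is homogeneous of degree $1$. Since $0$ is the unique least element of both $C_{usc}$ and $D_{usc}$, an order-isomorphism necessarily satisfies $\Psi(0) = 0$. For nonzero $x, y$ and $\lambda \ge 0$, homogeneity together with the order-isomorphism property yields the equivalences $\lambda x \le y \iff \Psi(\lambda x) \le \Psi(y) \iff \lambda\Psi(x) \le \Psi(y)$; taking suprema over all such $\lambda$ gives $m(y,x) = m(\Psi(y),\Psi(x))$. Hence $\Psi$ is gauge-preserving.

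I do not anticipate a substantive obstacle: the argument is essentially a transcription of the bounded case \Cref{gphop}. The only point demanding care is the behaviour at the degenerate elements — that $0$ and $\infty$ are respectively the least and greatest elements of $C_{usc}$ and $C_{lsc}$, that an order-isomorphism must fix them, and that the gauge-function arithmetic of \Cref{ecSec} genuinely persists for $[0,\infty]$-valued affine functions — all of which has already been established in that section.
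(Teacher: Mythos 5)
Your proof is correct and follows the route the paper intends: the proof is omitted there as being analogous to \Cref{gphop} (itself analogous to \Cref{graor}), and your argument is exactly that transcription to the extended cones, with the degenerate elements handled correctly (an order-isomorphism must fix the least element $0$ of $C_{usc}$, resp. the greatest element $\infty$ of $C_{lsc}$, and the supremum defining $m$ is attained there, so $\lambda x\le y\iff\lambda\le m(y,x)$). One remark: you rightly work with the intended condition $m(\Psi(x),\Psi(y))=m(x,y)$, which is what \Cref{gpgrdef} means even though the arguments in \eqref{gpres} appear swapped.
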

\begin{proof}
The proof, being analogous to \Cref{gphop}, is omitted.
\end{proof}

Our aim now is to show that each gauge-reversing bijection $\Phi\colon C \to D$ extends to gauge-reversing bijections $\Phi_{usc}\colon C_{usc} \to D_{lsc}$ and $\Phi_{lsc} \colon C_{lsc} \to D_{usc}$. Observe that by criterion (2) of \Cref{graorc}, these extensions are to be order-anti-isomorphisms, so are required to interchange suprema and infima. Because  the members of the extended cones are limits of monotone nets in the original cones by \Cref{scapprox}, one arrives at the formulae \eqref{PhiuscDef} and \eqref{PhilscDef} for $\Phi_{usc}$ resp. $\Phi_{lsc}$ in the following theorem.

\begin{theorem}\label{ghProp}
Let $V$ and $W$ be complete order unit spaces, and let $\Phi\colon C\to D$ be a gauge-reversing bijection between their open cones $C = V^\circ_+$ and $D = W^\circ_+$. Then $\Phi$ has a unique extension to a gauge-reversing bijection
$$\Phi_{usc}\colon  C_{usc} \to D_{lsc}$$
and to a gauge-reversing bijection
$$\Phi_{lsc}\colon C_{lsc} \to D_{usc}.$$
\end{theorem}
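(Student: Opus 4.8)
The plan is to define the extensions by the formulae forced by the requirement that an order-anti-isomorphism interchange infima and suprema:
\[
\Phi_{usc}(g) := \sup\{\Phi(a) : a \in C,\ g \prec a\} \in D_{lsc}, \qquad \Phi_{lsc}(h) := \inf\{\Phi(b) : b \in C,\ b \prec h\} \in D_{usc}.
\]
First I would check these are well defined. By \Cref{scapprox}, the set $\{a\in C: g\prec a\}$ is a nonincreasing net with pointwise infimum $g$, and $\{b\in C: b\prec h\}$ is a nondecreasing net with pointwise supremum $h$; since $\Phi$ is order-reversing, $\{\Phi(a): g\prec a\}$ is a nondecreasing net in $D\subset D_{lsc}$ and $\{\Phi(b): b\prec h\}$ a nonincreasing net in $D\subset D_{usc}$, so by \Cref{SupInfProp} the supremum resp.\ infimum exists in $D_{lsc}$ resp.\ $D_{usc}$ and is computed pointwise. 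A small fact used repeatedly is that a gauge-reversing bijection \emph{reverses strict inequalities}: if $x\prec y$ in $C$ then $M(x,y)<1$ (the continuous function $x/y$ has values $<1$ on the compact set $K$), so $x\le M(x,y)\,y$ with $M(x,y)\in(0,1)$, whence $\Phi(x)\ge M(x,y)^{-1}\Phi(y)\succ\Phi(y)$; the same holds for $\Phi^{-1}$.

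Next come the easy properties. That $\Phi_{usc}$ extends $\Phi$: for $g\in C$, order-reversal makes $\Phi(g)$ an upper bound of $\{\Phi(a): g\prec a\}$, while $\Phi(g+\epsilon v)\le\Phi_{usc}(g)$ for all small $\epsilon>0$ and $\Phi(g+\epsilon v)\to\Phi(g)$ in norm as $\epsilon\downarrow 0$ by continuity of $\Phi$ (\Cref{biLip}), so evaluating at states gives $\Phi(g)\le\Phi_{usc}(g)$; hence $\Phi_{usc}(g)=\Phi(g)$, and likewise $\Phi_{lsc}|_C=\Phi$. Homogeneity of degree $-1$ follows by the substitution $a=\lambda a'$ in the defining supremum, using homogeneity of $\Phi$ and that multiplication by $\lambda^{-1}>0$ is an order-automorphism of $D_{lsc}$; moreover $\Phi_{usc}(0)=\sup\Phi(C)=\sup D=\infty$ and $\Phi_{lsc}(\infty)=\inf D=0$, the extreme values being computed pointwise. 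Order-reversal of $\Phi_{usc}$ is immediate, since $g_1\le g_2$ shrinks the index set $\{a\in C: g\prec a\}$; applying the same to $\Phi^{-1}$ shows $(\Phi^{-1})_{lsc}$ (defined by the analogous infimum formula) is also order-reversing, so once bijectivity is known, $\Phi_{usc}$ will be an order-anti-isomorphism and hence gauge-reversing by \Cref{graorc}; similarly for $\Phi_{lsc}$.

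The heart of the proof, and the step I expect to be the main obstacle, is bijectivity. I would apply the construction to the gauge-reversing bijection $\Phi^{-1}\colon D\to C$, obtaining $(\Phi^{-1})_{usc}\colon D_{usc}\to C_{lsc}$ and $(\Phi^{-1})_{lsc}\colon D_{lsc}\to C_{usc}$, and prove $(\Phi^{-1})_{lsc}\circ\Phi_{usc}=\id_{C_{usc}}$ and $\Phi_{usc}\circ(\Phi^{-1})_{lsc}=\id_{D_{lsc}}$ (the relations for $\Phi_{lsc}$ then follow by interchanging $\Phi$ and $\Phi^{-1}$). For $(\Phi^{-1})_{lsc}(\Phi_{usc}(g))=g$ with $g\in C_{usc}$, set $h:=\Phi_{usc}(g)$, so $(\Phi^{-1})_{lsc}(h)=\inf\{\Phi^{-1}(b): b\in D,\ b\prec h\}$. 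For the inequality ``$\ge g$'': given $b\in D$ with $b\prec h$, the Dini-type \Cref{ocga}(2) applied to the nondecreasing net $\{\Phi(a): g\prec a\}$ with supremum $h$ yields $a_0\succ g$ with $\Phi(a_0)\succ b$, hence $\Phi^{-1}(b)\ge a_0\ge g$. For ``$\le g$'': given $a\in C$ with $a\succ g$, use that $g$ is upper semicontinuous, $a$ continuous, and \Cref{scatt}(2) to produce $g\prec a''\prec a'\prec a$ in $C$ (take $a':=a-\epsilon v$, $a'':=a'-\epsilon' v$ for suitable $\epsilon,\epsilon'>0$); since $\Phi$ reverses $\prec$ and $\Phi(a'')\le h$, we get $b:=\Phi(a')\prec h$ and $\Phi^{-1}(b)=a'\prec a$, so $(\Phi^{-1})_{lsc}(h)\le a$; as $a\succ g$ was arbitrary and $g=\inf\{a\in C: g\prec a\}$ by \Cref{scapprox}, this gives $(\Phi^{-1})_{lsc}(h)\le g$. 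The reverse composite $\Phi_{usc}((\Phi^{-1})_{lsc}(h))=h$ is proved by the symmetric argument, exchanging $\Phi\leftrightarrow\Phi^{-1}$, $C\leftrightarrow D$, $\succ\leftrightarrow\prec$, and using \Cref{ocga}(1). Thus $\Phi_{usc}$ is a bijection with inverse $(\Phi^{-1})_{lsc}$, and likewise $\Phi_{lsc}$ is a bijection with inverse $(\Phi^{-1})_{usc}$; by \Cref{graorc} both are gauge-reversing. Finally, uniqueness: any gauge-reversing extension $\widetilde\Phi\colon C_{usc}\to D_{lsc}$ is by \Cref{graorc} an order-anti-isomorphism, hence carries net infima to suprema; since $g=\inf\{a\in C: g\prec a\}$ in $C_{usc}$ and $\widetilde\Phi$ agrees with $\Phi$ on $C$, we get $\widetilde\Phi(g)=\sup\{\Phi(a): g\prec a\}=\Phi_{usc}(g)$, and symmetrically $\widetilde\Phi=\Phi_{lsc}$ on $C_{lsc}$.
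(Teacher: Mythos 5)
Your proposal is correct and follows essentially the same route as the paper: the same defining formulas $\Phi_{usc}(g)=\sup\{\Phi(a):a\in C,\ a\succ g\}$ and $\Phi_{lsc}(h)=\inf\{\Phi(b):b\in C,\ b\prec h\}$, the same verifications of the extension property, homogeneity, order-reversal and the values at $0$ and $\infty$, the same identification of $(\Phi^{-1})_{lsc}$ as the inverse of $\Phi_{usc}$, and the same uniqueness argument via order-anti-isomorphisms interchanging infima and suprema. The only difference is in bookkeeping: where the paper first establishes order-continuity of the extensions along monotone nets (via \Cref{ocga}) and then evaluates the composite along a net, you verify $(\Phi^{-1})_{lsc}\circ\Phi_{usc}=\id$ directly by a two-sided estimate, using \Cref{ocga}(2) for one inequality and the interpolation $g\prec a''\prec a'\prec a$ together with strict order-reversal of $\Phi$ for the other, so both arguments ultimately rest on the same Dini-type lemma.
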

\begin{proof}
We recall \Cref{graor} that a map between (extended) cones is a gauge-reversing bijection if and only if it is homogeneous of degree $-1$ and an order-anti-isomorphism, i.e.\ an order-reversing bijection with an order-reversing inverse.\\
\ind We define the extension $\Phi_{usc}: C_{usc} \to D_{lsc}$ as follows. Let $g \in C_{usc}$. By \Cref{scapprox}(1) the set $\{x \in C: x \succ g\}$ defines a nonincreasing net in $C$ with infimum $g$. It follows that $\{\Phi(x): x \succ g\}$ is a nondecreasing net in $D$, which has a supremum in $D_{lsc}$ by \Cref{SupInfProp}. We define
\begin{equation}\label{PhiuscDef}
\Phi_{usc}(g) := \sup \{\Phi(x): x \succ g\} \in D_{lsc}.  
\end{equation}
\ind Let us check that if $g \in C$ then $\Phi_{usc}(g) = \Phi(g)$. Since $\Phi$ is order-reversing, it is clear that $\Phi_{usc}(g) \le \Phi(g)$. On the other hand, for every $\lambda > 1$ we have $\lambda g \succ g$ and $\Phi(\lambda g) = \lambda^{-1}\Phi(g)$, so \eqref{PhiuscDef} gives $\Phi_{usc}(g) \ge \sup \{\lambda^{-1} \Phi(g): \lambda > 1\} = \Phi(g)$. We conclude that $\Phi_{usc}(g) = \Phi(g)$.\\ 
\ind Let us check that $\Phi_{usc}$ is order-reversing. Suppose that $g,g' \in C_{usc}$ satisfy $g\le g'$. Then $\{x \in C: x \succ g'\} \subseteq \{x\in C: x \succ g\}$, hence $$\Phi_{usc}(g') = \sup \{\Phi(x): x \in C, x \succ g'\} \le \sup \{\Phi(x): x \in C, x \succ g\} = \Phi_{usc}(g).$$
\ind Let us check that $\Phi$ is homogeneous of degree $-1$. For every $\lambda \in (0, \infty)$ and $g \in C_{usc}$ we have $\{x \in C: x \succ \lambda g\} = \{\lambda x: x \in C, x \succ g\}$, so that
\begin{align*}
\Phi_{usc}(\lambda g) &= \sup\{\Phi(\lambda x): x \in C, x \succ g\} = \sup\{\lambda^{-1}\Phi(x): x \in C, x \succ g\}\\
&= \lambda^{-1}\sup\{\Phi(x): x \in C, x \succ g\}=\lambda^{-1}\Phi_{usc}(g).
\end{align*}

It remains to check that $\Phi_{usc}(0) = \infty$. For every $\epsilon > 0$ we have $0 \prec \epsilon v$, so that $\Phi(0) \ge \Phi(\epsilon v)=\epsilon^{-1}\Phi(v)$, from where $\Phi_{usc}(0) = \infty$.\\
\ind Let us now show that $\Phi_{usc}$ is order-continuous with respect to nonincreasing nets. Let $(g_{\alpha})_{\alpha}$ be a nonincreasing net in $C_{usc}$, and let $g\in C_{usc}$ be its infimum. Since $\Phi_{usc}$ is order-reversing, the net $(\Phi_{usc}(g_{\alpha}))_{\alpha}$ in $D_{lsc}$ is nondecreasing. By definition we have $\Phi_{usc}(g_{\alpha}) = \sup\{\Phi(x): x \in C, x \succ g_{\alpha} \}$, and $\Phi_{usc}(g) = \sup \{\Phi(x): x \in C, x \succ g\}$. As a consequence of \Cref{ocga} we obtain that $\{x \in C: x \succ g\} = \bigcup_{\alpha} \{x\in C: x \succ g_{\alpha}\}$. Now take the supremum to see that $\Phi_{usc}(g) = \sup_{\alpha} \Phi(g_{\alpha})$, as desired.\\
\ind Similarly, using \Cref{scapprox}(2) that $\{x \in C: x \prec g\}$ is a nondecreasing net with supremum $g \in C_{lsc}$, we define 
\begin{equation}\label{PhilscDef}
\Phi_{lsc}(g) = \inf \{\Phi(x): x \in C, x \prec g\}.
\end{equation}
The verification that $\Phi_{lsc}$ is order-reversing, homogeneous of degree $-1$, and order-continuous with respect to nondecreasing nets goes along the same line.\\
\ind Let us now show that $\Phi_{usc}\colon C_{usc} \to D_{lsc}$ is a bijection with inverse $(\Phi^{-1})_{lsc}\colon D_{lsc} \to C_{usc}$. Let $g \in C_{usc}$, and choose some nonincreasing net $(g_{\alpha})$ in $C$ with infimum $g$. Because $\Phi_{usc}$ is order-continuous with respect to nonincreasing nets, $\Phi(g_{\alpha})$ is a nondecreasing net in $D$ with supremum $\Phi_{usc}(g)$ in $D_{usc}$. Similarly, using that $(\Phi^{-1})_{lsc}$ is order-continuous with respect to nondecreasing nets, we obtain that $(\Phi^{-1})_{lsc}(\Phi_{usc}(g)) = \inf_{\alpha} \Phi^{-1}(\Phi(g_\alpha)) = \inf_{\alpha} g_{\alpha} = g$. This shows that $(\Phi^{-1})_{lsc} \circ \Phi_{usc} = \Id_{C_{usc}}$; the proof that $\Phi_{usc} \circ (\Phi^{-1})_{lsc} = \Id_{D_{lsc}}$ is similar. Since $\Phi_{usc}$ and $(\Phi^{-1})_{lsc}$ are mutually inverse order-reversing bijections, they are order-anti-isomorphisms, and also homogeneous of degree $-1$. We conclude by \Cref{graor} that $\Phi_{usc}$ and $(\Phi^{-1})_{lsc}$ are gauge-reversing bijections. Considering the gauge-reversing map $\Phi^{-1}$ instead, we obtain that $\Phi_{lsc}$ and $(\Phi^{-1})_{usc}$ are mutually inverse bijections.\\
\ind Since an order-anti-isomorphism sends the infimum of a set to the supremum of the image of that set, any gauge-reversing bijection $\Phi_{usc}\colon C_{usc} \to D_{lsc}$ extending $\Phi$ is necessarily given by \eqref{PhiuscDef}. This shows the asserted uniqueness of $\Phi_{usc}$. Similarly, the gauge-reversing bijective extension $\Phi_{lsc}$ of $\Phi$ is uniquely determined by \eqref{PhilscDef}.
\end{proof}

Now suppose $\Psi\colon C \to D$ that is a gauge-preserving bijection. By the same method as in \Cref{ghProp}, one constructs extensions $\Psi_{usc}\colon C_{usc} \to D_{usc}$ and $\Psi_{lsc}\colon C_{lsc} \to D_{lsc}$. 

\begin{proposition}\label{ggProp}
Let $V$ and $W$ be complete order unit spaces, and let $\Psi\colon C\to D$ be a gauge-preversing bijection between their open cones $C = V^\circ_+$ and $D = W^\circ_+$. Then $\Psi$ has a unique extension to a gauge-preversing bijection
\begin{equation*}
\Psi_{usc}\colon C_{usc} \to D_{usc}\quad\text{ and }
\end{equation*}
and to a gauge-preserving bijection
\begin{equation*}
\Psi_{lsc}\colon C_{lsc} \to D_{lsc}.
\end{equation*}
\end{proposition}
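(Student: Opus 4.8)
The plan is to imitate the construction in the proof of \Cref{ghProp}, interchanging the roles of suprema and infima to reflect the fact that a gauge-preserving bijection is \emph{order-preserving} rather than order-reversing. By \Cref{gphop}, the map $\Psi$ is homogeneous of degree $1$ and an order-isomorphism $C \to D$. Given $g \in C_{usc}$, \Cref{scapprox}(1) exhibits $g$ as the pointwise infimum of the nonincreasing net $\{x \in C : x \succ g\}$; since $\Psi$ preserves order, the image $\{\Psi(x) : x \in C,\ x \succ g\}$ is again a nonincreasing net in $D$, and it has an infimum in $D_{usc}$ by \Cref{SupInfProp}(1). I would therefore define
\[
\Psi_{usc}(g) := \inf\{\Psi(x) : x \in C,\ x \succ g\} \in D_{usc},
\]
and, symmetrically, using \Cref{scapprox}(2) and \Cref{SupInfProp}(2), define
\[
\Psi_{lsc}(h) := \sup\{\Psi(x) : x \in C,\ x \prec h\} \in D_{lsc}
\]
for $h \in C_{lsc}$.

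Next I would verify, following the proof of \Cref{ghProp} step by step, that these formulae have the required properties. That $\Psi_{usc}$ extends $\Psi$ is seen as there: for $g \in C$ the inequality $\Psi_{usc}(g) \le \Psi(g)$ is immediate from monotonicity, while for $\lambda > 1$ one has $\lambda g \succ g$ and $\Psi(\lambda g) = \lambda \Psi(g)$, so $\Psi_{usc}(g) \le \inf_{\lambda > 1}\lambda\Psi(g) = \Psi(g)$; hence $\Psi_{usc}(g) = \Psi(g)$, and likewise $\Psi_{lsc}(g) = \Psi(g)$. That $\Psi_{usc}$ and $\Psi_{lsc}$ are order-preserving and homogeneous of degree $1$ is immediate from the defining formulae together with $\{x \in C : x \succ \lambda g\} = \{\lambda x : x \in C,\ x \succ g\}$. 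The boundary conditions $\Psi_{usc}(0) = 0$ and $\Psi_{lsc}(\infty) = \infty$ follow by fixing $x_0 \in C$ and using $\Psi(\epsilon x_0) = \epsilon\Psi(x_0) \to 0$ as $\epsilon \downarrow 0$, respectively $\to \infty$ as $\epsilon \to \infty$.

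The one step requiring genuine input — and it is handled verbatim as in \Cref{ghProp} — is the continuity of $\Psi_{usc}$ along nonincreasing nets: if $(g_\alpha)_\alpha$ is a nonincreasing net in $C_{usc}$ with infimum $g$, then \Cref{ocga}(1) yields $\{x \in C : x \succ g\} = \bigcup_\alpha \{x \in C : x \succ g_\alpha\}$, and passing to infima of the $\Psi$-images gives $\Psi_{usc}(g) = \inf_\alpha \Psi_{usc}(g_\alpha)$; the analogous statement for $\Psi_{lsc}$ along nondecreasing nets uses \Cref{ocga}(2). Applying the same construction to the gauge-preserving bijection $\Psi^{-1}\colon D \to C$ produces $(\Psi^{-1})_{usc}\colon D_{usc} \to C_{usc}$ and $(\Psi^{-1})_{lsc}\colon D_{lsc} \to C_{lsc}$, and order-continuity along monotone nets shows, exactly as in \Cref{ghProp}, that $\Psi_{usc}$ and $(\Psi^{-1})_{usc}$ are mutually inverse, as are $\Psi_{lsc}$ and $(\Psi^{-1})_{lsc}$. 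Being mutually inverse, order-preserving, and homogeneous of degree $1$, the maps $\Psi_{usc}$ and $\Psi_{lsc}$ are gauge-preserving by \Cref{gphopc}. Uniqueness is then forced: any gauge-preserving extension is an order-isomorphism, hence carries the infimum (resp.\ supremum) of a monotone net to the infimum (resp.\ supremum) of the image net, so by \Cref{scapprox} it must coincide with the formulae above. I do not anticipate any real obstacle beyond the bookkeeping of swapping $\sup$ and $\inf$ relative to the gauge-reversing case.
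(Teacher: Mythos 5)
Your proposal is correct and is essentially the paper's intended argument (the paper's proof of \Cref{ggProp} literally reads ``modify the proof of \Cref{ghProp} appropriately''): you swap suprema and infima, define $\Psi_{usc}(g)=\inf\{\Psi(x):x\succ g\}$ and $\Psi_{lsc}(h)=\sup\{\Psi(x):x\prec h\}$, and rerun the same order-continuity, inverse, and uniqueness steps via \Cref{scapprox}, \Cref{SupInfProp}, \Cref{ocga} and \Cref{gphopc}. One small slip: for $g\in C$, monotonicity gives $\Psi_{usc}(g)\ge\Psi(g)$ (not $\le$), which together with your scaling bound $\Psi_{usc}(g)\le\inf_{\lambda>1}\lambda\Psi(g)=\Psi(g)$ yields the desired equality $\Psi_{usc}(g)=\Psi(g)$.
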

\begin{proof}
Modify the proof of \Cref{ghProp} appropriately.
\end{proof}

\subsection{Strong atomicity}\label{evsubSec}

\begin{definition}\label{extD}
Let $(X, X_+)$ be an ordered vector space, and let $E \subset X_+$ be a subcone containing $0$. An \emph{extremal vector} of $E$ is a nonzero vector $p \in E$ such that $\{x \in E: x \le p\} = \{tp: 0 \le t \le 1\}$. The set of extremal vectors of $E$ is denoted $\ext E$. We say that the cone $E$ is \emph{strongly atomic} if for each $x\in E$ one has
$$x = \sup \{p \in \ext E \colon p \le x\}\text{ in }E.$$
\end{definition}

We continue with our assumption that $(V, V_+, v)$ and $(W, W_+, w)$ be complete order unit spaces, with state spaces $K$ resp. $S$, and open cones $C = V^\circ_+$ and $D = W^\circ_+$. The set of extreme vectors $p$ in $C_{usc}$ normalized so that $M(p,v) = 1$, or equivalently so that $m(v,p) = 1$, is denoted by
\begin{equation*}
\ext_v C_{usc} := \{p \in \ext C_{usc}: M(p,v) = 1\}.
\end{equation*}

\begin{lemma}\label{extEq}
Let $(V, V_+, v)$ be an order unit space. Then the following are equivalent:
\begin{enumerate}[label=\normalfont(\arabic*)]
\item the cone $C_{usc}$ is strongly atomic;
\item for each $g \in C_{usc}$ we have
\begin{equation}
g = \sup\{m(g,p)p: p \in \ext_v C_{usc}\}
\end{equation}
\item for all $g, g' \in C_{usc}$ we have
\begin{equation}
m(g,p) \le m(g',p)\text{ for each }p \in \ext_v C_{usc} \iff g \le g'.
\end{equation}
\end{enumerate}
\end{lemma}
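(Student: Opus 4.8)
The plan is to establish the cycle of implications $(1)\Rightarrow(2)\Rightarrow(3)\Rightarrow(1)$, using throughout the identification of elements of $C_{usc}$ with affine upper semicontinuous functions on $K$ and the monotone completeness of $A(K)^\infty_+$ recorded in \Cref{SupInfProp}. The starting observation, which I would isolate first, is that for a fixed extremal vector $p\in\ext C_{usc}$ the set $\{x\in C_{usc}: x\le p\}$ equals $\{tp: 0\le t\le 1\}$, so that $m(g,p)$ is precisely the largest $t$ with $tp\le g$; hence $m(g,p)p = \sup\{x\in C_{usc}: x\le g,\ x\le p\}$ is the ``component of $g$ along $p$''. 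Moreover every extremal vector is a positive multiple of a unique element of $\ext_v C_{usc}$ (normalize by $M(\cdot,v)$), so $\sup\{q\in\ext C_{usc}: q\le g\}$ and $\sup\{m(g,p)p: p\in\ext_v C_{usc}\}$ are the same supremum.

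\textbf{$(1)\Rightarrow(2)$.} Given strong atomicity, $g = \sup\{q\in\ext C_{usc}: q\le g\}$. Each such $q$ can be written $q = tp$ with $p\in\ext_v C_{usc}$ and $0\le t\le 1$, and by the preceding observation $tp\le g$ forces $t\le m(g,p)$, so $q\le m(g,p)p\le g$. Replacing each $q$ in the indexing family by the larger element $m(g,p)p$ (still $\le g$) does not change the supremum, giving $g = \sup\{m(g,p)p: p\in\ext_v C_{usc}\}$.

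\textbf{$(2)\Rightarrow(3)$.} One direction is trivial: if $g\le g'$ then $m(g,p)\le m(g',p)$ for every $p$ since $m(\cdot,p)$ is order-preserving. For the converse, suppose $m(g,p)\le m(g',p)$ for every $p\in\ext_v C_{usc}$. Then for each such $p$ we have $m(g,p)p\le m(g',p)p\le g'$, where the last inequality uses that $m(g',p)p\le g'$ by definition of $m$. Taking the supremum over $p$ and invoking $(2)$ for $g$ yields $g = \sup\{m(g,p)p: p\} \le g'$, as the supremum is computed pointwise and $g'$ is an upper bound.

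\textbf{$(3)\Rightarrow(1)$.} Fix $g\in C_{usc}$ and set $g_0 := \sup\{q\in\ext C_{usc}: q\le g\} = \sup\{m(g,p)p: p\in\ext_v C_{usc}\}\in C_{usc}$, the supremum existing by \Cref{SupInfProp}(1) applied to the upward-directed family of finite suprema; clearly $g_0\le g$. It remains to show $g\le g_0$, for which by $(3)$ it suffices to check $m(g,p)\le m(g_0,p)$ for every $p\in\ext_v C_{usc}$. But $m(g,p)p$ is one of the elements whose supremum defines $g_0$, so $m(g,p)p\le g_0$, which by definition of $m(g_0,p)$ gives $m(g,p)\le m(g_0,p)$. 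Hence $g = g_0$, i.e. $C_{usc}$ is strongly atomic.

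\textbf{Main obstacle.} The routine parts are the order-preservation of $m(\cdot,p)$ and the bookkeeping with normalizations; the one point deserving care is the identity $m(g,p)p = \sup\{x\in C_{usc}: x\le g, x\le p\}$ and, relatedly, that $m(g,p)p\le g$ actually holds (i.e. the supremum defining $m$ is attained). This attainment is where upper semicontinuity enters: if $\lambda_n\uparrow m(g,p)$ with $\lambda_n p\le g$, then $m(g,p)p = \sup_n \lambda_n p$ is the supremum of a nondecreasing net in $C_{usc}$, computed pointwise by \Cref{SupInfProp}(1), and $g$ is a pointwise upper bound, so $m(g,p)p\le g$. I expect this to be the only step requiring more than a line, and it is where the choice of $C_{usc}$ (rather than $C$) is essential, since the analogous supremum need not be attained in $C$.
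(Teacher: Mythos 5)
Your overall route (the cycle $(1)\Rightarrow(2)\Rightarrow(3)\Rightarrow(1)$) and your steps $(1)\Rightarrow(2)$, $(2)\Rightarrow(3)$ are sound, and they rest on exactly the two observations the paper's own two-line proof uses: that the extremal vectors below $g$ are, up to normalization and up to cofinality, the elements $m(g,p)p$ with $p\in\ext_v C_{usc}$, and that $m(g,p)\le m(g',p)$ is equivalent to $m(g,p)p\le g'$. You are also right that the attainment $m(g,p)p\le g$ is the one point to isolate, though your justification overreaches in two harmless ways: the identity $m(g,p)p=\sup\{x\in C_{usc}:x\le g,\ x\le p\}$ fails whenever $m(g,p)>1$ (the right-hand side is then $p$), and attainment is not special to $C_{usc}$ -- it holds already in $C$, by taking pointwise limits (or by archimedeanity); neither remark is used in your argument. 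Likewise ``$0\le t\le 1$'' in $(1)\Rightarrow(2)$ is false in general but never needed.

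The genuine gap is in $(3)\Rightarrow(1)$: you define $g_0:=\sup\{m(g,p)p:p\in\ext_v C_{usc}\}$ \emph{in} $C_{usc}$ and justify its existence by ``\Cref{SupInfProp}(1) applied to the upward-directed family of finite suprema.'' This step fails as written. \Cref{SupInfProp}(1) provides infima of nonincreasing nets in $C_{usc}$; suprema of nondecreasing nets are only provided in $C_{lsc}$ by part (2), and indeed a bounded nondecreasing net in $C_{usc}$ need not have a supremum in $C_{usc}$ (its pointwise limit need not be upper semicontinuous, and a least usc affine majorant need not exist). Worse, the ``finite suprema'' you direct the family with need not exist at all: $(A(K),\le)$ is not a lattice unless $K$ is a simplex, and the pointwise maximum of two affine functions is generally not affine. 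Fortunately the detour through $g_0$ is unnecessary: strong atomicity asks you to verify that $g$ itself is the least upper bound of $\{q\in\ext C_{usc}:q\le g\}$. It is an upper bound; and if $h\in C_{usc}$ is any upper bound of this set, then for every $p\in\ext_v C_{usc}$ one has $m(g,p)p\le h$ (trivially when $m(g,p)=0$, and because $m(g,p)p$ is an extremal vector dominated by $g$ otherwise), hence $m(g,p)\le m(h,p)$, and (3) gives $g\le h$. This simultaneously proves existence of the supremum and that it equals $g$, with no appeal to monotone completeness; with that replacement your proof is correct.
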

\begin{proof}
As to (1) $\iff$ (2), note that $\{r \in \ext C_{usc}: r \le g\} = \{m(g,p)p: p \in \ext_v C_{usc}\}.$ The equivalence (2) $\iff$ (3) holds because for each $p \in \ext_v C_{usc}$ we have that $m(g',p) \ge m(g,p)$ if and only if $m(g,p)p \le g'$.
\end{proof}

Recall that we defined $\delta_e S$ as the set of pure states in $S$, and for each $\psi \in \delta_e S$ the function $I^{\infty}_{\{\psi\}}$ by $I^{\infty}_{\{\psi\}}(\psi)=1$ and $I^{\infty}_{\{\psi\}}(\rho) = \infty$ if $\rho \neq \psi$, see \Cref{JF}.

\begin{proposition}\label{extdeBij} Let $\Phi\colon C \to D$ be a gauge-reversing bijection such that $\Phi(v) = w$. Then we have a bijection
\begin{align}\label{extBij}
\ext_v C_{usc} &\to \delta_e S,\\
p &\mapsto \psi,
\end{align}
where $\psi$ is determined by $I^{\infty}_{\{\psi\}} = \Phi_{usc}(p)$. Moreover, for all $g \in C_{usc}$ we have
\begin{equation}\label{Mppsi}
M(p,g) = \Phi_{usc}(g)(\psi).
\end{equation}
\end{proposition}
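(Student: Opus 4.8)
The plan is to transport the extremal structure of $C_{usc}$ across the order-anti-isomorphism $\Phi_{usc}\colon C_{usc}\to D_{lsc}$ of \Cref{ghProp} and then to identify the image of $\ext C_{usc}$ inside $D_{lsc}$. Since $\Phi_{usc}$ is a gauge-reversing bijection, it is homogeneous of degree $-1$, sends $0$ to $\infty$, and is an order-anti-isomorphism (\Cref{graorc}); applying it (resp.\ its inverse) to the defining equality $\{x\in C_{usc}:x\le p\}=\{tp:0\le t\le1\}$ of an extremal vector shows that $p\in\ext C_{usc}$ if and only if $q:=\Phi_{usc}(p)\in D_{lsc}\setminus\{\infty\}$ and $\{y\in D_{lsc}:y\ge q\}=\{sq:s\ge1\}\cup\{\infty\}$. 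So the heart of the argument will be the following claim: \emph{for $q\in D_{lsc}$ with $q\ne\infty$ one has $\{y\in D_{lsc}:y\ge q\}\subseteq\{sq:s\ge1\}\cup\{\infty\}$ if and only if $q=c\,I^\infty_{\{\psi\}}$ for some $\psi\in\delta_e S$ and $c>0$} (the degenerate case $\dim W=1$, where $S$ is a point and $\dim V=1$, will be treated by inspection, so I may assume $\dim W\ge2$).

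For the ``if'' half I would simply compute that $y\ge c\,I^\infty_{\{\psi\}}$ in $D_{lsc}$ forces $y\equiv\infty$ off $\psi$ and $y(\psi)\ge c$, hence $y=c'\,I^\infty_{\{\psi\}}$ with $c'\ge c$. For the ``only if'' half I would argue by perturbation in three steps. First, if $q$ were finite everywhere, then $q+a$ (a constant $a>0$) lies in $D_{lsc}$ and dominates $q$ but is a scalar multiple of $q$ only if $q$ is constant, while a constant $q$ is dominated by $q+\ell$ for a nonconstant $\ell\in A(S)$ with $\ell\ge0$ (which exists since $\dim W\ge2$ by Kadison's \Cref{KadThm}) that is not a multiple of $q$ --- contradicting co-extremality; so $F:=\{q<\infty\}$ is a nonempty proper face of $S$. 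Second, comparing $q$ with $q+a$ again shows $q$ is constant on $F$, so $q=c\,I^\infty_F$ with $c=\min_S q>0$ (\Cref{scatt}), and then lower semicontinuity of $q$ forces $F$ to be closed. Third, if $F$ had two distinct extreme points $\psi_1,\psi_2$ --- which happens whenever $F$ is not a singleton, by Krein--Milman (\Cref{KrMm}) --- I would pick $\ell\in A(S)$ with $\ell\ge0$ and $\ell(\psi_1)\ne\ell(\psi_2)$ and check, exactly as one checks $I^\infty_F\in C_{lsc}$, that the function equal to $c+\ell$ on $F$ and to $\infty$ off $F$ lies in $D_{lsc}$, dominates $q$, and is not a scalar multiple of $q$ --- a contradiction. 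Hence $F=\{\psi\}$ for some $\psi\in\delta_e S$ and $q=c\,I^\infty_{\{\psi\}}$.

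Granting the claim, \Cref{extBij} and \eqref{Mppsi} follow quickly from the gauge-reversing identity $M(\Phi_{usc}(x),\Phi_{usc}(y))=M(y,x)$ (valid for $x,y\in C_{usc}\setminus\{0\}$). Given $p\in\ext C_{usc}$ we get $\Phi_{usc}(p)=c\,I^\infty_{\{\psi\}}$ for some $\psi\in\delta_e S$, $c>0$; taking $x=v$, $y=p$ and using $\Phi_{usc}(v)=w=\mathbbm{1}_S$ together with the evaluation $M(\mathbbm{1}_S,c\,I^\infty_{\{\psi\}})=c^{-1}$ gives $c=M(p,v)^{-1}$, so $p\in\ext_v C_{usc}$ exactly when $\Phi_{usc}(p)=I^\infty_{\{\psi\}}$. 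The assignment $p\mapsto\psi$ is then well defined and injective because $\psi\mapsto I^\infty_{\{\psi\}}$ is injective and $\Phi_{usc}$ is a bijection, and surjective onto $\delta_e S$ because, by the ``if'' half of the claim applied to the gauge-reversing bijection $\Phi_{usc}^{-1}$, each $p:=\Phi_{usc}^{-1}(I^\infty_{\{\psi\}})$ lies in $\ext C_{usc}$ and satisfies $M(p,v)=1$. Finally, for $g\ne0$, taking $x=g$, $y=p$ gives $M(p,g)=M(\Phi_{usc}(g),I^\infty_{\{\psi\}})=\Phi_{usc}(g)(\psi)$ via the one-line fact that $M(h,I^\infty_{\{\psi\}})=h(\psi)$ for every $h\in D_{lsc}$; and for $g=0$ both sides equal $\infty$.

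I expect the main obstacle to be the ``only if'' half of the co-extremality claim, and specifically the semicontinuity bookkeeping: at each step one must verify that a perturbed function taking the value $\infty$ on a (closed) face is a genuine lower semicontinuous affine element of $D_{lsc}$, and one must handle the conventions $0\cdot\infty=0$ and the role of the distinguished point $\infty=\Phi_{usc}(0)$ (which is why the comparison set is $\{sq:s\ge1\}\cup\{\infty\}$ rather than just $\{sq:s\ge1\}$) carefully throughout.
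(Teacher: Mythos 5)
Your proposal is correct, and its skeleton coincides with the paper's: transport extremality of $p$ through the order-anti-isomorphism $\Phi_{usc}$, identify the resulting ``co-extremal'' elements of $D_{lsc}$ as positive multiples of the functions $I^{\infty}_{\{\psi\}}$ with $\psi \in \delta_e S$, and then read off \eqref{Mppsi} from the gauge-reversing identity together with the evaluation $M(h,I^{\infty}_{\{\psi\}})=h(\psi)$. Where you genuinely diverge is in the crux, namely the identification step. The paper normalizes $\inf f = 1$ for $f=\Phi_{usc}(p)$, invokes \Cref{scface}(2) to produce a pure state $\psi$ at which the lower semicontinuous affine function $f$ attains its infimum, observes $I^{\infty}_{\{\psi\}}\ge f$, and concludes $I^{\infty}_{\{\psi\}}=\lambda f$ with $\lambda=1$ by comparing infima -- essentially a two-line argument once \Cref{scface} is in hand. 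You instead classify from scratch all $q\in D_{lsc}\setminus\{\infty\}$ whose upper set is $\{sq: s\ge 1\}\cup\{\infty\}$: perturbing by a positive constant (and, in the everywhere-finite case, by a nonconstant nonnegative affine function) shows $q=c\,I^{\infty}_F$ with $F=\{q<\infty\}$ a closed face, and Krein--Milman plus a separating nonnegative element of $A(S)$ forces $F$ to be a singleton. Both routes ultimately rest on Krein--Milman (the paper's \Cref{scface} is proved with it), so yours is not more elementary, but it is self-contained in that it bypasses \Cref{scface}, at the price of the affineness/semicontinuity bookkeeping for the perturbed functions -- which you handle correctly (affineness of the function equal to $c+\ell$ on $F$ and $\infty$ off $F$ uses that $F$ is a face, lower semicontinuity that $F$ is closed), and of an explicit low-dimensional case. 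Your explicit tracking of $\infty=\Phi_{usc}(0)$ and of the scalar $c=M(p,v)^{-1}$, and the surjectivity argument via $\Phi_{usc}^{-1}(I^{\infty}_{\{\psi\}})$, recover what the paper gets more implicitly from its normalization $m(v,p)=1=\inf f$; the final derivation of \eqref{Mppsi} is the same in both proofs.
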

\begin{proof}
Let $p \in C_{usc}$ and $f \in D_{lsc}$ be such that $f = \Phi_{usc}(p)$, hence $p = (\Phi^{-1})_{lsc}(f)$. We need only show that $p \in \ext_v C_{usc}$ if and only if $f = I^{\infty}_{\{\psi\}}$ for some $\psi \in \delta_e S$. Since the bijection $\Phi_{usc}$ is an order-anti-isomorphism and homogeneous of degree $-1$, it restricts to bijections
\[\begin{tikzcd}
	{\{\lambda p \colon 0 \le \lambda \le 1\}} & {\{\lambda f \colon 1 \le \lambda \le \infty\}} & {} \\
	{\{g \in C_{usc} \colon g \le p\}} & {\{h \in D_{lsc} \colon h \ge f\}}
	\arrow["\sim", from=1-1, to=1-2]
	\arrow[phantom, sloped, "\subseteq", from=1-1, to=2-1]
	\arrow[phantom, sloped, "\subseteq", from=1-2, to=2-2]
	\arrow["\sim", from=2-1, to=2-2]
\end{tikzcd}\]
Moreover, we have $m(v,p) = m(f,w) = \inf f$, so we may assume that $m(v,p) = 1 = \inf f$. Then $p \in \ext C_{usc}$ if and only if the left inclusion is an equality, which happens only if the right inclusion is an equality. We will finish the proof by showing that we have equality on the right if and only if $f = I^{\infty}_{\{\psi\}}$ for some $\psi \in \delta_e S$.\\
\ind The if direction being clear, suppose we have equality on the right. By \Cref{scface} there exists $\psi \in \delta_e S$ with $f(\psi) = \inf f = 1$. Then we have $I^{\infty}_{\psi} \ge f$, hence there exists $\lambda \in [1, \infty]$ with $I^{\infty}_{\psi} = \lambda f$. Then $\lambda = \inf(\lambda f) = \inf I^{\infty}_{\psi} = 1$, so $f = I^{\infty}_{\psi}$ as desired.\\
\ind Finally, we turn to proving \eqref{Mppsi}. Note that for each $h \in D_{lsc}$ and $\psi \in \delta_e S$ we have that $h(\psi) = \inf\{\lambda > 0: h \le \lambda I^{\infty}_{\{\psi\}}\} = M(h,I^{\infty}_{\{\psi\}})$. Therefore if $p \in \ext_v C_{usc}$ and $g \in C_{usc}$ are such that $\Phi_{usc}(p)=I^{\infty}_{\{\psi\}}$ and $\Psi_{usc}(g) = h$, then we have
$$\Phi_{usc}(g)(\psi) = h(\psi) = M(h,I^{\infty}_{\{\psi\}}) = M(\Phi_{usc}(g),\Phi_{usc}(p))=M(p,g),$$
where in the last step we used that $\Phi_{usc}$ is gauge-reversing.
\end{proof}

\begin{theorem}\label{Phiconv} Let $\Phi\colon C \to D$ be a gauge-reversing bijection. Then $\Phi$ is convex, i.e.\ for all $x_1,x_2 \in C$ and $\lambda \in [0,1]$ one has
\begin{equation}\label{Convex}
\Phi((1-\lambda)x_1+\lambda x_2) \le (1-\lambda) \Phi(x_1) + \lambda \Phi(x_2).
\end{equation}
\end{theorem}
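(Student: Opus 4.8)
The plan is to verify the inequality \eqref{Convex} after evaluating both sides at an arbitrary pure state of $W$, where $\Phi$ is converted into a gauge function that is manifestly convex, and then to pass back to an inequality in $W$ by appealing to \Cref{pureIneq}.

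Since \eqref{Convex} is an inequality in the cone $W_+$, it is unaffected by the choice of order unit for $W$; replacing $w$ by $\Phi(v) \in D$ if necessary, we may therefore assume $\Phi(v) = w$, which does not change $D$ nor, by \Cref{Cndv}, the extended cones $D_{usc}$ and $D_{lsc}$. Let $S$ be the state space of $W$. By \Cref{extdeBij}, for each pure state $\psi \in \delta_e S$ there is a vector $p = p_\psi \in \ext_v C_{usc}$ with $\Phi_{usc}(p) = I^{\infty}_{\{\psi\}}$, and moreover $\Phi_{usc}(g)(\psi) = M(p,g)$ for every $g \in C_{usc}$. Specializing to $g = x \in C$, on which $\Phi_{usc}$ restricts to $\Phi$, we obtain $\Phi(x)(\psi) = M(p,x)$ for all $x \in C$.

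The key observation is that $x \mapsto M(p,x)$ is convex on $C$. Indeed, identifying (via Kadison's \Cref{KadThm}) each $x \in C$ with a continuous strictly positive affine function on the state space $K$ of $V$, we have
\begin{align*}
M(p,x) = \inf\{\mu > 0 : p \le \mu x\} = \sup_{\rho \in K} \frac{p(\rho)}{x(\rho)},
\end{align*}
and for each fixed $\rho \in K$ the function $x \mapsto p(\rho)/x(\rho)$ is convex on $C$, being the product of the nonnegative constant $p(\rho)$ with the composite of the convex function $t \mapsto t^{-1}$ on $(0,\infty)$ and the linear functional $x \mapsto x(\rho)$; a pointwise supremum of convex functions is convex. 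Consequently, for $x_1, x_2 \in C$, $\lambda \in [0,1]$ and every $\psi \in \delta_e S$,
\begin{align*}
\Phi\big((1-\lambda)x_1 + \lambda x_2\big)(\psi) = M\big(p, (1-\lambda)x_1 + \lambda x_2\big) &\le (1-\lambda)M(p,x_1) + \lambda M(p,x_2) \\
&= (1-\lambda)\Phi(x_1)(\psi) + \lambda \Phi(x_2)(\psi).
\end{align*}

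Finally, both $\Phi((1-\lambda)x_1 + \lambda x_2)$ and $(1-\lambda)\Phi(x_1) + \lambda\Phi(x_2)$ lie in $W$, hence are continuous affine---in particular lower semicontinuous---functions on $S$, and the preceding display shows the former is dominated by the latter at every point of $\delta_e S$; \Cref{pureIneq} then gives $\Phi((1-\lambda)x_1 + \lambda x_2) \le (1-\lambda)\Phi(x_1) + \lambda\Phi(x_2)$, which is \eqref{Convex}. I do not anticipate a genuine obstacle here: the substance of the proof is the one-line convexity of $M(p,\cdot)$, and the only points to watch are the innocuous normalization $\Phi(v) = w$ needed to invoke \Cref{extdeBij} and the fact---automatic since the relevant functions lie in $W \cong A(S)$---that \Cref{pureIneq} applies.
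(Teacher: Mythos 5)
Your proof is correct and follows essentially the same route as the paper: reduce to an inequality at pure states of $W$ via \Cref{extdeBij}, show that $\psi \circ \Phi = M(p,\cdot)$ is convex on $C$, and conclude with \Cref{pureIneq}. The only (harmless) differences are cosmetic: you verify convexity by writing $M(p,x)=\sup_{\rho\in K}p(\rho)/x(\rho)$ as a pointwise supremum of convex functions, whereas the paper shows $m(\cdot,p)=M(p,\cdot)^{-1}$ is concave via superadditivity and homogeneity and then inverts, and you make explicit the normalization $\Phi(v)=w$ (justified by \Cref{Cndv}) that the paper leaves implicit when invoking \Cref{extdeBij}.
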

\begin{proof}
Let $\psi \in \delta_e S$ be a pure state of $W$. We will show that the function $\psi \circ \Phi\colon C \to (0, \infty)$ is convex. \Cref{extdeBij} grants us an extremal vector $p \in \ext_v(C_{usc})$ such that $\psi(\Phi(z)) = M(p,z) = m(z,p)^{-1}$ for every $z \in C$. It suffices to show that the map $m(\cdot,p)\colon C \to (0, \infty)$ is concave, because the inverse of a strictly positive concave function is convex.\\
\ind We claim that $m(\cdot,p)$ is homogeneous of degree $1$ and superadditive. Plainly, for all $\lambda > 0$ and $z\in C$ we have $m(\lambda z,p)=\lambda m(z,p)$. Let now $z_1, z_2 \in C$. Then $m(z_1,p)p \le z_1$ and $m(z_2,p)p \le z_2$ hold, so $(m(z_1,p)+m(z_2,p))p \le z_1+z_2$, whence $m(z_1,p)+m(z_2,p)\le m(z_1+z_2,p)$. This proves our claim. It follows that the function $m(\cdot,p)$ is concave, since
\begin{equation*}
m((1-\lambda)x_1+\lambda x_2,p) \ge m(1-\lambda)x_1,p)+m(\lambda x_2,p) =
(1-\lambda) m(x_1,p)+\lambda m(x_2,p).
\end{equation*}
Consequently, $\psi \circ \Phi = m(\cdot,p)^{-1}$ is convex. Since $\psi \in \delta_e S$ was arbitrary, we conclude that $\Phi$ is convex by \Cref{pureIneq}.
\end{proof}

\begin{theorem}\label{supAtom} Let $(V, V_+, v)$ and $(W, W_+, w)$ be complete order unit spaces such that there exists a gauge-reversing bijection $\Phi\colon C \to D$. Then the cone $C_{usc}$ is strongly atomic.
\end{theorem}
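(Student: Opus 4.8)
The strategy is to verify the third criterion of \Cref{extEq}: it suffices to show that if $g, g' \in C_{usc}$ satisfy $m(g,p) \le m(g',p)$ for every $p \in \ext_v C_{usc}$, then $g \le g'$, the reverse implication being trivial. To bring \Cref{extdeBij} to bear I would first arrange that $\Phi(v) = w$. This is harmless: since $\Phi(v)$ lies in $D = W_+^\circ$, replacing the order unit $w$ of $W$ by $w' := \Phi(v)$ produces a complete order unit space $(W, W_+, w')$ with the same norm-interior $D$ and, by \Cref{Cndv}, the same extended cones $D_{usc}$ and $D_{lsc}$, while $\Phi$ remains a gauge-reversing bijection $C \to D$ which now sends $v$ to $w'$. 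So we may and do assume $\Phi(v) = w$.

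Write $S$ for the state space of $W$ and let $\Phi_{usc}\colon C_{usc} \to D_{lsc}$ be the gauge-reversing extension of $\Phi$ furnished by \Cref{ghProp}. By \Cref{extdeBij} there is a bijection $\ext_v C_{usc} \to \delta_e S$, $p \mapsto \psi$, singled out by the condition $\Phi_{usc}(p) = I^\infty_{\{\psi\}}$, and for every $h \in C_{usc}$ one has $M(p,h) = \Phi_{usc}(h)(\psi)$. Since $m(h,p) = M(p,h)^{-1}$ under the convention relating $0$ and $\infty$, the inequality $m(g,p) \le m(g',p)$ is equivalent to $\Phi_{usc}(g)(\psi) \ge \Phi_{usc}(g')(\psi)$. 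As $p$ ranges over $\ext_v C_{usc}$ the corresponding $\psi$ ranges over all of $\delta_e S$, so the hypothesis translates into the statement that $\Phi_{usc}(g')(\psi) \le \Phi_{usc}(g)(\psi)$ for every pure state $\psi \in \delta_e S$.

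Now $\Phi_{usc}(g)$ and $\Phi_{usc}(g')$ are lower semicontinuous affine functions $S \to (0,\infty]$, so \Cref{pureIneq} gives $\Phi_{usc}(g') \le \Phi_{usc}(g)$. Because $\Phi_{usc}$ is a gauge-reversing bijection, it is an order-anti-isomorphism by \Cref{graorc}, so in particular $\Phi_{usc}^{-1}$ is order-reversing; applying it yields $g' \ge g$. This is precisely the implication required by \Cref{extEq}, whence $C_{usc}$ is strongly atomic.

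The substance of this theorem does not lie in the present argument but in its inputs: the bijection $\ext_v C_{usc} \leftrightarrow \delta_e S$ together with the identity $M(p,h) = \Phi_{usc}(h)(\psi)$ from \Cref{extdeBij}, and the semicontinuous form of the Krein--Milman inequality in \Cref{pureIneq}. Granting these, the only points that demand slight care are the normalization $\Phi(v) = w$ and the bookkeeping that $m(h,p)$ may vanish, corresponding to $\Phi_{usc}(h)(\psi) = \infty$, which is legitimate since members of $D_{lsc}$ take values in $(0,\infty]$.
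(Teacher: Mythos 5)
Your proof is correct and follows essentially the same route as the paper: verify criterion (3) of \Cref{extEq} by transporting the hypothesis through \Cref{extdeBij} to an inequality of $\Phi_{usc}(g)$ and $\Phi_{usc}(g')$ at every pure state, apply \Cref{pureIneq}, and pull back through the order-anti-isomorphism $\Phi_{usc}$. Your explicit normalization $\Phi(v)=w$ via \Cref{Cndv} is a welcome extra precision (the paper invokes \Cref{extdeBij}, which is stated under that hypothesis, without comment), but it does not change the substance of the argument.
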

\begin{proof}
Let us show that condition (3) of \Cref{extEq} is satisfied. Let $g, g' \in C_{usc}$ be such that $m(g,p) \le m(g',p)$, equivalently $M(p,g) \ge M(p,g')$, for each $p \in \ext_v C_{usc}$. Set $h = \Phi_{usc}(g)$ and $h' = \Phi_{usc}(g')$. Let $\psi \in \delta_e S$ be arbitrary. By \Cref{extdeBij} there exists a unique $p \in \ext_v C_{usc}$ such that $\Phi_{usc}(p) = I^{\infty}_{\{\psi\}}$. Moreover, according to (\ref{Mppsi}) we have that $M(p,g)=h(\psi)$ and $M(p,g')=h'(\psi)$.
It follows that $h(\psi) = M(p,g) \ge M(p,g') = h'(\psi).$ Now \Cref{pureIneq} implies that $h \ge h'$. Since $\Phi_{usc}$ is an order-anti-isomorphism, we conclude that $g \le g'$. 
\end{proof}

\begin{corollary}\label{myz}
Let $y,z \in C_{usc}$. Then $$\lVert y-z\rVert_v \le \sup \{|m(y,p)-m(z,p)|: p \in \ext_v C_{usc}\}.$$
\end{corollary}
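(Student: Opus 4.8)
Set $R := \sup\{|m(y,p)-m(z,p)| : p\in\ext_v C_{usc}\}\in[0,\infty]$. If $R=\infty$ there is nothing to prove, so I would assume $R<\infty$. Since $y$ and $z$ lie in $C_{usc}$ they are nonnegative affine upper semicontinuous functions on the compact state space $K$, hence bounded (by \Cref{scatt}), so $y-z$ is a bounded affine function and $\lVert y-z\rVert_v=\sup_{\rho\in K}|(y-z)(\rho)|$. Consequently it suffices to establish the two order inequalities $-Rv\le y-z\le Rv$ in $A(K)^\infty_+$, and by the symmetry between $y$ and $z$ it is enough to prove $y\le z+Rv$.

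The engine of the argument is that $C_{usc}$ is strongly atomic by \Cref{supAtom}, so that \Cref{extEq} is available. Fix $p\in\ext_v C_{usc}$. From the normalization $M(p,v)=1$ I get $p\le v$, and from the definition of $R$ I get $m(y,p)\le m(z,p)+R$. Since $m(z,p)\,p\le z$ by definition of $m$, and $R\ge 0$, this yields
$$
m(y,p)\,p \;\le\; m(z,p)\,p + R\,p \;\le\; z + Rv .
$$
Now $z+Rv\in C_{usc}$ (as $C_{usc}$ is closed under addition and under multiplication by scalars in $[0,\infty)$), and by \Cref{extEq}(2) we have $y=\sup\{m(y,p)\,p : p\in\ext_v C_{usc}\}$ in $C_{usc}$; since $z+Rv$ is an upper bound of this family, it dominates the least upper bound, so $y\le z+Rv$. (Alternatively, one could invoke \Cref{extEq}(3): using that $m(\cdot,p)$ is superadditive and homogeneous of degree one — exactly as in the proof of \Cref{Phiconv} — together with $m(v,p)=1$, one has $m(z+Rv,p)\ge m(z,p)+R\ge m(y,p)$ for every $p$, whence $y\le z+Rv$.)

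Applying the same reasoning with $y$ and $z$ interchanged gives $z\le y+Rv$, so $-Rv\le y-z\le Rv$ and therefore $\lVert y-z\rVert_v\le R$, which is the claim. I do not expect a genuine obstacle here: all the substance is already contained in the strong atomicity of $C_{usc}$ (\Cref{supAtom}) and its order-theoretic reformulation (\Cref{extEq}), and what remains is only the short deduction above, the trivial separate treatment of the case $R=\infty$, and the routine remark that a supremum formed in $C_{usc}$ is a least upper bound and hence is dominated by any member of $C_{usc}$ that bounds the family pointwise.
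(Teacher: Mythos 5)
Your proof is correct and follows essentially the same route as the paper: the paper also sets $\nu$ equal to the supremum, shows $m(z+\nu v,p)\ge m(z,p)+\nu\ge m(y,p)$ for every $p\in\ext_v C_{usc}$, invokes strong atomicity (\Cref{supAtom}, via \Cref{extEq}) to get $y\le z+\nu v$, and concludes by symmetry. Your main packaging via \Cref{extEq}(2) (upper bounds of the family $\{m(y,p)p\}$) and your parenthetical alternative via \Cref{extEq}(3) are just two equivalent phrasings of that same argument.
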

\begin{proof}
Denote the supremum on the right by $\nu$. We claim that $y \le z + \nu v$. Indeed, for each $p \in \ext_v C_{usc}$ we have $m(z+\nu v,p) \ge m(z,p) + m(\nu v,p)=m(z,p)+\nu \ge m(y,p)$, so by \Cref{supAtom} it follows that $z + \nu v \ge y$. Now $y + \nu v \ge z$ follows by symmetry, whence $-\nu v \le y - z \le \nu v$. This shows that $\lVert y-z\rVert_v \le \nu$, as desired.
\end{proof}

\section{Consequences of strong atomicity}\label{evgdSec}
In \Cref{supAtom} of the previous section we have established that the extended cone $C_{usc}$ is strongly atomic. Each of the three subsections of this section derives a distinct consequence of strong atomicity of $C_{usc}$. In the first subsection we prove that the gauge-reversing bijection $\Phi\colon C \to D$ is Gateaux-differentiable in positive directions, and that the negative of the Gateaux derivative at a point $x \in C$ extends to an isomorphism $(V, V_+, x) \cong (W, W_+, \Phi(x))$. In the second subsection, we give an alternative proof of \Cref{NSThmB}, stating that a gauge-preserving bijection $\Psi\colon C\to D$ extends to a linear isomorphism between $V$ and $W$, under the additional assumption of strong atomicity of $C_{usc}$. Finally, in the third subsection, we prove a strong form of Hua's identity, which provides an explicit formula for $D\Phi$ in terms of $\Phi$.

\subsection{Gateaux-differentiability}\label{GdsubSec}
In this section we will show that the gauge-reversing bijection $\Phi_{usc}\colon C_{usc} \to D_{lsc}$ extending $\Phi$ has a (one-sided) Gateaux derivative at every point $x \in C$ in each direction $y \in C_{usc}$ (see \Cref{diffSec} for a discussion of Gateaux-differentiability). This Gateaux-derivative, denoted $D\Phi(x)(y)$, belongs to $-D_{usc}$ since $\Phi_{usc}$ is order-reversing. We show that the map $-D\Phi(x): C_{usc} \to D_{usc}$ extends to an isomorphism $(V, V_+, x) \cong (W, W_+, \Phi(x))$, culminating in \Cref{DPhi}.\\
\ind We will first prove Gateaux-differentiability in the direction of an extremal vector of $C_{usc}$, by adopting the arguments \cite[Lemmata 2.1 and 6.1]{LRW25} to the context of extended cones.

\begin{lemma}\label{x<ye}
Let $x \in C$ and $y \in C_{usc}$ with $y > x$. Then the following are equivalent:
\begin{enumerate}[label={\normalfont(\arabic*)}]
\item we have that $y-x \in C_{usc}$ is an extremal vector of $C_{usc}$;
\item we have $\{z \in C_{usc}: x \le z \le y\} = \{(1-t)x + ty: 0 \le t \le 1\}$;
\item the set $\{z \in C_{usc}: x \le z \le y\}$ is totally ordered.
\end{enumerate}
\end{lemma}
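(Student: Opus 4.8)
The plan is to organize the argument around the implications $(1)\Leftrightarrow(2)$, $(2)\Rightarrow(3)$ and $(3)\Rightarrow(2)$, the last of which carries the actual content. First I would record that $p:=y-x$ belongs to $C_{usc}$: it is affine, nonnegative (as $y\ge x$), finite-valued, and upper semicontinuous, being the sum of the upper semicontinuous function $y$ and the continuous function $-x$; also $p\neq 0$ since $y>x$. Because $C_{usc}$ is closed under addition and under scalar multiplication by elements of $[0,\infty)$ and contains $x\in C$, translation by $x$ yields mutually inverse order-isomorphisms
\[
\{z\in C_{usc}:x\le z\le y\}\ \longleftrightarrow\ \{w\in C_{usc}:0\le w\le p\},\qquad z\mapsto z-x,\ \ w\mapsto w+x,
\]
carrying the segment $\{(1-t)x+ty:0\le t\le1\}$ onto $\{tp:0\le t\le1\}$. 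Hence condition $(2)$ is equivalent to the statement $\{w\in C_{usc}:w\le p\}=\{tp:0\le t\le1\}$ (using that every member of $C_{usc}$ is nonnegative), which by \Cref{extD} is exactly the assertion that $p$ is an extremal vector of $C_{usc}$, that is, condition $(1)$. This settles $(1)\Leftrightarrow(2)$.

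For $(2)\Rightarrow(3)$ I would simply note that $t\mapsto(1-t)x+ty$ is nondecreasing on $[0,1]$ (because $y-x\ge0$), so the segment $\{(1-t)x+ty:0\le t\le1\}$ is totally ordered; under $(2)$ this segment is the set in question.

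The heart of the matter is $(3)\Rightarrow(2)$. Put $L:=\{z\in C_{usc}:x\le z\le y\}$ and $\Gamma:=\{(1-t)x+ty:0\le t\le1\}$. One always has $\Gamma\subseteq L$, since each $(1-t)x+ty$ lies in $C_{usc}$ and satisfies $x\le(1-t)x+ty\le y$; so it remains to show that if $L$ is a chain then $L\subseteq\Gamma$. Fix $z\in L$. As $\Gamma\subseteq L$ and $L$ is totally ordered, for every $t\in[0,1]$ the elements $(1-t)x+ty$ and $z$ are comparable, whence $[0,1]=A\cup B$ where $A:=\{t\in[0,1]:(1-t)x+ty\le z\}$ and $B:=\{t\in[0,1]:(1-t)x+ty\ge z\}$. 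For a fixed state $\rho\in K$ the inequality $x(\rho)+t\,(y-x)(\rho)\le z(\rho)$ cuts out a closed subinterval of $[0,1]$ containing $0$ (because $(y-x)(\rho)\ge0$ and $(z-x)(\rho)\ge0$), and $A$ is the intersection of these subintervals over all $\rho\in K$; thus $A$ is closed, contains $0$, and is downward closed, so $A=[0,a]$ for some $a\in[0,1]$, and likewise $B=[b,1]$ for some $b\in[0,1]$. Since $A\cup B=[0,1]$ we must have $b\le a$. If $b<a$, then any $t\in[b,a]\subseteq A\cap B$ gives $(1-t)x+ty\le z\le(1-t)x+ty$, hence $z=(1-t)x+ty\in\Gamma$; if $a=b=:t_\ast$, the same conclusion follows from $t_\ast\in A\cap B$ (both sets being closed). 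In all cases $z\in\Gamma$, which completes the argument.

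The step I expect to need the most care is exactly this passage $(3)\Rightarrow(2)$, specifically the bookkeeping that $A$ and $B$ are closed monotone subintervals of $[0,1]$ whose union is all of $[0,1]$; the rest is formal. I would also emphasize that this lemma is purely order-theoretic and invokes neither strong atomicity of $C_{usc}$ (\Cref{supAtom}) nor the gauge-reversing bijection.
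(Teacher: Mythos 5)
Your proof is correct, and its skeleton coincides with the paper's: translating by $x$ reduces everything to the order interval $[0,p]$ with $p=y-x$, the equivalence (1)$\iff$(2) is then immediate from \Cref{extD}, (2)$\implies$(3) is trivial, and all the work sits in (3)$\implies$(2). The only real divergence is in how that last implication is finished. You argue pointwise on the state space: the parameter sets $A$ and $B$ are intersections over $\rho\in K$ of closed subintervals of $[0,1]$, hence of the form $[0,a]$ and $[b,1]$ with union $[0,1]$, and any $t\in A\cap B$ forces $z=(1-t)x+ty$. The paper instead sets $\lambda=\sup\{\mu\in[0,1]:\mu y\le z\}=\inf\{\mu\in[0,1]:\mu y\ge z\}$ (after the informal reduction ``assume $x=0$'') and deduces $z=\lambda y$ from $-\epsilon y\le z-\lambda y\le \epsilon y$ for all $\epsilon>0$ via Archimedeanness. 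The two mechanisms are variants of the same idea; yours is marginally more scrupulous, since you keep $x\in C$ and verify the translation is an order-isomorphism onto $\{w\in C_{usc}:0\le w\le p\}$, and your pointwise finiteness argument does not need to invoke the Archimedean property for $z-\lambda y$, which in general lies outside $V$. Your closing remark that the lemma is purely order-theoretic and uses neither strong atomicity nor the gauge-reversing bijection is also accurate.
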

\begin{proof}
The validity of each of the $3$ statements is unaltered if we subtract $x$ from $x$ and $y$, so we may and do assume that $x = 0$. Then (1) $\iff$ (2) holds by \Cref{extD}, while (2) $\implies$ (3) is plain from $y \ge 0$. As to (3) $\implies$ (2), note that the containment $\supseteq$ in (2) always holds.\\
\ind Now assume $\{z \in C_{usc}: 0 \le z \le y\}$ is totally ordered, and let $z$ be one of its members. We finish the proof by showing that $z = \lambda y$ for some $\lambda \in [0,1]$.
For every $\mu\in [0,1]$ either $\mu y\le z$ or $\mu y \ge z$. Let $\lambda = \sup \{\mu \in [0,1]: \mu y \le z\} = \inf \{\mu \in [0,1]: \mu y \ge z\}$. For each $\epsilon > 0$ we have $(\lambda - \epsilon)y \le z \le (\lambda + \epsilon)y$, or $ -\epsilon y \le z - \lambda y \le \epsilon y$, and since $(V, V_+)$ is Archimedean this implies $z - \lambda y = 0$. We conclude that $z \in \{\lambda y: \lambda \in [0, 1]\}$ as desired.
\end{proof}

\begin{lemma}\label{y<xe}
Let $x \in C$ and $y \in C_{lsc}$ with $y < x$. Then the following are equivalent:
\begin{enumerate}[label=\normalfont{(\arabic*)}]
\item we have that $x-y \in C_{usc}$ is an extremal vector of $C_{usc}$;
\item we have $\{z \in C_{lsc}: y \le z \le x\} = \{(1-t)x + ty: 0 \le t \le 1\}$;
\item the set $\{z \in C_{lsc}: y \le z \le x\}$ is totally ordered.
\end{enumerate}
\end{lemma}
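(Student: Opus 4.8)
The plan is to deduce \Cref{y<xe} from the already-proved \Cref{x<ye} by transporting the order interval $\{z\in C_{lsc}: y\le z\le x\}$ onto an interval of the type appearing in \Cref{x<ye} along an explicit order-reversing bijection. Set $g:=x-y$; the hypothesis $y<x$ says exactly that $g\in C_{usc}\setminus\{0\}$, and in particular $y=x-g$ is finite-valued. Fix a base point $x_0\in C$, for instance $x_0=v$, and put $y_0:=x_0+g$. Since $x_0\in C\subseteq C_{usc}$ and $C_{usc}$ is closed under addition, we have $y_0\in C_{usc}$, and $y_0-x_0=g\neq 0$, so $y_0>x_0$ and \Cref{x<ye} is applicable to the pair $(x_0,y_0)$.

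First I would check that $\Theta\colon z\mapsto x_0+x-z$ defines an order-reversing bijection
\[
\Theta\colon \{z\in C_{lsc}: y\le z\le x\}\;\xrightarrow{\ \sim\ }\;\{z'\in C_{usc}: x_0\le z'\le y_0\}.
\]
Indeed, for $z$ in the left-hand set $\Theta(z)$ is affine and upper semicontinuous, being the difference of the continuous function $x_0+x$ and the lower semicontinuous function $z$, and it satisfies $x_0\le \Theta(z)\le x_0+x-y=y_0$; as $y_0$ is finite, $\Theta(z)\in C_{usc}$. Conversely, for $z'$ in the right-hand set, $x_0+x-z'$ is affine and lower semicontinuous, lies between $x-g=y$ and $x$, hence is $(0,\infty]$-valued and belongs to $C_{lsc}$. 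The two assignments are mutually inverse, each reverses the order, and $\Theta(x)=x_0$, $\Theta(y)=y_0$. Moreover $\Theta$ carries the segment $\{(1-t)x+ty: 0\le t\le 1\}$ onto $\{(1-t)x_0+ty_0: 0\le t\le 1\}$, because $x_0+x-\bigl((1-t)x+ty\bigr)=x_0+t(x-y)=x_0+tg$.

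With $\Theta$ at hand the equivalences follow by transport of structure. Condition (1) of \Cref{y<xe}, that $x-y=g=y_0-x_0$ is an extremal vector of $C_{usc}$, is literally condition (1) of \Cref{x<ye} for the pair $(x_0,y_0)$. As $\Theta$ is a bijection sending one distinguished segment onto the other, condition (2) of \Cref{y<xe} holds if and only if condition (2) of \Cref{x<ye} holds; and as $\Theta$ is an order-reversing bijection it preserves and reflects total orderedness, so condition (3) of \Cref{y<xe} holds if and only if condition (3) of \Cref{x<ye} holds. An appeal to \Cref{x<ye} then finishes the proof.

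I expect the only genuinely delicate point to be the well-definedness of $\Theta$ and of its inverse, namely verifying that subtracting the continuous function $x_0+x$ from a lower (resp.\ upper) semicontinuous affine function --- together with the finiteness enforced by the order bounds --- keeps us inside $C_{usc}$ (resp.\ $C_{lsc}$); the remainder is formal manipulation of an order-reversing bijection plus one invocation of \Cref{x<ye}. Alternatively, one could skip \Cref{x<ye} altogether and simply rerun its short proof in the present setting after reducing to $x_0=v$; the construction of $\Theta$ above is merely a way of packaging that repetition.
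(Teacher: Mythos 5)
Your proposal is correct and takes essentially the same route as the paper, which deduces the result from \Cref{x<ye} by transporting the interval along the order-anti-isomorphism $z \mapsto x-z$ onto $\{z \in C_{usc}: 0 \le z \le x-y\}$. Your additional shift by $v$ (using $z \mapsto v+x-z$) only serves to make the endpoints literally satisfy the hypotheses of the statement of \Cref{x<ye} rather than its reduced case $x=0$ used inside that proof; this is a harmless refinement of the same argument.
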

\begin{proof}
This is deduced from \Cref{x<ye} using the order-anti-isomorphism $z \mapsto x-z$ between $\{z \in C_{lsc}: y \le z \le x\}$ and $\{z \in C_{usc}: 0 \le z \le x-y\}$. 
\end{proof}

Since condition (3) in \Cref{x<ye} and \Cref{y<xe} is purely order-theoretic, it follows that a gauge-reversing bijection sends extremal rays to extremal rays. A calculation of gauge-functions then yields the following more precise results.

\begin{theorem}\label{lpqLem}
Let $\Phi\colon C \to D$ be a gauge-reversing bijection between the open cones of complete order unit spaces.
\begin{enumerate}[label=\normalfont{(\arabic*)}]
\item Let $x \in C$ and $p \in \ext(C_{usc})$ with $m(x,p) = 1$. Then there exists $q \in \ext(D_{usc})$ with $m(\Phi(x),q)=1$ such that for all $\lambda \in (0, \infty)$ one has
\begin{equation}\label{Phiuscp}
    \Phi_{usc}(x + \lambda p) = \Phi(x) - \frac{\lambda}{\lambda + 1}q \in D_{lsc}.
\end{equation}
\item Let $z \in D$ and $q \in \ext(D_{usc})$ with $m(z,q) = 1$. Then there exists $p \in \ext(C_{usc})$ with $m(\Phi^{-1}(z),p)=1$ such that for all $\lambda \in (-1, 0)$ one has
\begin{equation}\label{Philscp}
    (\Phi^{-1})_{lsc}(z + \lambda q) = \Phi^{-1}(z) - \frac{\lambda}{\lambda + 1}p \in C_{usc}.
\end{equation}
\end{enumerate}
\end{theorem}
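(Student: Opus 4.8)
The plan is to prove part (1) in full and then obtain part (2) by the mirror argument. First I would fix $x\in C$ and $p\in\ext(C_{usc})$ with $m(x,p)=1$, and extract the following geometric skeleton. For each $\lambda>0$ the vector $\lambda p$ is again extremal in $C_{usc}$, so \Cref{x<ye} applies with $y=x+\lambda p$ and shows that the order interval $\{z\in C_{usc}:x\le z\le x+\lambda p\}$ is the totally ordered segment $\{x+sp:0\le s\le\lambda\}$. Since $\Phi_{usc}$ is an order-anti-isomorphism, it carries this interval bijectively onto $\{h\in D_{lsc}:\Phi_{usc}(x+\lambda p)\le h\le\Phi(x)\}$, which is therefore totally ordered; \Cref{y<xe} (applied with $W$ in place of $V$) then shows that $q_\lambda:=\Phi(x)-\Phi_{usc}(x+\lambda p)$ is an extremal vector of $D_{usc}$ and that this image interval equals $\{\Phi(x)-tq_\lambda:0\le t\le1\}$. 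For $0<\lambda\le\lambda'$ the segment for $\lambda$ is contained in the one for $\lambda'$, which forces $q_\lambda$ and $q_{\lambda'}$ to be positive multiples of one another. Hence there are a fixed $\hat q\in\ext(D_{usc})$ and a nondecreasing $c\colon(0,\infty)\to(0,\infty)$ with $\Phi_{usc}(x+\lambda p)=\Phi(x)-c(\lambda)\hat q$ for $\lambda>0$; moreover, as $\Phi_{usc}(x+\lambda p)\in D_{lsc}$ is pointwise strictly positive, $c(\lambda)<\mu_0:=m(\Phi(x),\hat q)$, a quantity easily checked to be positive and finite.

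Next I would identify $c$ by a gauge-function computation. From $m(x,p)=1$ one gets $M(p,x)=1$ by \eqref{LM}, whence $m(x,x+\lambda p)=M(x+\lambda p,x)^{-1}=(1+\lambda M(p,x))^{-1}=(1+\lambda)^{-1}$ using \eqref{Maxby1}. On the other hand, since $\Phi_{usc}$ reverses gauges, $m(x,x+\lambda p)=m(\Phi_{usc}(x+\lambda p),\Phi(x))=m(\Phi(x)-c(\lambda)\hat q,\Phi(x))$, and by \eqref{Maxby3} this last quantity is $1-c(\lambda)M(\hat q,\Phi(x))=1-c(\lambda)/\mu_0$ (using $M(\hat q,\Phi(x))=m(\Phi(x),\hat q)^{-1}$). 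Comparing gives $c(\lambda)=\mu_0\tfrac{\lambda}{\lambda+1}$. Setting $q:=\mu_0\hat q$, which is again an extremal vector of $D_{usc}$, one obtains $m(\Phi(x),q)=\mu_0^{-1}m(\Phi(x),\hat q)=1$ and $\Phi_{usc}(x+\lambda p)=\Phi(x)-\tfrac{\lambda}{\lambda+1}q$ for all $\lambda>0$, which is part (1).

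For part (2) I would run the same argument with $\Phi^{-1}$, $D$, and the extension $(\Phi^{-1})_{lsc}\colon D_{lsc}\to C_{usc}$ in the roles of $\Phi$, $C$, $\Phi_{usc}$. Given $z\in D$ and $q\in\ext(D_{usc})$ with $m(z,q)=1$, one has $z+\lambda q\in D_{lsc}$ for $\lambda\in(-1,0)$ (this is exactly where $m(z,q)=1$ is used); writing $\lambda=-s$ with $s\in(0,1)$, \Cref{y<xe} gives $\{w\in D_{lsc}:z-sq\le w\le z\}=\{z-rq:0\le r\le s\}$, and applying the order-anti-isomorphism $(\Phi^{-1})_{lsc}$ together with \Cref{x<ye} and the same nesting argument yields $(\Phi^{-1})_{lsc}(z-sq)=\Phi^{-1}(z)+\tilde c(s)\hat p$ for a fixed $\hat p\in\ext(C_{usc})$. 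Using $M(q,z)=m(z,q)^{-1}=1$ and \eqref{Maxby3} one gets $m(z-sq,z)=1-s$, hence $m(\Phi^{-1}(z),(\Phi^{-1})_{lsc}(z-sq))=1-s$ by gauge-reversal; evaluating the left side via \eqref{LM} and \eqref{Maxby1} as $(1+\tilde c(s)\,m(\Phi^{-1}(z),\hat p)^{-1})^{-1}$ and solving gives $\tilde c(s)=\nu_0\tfrac{s}{1-s}$ with $\nu_0:=m(\Phi^{-1}(z),\hat p)$. Putting $p:=\nu_0\hat p$ and reparametrizing back to $\lambda\in(-1,0)$ yields $(\Phi^{-1})_{lsc}(z+\lambda q)=\Phi^{-1}(z)-\tfrac{\lambda}{\lambda+1}p$ with $m(\Phi^{-1}(z),p)=1$.

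The step I expect to be the main obstacle is the geometric one: passing from ``the image order-interval is totally ordered'' to ``it is a line segment along a single extremal ray of $D_{usc}$'', so that $\Phi_{usc}(x+\lambda p)$ takes the rigid form $\Phi(x)-c(\lambda)\hat q$. This is precisely what \Cref{x<ye}, \Cref{y<xe}, and the nesting/collinearity argument deliver; once that is in hand, the rest is bookkeeping with the gauge functions. A secondary point requiring vigilance is semicontinuity: one must check consistently that the elements in play ($x+\lambda p$, $z-sq$, $\Phi(x)-c(\lambda)\hat q$, $q_\lambda$) lie in the intended extended cones — in particular that $z-sq$ is pointwise strictly positive for $s<1$, that $q_\lambda$ is finite-valued, and that $c(\lambda)$ and $s$ stay in the ranges where the formulae of \Cref{Maxby} apply. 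Notably no continuity or differentiability of $\Phi$ is needed; everything is drawn from the order structure and homogeneity.
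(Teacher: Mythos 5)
Your proposal is correct and takes essentially the same route as the paper: extremality of $\lambda p$ makes the order interval $[x,x+\lambda p]$ totally ordered (\Cref{x<ye}), the order-anti-isomorphism $\Phi_{usc}$ carries it to a totally ordered interval which \Cref{y<xe} identifies as a segment along an extremal ray of $D_{usc}$, and the coefficient is then pinned down by the gauge computation via \Cref{Maxby}, with part (2) obtained by the mirrored argument. The only differences are bookkeeping: you fix an unnormalized $\hat q$ and rescale at the end while handling the $\lambda$-dependence by a nesting argument, whereas the paper normalizes $q$ at the outset and fixes one large $\kappa$, noting that enlarging $\kappa$ does not change $q$.
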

\begin{proof}
Fix some large $\kappa > 0$ for the moment. Since $\kappa p$ is an extremal vector, the order interval $\{y \in C_{usc}: x \le y \le x + \kappa p\}$ is totally ordered by \Cref{x<ye}. Since $\Phi_{usc}$ is an order-anti-isomorphism, it follows that
$$\Phi_{usc}(\{y \in C_{usc}: x \le y \le x + \kappa p\}) = \{z \in D_{lsc}: \Phi_{usc}(x+\kappa p) \le z \le \Phi(x)\}$$
is totally ordered as well. This means that condition (3) of \Cref{y<xe} is satisfied, and so the other two conditions also hold. First, conditions (1) gives that $\Phi(x) - \Phi_{usc}(x+\kappa p)$ is an extremal vector of $D_{usc}$, hence a multiple of an extremal vector $q$ with $M(q,\Phi(x)) = 1$. Secondly, condition (2) implies that for each $0 \le \lambda \le \kappa$ there exists $t(\lambda) \le 0$ with $\Phi(x + \lambda p) = \Phi(x) + t(\lambda)q$. To calculate $t(\lambda)$, use equations \eqref{Maxby1} and \eqref{Maxby3} as well as the fact that $m(x,p)=1=m(\Phi(x),q)$ to see that 
$$\frac{1}{1 + \lambda} = m(x,x + \lambda p) = m(\Phi(x)+ t(\lambda)q,\Phi(x)) = 1 + t(\lambda).$$
It follows that $t(\lambda) = 1/(1+t) - 1 = -t/(t+1)$. Since increasing $\kappa$ does not change $q$, we conclude that \eqref{Phiuscp} holds for all $\lambda \ge 0$. Part (2) is shown in a similar fashion.
\end{proof}


\begin{proposition}\label{DPhiusc}
Let $x \in C$. There exists a homogeneous of degree $1$, order-reversing map
$$D\Phi_{usc}(x)\colon C_{usc} \to -D_{usc},$$
such that $D\Phi_{usc}(x)(y)$ is the Gateaux-derivative of $\Phi_{usc}$ at $x$ in the direction $y$ for each $y \in C_{usc}$. Moreover, $D\Phi_{usc}(x)$ maps $V_+$ into $-W_+$, sends $x$ to $-\Phi(x)$, and maps $C$ into $-D$. 
\end{proposition}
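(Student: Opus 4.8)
The idea is: (i) extract one‑sided directional derivatives of $\Phi$ in \emph{continuous} directions from convexity; (ii) extend to a candidate map $D\Phi_{usc}(x)$ on $C_{usc}$ and read off its algebraic properties; (iii) identify it with the Gateaux derivative, the last step being where strong atomicity (\Cref{supAtom}) and the exact formulae of \Cref{lpqLem} are indispensable.

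First I would record that $\Phi$ is convex (\Cref{Phiconv}) and locally Lipschitz (\Cref{biLip}), and that for $y\in C_{usc}$ and $\mu>0$ one has $\Phi_{usc}(x+\mu y)=\sup\{\Phi(x+\mu a):a\in C,\ a\succ y\}$: indeed any $z\in C$ with $z\succ x+\mu y$ is $x+\mu a$ for $a:=\mu^{-1}(z-x)\in C$, $a\succ y$, and conversely each such $x+\mu a$ occurs in the supremum \eqref{PhiuscDef}. Hence $\mu\mapsto\Phi_{usc}(x+\mu y)$ is convex (a supremum of the convex functions $\mu\mapsto\Phi(x+\mu a)$), so the difference quotient $D_\mu:=\mu^{-1}(\Phi_{usc}(x+\mu y)-\Phi(x))$ is nondecreasing in $\mu$ and $\le 0$. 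For $a\in C$ the scalar quotient $\mu^{-1}(\Phi(x+\mu a)-\Phi(x))$ is nondecreasing and norm‑bounded, so $D\Phi(x)(a):=\lim_{\mu\downarrow0}\mu^{-1}(\Phi(x+\mu a)-\Phi(x))$ exists in $W$, lies in $-W_+$, and $a\mapsto D\Phi(x)(a)$ is homogeneous of degree $1$, order‑reversing, and Lipschitz in $a$. Comparing $x+\mu v$ with $(1+\mu m(x,v)^{-1})x$ and $(1+\mu M(x,v)^{-1})x$ via order‑reversal and homogeneity of degree $-1$ gives $\Phi(x+\mu v)\le(1+\mu M(x,v)^{-1})^{-1}\Phi(x)$, whence $-D\Phi(x)(v)\ge M(x,v)^{-1}\Phi(x)\in D$, and therefore $-D\Phi(x)(a)\in D$ for every $a\in C$ (use $a\ge\delta v$). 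Finally $\Phi_{usc}(x+\mu x)=(1+\mu)^{-1}\Phi(x)$ gives $D\Phi(x)(x)=-\Phi(x)$.

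Next I set $D\Phi_{usc}(x)(y):=\sup\{D\Phi(x)(a):a\in C,\ a\succ y\}$ for $y\in C_{usc}$. The net $\{-D\Phi(x)(a):a\succ y\}$ is nonincreasing in $D_{usc}$, so by \Cref{SupInfProp}(1) its pointwise infimum lies in $D_{usc}$; thus $D\Phi_{usc}(x)(y)\in-D_{usc}$ is well defined, and visibly homogeneous of degree $1$ and order‑reversing. Since $a\mapsto D\Phi(x)(a)$ is order‑reversing and norm‑continuous, $D\Phi_{usc}(x)(y)=D\Phi(x)(y)$ for $y\in C$ (take $a=y+\varepsilon v\downarrow y$), so $D\Phi_{usc}(x)$ maps $C$ into $-D$ and sends $x$ to $-\Phi(x)$; and for $y\in V_+\subset C_{usc}$, where $x+\mu y\in C$ for all $\mu>0$, the same argument identifies $D\Phi_{usc}(x)(y)$ with the norm limit $\lim_{\mu\downarrow0}\mu^{-1}(\Phi(x+\mu y)-\Phi(x))\in-W_+$.

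It remains to show $D_\mu\to D\Phi_{usc}(x)(y)$. From $D_\mu\ge\mu^{-1}(\Phi(x+\mu a)-\Phi(x))\ge D\Phi(x)(a)$ for all $a\succ y$ we get $D_\mu\ge D\Phi_{usc}(x)(y)$ for every $\mu$. For the opposite bound I apply \Cref{lpqLem}(1) to $\widehat r:=m(x,r)\,r$ for an extremal vector $r\le y$: by \eqref{Phiuscp}, $\Phi_{usc}(x+\mu r)=\Phi(x)-\tfrac{\mu}{\mu+m(x,r)}q_r$ with $q_r\in\ext(D_{usc})$ and $m(\Phi(x),q_r)=1$ (so $q_r\le\Phi(x)$); since $x+\mu y\ge x+\mu r$ this gives $D_\mu\le-\tfrac{1}{\mu+m(x,r)}q_r$, hence (the limit existing pointwise by monotonicity) $\lim_{\mu\downarrow0}D_\mu\le\inf\{-m(x,r)^{-1}q_r:r\in\ext(C_{usc}),\ r\le y\}=:D\Phi^{\mathrm{ext}}_{usc}(x)(y)$. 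So $D\Phi_{usc}(x)(y)\le\lim D_\mu\le D\Phi^{\mathrm{ext}}_{usc}(x)(y)$. The crux is the identity $D\Phi_{usc}(x)(y)=D\Phi^{\mathrm{ext}}_{usc}(x)(y)$: for $y$ extremal this holds because extremal vectors below $y$ are its multiples while $\sup_{a\succ y}D\Phi(x)(a)$ equals the exact value $-m(x,y)^{-1}q_y$ (combine \eqref{Phiuscp} with $\Phi_{usc}(x+\mu y)=\sup_{a\succ y}\Phi(x+\mu a)$); for general $y$ one invokes strong atomicity of $C_{usc}$ (\Cref{supAtom}), reducing $a\succ y$ to its extremal decomposition and letting $a\downarrow y$ using $\bigcap_{a\succ y}\{r\in\ext(C_{usc}):r\le a\}=\{r\in\ext(C_{usc}):r\le y\}$. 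Granting the identity and writing $\ell$ for the common value, the sandwich $\sup_r\tfrac{1}{\mu+m(x,r)}q_r\le-D_\mu\le-\ell=\sup_r m(x,r)^{-1}q_r$ together with the uniform bounds $m(x,r)\ge m(x,v)/M(y,v)$ and $q_r\le\Phi(x)$ (for $r\le y$) yields $\|-D_\mu-(-\ell)\|_w\le\mu\,M(y,v)^2m(x,v)^{-2}\|\Phi(x)\|_w\to0$, so $D_\mu\to\ell=D\Phi_{usc}(x)(y)$ in norm; being a uniform limit of the usc functions $-D_\mu$, the limit $-\ell$ is usc, confirming $\ell\in-D_{usc}$. (Norm convergence can alternatively be extracted from \Cref{myz} applied on the $W$‑side.)

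\textbf{Main obstacle.} The hard part is the identity $D\Phi_{usc}(x)(y)=D\Phi^{\mathrm{ext}}_{usc}(x)(y)$ — the agreement of the derivative computed from continuous vectors \emph{above} $y$ with the one computed from extremal vectors \emph{below} $y$. The pointwise limit $\lim_{\mu\downarrow0}D_\mu$ exists trivially by monotonicity of $D_\mu$, but without a rich supply of extremal vectors below $y$ there is no reason for it to lie in $-D_{usc}$, to equal $D\Phi_{usc}(x)(y)$, or for the convergence to be uniform; this is precisely where strong atomicity of $C_{usc}$ (available here by \Cref{supAtom}) must be used. The remaining steps are routine manipulations with gauge functions, semicontinuity, and convexity.
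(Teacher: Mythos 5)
Your architecture (sandwich the quotient $D_\mu$ between a candidate built from continuous directions $a\succ y$ and a candidate built from extremal vectors $r\le y$) leaves the decisive step unproved, and an earlier step is also unjustified. First, you define $D\Phi(x)(a)$ for $a\in C$ as a norm limit on the grounds that the difference quotients are monotone and norm-bounded; in an order unit space a monotone norm-bounded net need not converge in norm (a monotone sequence in $A(K)$ can have a merely semicontinuous pointwise limit), so the existence of this limit in $W$ — and hence the very definition of $D\Phi_{usc}(x)(y):=\sup_{a\succ y}D\Phi(x)(a)$ as an element of $-D_{usc}$ — is not established. Proving that the quotients are norm-Cauchy is precisely the substance of the paper's proof, obtained from the gauge computation together with \Cref{myz}.

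Second, the identity $D\Phi_{usc}(x)(y)=D\Phi^{\mathrm{ext}}_{usc}(x)(y)$, which you yourself call the crux, is not proved, and your sketch gives only the easy half. For general $y$, shrinking the index set $\{r\in\ext C_{usc}:r\le a\}$ down to $\{r\le y\}$ as $a\downarrow y$ only \emph{increases} the infimum, so "$\sup_{a\succ y}\inf_{r\le a}\le\inf_{r\le y}$" is the inequality you already have from order-reversal; the reverse inequality (equivalently, the lower bound $D_\mu\ge\ell$ with $\ell$ given by the extremal formula) is the real content. It cannot be finessed by approximating $y$ from above by continuous elements: when $y\in C_{usc}$ is not continuous there is no continuous affine $a$ with $y\prec a\le y+\epsilon v$, so $\Phi(x+\mu a)\downarrow\Phi_{usc}(x+\mu y)$ is only a monotone pointwise limit, and the interchange of $\lim_{\mu\downarrow0}$ with $\sup_{a\succ y}$ is exactly what has to be justified (your parenthetical argument in the extremal case likewise only reproves the easy direction). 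The paper's proof supplies the missing control by computing $m\bigl(\mu^{-1}(\Phi(x)-\Phi_{usc}(x+\mu y)),q\bigr)=m(y,p)/(1+\mu\,m(y,p))$ for \emph{every} extremal $q\in\ext_w D_{usc}$, via the correspondence $p\leftrightarrow q$ of \Cref{lpqLem} — not only for the $q_r$ attached to extremal $r\le y$ — and then converts the resulting Lipschitz-in-$\mu$ bound, uniform in $q$, into a norm bound using \Cref{myz} (strong atomicity of $D_{usc}$); this yields norm-Cauchyness of the quotients and both identifications simultaneously. Without an argument of this kind your sandwich has no proved lower wall, and the concluding estimate $\lVert -D_\mu+\ell\rVert_w\le\mu\,M(y,v)^2m(x,v)^{-2}\lVert\Phi(x)\rVert_w$ rests on the unproved identity.
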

\begin{proof}
There is no loss of generality in assuming $x = v$ and $\Phi(x) = w$, cf.  \Cref{Cndv}. Fix $y \in C_{usc}$, and define $f\colon (0, \infty) \to D_{usc}$ by the difference quotient
\begin{equation*}
f(\mu) := \mu^{-1}(w - \Phi_{usc}(v+\mu y)).
\end{equation*}
Note here that $v+\mu y \ge v$ implies $w \ge \Phi_{usc}(v + \mu y)$, so that $\mu^{-1}(w - \Phi_{usc}(v+\mu y)) \in D_{usc}$. We will show there exists $f(0) \in D_{usc}$ such that
\begin{equation}\label{fmu0}
\lim_{\mu \downarrow 0} \lVert f(\mu) - f(0)\rVert_w = 0.
\end{equation}
Let $q \in \ext_w D_{usc}$ and use \Cref{lpqLem}(2) to obtain $p \in \ext_v C_{usc}$ such that \eqref{Philscp} holds with $z = w$ and $\Phi^{-1}(z) = v$. Let $\mu, \nu \in (0, \infty)$. We claim that
\begin{equation}\label{MF}
m(f(\mu),q) = \frac{m(y,p)}{1+\mu m(y,p)}.
\end{equation}
For $\lambda \ge 0$ the following are equivalent:
\begin{align*}
\lambda \le \mu m(y,p) \iff \lambda p \le \mu y &\iff v + \lambda p \le v + \mu y\\
&\iff \Phi(v + \mu y) \le \Phi(v + \lambda p) = w - \frac{\lambda}{1 + \lambda}q \\&\iff
\frac{\lambda}{1 + \lambda}q \le w - \Phi(v + \mu y).
\end{align*}
It follows that
$$m(\mu^{-1}(w-\Phi(v+\mu y)),q)=\mu^{-1} \cdot \frac{\mu m(y,p)}{1 + \mu m(y,p)} = \frac{m(y,p)}{1 + \mu m(y,p)},$$
whence our claim follows. A little calculation shows that
\begin{equation}\label{mfb}
m(f(\mu),q)-m(f(\nu),q) =
\frac{m(y,p)}{1+\mu m(y,p)} - \frac{m(y,p)}{1+\nu m(y,p)}
=
\frac{(\nu - \mu)m(y,p)^2}{(1 + \mu m(y,p))(1+ \nu m(y,p))}.
\end{equation}
Since $0 \le m(y,p) \le \lVert y\rVert_v$, we conclude that
$$\sup\{|m(f(\mu),q)-m(f(\nu),q)|: q \in \ext_w D_{usc}\} \le \lVert y\rVert_v^2|\mu - \nu|.$$
By \Cref{myz} it follows that
\begin{equation}\label{fmunu}
\lVert f(\mu) - f(\nu) \rVert_w \le \lVert y\rVert_v^2|\mu - \nu|.
\end{equation}
Since each $\lVert\cdot\rVert_w$-Cauchy net in $D_{usc}$ converges in $D_{usc}$ by \Cref{sclim}(1), it follows that there exists $f(0) \in D_{usc}$ satisfying \eqref{fmu0}. If $y \in V_+$, then $f(\mu) \in W_+$ for all $\mu > 0$ so $f(0) \in W_+$ because $(W_+, \lVert\cdot\rVert_w)$ is complete. Setting $D\Phi_{usc}(v)(y) = -f(0) \in -D_{usc}$ and letting $\nu \downarrow 0$ in \eqref{fmunu} gives
\begin{equation}\label{fmu2}
\lVert \mu^{-1}(\Phi_{usc}(v+\mu y) - \Phi(v)) - D\Phi_{usc}(v)(y)\rVert_w = 
\lVert f(\mu) - f(0)\rVert_w \le \mu \lVert y\rVert_v^2.
\end{equation}
We conclude that $D\Phi_{usc}(v)(y)$ is the Gateaux-derivative of $\Phi_{usc}$ at $v$ in the direction $y$.\\
\ind Now let $x\in C$ be arbitrary. By a general principle of Gateaux-derivatives, $D\Phi_{usc}(x)$ is homogeneous of degree $1$. It reverses the order since $\Phi_{usc}$ does. Since $\Phi$ is homogeneous of degree $-1$, one easily sees that $D\Phi_{usc}(x)(x) = -\Phi(x)$. This implies by homogeneity that $D\Phi_{usc}(x)(\delta x)=-\delta \Phi(x)$ for each $\delta > 0$. Because we have shown that $D\Phi_{usc}(x)$ maps $V_+$ into $-W_+$ and is order-reversing, it follows that $C = \bigcup_{\delta > 0} \{y \in V_+: y \ge \delta x\}$ is mapped into $-D = \bigcup_{\delta > 0} \{z \in -W_+: z \le -\delta \Phi(x)\}$ by $D\Phi_{usc}(x)$.
\end{proof}

\begin{proposition}\label{DPhilsc}
Let $x \in C$. There exists a homogeneous order-reversing map
$$D\Phi_{lsc}(x)\colon -C_{usc} \to D_{usc},$$
such that $D\Phi_{lsc}(x)(-y)$ is the Gateaux-derivative of $\Phi_{lsc}$ at $x$ in the direction $-y$ for each $y \in C_{usc}$. Moreover, $D\Phi_{lsc}(x)$ maps $-V_+$ inside $W_+$, sends $-x$ to $\Phi(x)$, and maps $-C$ into $D$. 
\end{proposition}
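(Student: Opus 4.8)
The plan is to mirror the proof of \Cref{DPhiusc}, with $\Phi_{usc}$ replaced by $\Phi_{lsc}$, the positive directions $y$ replaced by the directions $-y$, and \Cref{lpqLem}(2) replaced by \Cref{lpqLem}(1) applied to the gauge-reversing bijection $\Phi^{-1}\colon D \to C$. By \Cref{Cndv} there is no loss of generality in assuming $x = v$ and $\Phi(x) = w$. Fix $y \in C_{usc}$; the case $y = 0$ being trivial, assume $y \neq 0$. Since $y$ is upper semicontinuous on the compact set $K$ it is bounded, say $y \le M(y,v)v$, so that $v - \mu y \succ 0$ is lower semicontinuous and hence lies in $C_{lsc}$ whenever $0 < \mu < M(y,v)^{-1}$. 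As $\Phi_{lsc}$ is order-reversing, the difference quotient
$$g(\mu) := \mu^{-1}\bigl(\Phi_{lsc}(v-\mu y) - w\bigr) \in D_{usc}$$
is then well defined, and it suffices to exhibit $g(0) \in D_{usc}$ with $\lim_{\mu\downarrow 0}\lVert g(\mu) - g(0)\rVert_w = 0$; one then sets $D\Phi_{lsc}(v)(-y) := g(0)$.

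The crux is computing $m(g(\mu), q)$ for an extremal vector $q \in \ext_w D_{usc}$. By \Cref{lpqLem}(1) applied to $\Phi^{-1}$ at the base point $w$, there is $p \in \ext_v C_{usc}$ with $(\Phi^{-1})_{usc}(w + tq) = v - \tfrac{t}{t+1}p$ for all $t \ge 0$. Since $(\Phi^{-1})_{usc}$ is the order-reversing inverse of $\Phi_{lsc}$ by \Cref{ghProp}, for $t \ge 0$ one has the equivalences
$$w + tq \le \Phi_{lsc}(v-\mu y) \iff v - \tfrac{t}{t+1}p \ge v - \mu y \iff \tfrac{t}{t+1} \le \mu\,m(y,p),$$
and because $m(y,p) \le M(y,v)$ (immediate from $m(v,p)=1$ together with $y \le M(y,v)v$) this gives, for $0 < \mu < M(y,v)^{-1}$,
$$m(g(\mu),q) = \frac{m(y,p)}{1-\mu\,m(y,p)}.$$
Subtracting the corresponding expression at $\nu$ and using $0 \le m(y,p) \le M(y,v)$ yields, on the range $0 < \mu,\nu < \tfrac{1}{2} M(y,v)^{-1}$, the estimate $|m(g(\mu),q)-m(g(\nu),q)| \le 4M(y,v)^2|\mu-\nu|$, uniformly in $q$. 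As $D_{usc}$ is strongly atomic by \Cref{supAtom} applied to $\Phi^{-1}$, the version of \Cref{myz} for $W$ gives $\lVert g(\mu)-g(\nu)\rVert_w \le 4M(y,v)^2|\mu-\nu|$. Since $\lVert\cdot\rVert_w$-Cauchy nets in $D_{usc}$ converge in $D_{usc}$ by \Cref{sclim}(1), the limit $g(0)\in D_{usc}$ exists, and letting $\nu\downarrow 0$ shows that $g(0)$ is the Gateaux-derivative of $\Phi_{lsc}$ at $v$ in the direction $-y$. If in addition $y\in V_+$, then $v-\mu y\in C$, so $g(\mu)\in W_+$, and completeness of $(W_+,\lVert\cdot\rVert_w)$ forces $g(0)\in W_+$.

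For a general base point $x\in C$, homogeneity of degree $1$ and the order-reversing property of $D\Phi_{lsc}(x)$ follow formally from the corresponding properties of $\Phi_{lsc}$ and of the one-sided Gateaux-derivative, exactly as in \Cref{DPhiusc}. Homogeneity of degree $-1$ of $\Phi$ gives $D\Phi_{lsc}(x)(-x) = \lim_{\mu\downarrow 0}\mu^{-1}\bigl((1-\mu)^{-1}\Phi(x)-\Phi(x)\bigr) = \Phi(x)$, hence $D\Phi_{lsc}(x)(-\delta x)=\delta\Phi(x)$ for every $\delta>0$. Writing $-C = \bigcup_{\delta>0}\{z\in-V_+ : z\le-\delta x\}$ and using that $D\Phi_{lsc}(x)$ is order-reversing and sends $-V_+$ into $W_+$, one concludes that it maps $-C$ into $D$.

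The main obstacle is the sign and normalization bookkeeping in the key computation: one must keep straight that it is $(\Phi^{-1})_{usc}$, not $(\Phi^{-1})_{lsc}$, that is inverse to $\Phi_{lsc}$; verify that the perturbation $v-\mu y$ genuinely stays in $C_{lsc}$, which is exactly where boundedness of upper semicontinuous functions on the compact state space enters; and check that the denominator $1-\mu\,m(y,p)$ stays bounded away from $0$ uniformly in $q$, which rests on the a priori bound $m(y,p)\le M(y,v)$ for all normalized extremal $p$. Note that a direct appeal to an inverse-function theorem is not available here, since $\Phi_{lsc}$ is only known to be differentiable in the one-sided directions $-y$.
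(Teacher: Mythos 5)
Your proposal is correct and follows essentially the same route as the paper, which proves \Cref{DPhilsc} by adapting the argument of \Cref{DPhiusc}: you work with the difference quotient $\mu^{-1}(\Phi_{lsc}(v-\mu y)-w)$ (the paper's $f(\mu)$ with $\mu<0$), derive the analogue of \eqref{MF}, namely $m(\cdot,q)=m(y,p)/(1-\mu\,m(y,p))$ via the extremal-ray correspondence (your use of \Cref{lpqLem}(1) applied to $\Phi^{-1}$ is the right way to start from $q\in\ext_w D_{usc}$), and conclude with the Lipschitz estimate, \Cref{myz} for $D_{usc}$, and completeness. The points you flag — that $(\Phi^{-1})_{usc}$ is the inverse of $\Phi_{lsc}$, that $v-\mu y$ stays in $C_{lsc}$ via $m(y,p)\le M(y,v)<\infty$, and that the denominator stays bounded below — are exactly the adaptations the paper leaves implicit, and you handle them correctly.
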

\begin{proof}
The proof being very similar to that of \Cref{DPhiusc}, we only indicate which adaptions ought to be made. Again we may assume that $x = v$ and $\Phi(x) = w$.
Let $y \in C_{usc}$ and define $f\colon (-\lVert y\rVert_v, 0) \to D_{usc}$ by
$$f(\mu) = \mu^{-1}(w - \Phi_{lsc}(v + \mu y)).$$
Now one establishes for $-\max\{1,\lVert y\rVert_v\}^{-1} < \mu < \nu < 0$ that \eqref{MF} and \eqref{mfb} hold, and deduces from \Cref{myz} that
$$\lVert f(\nu) - f(\mu) \rVert_w \le \frac{(\nu - \mu)\lVert y\rVert_v^2}{(1+\mu \lVert y\rVert_v)^2}.$$
The proof is concluded in the same way as the previous proposition.
\end{proof}

\begin{lemma}\label{DPhisc}
The maps $D\Phi_{usc}(x)\colon C_{usc}  \to -D_{usc}$ and $D(\Phi^{-1})_{lsc}(\Phi(x))\colon -D_{usc} \to C_{usc}$ are mutually inverse for every $x\in C$.
\end{lemma}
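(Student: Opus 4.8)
The plan is to realize the two maps as Gateaux-derivatives of the mutually inverse extended gauge-reversing bijections $\Phi_{usc}\colon C_{usc}\to D_{lsc}$ and $(\Phi^{-1})_{lsc}\colon D_{lsc}\to C_{usc}$ furnished by \Cref{ghProp}; here one uses that $\Phi^{-1}\colon D\to C$ is again a gauge-reversing bijection, so that \Cref{DPhilsc} applies to it and produces the map $D(\Phi^{-1})_{lsc}(\Phi(x))$. One then differentiates the identities $(\Phi^{-1})_{lsc}\circ\Phi_{usc}=\Id_{C_{usc}}$ and $\Phi_{usc}\circ(\Phi^{-1})_{lsc}=\Id_{D_{lsc}}$ by means of the chain rule for Gateaux-derivatives \Cref{chr}(2). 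By \Cref{Cndv} there is no loss of generality in assuming $x=v$ and $\Phi(x)=w$. The only hypothesis of \Cref{chr}(2) that requires work is that the relevant \emph{outer} map be Lipschitz on a neighbourhood of the base point, and checking this for $\Phi_{usc}$ near $v$ is the crux.

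For the easy half I would first show that $(\Phi^{-1})_{lsc}$ is Lipschitz near $w$. If $h,h'\in D_{lsc}$ satisfy $\lVert h-w\rVert_w<\tfrac12$, then $h,h'\ge\tfrac12 w$, so $h(\psi),h'(\psi)\ge\tfrac12$ for every pure state $\psi\in\delta_e S$. Combining \Cref{myz} applied to the pair $(\Phi^{-1})_{lsc}(h),(\Phi^{-1})_{lsc}(h')\in C_{usc}$ with the bijection $\ext_v C_{usc}\to\delta_e S$ of \Cref{extdeBij} and formula \eqref{Mppsi} (which yields $m((\Phi^{-1})_{lsc}(h),p)=\Phi_{usc}((\Phi^{-1})_{lsc}(h))(\psi)^{-1}=h(\psi)^{-1}$ whenever $\Phi_{usc}(p)=I^{\infty}_{\{\psi\}}$), one obtains
\[
\lVert (\Phi^{-1})_{lsc}(h)-(\Phi^{-1})_{lsc}(h')\rVert_v\le\sup_{\psi\in\delta_e S}\lvert h(\psi)^{-1}-h'(\psi)^{-1}\rvert\le 4\sup_{\psi\in\delta_e S}\lvert h(\psi)-h'(\psi)\rvert\le 4\lVert h-h'\rVert_w.
\]
Note this argument uses strong atomicity of $C_{usc}$ (\Cref{supAtom}) in an essential way, through \Cref{myz}.

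The harder half, which I expect to be the main obstacle, is to show that $\Phi_{usc}$ is Lipschitz near $v$: one cannot simply transpose the previous argument to the codomain side, since $C_{lsc}$ (equivalently $D_{lsc}$) need not be strongly atomic. Instead I would exploit the explicit description $\Phi_{usc}(g)=\sup\{\Phi(a):a\in C,\ a\succ g\}$ from \Cref{ghProp}, the supremum being that of a nondecreasing net and computed pointwise by \Cref{SupInfProp}(2). Given $g,g'\in C_{usc}$ with $g,g'\ge\tfrac12 v$ and $\delta:=\lVert g-g'\rVert_v$, so that $g\le g'+\delta v$, the Dini-type cofinality statement \Cref{ocga}(1) lets one restrict the defining net of $\Phi_{usc}(g')$ to $\{a\in C:g'\prec a\prec 2v\}$; for any such $a$ one has $a+\delta v\succ g$, hence $\Phi_{usc}(g)\ge\Phi(a+\delta v)$, while $a$ and $a+\delta v$ both lie in a fixed Thompson-metric ball around $v$, so $\lVert\Phi(a+\delta v)-\Phi(a)\rVert_w\le L\delta$ with $L$ independent of $a$, by \Cref{jcont}. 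Taking suprema over $a$ and then interchanging $g$ and $g'$ yields $\lvert\Phi_{usc}(g)(\rho)-\Phi_{usc}(g')(\rho)\rvert\le L\delta$ for every state $\rho$; since $g,g'\ge\tfrac12 v$ forces $\Phi_{usc}(g),\Phi_{usc}(g')\le 2w$ (so both are bounded affine functions), this gives $\lVert\Phi_{usc}(g)-\Phi_{usc}(g')\rVert_w\le L\lVert g-g'\rVert_v$.

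Finally I would conclude as follows. Fix $y\in C_{usc}$ and set $q:=D\Phi_{usc}(v)(y)\in -D_{usc}$, which by \Cref{DPhiusc} is the Gateaux-derivative of $\Phi_{usc}$ at $v$ in direction $y$; by \Cref{DPhilsc} applied to $\Phi^{-1}$, the Gateaux-derivative of $(\Phi^{-1})_{lsc}$ at $w$ in direction $q$ equals $D(\Phi^{-1})_{lsc}(w)(q)$. As $(\Phi^{-1})_{lsc}$ is Lipschitz near $w$ by the easy half, \Cref{chr}(2) identifies $D(\Phi^{-1})_{lsc}(w)(q)$ with the Gateaux-derivative of $(\Phi^{-1})_{lsc}\circ\Phi_{usc}=\Id_{C_{usc}}$ at $v$ in direction $y$, namely $y$ itself; thus $D(\Phi^{-1})_{lsc}(w)\circ D\Phi_{usc}(v)=\Id_{C_{usc}}$. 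Running the symmetric argument with the roles of $\Phi_{usc}$ and $(\Phi^{-1})_{lsc}$ interchanged, now invoking the hard half to ensure $\Phi_{usc}$ is Lipschitz near $v$ and differentiating $\Phi_{usc}\circ(\Phi^{-1})_{lsc}=\Id_{D_{lsc}}$, gives $D\Phi_{usc}(v)\circ D(\Phi^{-1})_{lsc}(w)=\Id_{-D_{usc}}$. Hence the two maps are mutually inverse.
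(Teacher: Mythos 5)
Your proposal is correct and its skeleton is the same as the paper's proof: both differentiate the identities $(\Phi^{-1})_{lsc}\circ\Phi_{usc}=\Id_{C_{usc}}$ and $\Phi_{usc}\circ(\Phi^{-1})_{lsc}=\Id_{D_{lsc}}$ from \Cref{ghProp} by means of the chain rule for Gateaux-derivatives, \Cref{chr}(2). Where you diverge is in how the Lipschitz hypothesis on the outer map is handled: the paper disposes of it with a one-line citation of \Cref{biLip}, i.e.\ it treats the local Lipschitz continuity of $\Phi$ and $\Phi^{-1}$ on $C$ and $D$ as passing tacitly to the extensions, whereas you prove Lipschitz continuity of the extended maps on norm neighbourhoods \emph{inside} $D_{lsc}$ and $C_{usc}$ explicitly --- for $(\Phi^{-1})_{lsc}$ via strong atomicity (\Cref{supAtom}, \Cref{myz}, \Cref{extdeBij} and \eqref{Mppsi}), and for $\Phi_{usc}$ via the supremum description in \Cref{ghProp} together with the cofinality lemma \Cref{ocga} and the estimate of \Cref{jcont}. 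Both of your verifications are sound; the second is the more robust (it needs no atomicity and is exactly the approximation argument the paper leaves implicit), and your remark that the atomicity argument cannot simply be transposed to the codomain side is accurate, since $\Phi_{usc}(g)$ lies in $D_{lsc}$ rather than $D_{usc}$. So your additional work is not a detour: it fills in the detail that \Cref{biLip}, as literally stated, concerns $\Phi$ on $C$ rather than $\Phi_{usc}$ on $C_{usc}$, at the cost of a longer argument than the paper's.
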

\begin{proof}
The maps $\Phi_{usc}\colon C_{usc} \to D_{lsc}$ and $(\Phi^{-1})_{lsc}\colon D_{lsc} \to C_{usc}$ are mutually inverse by \Cref{ghProp}. As we have seen in \Cref{biLip}, both maps are locally Lipschitz continuous, so the chain rule for Gateaux-derivatives, \Cref{chr}, holds:
$$D(\Phi^{-1})_{lsc}(\Phi(x)) \circ D\Phi_{usc}(x) = \id_{C_{usc}} \text{ and } D\Phi_{usc}(x) \circ D(\Phi^{-1})_{lsc}(\Phi(x)) = \id_{-D_{usc}}.$$

\end{proof}

\begin{theorem}\label{DPhi}
For each $x\in C$ there exists a unique isomorphism of order unit spaces 
\begin{equation}\label{-DPhiEq}
-D\Phi(x)\colon (V, V_+, x) \to (W, W_+, \Phi(x))
\end{equation}
such that for all $y \in V_+$ one has
\begin{equation}\label{GPhi}
\lVert \Phi(x+y) -  \Phi(x) - D\Phi(x)(y)\rVert_{\Phi(x)} \le \lVert y\rVert^2_x.
\end{equation}
\end{theorem}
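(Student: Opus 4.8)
The plan is to reduce to the normalized situation $x=v$, $\Phi(x)=w$ (permissible by \Cref{Cndv}, exactly as at the start of the proof of \Cref{DPhiusc}), to put $T:=-D\Phi_{usc}(v)$, and to prove that $T$ is \emph{additive} on the open cone $C$. Granting this, the theorem follows by soft arguments: by \Cref{DPhiusc} the map $T$ is order-preserving and positively homogeneous of degree $1$, carries $V_+$ into $W_+$ and $C$ into $D$, sends $v$ to $w$, and (via \Cref{DPhisc} together with \Cref{DPhilsc}) restricts to a bijection $C\to D$; additivity of $T|_C$ then lets one extend $T$ to a map $\bar T\colon V\to W$ by $\bar T(a):=T(a+nv)-nw$, where $n$ is any integer with $a+nv\in C$. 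This $\bar T$ is well defined (as $C+C\subseteq C$ and $V=C-C$), additive and positively homogeneous hence $\R$-linear, it restricts to $T$ on $C$, and being linear it is bounded; so for $y\in V_+$ we get $\bar T(y)=\lim_n\bar T(y+\tfrac1n v)=\lim_n T(y+\tfrac1n v)\in\overline D=W_+$. Carrying out the same construction for $\Phi^{-1}$ produces the inverse linear map, so $\bar T$ is an isomorphism of order unit spaces $(V,V_+,v)\to(W,W_+,w)$, and one sets $-D\Phi(v):=\bar T$.

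For the additivity of $T$ on $C$ I would use two superadditivity facts and then a sandwich argument. First, $\Phi$ is convex (\Cref{Phiconv}): writing $v+\mu(y_1+y_2)=\tfrac12(v+2\mu y_1)+\tfrac12(v+2\mu y_2)$ for $y_1,y_2\in C$, subtracting $w$, dividing by $\mu$, and letting $\mu\downarrow 0$ shows that the Gateaux derivative $D\Phi_{usc}(v)(\cdot)$ is subadditive on $C$, i.e.\ $T|_C$ is superadditive. Second, applying the same reasoning to the gauge-reversing bijection $\Phi^{-1}$ (also convex by \Cref{Phiconv}) at the point $w$ shows that $\tau:=-D(\Phi^{-1})_{usc}(w)$ restricts to an order-preserving, positively homogeneous, \emph{superadditive} map $D\to C$ with $\tau(w)=v$. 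Now \emph{if} $\tau|_D=T^{-1}$, then applying $T^{-1}$ to the inequality $T(y_1+y_2)\ge T(y_1)+T(y_2)$ and then superadditivity of $T^{-1}$ gives $y_1+y_2\ge T^{-1}\bigl(T(y_1)+T(y_2)\bigr)\ge y_1+y_2$, forcing $T(y_1+y_2)=T(y_1)+T(y_2)$ by injectivity of $T^{-1}$; and additivity of $\tau$ follows symmetrically.

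So the heart of the proof — and the step I expect to be the main obstacle — is the identification $\tau|_D=T^{-1}$; equivalently, that the one-sided Gateaux derivatives of $\Phi$ at $v$ and of $\Phi^{-1}$ at $w$ are in fact two-sided. I would extract this from the explicit description of $\Phi_{usc}$ along extremal rays in \Cref{lpqLem}, the mutual-inverse relation of \Cref{DPhisc}, and strong atomicity of $C_{usc}$ and of $D_{usc}$ (\Cref{supAtom}, applied to $\Phi$ and to $\Phi^{-1}$). Concretely, \Cref{lpqLem}(1) together with the fact that $\Phi_{usc}$ is an order-anti-isomorphism yields, as in equation \eqref{MF} in the proof of \Cref{DPhiusc}, the identity $m\bigl(T(y),T(p)\bigr)=m(y,p)$ for every $y\in C$ and every extremal vector $p\in\ext_v C_{usc}$, and shows that $T$ restricts to a bijection $\ext_v C_{usc}\to\ext_w D_{usc}$; the corresponding statement for $\tau$ gives the analogous gauge identity and a bijection in the opposite direction. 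Strong atomicity — in the guise of criterion $(3)$ of \Cref{extEq} — is then used to lift these relations from extremal vectors to all of $C$ and $D$, which is the delicate point and where the ordering of the extended cones must be exploited carefully; the outcome is $\tau\circ T=\mathrm{id}_C$ and $T\circ\tau=\mathrm{id}_D$.

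Finally, the estimate \eqref{GPhi} is essentially already available: for $y\in C$ it is \eqref{fmu2} with $\mu=1$ (then $v+y\in C$, so $\Phi(v+y)=\Phi_{usc}(v+y)$ and $D\Phi(v)(y)=D\Phi_{usc}(v)(y)$), and for general $y\in V_+$ it follows by approximating $y$ by $y+\tfrac1n v\in C$ and using continuity of $\Phi$ on $C$ (\Cref{biLip}) and of the linear map $D\Phi(v)$. Uniqueness is immediate: replacing $y$ by $\mu y$ in \eqref{GPhi} and letting $\mu\downarrow 0$ forces $-D\Phi(v)(y)$ to equal the one-sided Gateaux derivative of $\Phi$ at $v$ in the direction $y$ for each $y\in V_+$, and a linear map is determined by its restriction to $V_+$. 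Undoing the normalization gives the statement for arbitrary $x\in C$.
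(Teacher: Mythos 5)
Your overall architecture (normalize to $x=v$, get the estimate from \eqref{fmu2} with $\mu=1$, extend an additive map on $C$ to a linear map on $V=C-C$, uniqueness via the one-sided Gateaux characterization) is fine, but the central step of your plan has a genuine gap. The paper obtains linearity by observing that $-D\Phi_{usc}(x)$ is a gauge-preserving bijection $C\to D$ and then invoking the Noll--Sch\"afer theorem (\Cref{NSThmB}, with the self-contained variant \Cref{ThmBsa} available because of \Cref{supAtom}); you propose to bypass this and prove additivity of $T:=-D\Phi_{usc}(v)$ directly by a convexity sandwich. That sandwich hinges on the identification $\tau|_D=T^{-1}$, where $\tau:=-D(\Phi^{-1})_{usc}(w)$ is built from the derivative of $\Phi^{-1}$ in \emph{positive} directions, while the actual inverse of $T$ supplied by \Cref{DPhisc} is $z\mapsto D(\Phi^{-1})_{lsc}(w)(-z)$, built from the derivative in \emph{negative} directions. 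This is exactly the two-sidedness of the one-sided derivatives, and the ingredients you cite do not deliver it: \Cref{lpqLem}(1) applied to $\Phi$ pairs $p\in\ext_v C_{usc}$ with some $q_p\in\ext_w D_{usc}$ via the behaviour of $\Phi_{usc}$ on the ray $v+\lambda p$, equivalently via the behaviour of $(\Phi^{-1})_{lsc}$ on the ray $w-\mu q_p$, whereas \Cref{lpqLem}(1) applied to $\Phi^{-1}$ pairs $q$ with some $p'_q$ via the behaviour of $(\Phi^{-1})_{usc}$ on the \emph{opposite} ray $w+\lambda q$. Showing that these two correspondences are mutually inverse (i.e.\ $q_{p'_q}=q$) is precisely the two-sidedness you are trying to establish, so the appeal to "strong atomicity to lift the relations from extremal vectors" is circular at the very point where the work has to be done.

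Moreover, without that identification the sandwich collapses for a structural reason: convexity of $\Phi^{-1}$ (via \Cref{Phiconv}) applied along the negative directions $w-\mu z$ yields only \emph{sub}additivity of the genuine inverse $T^{-1}$, and subadditivity of $T^{-1}$ already follows formally from superadditivity of $T$ (apply $T$ to $T^{-1}(z_1)+T^{-1}(z_2)$), so the two facts you can actually prove at this stage give no new information and do not force $T(y_1+y_2)=T(y_1)+T(y_2)$. In other words, the only genuinely available superadditivity statements are consistent with a non-additive $T$, and the missing input is exactly what \Cref{NSThmB}/\Cref{ThmBsa} provides: there, additivity of a gauge-preserving bijection is proved by a different mechanism (the exact affine behaviour $\Psi_{usc}(x+\lambda p)=\Psi(x)+\lambda q$ along extremal rays, combined with \Cref{myz} and strong atomicity), not by convexity. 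Either supply a genuine proof of $\tau|_D=T^{-1}$ (which would in effect reprove a large part of \Cref{ThmBsa}), or follow the paper and simply apply \Cref{ThmBsa} to the gauge-preserving bijection $-D\Phi_{usc}(x)|_C\colon C\to D$; the remaining parts of your write-up (the estimate \eqref{GPhi}, the passage from $C$ to $V_+$, and uniqueness) are correct as stated.
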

\begin{proof}
Propositions \ref{DPhiusc} and \ref{DPhilsc} and \Cref{DPhisc} 
yield that the map $-D\Phi_{usc}(x)\colon C_{usc} \to D_{usc}$ is an order-isomorphism and homogeneous of degree $1$, hence a gauge-preserving bijection by \Cref{graor}. By the same propositions, $-D\Phi_{usc}(x)$ restricts to a gauge-preserving bijection from $C$ into $D$, which we denote by $-D\Phi(x)\colon C \to D$. By \Cref{NSThmB} the map $-D\Phi(x)\colon C \to D$ extends to a linear isomorphism also denoted by $-D\Phi(x)\colon V \to W.$ Since $D\Phi(x)(x) = -\Phi(x)$ holds by \Cref{DPhiusc}, we have that $-D\Phi(x)\colon (V, V_+, x) \to (W, W_+, \Phi(x))$ is an isomorphism of order unit spaces.\\
\ind Taking $\mu = 1$ in (\ref{fmu2})  we obtain that for all $y \in V_+$ we have
$$\lVert \Phi(v + y) - \Phi(v) -  D\Phi(v)(y) \rVert_w \le \lVert y\rVert_v^2.$$
Since the order unit of $(V, V_+)$ may be chosen at will, the last formula holds with $v$ replaced by an arbitrary element $x$ of $C$ and $w$ replaced by $\Phi(x)$.
\end{proof}


\begin{remark}
\Cref{DPhi} yields a proof of \Cref{grevThm} in the case that both order unit spaces are complete. We will dispense with this completeness assumption in \Cref{ncmplSec}.
\end{remark}

\subsection{Noll-Sch\"afer theorem}\label{ThmBSec}

Let $(V, V_+, v)$ and $(W, W_+, w)$ be complete order unit spaces. By a theorem of Noll and Sch\"afer, \cite[Theorem~B]{NollSch77, Sch78}, if $\Psi\colon C \to D$ is a gauge-preserving bijection with $\Psi(v) = w$, then $\Psi$ extends to an isomorphism of order unit spaces $(V, V_+, v) \cong (W, W_+, w)$. To make our exposition more self-contained, we will give a new proof of this theorem.

\begin{theorem}\label{ThmBsa}
Let $(V, V_+, v)$ and $(W, W_+, w)$ be complete order unit spaces. Let $\Psi\colon C \to D$ be a gauge-preserving bijection with $\Psi(v) = w$. Then $\Psi$ extends to an isomorphism of order unit spaces $(V, V_+, v) \cong (W, W_+, w)$.
\end{theorem}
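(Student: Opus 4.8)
The plan is to promote $\Psi$ to a gauge-preserving bijection on the semicontinuous cones, use the extremal structure supplied by strong atomicity to show that $\Psi$ is affine along rays, deduce that $\Psi$ is additive on $C$, and then extend it to a linear isomorphism in the standard way.

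First I would extend $\Psi$, via \Cref{ggProp}, to a gauge-preserving bijection $\Psi_{usc}\colon C_{usc}\to D_{usc}$, which by \Cref{gphopc} is an order-isomorphism homogeneous of degree $1$ and (by construction, cf.\ \Cref{ghProp}) order-continuous with respect to monotone nets. Since the defining property of strong atomicity (\Cref{extD}) is order-theoretic, it is transported by the order-isomorphism $\Psi_{usc}$, so $D_{usc}$ is also strongly atomic; and using $\Psi(v)=w$ together with homogeneity one gets $M(\Psi_{usc}(p),w)=M(\Psi_{usc}(p),\Psi_{usc}(v))=M(p,v)$, so $\Psi_{usc}$ restricts to a bijection $\ext_v C_{usc}\to\ext_w D_{usc}$.

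The crux is to prove that, for each $x\in C$ and each normalized extremal vector $p\in\ext_v C_{usc}$, one has $\Psi_{usc}(x+\lambda p)=\Psi(x)+\lambda\,\Psi_{usc}(p)$ for all $\lambda\ge 0$. For this I would argue that the order interval $\{z\in C_{usc}\colon x\le z\le x+\kappa p\}$ is totally ordered by \Cref{x<ye} (since $\kappa p$ is an extremal vector), hence so is its image under the order-isomorphism $\Psi_{usc}$; applying \Cref{x<ye} again, now in $W$, shows $\Psi_{usc}(x+\kappa p)-\Psi(x)$ is an extremal vector of $D_{usc}$ and that $\lambda\mapsto\Psi_{usc}(x+\lambda p)$ traces a segment, and letting $\kappa\to\infty$ these nested segments exhaust a single ray $\Psi(x)+[0,\infty)\,r$. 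Pushing the identity $m(x+\lambda p,x)=1+\lambda\,m(p,x)$ through the gauge-preservation of $\Psi_{usc}$ and formula \eqref{Maxby2} then forces the parametrization to be affine, and comparing leading behaviour as $\lambda\to\infty$ (write $\Psi_{usc}(x+\lambda p)=\lambda\,\Psi_{usc}(\lambda^{-1}x+p)$, note $\lambda^{-1}x+p\downarrow p$, and use order-continuity of $\Psi_{usc}$) identifies $r$ with $\Psi_{usc}(p)$, pinning the slope. With this in hand, additivity of $\Psi$ on $C$ falls out: for $x,y\in C$ and a normalized extremal $q\in\ext_w D_{usc}$, writing $p:=\Psi_{usc}^{-1}(q)$ and using that $\Psi_{usc}$ is an order-isomorphism, the sets $\{\lambda\ge 0\colon\lambda p\le\mu y\}$ and $\{\lambda\ge 0\colon \Psi(x)+\lambda q\le\Psi(x+\mu y)\}$ coincide for every $\mu>0$, whence $m\big(\mu^{-1}(\Psi(x+\mu y)-\Psi(x)),q\big)=m(y,p)$ is independent of $\mu$ for every $q$; by criterion $(3)$ of \Cref{extEq} (valid for $W$, as $D_{usc}$ is strongly atomic) the element $\mu^{-1}(\Psi(x+\mu y)-\Psi(x))\in D_{usc}$ is independent of $\mu$, and letting $\mu\to\infty$ and invoking homogeneity of degree $1$ and continuity of $\Psi$ on $C$ (\Cref{biLip}) identifies it with $\Psi(y)$; taking $\mu=1$ gives $\Psi(x+y)=\Psi(x)+\Psi(y)$.

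Finally, since $\Psi$ is additive on $C$ and $C-C=V$ (as $C$ consists of order units), the rule $T(a-b):=\Psi(a)-\Psi(b)$ for $a,b\in C$ defines $T$ unambiguously; $T$ is additive on $V$ and, being homogeneous of degree $1$ on $C$, is $\R$-linear. It is positive because $T(y)=\Psi(a+y)-\Psi(a)\in W_+$ for $y\in V_+$, $a\in C$ by monotonicity of $\Psi$, and $T(v)=\Psi(2v)-\Psi(v)=w$. Running the same construction for the gauge-preserving bijection $\Psi^{-1}\colon D\to C$ produces a linear $T'\colon W\to V$ with $T'|_D=\Psi^{-1}$, and $T'\circ T=\Id$ on $C$, hence on $V$; similarly $T\circ T'=\Id$. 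Thus $T\colon(V,V_+,v)\to(W,W_+,w)$ is an isomorphism of order unit spaces extending $\Psi$. I expect the main obstacle to be the third step: squeezing the \emph{affine} law $\Psi_{usc}(x+\lambda p)=\Psi(x)+\lambda\Psi_{usc}(p)$ out of the order-interval dichotomy of \Cref{x<ye} and the gauge calculus must be done just so, and it is precisely there—and in the ensuing application of the separation criterion \Cref{extEq}$(3)$—that the hypothesis of strong atomicity of $C_{usc}$ is indispensable.
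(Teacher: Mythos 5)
Your overall route is the same as the paper's: extend $\Psi$ to $\Psi_{usc}\colon C_{usc}\to D_{usc}$ via \Cref{ggProp}, prove the ray formula $\Psi_{usc}(x+\lambda p)=\Psi(x)+\lambda q$ for extremal $p$ (the gauge-preserving analogue of \Cref{lpqLem}, which the paper's proof asserts without detail), use it to see that the gauges of the difference quotient $\mu^{-1}(\Psi(x+\mu y)-\Psi(x))$ against extremal vectors of $D_{usc}$ do not depend on $\mu$, invoke strong atomicity (you via \Cref{extEq}(3) for $D_{usc}$, the paper via \Cref{myz}; both need the transfer of strong atomicity to $D_{usc}$, which you rightly make explicit) to get additivity, and extend linearly. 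Your finishing moves differ only cosmetically: you identify the constant by letting $\mu\to\infty$ and using homogeneity plus norm continuity, whereas the paper manipulates $\Psi_{usc}(\mu x+\mu y)$ and takes an infimum over $\mu$; both are fine, as is your direct construction of the linear extension from additivity on $C$.

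There is, however, one step that fails as written: pinning down the affine parametrization via ``$m(x+\lambda p,x)=1+\lambda\,m(p,x)$'' (formula \eqref{Maxby2}). For an extremal vector $p$ of $C_{usc}$ and an order unit $x$, one has $m(p,x)>0$ only if $p$ is itself an order unit, which never happens once $\dim V\ge 2$ (the order interval $[0,p]$ must be a segment); so in all relevant cases $m(p,x)=0$ and your identity reads $1=1$, giving no control over $t(\lambda)$. The repair is exactly the paper's device in \Cref{lpqLem}: use the $M$-gauge instead, namely \eqref{Maxby1}, $M(x+\lambda p,x)=1+\lambda M(p,x)$ with $0<M(p,x)<\infty$, together with the fact that a gauge-preserving bijection preserves $M$ as well as $m$; comparing with $M(\Psi(x)+t(\lambda)q,\Psi(x))=1+t(\lambda)M(q,\Psi(x))$ then forces $t(\lambda)$ to be linear in $\lambda$, after which your slope identification and the rest of the argument go through.
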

\begin{proof}
Let $\Psi_{usc}\colon C_{usc} \to D_{usc}$ be the gauge-preserving bijection extending $\Psi$, which has been constructed in \Cref{ggProp}. Denote the state spaces of $(V, V_+, v)$ and $(W, W_+, w)$ by $K$ and $S$, respectively. Let $f \in C_{lsc}$. We have seen in \Cref{extdeBij} that  $f = I^{\infty}_{\psi}$ for some pure state $\psi \in \partial_e K$ if and only if $M(v, f) = 1$ and $\{h \in C_{lsc}: h \ge f\} = \{\lambda f: \lambda \in [1, \infty]\}$. Similarly, $\Phi_{usc}(f)$ has the shape $I^{\infty}_{\phi}$ for a pure state $\phi \in \partial_e S$ if and only if $M(w, \Phi_{lsc}(f)) = 1$ and $\{k \in D_{lsc}: k \ge \Psi_{lsc}(f)\} = \{\mu \Psi_{lsc}(f): \mu \in [1, \infty]\}$. Since $\Phi_{lsc}$ is homogeneous of degree $1$ and an order-isomorphism, it follows that $\Phi_{lsc}$ restricts to a bijection between $\{I^{\infty}_\psi: \psi \in \delta_e K\}$ and $\{I^{\infty}_\phi: \phi \in \delta_e S\}$. Thus we obtain a bijection $\partial_e K \to \partial_e S$, $\psi \mapsto \phi$, where $\phi$ is determined by $\Psi_{lsc}(I^{\infty}_{\psi})=I^{\infty}_\phi$. Moreover, for $x\in C_{lsc}$ we have
$$\Phi_{lsc}(x)(\phi) = M(\Phi_{lsc}(x), I^{\infty}_\phi) = M(\Phi_{lsc}(x), \Phi_{lsc}(I^{\infty}_\psi)) = M(x, I^{\infty}_\psi) = x(\psi).$$
It follows that $\Psi(x+y) = \Psi(x)+\Psi(x)$ for all $x,y \in C$. Indeed, for $\phi \in \partial_e K$ and $\psi \in \partial_e S$ as above, we have
$$\Phi(x+y)(\phi)=(x+y)(\psi)=x(\psi)+y(\psi)=\Phi(x)(\phi)+\Phi(y)(\phi)=(\Phi(x)+\Phi(y))(\phi).$$
This equation holds for all $\phi \in \delta_e S$, whence $\Psi(x+y) = \Psi(x)+\Psi(x)$. We conclude that $\Psi$ is additive on $C$.\\
\ind Since $V = C - C$, we find that $\Psi\colon C \to D$ extends to a linear mapping from $V$ to $W$. Similarly, we obtain a linear map from $W$ to $V$ extending $\Psi^{-1}\colon D \to C$. These two linear maps are inverse to one other and preserve the open cones $C$ and $D$, hence the closed cones $V_+$ and $W_+$. Therefore, we have obtained an isomorphism of order unit spaces $(V, V_+, v) \cong (W, W_+, w)$ extending $\Psi$.
\end{proof}

\subsection{Hua's identity}\label{HuasubSec}

The principal result of this subsection is that every gauge-reversing bijection $\Phi$ automatically satisfies Hua’s identity. This identity will play a central role in all subsequent subsections. As a corollary we obtain a formula for the Gateaux-derivative $D\Phi$ of $\Phi$ solely in terms of $\Phi$ itself, which will be instrumental in establishing higher-order smoothness properties of $\Phi$.

\begin{theorem}[Hua's identity]\label{HuaPhi}
Let $\Phi\colon C \to D$ be a gauge-reversing bijection between the open cones of complete order unit spaces $(V, V_+, v)$ resp. $(W, W_+, w)$. Then for all $x,y \in C$ we have
\begin{equation}\label{Hua}
\Phi(x) - \Phi(x+y) = -D\Phi(x)(\Phi^{-1}(\Phi(x)+\Phi(y))).
\end{equation}
\end{theorem}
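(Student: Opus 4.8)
The plan is to reduce \eqref{Hua} to a numerical identity that can be checked one extremal direction at a time, exploiting that \Cref{lpqLem} makes $\Phi$ fractional--linear along every extremal ray. Fix $x,y\in C$. There is no loss of generality in assuming $\Phi(v)=w$, because replacing the order unit $w$ of $W$ by $\Phi(v)\in D$ changes neither $D$, nor the cone $D_{usc}$ (by \Cref{Cndv}), nor the order--unit topology of $W$, nor the meaning of \eqref{Hua}. Set
\[
z:=\Phi^{-1}(\Phi(x)+\Phi(y))\in C,\qquad \xi_1:=\Phi(x)-\Phi(x+y),\qquad \xi_2:=-D\Phi(x)(z).
\]
Since $x+y\ge x$ and $\Phi$ is order--reversing, $\xi_1\ge 0$, so $\xi_1\in D_{usc}$, while $\xi_2\in D$ by \Cref{DPhiusc}; the goal is $\xi_1=\xi_2$. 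Now $\Phi^{-1}\colon D\to C$ is gauge--reversing, so $D_{usc}$ is strongly atomic by \Cref{supAtom}; hence by \Cref{extEq}(3) applied to $W$ it suffices to prove $m(\xi_1,q)=m(\xi_2,q)$ for every extremal vector $q$ of $D_{usc}$, and by homogeneity of $m$ in its second argument we may normalize $q$ so that $m(\Phi(x),q)=1$.

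\textbf{The left--hand gauge.} Fix such a $q$ and apply \Cref{lpqLem}(2) with the role of ``$z$'' there played by $\Phi(x)\in D$: this produces $p\in\ext(C_{usc})$ with $m(x,p)=1$ and $(\Phi^{-1})_{lsc}(\Phi(x)-\nu q)=x+\tfrac{\nu}{1-\nu}p$ for all $\nu\in(0,1)$; applying the order--anti-isomorphism $\Phi_{usc}$ this reads $\Phi_{usc}(x+\mu p)=\Phi(x)-\tfrac{\mu}{\mu+1}q$ for all $\mu>0$, whence $q=-D\Phi_{usc}(x)(p)$ upon differentiating at $\mu=0$. Using that $\Phi_{usc}\colon C_{usc}\to D_{lsc}$ is an order--anti-isomorphism and that $(\Phi^{-1})_{lsc}$ restricts to $\Phi^{-1}$ on $D$ (\Cref{ghProp}), we get for $\nu\in(0,1)$
\begin{align*}
\nu q\le\Phi(x)-\Phi(x+y) &\iff \Phi(x+y)\le\Phi(x)-\nu q\\
&\iff x+y\ge (\Phi^{-1})_{lsc}(\Phi(x)-\nu q)=x+\tfrac{\nu}{1-\nu}p\\
&\iff \nu\le\tfrac{m(y,p)}{1+m(y,p)}.
\end{align*}
Since $m(y,p)\in(0,\infty)$ (as $p\in C_{usc}$ is bounded and nonzero and $y\succ 0$), and $m(\xi_1,q)\le m(\Phi(x),q)=1$ while $\tfrac{m(y,p)}{1+m(y,p)}<1$, this forces $m(\xi_1,q)=\tfrac{m(y,p)}{1+m(y,p)}$.

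\textbf{The right--hand gauge, and conclusion.} By \Cref{DPhiusc}, \Cref{DPhisc} and \Cref{gphopc}, the map $-D\Phi_{usc}(x)\colon C_{usc}\to D_{usc}$ is a gauge--preserving bijection, and it sends $p$ to $q$; hence $m(\xi_2,q)=m\bigl(-D\Phi_{usc}(x)(z),-D\Phi_{usc}(x)(p)\bigr)=m(z,p)$. Since $p$ is extremal, \Cref{extdeBij} together with homogeneity of $\Phi_{usc}$ yields $\Phi_{usc}(p)=\beta^{-1}I^{\infty}_{\{\psi\}}$ for some $\psi\in\delta_e S$ and $\beta=M(p,v)>0$; as $\Phi_{usc}$ is an order--anti-isomorphism this gives $m(g,p)=(\beta\,\Phi(g)(\psi))^{-1}$ for every $g\in C$. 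In particular $\beta\Phi(x)(\psi)=1$ and $\beta\Phi(y)(\psi)=m(y,p)^{-1}$, so
\[
m(z,p)=\bigl(\beta\Phi(x)(\psi)+\beta\Phi(y)(\psi)\bigr)^{-1}=\tfrac{m(y,p)}{1+m(y,p)}.
\]
Thus $m(\xi_1,q)=m(\xi_2,q)$ for every extremal $q$, so $\xi_1=\xi_2$, which is \eqref{Hua}.

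\textbf{Main obstacle.} The step I expect to require the most care is the computation of $m(\xi_1,q)$: one must transport the fractional--linear formula of \Cref{lpqLem}(2) through the order--anti-isomorphism between the extended cones $C_{usc}$ and $D_{lsc}$ without sign errors, and keep track of the three separate normalizations (of $q$, of $p$, and of the order unit of $W$). Conceptually, everything is driven by the fact that Hua's identity is an algebraic identity satisfied by fractional--linear maps, that \Cref{lpqLem} exhibits $\Phi$ as fractional--linear along each extremal direction, and that strong atomicity (\Cref{supAtom}) lets the identity be tested one extremal direction at a time.
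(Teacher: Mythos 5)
Your proposal is correct and follows essentially the same route as the paper's proof: reduce via strong atomicity of $D_{usc}$ to testing gauges against extremal vectors $q$, pair $q$ with $p\in\ext C_{usc}$ through \Cref{lpqLem} to get the fractional-linear formula and $-D\Phi_{usc}(x)(p)=q$, compute the left-hand gauge as $m(y,p)/(1+m(y,p))$, and evaluate the right-hand gauge using that $-D\Phi_{usc}(x)$ is gauge-preserving together with the pure state from \Cref{extdeBij}. The only differences are cosmetic (you keep $x$ general and normalize $\Phi(v)=w$, tracking the constant $\beta=M(p,v)$ explicitly, whereas the paper normalizes $x=v$, $\Phi(x)=w$).
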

\begin{proof}
In view of \Cref{Cndv}, we may and do assume that $x = v$ and $\Phi(x) = w$, so our objective becomes to show the equation
$$w-\Phi(v+y) = -D\Phi(v)(\Phi^{-1}(w+\Phi(y))).$$
Both members reside in $D$, since $w-\Phi(v+y) \ge w-\Phi(v+m(y,v)v)=(1-(1+m(y,v))^{-1})w$ and because $-D\Phi(v): (V, V_+,v) \to (W, W_+, w)$ is an isomorphism of order unit spaces by \Cref{DPhi}.\\
\ind By \Cref{supAtom} it suffices to prove for each $q \in \ext_w D_{usc}$ that
\begin{equation}\label{LL}
m(w-\Phi(v+y),q) = m({-D\Phi(v)}(\Phi^{-1}(w+\Phi(y))),q).
\end{equation}
Let $p \in \ext_v C_{usc}$ and $q \in \ext_w D_{usc}$ be a pair of extremal vectors corresponding to one another as in \Cref{lpqLem}(1), i.e.\ such that for each $\lambda > 0$ we have 
\begin{equation}\label{Phivp}
\Phi(v + \lambda p) = w - \frac{\lambda}{\lambda+1}q. 
\end{equation}
\ind To calculate the left-hand side of \eqref{LL}, observe that for each $\lambda > 0$ it holds that
\begin{equation*}
\lambda p \le y \iff v + \lambda p \le v + y \iff w - \frac{\lambda}{\lambda + 1}q \ge \Phi(v+y) \iff \frac{\lambda}{\lambda + 1}q \le w - \Phi(v+y).
\end{equation*}
Because we defined $m(y,p) = \sup\{\lambda > 0: \lambda p \le y\}$, this shows that
$$m(w-\Phi(v+y),q)= \frac{m(y,p)}{m(y,p)+1}.$$
\ind We now turn to the right-hand side of \eqref{LL}. One sees from \eqref{Phivp} that the Gateaux-derivative of $\Phi_{usc}$ at $v$ in the direction of $p$ equals $D\Phi_{usc}(v)(p) = -q$. Since $-D\Phi_{usc}(v): C_{usc} \to D_{usc}$ is a gauge-preserving bijection by \Cref{DPhi}, for each $g \in C$ we have that
$$M(q,{-D\Phi}(v)(g))=M({-D\Phi_{usc}}(v)(p),{-D\Phi_{usc}}(v)(g))=M(p,g).$$
Moreover, \Cref{extdeBij} grants us a state $\psi$ on $(W,W_+,w)$ such that $M(p,g) = \Phi(g)(\psi)$ for each $g \in C$. An application of these results to $g = \Phi^{-1}(w+\Phi(y))$ yields
$$
M(q,{-D\Phi(v)}(\Phi^{-1}(w+\Phi(y))))=M(p,\Phi^{-1}(w+\Phi(y))) = w(\psi) + \Phi(y)(\psi)
= 1 + M(p,y).
$$
Because $L$ and $M$ are multiplicatively inverse, we have $m(y,p)=M(p,y)^{-1}$ and
$$m(w-\Phi(v+y)/q)M(q,{-D\Phi(v)}(\Phi^{-1}(w+\Phi(y)))) = \frac{m(y,p)}{m(y,p)+1}(1+M(p,y))=1.$$
For the same reason this equality is equivalent to \eqref{LL}, whence the proof is complete.
\end{proof}

We obtain the following formula for the Gateaux derivative $D\Phi(x)$ of $\Phi$ at points $x \in C$ in directions $u \in C$.

\begin{corollary}\label{HuaDPhi} For all $x, u \in C$ with $2u \le x$, the Gateaux derivative of $\Phi$ at $x$ in the direction $u$ is given by
\begin{equation}\label{DPhixy}
D\Phi(x)(u) = \Phi(x+\Phi^{-1}(\Phi(u)-\Phi(x)))-\Phi(x).
\end{equation}
\end{corollary}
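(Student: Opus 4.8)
The plan is to obtain the formula by substituting a well-chosen direction into Hua's identity \eqref{Hua}. Given $x,u\in C$ with $2u\le x$, I would like to apply \Cref{HuaPhi} with a vector $y\in C$ chosen so that $\Phi^{-1}(\Phi(x)+\Phi(y))=u$; this forces $\Phi(y)=\Phi(u)-\Phi(x)$, so the natural candidate is $y:=\Phi^{-1}(\Phi(u)-\Phi(x))$. For this to be legitimate one must first verify that $\Phi(u)-\Phi(x)$ lies in the open cone $D$ (the domain of $\Phi^{-1}$), and then everything else is a formal rearrangement.

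The one step with actual content is checking $\Phi(u)-\Phi(x)\in D$, and this is precisely where the hypothesis $2u\le x$ — rather than merely $u\le x$ — is needed, since $\Phi$ order-reversing only gives $\Phi(u)-\Phi(x)\in W_+$ and we need the interior. I would argue as follows: from $2u\le x$ and $u\in C$ we get $x-u\ge u\ge\delta v$ for some $\delta>0$, while $u\le\lambda v$ for some $\lambda>0$ because $v$ is an order unit; hence $x=u+(x-u)\ge u+\delta v\ge(1+\delta\lambda^{-1})u$. Writing $c:=\delta\lambda^{-1}>0$ and using that $\Phi$ is order-reversing and homogeneous of degree $-1$, we get $\Phi(x)\le(1+c)^{-1}\Phi(u)$, so
$$\Phi(u)-\Phi(x)\ge\bigl(1-(1+c)^{-1}\bigr)\Phi(u)=\tfrac{c}{1+c}\Phi(u)\in D,$$
whence $\Phi(u)-\Phi(x)\in D$ because $D$ is upward closed in $W$. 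Thus $y:=\Phi^{-1}(\Phi(u)-\Phi(x))$ is a well-defined element of $C$, and $x+y\in C$ automatically.

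To finish, I would note that by construction $\Phi(y)=\Phi(u)-\Phi(x)$, so $\Phi(x)+\Phi(y)=\Phi(u)$ and therefore $\Phi^{-1}(\Phi(x)+\Phi(y))=u$. Plugging $y$ into Hua's identity \eqref{Hua} gives $\Phi(x)-\Phi(x+y)=-D\Phi(x)(u)$, that is,
$$D\Phi(x)(u)=\Phi(x+y)-\Phi(x)=\Phi\bigl(x+\Phi^{-1}(\Phi(u)-\Phi(x))\bigr)-\Phi(x),$$
which is the claimed identity once one recalls from \Cref{DPhiusc} and \Cref{DPhi} that for $u\in C$ the vector $D\Phi(x)(u)$ is exactly the Gateaux derivative of $\Phi$ at $x$ in the direction $u$. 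I do not anticipate any real obstacle here beyond the bookkeeping in the second paragraph; the corollary is essentially just Hua's identity read in the right variables.
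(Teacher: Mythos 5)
Your proof is correct and follows essentially the same route as the paper: substitute $y:=\Phi^{-1}(\Phi(u)-\Phi(x))$ into Hua's identity \eqref{Hua} after checking that $\Phi(u)-\Phi(x)\in D$, which is exactly where the hypothesis $2u\le x$ enters. Your verification of this membership is a bit more roundabout than necessary — the paper simply notes that $2u\le x$ gives $\Phi(x)\le\Phi(2u)=\tfrac12\Phi(u)$, hence $\Phi(u)-\Phi(x)\ge\tfrac12\Phi(u)\in D$ — but your argument is sound.
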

\begin{proof}
Since $2u \le x$ gives $\Phi(x) \le \frac{1}{2}\Phi(u)$, we have $\Phi(u) - \Phi(x) \in D$. As a result, the element $y = \Phi^{-1}(\Phi(u) - \Phi(x)) \in C$ is well-defined and satisfies $\Phi(x) + \Phi(y) = \Phi(u)$. Therefore, Hua's identity \eqref{Hua} yields $D\Phi(x)(u) = \Phi(x+y) - \Phi(x)$, which is \eqref{DPhixy}.
\end{proof}

\section{Proof of the main theorems}\label{MTSec}
In this section, we shall construct the JB-algebra structure on $V$ from the gauge-reversing bijection $\Phi\colon C \to D$. We perform the analysis to prove that gauge-reversing bijections are (twice) continuously differentiable in \Cref{difSec} and \Cref{twdSec}. In \Cref{symsubSec} we consider $d_T$-symmetries and partially construct the quadratic product as a map $P\colon C \to L(V)$. In \Cref{smgrSec}, we will extend $P$ to a quadratic map on $V$ and deduce that gauge-reversing bijections are analytic. We show that the quadratic product $P\colon V \to L(V)$ turns $V$ into a quadratic Jordan algebra in \Cref{quadSec}. We verify that $V$ is a (quadratic) JB-algebra for the order unit norm in \Cref{qJBSec}. Finally, \Cref{MAINTHM} and \Cref{MAINTHM2} will be proven in \Cref{MAINTHMsection}.\\
\ind In \Cref{ncmplSec} we consider non-complete order unit spaces and prove \Cref{grevThm}. \Cref{nuJBSec} is concerned with a characterization of non-unital JB-algebras among the ordered Banach spaces.

\subsection{Differentiability}\label{difSec}

In \Cref{DPhi} we have established that for each $x \in C$ there exists an isomorphism of order unit spaces $-D\Phi(x): (V, V_+, x) \to (W, W_+, \Phi(x))$ such that if $y \in V_+$ then $D\Phi(x)(y)$ is the Gateaux-derivative of $\Phi$ at $x$ in the direction $y$. In this section we show that $D\Phi(x)$ is in fact the Fréchet-derivative of $\Phi$ at $x$.\\

The space $L(V, W)$ of all bounded linear operators $A \colon V\to W$ is a Banach space in the operator norm $$\lVert A\rVert_{v,w} := \sup \{\lVert Ax\rVert_w: x \in V, \lVert x\rVert_v \le 1\}.$$

\begin{lemma}\label{Aopnm}
For each $A \in L(V, W)$ we have
\end{lemma}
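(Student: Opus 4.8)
The plan is to express $\lVert A\rVert_{v,w}$ in purely order-theoretic terms and then read off the claimed identity. The first step is to recall that, since $(V,V_+)$ is archimedean, the closed unit ball of $\lVert\cdot\rVert_v$ equals the order interval $[-v,v]$: if $-\lambda v\le x\le\lambda v$ for all $\lambda>1$, then for any $t\ge 0$ and any integer $n>t$ one has $t(x-v)\le \tfrac{t}{n}v< v$, so the set $\{t(x-v):t\ge 0\}$ is bounded above and hence $x\le v$ by the archimedean property, and symmetrically $-v\le x$. Consequently
\[
\lVert A\rVert_{v,w}=\sup\{\lVert Ax\rVert_w : -v\le x\le v\}.
\]
The second step is to invoke Kadison's \Cref{KadThm}: identifying $W$ with the order unit space of continuous affine functions on its state space $S$ gives $\lVert y\rVert_w=\sup_{\rho\in S}\lvert\rho(y)\rvert$ for every $y\in W$. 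Substituting this and interchanging the two suprema yields $\lVert A\rVert_{v,w}=\sup_{\rho\in S}\lVert\rho\circ A\rVert$, the supremum over states $\rho$ of the dual norms of the functionals $x\mapsto\rho(Ax)$ on $(V,\lVert\cdot\rVert_v)$.

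For the sharper statement in the positive case one argues as follows. If $A(V_+)\subseteq W_+$ and $-v\le x\le v$, then positivity gives $-Av\le Ax\le Av$; since $Av\in W_+$ we have $-\lVert Av\rVert_w\,w\le -Av$ and $Av\le\lVert Av\rVert_w\,w$, so $-\lVert Av\rVert_w\,w\le Ax\le\lVert Av\rVert_w\,w$ and therefore $\lVert Ax\rVert_w\le\lVert Av\rVert_w$. Taking $x=v$ gives the reverse inequality, so $\lVert A\rVert_{v,w}=\lVert Av\rVert_w$; equivalently, each $\rho\circ A$ is then a positive functional, so $\lVert\rho\circ A\rVert=(\rho\circ A)(v)=\rho(Av)$, and taking the supremum over $\rho\in S$ recovers the same formula.

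I do not expect a genuine obstacle here: the content is elementary once the definitions are unwound. The only two points deserving a line of care are the archimedean argument identifying the $\lVert\cdot\rVert_v$-unit ball with $[-v,v]$ — this is precisely where the order-unit-space axioms enter — and the (harmless) interchange of suprema; everything else is a two-line computation from the definitions of the order unit norm and the operator norm.
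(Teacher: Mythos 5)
There is a genuine gap: your proposal does not prove the inequality that constitutes the content of \Cref{Aopnm}, namely \eqref{A3nm},
\begin{equation*}
\lVert A\rVert_{v,w} \le 3 \sup \{\lVert Ax\rVert_w: x \in C,\ \lVert x\rVert_v \le 1\},
\end{equation*}
for an \emph{arbitrary} bounded operator $A \in L(V,W)$. What you establish instead are two different facts: the identification of the $\lVert\cdot\rVert_v$-unit ball with $[-v,v]$ together with $\lVert A\rVert_{v,w}=\sup_{\rho\in S}\lVert\rho\circ A\rVert$ (correct, but only a rewriting of the operator norm over the full ball, not over the cone), and the exact formula $\lVert A\rVert_{v,w}=\lVert Av\rVert_w$ under the extra hypothesis $A(V_+)\subseteq W_+$. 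The positivity hypothesis is not available where the lemma is used in the paper (e.g.\ $A=D\Phi(x)-D\Phi(y)$ in \Cref{Djcont}, or $P(x+z)-P(x)-DP(x)(z)$ in \Cref{Pdiff}), and your general-case identity never relates $\lVert A\rVert_{v,w}$ to the supremum of $\lVert Ax\rVert_w$ over \emph{positive} $x$, which is exactly the point of the lemma.

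The missing idea is the decomposition of an arbitrary $x\in V$ into a difference of positive elements with controlled norms: take $x_2=\lVert x\rVert_v v$ and $x_1=x+x_2$, so $x=x_1-x_2$ with $x_1,x_2\in V_+$ and $\lVert x_1\rVert_v+\lVert x_2\rVert_v\le 3\lVert x\rVert_v$; then $\lVert Ax\rVert_w\le\lVert Ax_1\rVert_w+\lVert Ax_2\rVert_w$ yields the factor $3$, and one passes from $V_+$ to the open cone $C$ using that $C\cap[0,v]$ is $\lVert\cdot\rVert_v$-dense in $[0,v]$ (and continuity of $A$). Your first step (unit ball $=[-v,v]$) could be combined with essentially this same decomposition (e.g.\ $x=(x+v)-v$ for $x\in[-v,v]$) to recover the bound, but as written the proposal stops short of the needed estimate and proves a statement the paper does not claim here.
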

\begin{equation}\label{A3nm}
\lVert A\rVert_{v,w} \le 3 \sup \{\lVert Ax\rVert_w: x \in C, \lVert x\rVert_v \le 1\}.
\end{equation}
\begin{proof}
Every $x\in V$ can be written as $x = x_1 - x_2$ with $x_1, x_2 \in V_+$ and $\lVert x_1\rVert_v + \lVert x_2\rVert_v \le 3\lVert x\rVert_v$. Indeed, we may take $x_2 = \lVert x\rVert_vv$ and $x_1 = x + x_2$. The lemma follows since $C \cap [0,v]$ is $\lVert\cdot\rVert_v$-dense in $[0, v]$.
\end{proof}

\begin{lemma}\label{Phislw} For all $x \in C$ we have
\begin{equation}\label{Top}
\lVert D\Phi(x)\rVert_{v,w} \le M(v,x)^2.
\end{equation}
\end{lemma}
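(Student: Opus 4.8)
The plan is to deduce this directly from \Cref{DPhi}. That theorem provides, for each $x \in C$, an isomorphism of order unit spaces $-D\Phi(x)\colon (V, V_+, x) \to (W, W_+, \Phi(x))$; being an isomorphism of order unit spaces, $-D\Phi(x)$ carries the order interval $[-x,x]$ onto $[-\Phi(x),\Phi(x)]$ and is therefore isometric for the associated order unit norms, so that $\lVert D\Phi(x)(y)\rVert_{\Phi(x)} = \lVert y\rVert_x$ for all $y \in V$. The remaining task is purely one of comparing the auxiliary norms $\lVert\cdot\rVert_x$ and $\lVert\cdot\rVert_{\Phi(x)}$ with the ambient norms $\lVert\cdot\rVert_v$ and $\lVert\cdot\rVert_w$.

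To that end I would first record the elementary inequality $\lVert z\rVert_{u'} \le M(u,u')\,\lVert z\rVert_u$, valid for any two order units $u, u'$ of a partially ordered vector space: if $-\mu u \le z \le \mu u$, then since $u \le M(u,u')\,u'$ one obtains $-\mu M(u,u')\,u' \le z \le \mu M(u,u')\,u'$. Taking $(u,u') = (v,x)$ gives $\lVert y\rVert_x \le M(v,x)\,\lVert y\rVert_v$ on $V$, and taking $(u,u') = (\Phi(x),w)$ gives $\lVert z\rVert_w \le M(\Phi(x),w)\,\lVert z\rVert_{\Phi(x)}$ on $W$. Chaining these with the isometry above, for every $y \in V$ we get
\[
\lVert D\Phi(x)(y)\rVert_w \le M(\Phi(x),w)\,\lVert D\Phi(x)(y)\rVert_{\Phi(x)} = M(\Phi(x),w)\,\lVert y\rVert_x \le M(\Phi(x),w)\,M(v,x)\,\lVert y\rVert_v,
\]
so that $\lVert D\Phi(x)\rVert_{v,w} \le M(v,x)\,M(\Phi(x),w)$. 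Finally, since $\Phi$ is gauge-reversing and $\Phi(v) = w$, the identity \eqref{grev}, in its equivalent form $M(\Phi(x),\Phi(y)) = M(y,x)$, yields $M(\Phi(x),w) = M(\Phi(x),\Phi(v)) = M(v,x)$, whence $\lVert D\Phi(x)\rVert_{v,w} \le M(v,x)^2$.

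I do not expect a real obstacle here; the points that require a little care are getting the direction of the two norm-comparison inequalities right (which order unit dominates the other) and invoking $M(\Phi(x),w) = M(v,x)$, which rests precisely on the normalization $\Phi(v) = w$ together with $\Phi$ being gauge-reversing, rather than merely order-reversing and homogeneous of degree $-1$.
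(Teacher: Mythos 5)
Your proposal is correct and follows essentially the same route as the paper: use \Cref{DPhi} to view $-D\Phi(x)$ as an isometry from $(V,\lVert\cdot\rVert_x)$ to $(W,\lVert\cdot\rVert_{\Phi(x)})$, compare $\lVert\cdot\rVert_x$ with $M(v,x)\lVert\cdot\rVert_v$ and $\lVert\cdot\rVert_w$ with $M(\Phi(x),w)\lVert\cdot\rVert_{\Phi(x)}$, and use the gauge-reversing property (with the normalization $\Phi(v)=w$) to identify $M(\Phi(x),w)=M(v,x)$. Your explicit remark that this last step rests on $\Phi(v)=w$ is a point the paper leaves implicit, but the argument is the same.
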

\begin{proof}
 The isomorphism of order unit spaces $-D\Phi(x)\colon (V, x) \to (W, \Phi(x))$ is an isometry with respect to order unit norms, that is, $\lVert -D\Phi(x)y\rVert_{\Phi(x)} = \lVert y\rVert_{x}$ for each $y \in V$. The order unit norms $\lVert\cdot\rVert_x$ and $\lVert\cdot\rVert_v$ on $V$ satisfy $\lVert \cdot\rVert_x \le M(v,x)\lVert \cdot\rVert_v$. Likewise the norms $\lVert\cdot\rVert_{w}$ and $\lVert \cdot \rVert_{\Phi(x)}$ on $W$ are related by $\lVert \cdot\rVert_w \le M(\Phi(x),w)\lVert \cdot\rVert_{\Phi(x)}$. Since $\Phi$ is gauge-reversing, it follows that
$$\lVert D\Phi(x)y\rVert_w \le M(\Phi(x),w) \lVert D\Phi(x)y\rVert_{\Phi(x)} = M(v,x) \lVert y\rVert_{x} \le M(v,x)^2 \lVert y\rVert_v$$
for every $y \in V$, whence $\lVert D\Phi(x)\rVert_{v,w}\le M(v,x)^2.$

\end{proof}

The following lemma will be used to show continuity of the map $D\Phi: C \to L(V, W)$.

\begin{lemma}\label{DPhixyu}
Let $\lambda > 0$. For all $x,y,u \in C$ with $x,y \ge \lambda^{-1}v \ge  2u$ we have
\begin{equation}
\lVert (D\Phi(x) - D\Phi(y))(u) \rVert_w \le 3\lambda^2 \lVert x-y\rVert_v.
\end{equation}
\end{lemma}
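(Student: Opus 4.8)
The plan is to read off the estimate from the closed-form expression for the Gateaux derivative provided by \Cref{HuaDPhi}, which is applicable here because $2u \le \lambda^{-1}v \le x$ and likewise $2u \le y$. Writing $a_x := \Phi^{-1}(\Phi(u)-\Phi(x))$ and $a_y := \Phi^{-1}(\Phi(u)-\Phi(y))$ — these lie in $C$ since $\Phi(u)-\Phi(x)$ and $\Phi(u)-\Phi(y)$ belong to $D$ — that corollary gives $D\Phi(x)(u) = \Phi(x+a_x)-\Phi(x)$ and $D\Phi(y)(u) = \Phi(y+a_y)-\Phi(y)$. I would then insert the intermediate term $\Phi(y+a_x)$ and write
\begin{equation*}
(D\Phi(x)-D\Phi(y))(u) = \bigl[\Phi(x+a_x)-\Phi(y+a_x)\bigr] + \bigl[\Phi(y+a_x)-\Phi(y+a_y)\bigr] - \bigl[\Phi(x)-\Phi(y)\bigr],
\end{equation*}
and bound the three bracketed differences in $\lVert\cdot\rVert_w$ separately. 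Since $x,y \ge \lambda^{-1}v$ and $a_x,a_y \ge 0$, every argument of $\Phi$ appearing above dominates $\lambda^{-1}v$, so \Cref{jcont} bounds the first and third brackets by $\lambda^2\lVert x-y\rVert_v$ and the middle bracket by $\lambda^2\lVert a_x-a_y\rVert_v$.

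The crux is the estimate $\lVert a_x-a_y\rVert_v \le \lVert x-y\rVert_v$, and this is precisely where the hypothesis $\lambda^{-1}v \ge 2u$ is used. From $u \le (2\lambda)^{-1}v$ and order-reversal (together with the normalization $\Phi(v)=w$) I get $\Phi(u) \ge 2\lambda w$, while $x \ge \lambda^{-1}v$ gives $\Phi(x) \le \lambda w$; hence $\Phi(u)-\Phi(x) \ge \lambda w$ and, symmetrically, $\Phi(u)-\Phi(y) \ge \lambda w$. Now $\Phi^{-1}\colon D\to C$ is itself a gauge-reversing bijection with $\Phi^{-1}(w)=v$, so applying \Cref{jcont} to $\Phi^{-1}$ with parameter $\lambda$ gives
\begin{equation*}
\lVert a_x-a_y\rVert_v \le \lambda^{-2}\bigl\lVert (\Phi(u)-\Phi(x))-(\Phi(u)-\Phi(y))\bigr\rVert_w = \lambda^{-2}\lVert \Phi(y)-\Phi(x)\rVert_w \le \lambda^{-2}\cdot\lambda^2\lVert x-y\rVert_v,
\end{equation*}
the last step being \Cref{jcont} for $\Phi$ again, using $x,y \ge \lambda^{-1}v$; the factors $\lambda^{\pm2}$ cancel.

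Combining, each of the three bracketed terms has $\lVert\cdot\rVert_w$-norm at most $\lambda^2\lVert x-y\rVert_v$, and the triangle inequality yields $\lVert (D\Phi(x)-D\Phi(y))(u)\rVert_w \le 3\lambda^2\lVert x-y\rVert_v$. I do not expect a genuine obstacle beyond bookkeeping; the one point that needs care is calibrating the lower bound $\Phi(u)-\Phi(x) \ge \lambda w$ so that the Lipschitz estimate for $\Phi^{-1}$ can be invoked with exactly the parameter $\lambda$ — this precise matching of parameters is what makes the constant come out as $3\lambda^2$ rather than something larger.
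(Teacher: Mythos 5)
Your proof is correct and is essentially the paper's own argument: both start from \Cref{HuaDPhi}, bound $\lVert\Phi(x)-\Phi(y)\rVert_w\le\lambda^2\lVert x-y\rVert_v$, use $\Phi(u)-\Phi(x),\,\Phi(u)-\Phi(y)\ge\lambda w$ to apply the Lipschitz estimate to $\Phi^{-1}$ with the cancelling factor $\lambda^{-2}$, and conclude by the triangle inequality with constant $3\lambda^2$ (the paper, like you, implicitly uses $\Phi(v)=w$ here). The only difference is bookkeeping: you insert the intermediate term $\Phi(y+a_x)$ and apply \Cref{jcont} twice, whereas the paper bounds $\Phi(x+a_x)-\Phi(y+a_y)$ in one application via $\lVert(x+a_x)-(y+a_y)\rVert_v\le 2\lVert x-y\rVert_v$, which gives the same total.
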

\begin{proof}
\Cref{HuaDPhi} applied twice to $x \ge 2u$ and $y\ge 2u$ gives
\begin{equation}\label{DPhixy0}
D\Phi(x)(u) - D\Phi(y)(u) = \Phi(x + \Phi^{-1}(\Phi(u)-\Phi(x)))-\Phi(y+\Phi^{-1}(\Phi(u)-\Phi(y)))-\Phi(x)+\Phi(y).
\end{equation}
By our assumption $x, y \ge \lambda^{-1}v$, the Lipschitz property \eqref{jlip} of $\Phi$ gives that 
\begin{equation}\label{DPhixya}
\lVert \Phi(x)-\Phi(y)\rVert \le \lambda^2 \lVert x-y\rVert_v.
\end{equation}
Because $\Phi(u)-\Phi(x) \ge \Phi((2\lambda)^{-1}v)-\Phi(\lambda^{-1}v) = 2\lambda w - \lambda w = \lambda w$ and likewise $\Phi(u) - \Phi(y) \ge \lambda w$, we find again by \eqref{jlip} that
\begin{equation}\label{Phiinv}
\lVert \Phi^{-1}(\Phi(u)-\Phi(x))-\Phi^{-1}(\Phi(u)-\Phi(y))\rVert_v \le \lambda^{-2} \lVert \Phi(x)-\Phi(y)\rVert_w\\
\le \lVert x-y\rVert_v.
\end{equation}
Once more applying \eqref{jlip}, this time to $x+\Phi^{-1}(\Phi(u)-\Phi(x))$ and $y + \Phi^{-1}(\Phi(u)-\Phi(y))$, gives using the triangle inequality and (\ref{Phiinv}) that
\begin{align}\label{DPhixyb}
\lVert \Phi(x+\Phi^{-1}(\Phi(u)-\Phi(x))) - \Phi(y+\Phi^{-1}(\Phi(u)-\Phi(y)))\rVert_w \le
2\lambda^2 \lVert x-y\rVert_v.
\end{align}
Using the triangle inequality on \eqref{DPhixy0}, it follows from \eqref{DPhixyb} and \eqref{DPhixya} that
\begin{align*}
\lVert D\Phi(x)(u) - D\Phi(y)(u) \Vert_w \le
2\lambda^2 \lVert x-y\rVert_v + \lambda^2 \lVert x-y\rVert_v &= 3\lambda^2\lVert x-y\rVert_v.
\end{align*}
\end{proof}

\begin{proposition}\label{Djcont}
Let $\lambda > 0$. For all $x, y \in C$ with $x,y \ge \lambda^{-1}v$ one has
$$\lVert D\Phi(x) - D\Phi(y) \rVert_{v,w} \le 18\lambda^3 \lVert x-y\rVert_v.$$
Moreover, for all $u \in V_+$ one has
\begin{equation}\label{DPhixyV+}
\lVert (D\Phi(x) - D\Phi(y))(u) \rVert_{w} \le 6\lambda^3 \lVert x-y\rVert_v\lVert u\rVert_v.
\end{equation}
\end{proposition}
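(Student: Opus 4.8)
The plan is to prove the pointwise estimate \eqref{DPhixyV+} first, and then to read off the operator-norm bound from \Cref{Aopnm}. Write $A := D\Phi(x) - D\Phi(y)$; since $-D\Phi(x)$ and $-D\Phi(y)$ lie in $L(V,W)$ by \Cref{DPhi}, so does $A$. By homogeneity of $A$ it therefore suffices to bound $\lVert Au\rVert_w$ for $u \in V_+$ with $\lVert u\rVert_v = 1$ (the case $u = 0$ being trivial), since the general inequality then follows by scaling $u$; and for such $u$ one has $0 \le u \le v$.

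The key step is to manoeuvre the hypotheses of \Cref{DPhixyu} into place: that lemma requires the direction to be an element of $C$ dominated by $(2\lambda)^{-1}v$, whereas $u$ merely sits in $V_+$. To repair this I would perturb $u$: for $\delta \in (0,1)$ set
\[
u_\delta := \frac{1}{2\lambda}\bigl((1-\delta)u + \delta v\bigr),
\]
so that $u_\delta \ge \tfrac{\delta}{2\lambda}v$ gives $u_\delta \in C$, while $(1-\delta)u + \delta v \le v$ gives $2u_\delta \le \lambda^{-1}v \le x, y$. Then \Cref{DPhixyu}, applied with direction $u_\delta$, yields $\lVert A u_\delta\rVert_w \le 3\lambda^2\lVert x-y\rVert_v$. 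Expanding $A u_\delta = \tfrac{1-\delta}{2\lambda}Au + \tfrac{\delta}{2\lambda}Av$ by linearity and invoking the triangle inequality gives $\tfrac{1-\delta}{2\lambda}\lVert Au\rVert_w \le 3\lambda^2\lVert x-y\rVert_v + \tfrac{\delta}{2\lambda}\lVert Av\rVert_w$, where $\lVert Av\rVert_w$ is finite and independent of $\delta$ (indeed $\lVert Av\rVert_w \le 2\lambda^2$ by \Cref{Phislw} together with $M(v,x), M(v,y) \le \lambda$, although mere finiteness is all that is used). Letting $\delta \downarrow 0$ produces $\lVert Au\rVert_w \le 6\lambda^3\lVert x-y\rVert_v$, which is \eqref{DPhixyV+} for unit-norm $u \in V_+$, hence for all $u \in V_+$.

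For the operator-norm estimate I would then apply \Cref{Aopnm} to $A = D\Phi(x) - D\Phi(y)$: every $x' \in C$ with $\lVert x'\rVert_v \le 1$ lies in $V_+$, so \eqref{DPhixyV+} bounds $\lVert A x'\rVert_w$ by $6\lambda^3\lVert x-y\rVert_v$; taking the supremum over such $x'$ and multiplying by $3$ yields $\lVert D\Phi(x) - D\Phi(y)\rVert_{v,w} \le 18\lambda^3\lVert x-y\rVert_v$. I do not anticipate a genuine obstacle here: the proof is an assembly of \Cref{DPhixyu}, \Cref{Phislw} and \Cref{Aopnm}, and the only points warranting a little care are verifying that $u_\delta$ meets the hypotheses of \Cref{DPhixyu} (which rests on $0 \le u \le v$) and observing that the parasitic $Av$-term vanishes in the limit precisely because it does not depend on $\delta$.
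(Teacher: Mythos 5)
Your proof is correct and follows essentially the same route as the paper: in both cases the heart of the argument is \Cref{DPhixyu} applied to a rescaled direction satisfying $2\hat{u} \le \lambda^{-1}v$, with \Cref{Aopnm} then converting the pointwise estimate \eqref{DPhixyV+} into the operator-norm bound. The only immaterial difference is how you pass from $C$ to all of $V_+$: you perturb $u$ towards $v$ and let $\delta \downarrow 0$ (using \Cref{Phislw} for the parasitic term), whereas the paper simply invokes the $\lVert\cdot\rVert_v$-density of $C$ in $V_+$.
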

\begin{proof}
In view of \Cref{Aopnm}, it suffices to show the last estimate.\\
\ind First assume that $u \in C$. Then $\hat{u} = (2\lambda \lVert u\rVert_v)^{-1}u$ satisfies $2\hat{u} \le \lambda^{-1}v$. By the result of the previous paragraph, and the linearity of $D\Phi(x) - D\Phi(y)$ we obtain
\begin{equation*}
\lVert (D\Phi(x)-D\Phi(y))u\rVert_w = 2\lambda \lVert u\rVert_v \cdot \lVert D\Phi(x)\hat{u} - D\Phi(y)\hat{u} \rVert_w \le 2\lambda \lVert u\rVert_v \cdot 3\lambda^2 \lVert x-y\rVert_v = 6\lambda^3 \lVert x-y\rVert_v \lVert u\rVert_v.
\end{equation*}
\ind Since $C$ is $\lVert\cdot\rVert_v$-dense in $V_+$, the inequality holds for all $u \in V_+$.
\end{proof}

We conclude that $D\Phi\colon C\to L(V, W)$ is locally Lipschitz continuous. The following lemma will be used to show that $D\Phi$ is in fact the Fréchet derivative of $\Phi$. 

\begin{lemma}\label{jdiff}
Let $\lambda > 0$. For all $x \in C$ and $z\in V$ with $x,x+z \ge \lambda^{-1}v$ one has
$$\lVert \Phi(x+z) - \Phi(x) - D\Phi(x)(z) \rVert_w \le 11 \lambda^3 \lVert z\rVert_v^2.$$
\end{lemma}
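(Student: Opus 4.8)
The plan is to reduce everything to the one-sided (positive-direction) estimate \eqref{GPhi} of \Cref{DPhi} by a telescoping trick that writes the increment $z$, which need not be positive, as a difference of two positive vectors. First I would set $a := z + \lVert z\rVert_v v$ and $b := \lVert z\rVert_v v$. Since $z \ge -\lVert z\rVert_v v$ we have $a,b \in V_+$; moreover $z = a-b$, $\lVert a\rVert_v \le 2\lVert z\rVert_v$, $\lVert b\rVert_v = \lVert z\rVert_v$, and crucially the two ways of adding these to the base points coincide:
\begin{equation*}
x + a = (x+z) + b.
\end{equation*}
Note also that $x+z \in C$, because $x+z \ge \lambda^{-1}v$, so $D\Phi$ is defined there.

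Next I would apply \eqref{GPhi} twice. Applied at the point $x$ in the positive direction $a$ it yields $\Phi(x+a) = \Phi(x) + D\Phi(x)(a) + E_1$ with $\lVert E_1\rVert_{\Phi(x)} \le \lVert a\rVert_x^2$; applied at $x+z$ in the positive direction $b$ it yields $\Phi\bigl((x+z)+b\bigr) = \Phi(x+z) + D\Phi(x+z)(b) + E_2$ with $\lVert E_2\rVert_{\Phi(x+z)} \le \lVert b\rVert_{x+z}^2$. The left-hand sides agree by the displayed identity, and $D\Phi(x)$ is a linear map (by \Cref{DPhi}) so $D\Phi(x)(z) = D\Phi(x)(a) - D\Phi(x)(b)$. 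Subtracting the two expansions and rearranging therefore gives
\begin{equation*}
\Phi(x+z) - \Phi(x) - D\Phi(x)(z) = \bigl(D\Phi(x) - D\Phi(x+z)\bigr)(b) + E_1 - E_2,
\end{equation*}
which reduces the lemma to estimating the three terms on the right.

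For the first term I would quote the local Lipschitz estimate \eqref{DPhixyV+} of \Cref{Djcont}: as $x, x+z \ge \lambda^{-1}v$ and $b \in V_+$, it gives $\lVert (D\Phi(x)-D\Phi(x+z))(b)\rVert_w \le 6\lambda^3 \lVert z\rVert_v \lVert b\rVert_v = 6\lambda^3\lVert z\rVert_v^2$. For the two error terms one converts order-unit norms exactly as in \Cref{Phislw} and the proof of \Cref{jcont}: from $x \ge \lambda^{-1}v$ we get $M(v,x) \le \lambda$, hence $\lVert a\rVert_x \le \lambda\lVert a\rVert_v \le 2\lambda\lVert z\rVert_v$, and similarly $\lVert b\rVert_{x+z} \le \lambda\lVert z\rVert_v$; passing from $\lVert\cdot\rVert_{\Phi(x)}$ resp. $\lVert\cdot\rVert_{\Phi(x+z)}$ back to $\lVert\cdot\rVert_w$ costs at most a further factor $\lambda$ since $\Phi(x),\Phi(x+z) \le \lambda w$. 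This yields $\lVert E_1\rVert_w \le 4\lambda^3\lVert z\rVert_v^2$ and $\lVert E_2\rVert_w \le \lambda^3\lVert z\rVert_v^2$. Summing, $\lVert \Phi(x+z)-\Phi(x)-D\Phi(x)(z)\rVert_w \le (6+4+1)\lambda^3\lVert z\rVert_v^2 = 11\lambda^3\lVert z\rVert_v^2$, as required.

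The only real obstacle is the very first step. The estimate \eqref{GPhi} controls $\Phi$ only along positive directions, so $\Phi(x+z)$ cannot be expanded directly when $z$ is a general vector; the telescoping identity $x + a = (x+z) + b$ is precisely the device that lets two one-sided expansions be spliced into a genuinely two-sided one. Once that is in place, the remainder is routine bookkeeping with the norm-comparison constants already recorded earlier in the paper, and the number $11 = 6 + 4 + 1$ comes out of the three contributions above.
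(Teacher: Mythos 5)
Your proposal is correct and follows essentially the same route as the paper: the paper also splits $z = z_2 - z_1$ with $z_1 = \lVert z\rVert_v v$ and $z_2 = z + \lVert z\rVert_v v$ (your $b$ and $a$), applies the one-sided estimate \eqref{GPhi} at $x$ in the direction $z_2$ and at $x+z$ in the direction $z_1$, controls $(D\Phi(x+z)-D\Phi(x))z_1$ via \eqref{DPhixyV+}, and converts norms using $M(v,x)\le\lambda$ to obtain $11=4+6+1$. The telescoping identity and constant bookkeeping match the paper's proof exactly.
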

\begin{proof}
As in the proof of \Cref{Aopnm}, we can write $z = z_2 - z_1$ with $0 \le z_i \le i\lVert z\rVert_vv$ for $i=1,2$. 
In \Cref{DPhi} we have shown that $D\Phi(x)(z_2)$ is the Gateaux-derivative of $\Phi$ at $x$ in the direction $z_2 \in V_+$ as a consequence of the estimate 
$$\lVert \Phi(x+z_2) - \Phi(x) - D\Phi(x)(z_2)\rVert_{\Phi(x)} \le \lVert z_2\rVert_x^2.$$
In view of the fact that $M(\Phi(x),v) = M(v,x)\le \lambda$, we have $\lVert\cdot\rVert_w \le \lambda \lVert\cdot\rVert_{\Phi(x)}$ and $\lVert \cdot \rVert_x \le \lambda \lVert\cdot\rVert_v$, so our estimate entails that
$$
\lVert \Phi(x+z_2) - \Phi(x) - D\Phi (x)(z_2) \rVert_w \le \lambda^3 \lVert z_2\Vert_v^2.$$
Similarly, because $M(v,x+z) \le \lambda$ and $D\Phi(x+z)(z_1)$ is the Gateaux-derivative of $\Phi$ at $x+z$ in the direction $z_1 \in V_+$, we obtain
$$
\lVert \Phi(x+z + z_1) - \Phi(x + z) - D\Phi(x + z)(z_1) \rVert \le \lambda^3 \lVert z_1\rVert_v^2.
$$
Moreover, by the bound \eqref{DPhixyV+} in \Cref{Djcont} we have
$$\lVert (D\Phi(x+z)- D\Phi(x))(z_1) \rVert_v \le 6 \lambda^3 \lVert z\rVert_v \lVert z_1\rVert_v.$$
Using the triangle inequality on
\begin{align*}
\Phi(x+z) - \Phi(x) - D\Phi(x)z &= 
\Phi(x+z_2) - \Phi(x) - D\Phi(x)z_2 - (D\Phi(x+z) - D\Phi(x))z_1\\
&\phantom{=} - (\Phi(x+z+z_1) - \Phi(x+z) - D\Phi(x+z)z_1) ,
\end{align*}
it follows that
$$\lVert \Phi(x+z) - \Phi(x) - D\Phi(x)z\rVert_v \le \lambda^3(\lVert z_2\rVert_v^2 + 6 \lVert z\rVert_v \lVert z_1\rVert_v + \lVert z_1\rVert_v^2) \le \lambda^3 (2^2 + 6 + 1)\lVert z\rVert_v^2 = 11\lambda^3 \lVert z\rVert_v^2.$$ 
\end{proof}

\begin{theorem}\label{C1Phi}
The map $\Phi: C \to D$ is continuously differentiable.
\end{theorem}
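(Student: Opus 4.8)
The plan is to harvest the two quantitative estimates that are already in place. \Cref{jdiff} will yield Fréchet-differentiability of $\Phi$ at each point of $C$, with Fréchet-derivative equal to the linear operator $D\Phi(x)$ produced in \Cref{DPhi}; and \Cref{Djcont} will yield that the assignment $x \mapsto D\Phi(x)$ is locally Lipschitz, hence continuous, as a map $C \to L(V,W)$. Combining these two facts gives that $\Phi$ is continuously differentiable, which is the assertion.

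For the first point, I would fix $x \in C$. Since $x$ is an order unit of $(V,V_+)$, there is $\epsilon > 0$ with $x \ge 2\epsilon v$; set $\lambda := \epsilon^{-1}$. Then for every $z \in V$ with $\lVert z\rVert_v < \epsilon$ one has $x \ge \lambda^{-1}v$ and $x + z \ge x - \lVert z\rVert_v v \ge \epsilon v = \lambda^{-1}v$, so \Cref{jdiff} applies and gives
$$\lVert \Phi(x+z) - \Phi(x) - D\Phi(x)(z)\rVert_w \le 11\lambda^3 \lVert z\rVert_v^2.$$
Dividing by $\lVert z\rVert_v$ and letting $z \to 0$ shows that $\lVert z\rVert_v^{-1}\lVert \Phi(x+z) - \Phi(x) - D\Phi(x)(z)\rVert_w \to 0$. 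Since $D\Phi(x) \in L(V,W)$ is a bounded linear operator by \Cref{Phislw}, this is precisely the assertion that $\Phi$ is Fréchet-differentiable at $x$ with Fréchet-derivative $D\Phi(x)$; in particular the notation $D\Phi(x)$ is consistent with the one introduced in \Cref{diffSec}.

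For the second point, I would fix $x_0 \in C$ and choose $\epsilon > 0$ with $x_0 \ge 2\epsilon v$, putting $\lambda := \epsilon^{-1}$. For $x \in C$ with $\lVert x - x_0\rVert_v < \epsilon$ one has $x \ge x_0 - \lVert x - x_0\rVert_v v \ge \epsilon v = \lambda^{-1}v$ and $x_0 \ge \lambda^{-1}v$, so \Cref{Djcont} gives $\lVert D\Phi(x) - D\Phi(x_0)\rVert_{v,w} \le 18\lambda^3\lVert x - x_0\rVert_v$. Letting $x \to x_0$ shows that $D\Phi$ is continuous at $x_0$, and since $x_0 \in C$ was arbitrary, $D\Phi\colon C \to L(V,W)$ is continuous (indeed locally Lipschitz). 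Hence $\Phi$ is continuously differentiable.

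I do not expect a serious obstacle here: the analytic content is entirely contained in \Cref{jdiff} and \Cref{Djcont}, and the only point requiring attention is the routine observation that membership of $x$ in the open cone $C$ supplies, via the order unit property, a scale $\lambda$ for which those lemmata are applicable on a $\lVert\cdot\rVert_v$-neighbourhood of $x$.
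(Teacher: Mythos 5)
Your proposal is correct and follows essentially the same route as the paper: it deduces Fréchet-differentiability at each $x\in C$ from the quadratic estimate of \Cref{jdiff} and continuity (indeed local Lipschitz continuity) of $x\mapsto D\Phi(x)$ from \Cref{Djcont}, after choosing a scale $\lambda$ so that both $x$ and the nearby points dominate $\lambda^{-1}v$. The only cosmetic difference is your explicit choice $x\ge 2\epsilon v$, $\lambda=\epsilon^{-1}$ versus the paper's $\lambda = 2m(x,v)^{-1}$, which amounts to the same thing.
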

\begin{proof}
Let $x \in C$, and set $\lambda = 2m(x,v)^{-1}$. If $z \in V$ is small enough so that $\lVert z\rVert_v \le \lambda^{-1}$, then $x+z \ge m(x,v)v - \lambda^{-1}v=\lambda^{-1}v$.  Consequently, we may apply \Cref{jdiff} to obtain that $\lVert \Phi(x+z) - \Phi(x) - D\Phi(x)(z)\rVert_w \le 11\lambda^3 \lVert z\rVert_v^2$. We conclude that $D\Phi(x)$ is the Fréchet-derivative of $\Phi$ at $x$. Similarly, by \Cref{Djcont} we have $\lVert D\Phi(x+z) - D\Phi(x)\rVert \le 18\lambda^3 \lVert z\rVert_v$, hence $D\Phi: C \to L(V, W)$ is continuous at $x$. Since $x \in C$ was arbitrary, the theorem is proved.
\end{proof}

\subsection{Symmetries and the quadratic representation}\label{symsubSec}

In this subsection, we define and construct symmetries of $C$ at each of its points. As a particular case, the symmetry $j = S_v$ at $v$ will later feature as the inversion map of the JB-algebra structure on $V$ with identity element $v$. Moreover, we introduce the quadratic representation and prove various formulae for it.\\
\ind The following lemma shows that a gauge-reversing bijection is determined by its linearization.

\begin{lemma}\label{!Sx}
Let $\Phi, \Upsilon \colon C \to D$ be two gauge-reversing bijections. If there exists $x \in C$ such that $\Phi(x) = \Upsilon(x)$ and $D\Phi(x) = D\Upsilon(x)$, then $\Phi$ and $\Upsilon$ are equal.
\end{lemma}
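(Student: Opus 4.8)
The plan is to show that the composite $\Theta := \Upsilon^{-1}\circ\Phi\colon C\to C$ is the identity map, which immediately gives $\Phi = \Upsilon$. The key point is that, being a composition of two gauge-reversing maps, $\Theta$ is \emph{gauge-preserving}: indeed $\Upsilon^{-1}$ is itself gauge-reversing (it is the inverse of a gauge-reversing bijection), so by \Cref{graor} both $\Phi$ and $\Upsilon^{-1}$ are homogeneous of degree $-1$ and order-anti-isomorphisms; hence $\Theta$ is homogeneous of degree $1$ and an order-isomorphism, and therefore a gauge-preserving bijection by \Cref{gphop}. Moreover $\Theta$ fixes $x$, since $\Theta(x) = \Upsilon^{-1}(\Phi(x)) = \Upsilon^{-1}(\Upsilon(x)) = x$.

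Next I would regard $V$ as the complete order unit space $(V, V_+, x)$ with order unit $x\in C$; its open cone is again $C$, and by \Cref{supAtom} together with the independence of $C_{usc}$ from the choice of order unit (\Cref{Cndv}) the cone $C_{usc}$ is strongly atomic. Applying \Cref{ThmBsa} to the gauge-preserving bijection $\Theta$ fixing the order unit $x$, one concludes that $\Theta$ extends to a bounded linear automorphism $\widetilde{\Theta}\colon V\to V$ with $\widetilde{\Theta}|_C = \Theta$ and $\widetilde{\Theta}(x) = x$. (Alternatively one may cite the original Noll--Sch\"afer theorem \Cref{NSThmB} here.)

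It then remains to identify $\widetilde{\Theta}$ with $\id_V$. From $\Theta = \Upsilon^{-1}\circ\Phi$ we get $\Phi = \Upsilon\circ\widetilde{\Theta}$ on $C$. Since $\Upsilon$ is Fr\'echet-differentiable at $x$ by \Cref{C1Phi}, and the bounded linear map $\widetilde{\Theta}$ is its own Fr\'echet-derivative at every point, the chain rule \Cref{chr}(1) gives $D\Phi(x) = D\Upsilon(\widetilde{\Theta}(x))\circ\widetilde{\Theta} = D\Upsilon(x)\circ\widetilde{\Theta}$. By \Cref{DPhi} the operator $-D\Upsilon(x)$ is an isomorphism of order unit spaces, in particular a linear bijection and so left-cancellable; using the hypothesis $D\Phi(x) = D\Upsilon(x)$ this forces $\widetilde{\Theta} = \id_V$, whence $\Theta = \id_C$ and $\Phi = \Upsilon$.

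I do not expect a serious obstacle. The points that need care are checking that the earlier results---\Cref{graor}, \Cref{gphop}, \Cref{C1Phi}, \Cref{DPhi}, \Cref{ThmBsa}---although phrased for the distinguished map $\Phi$ in the standing setup, apply verbatim to $\Upsilon$, to $\Upsilon^{-1}$, and after replacing the order unit $v$ by $x$ (which is exactly what \Cref{Cndv} licenses), and making sure the chain rule is invoked in its Fr\'echet (not merely Gateaux) form, which is legitimate because $\Upsilon$ is $C^1$ by \Cref{C1Phi} and $\widetilde{\Theta}$ is continuous and linear.
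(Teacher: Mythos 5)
Your proposal is correct and follows essentially the same route as the paper: both form the gauge-preserving composite $\Upsilon^{-1}\circ\Phi$, extend it to a linear map via the Noll--Sch\"afer theorem (\Cref{ThmBsa}/\Cref{NSThmB}), and then use the chain rule together with the hypothesis at $x$ to force this linear map to be the identity. The only differences are cosmetic: the paper computes $D(\Upsilon^{-1}\circ\Phi)(x)$ directly and cancels through $D(\Upsilon^{-1})$, while you differentiate $\Phi=\Upsilon\circ\widetilde{\Theta}$ and cancel $D\Upsilon(x)$; your extra care about renormalizing the order unit to $x$ before invoking \Cref{ThmBsa} is a nice touch but not a change of method.
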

\begin{proof}
Since $\Phi$ and $\Upsilon^{-1}$ are gauge-reversing bijections, we deduce that the composition $T = \Upsilon^{-1} \circ \Phi$ is a gauge-preserving bijection, hence extends to a bounded linear map on $V$ by \Cref{ThmBsa}. Two applications of the chain rule, and our assumptions on $\Phi$ and $\Upsilon$ at $x$ give that $$DT(x) = D(\Upsilon^{-1})(\Phi(x)) \circ D\Phi(x) = D(\Upsilon^{-1})(\Upsilon(x)) \circ D\Upsilon(x) = \id_V.$$ It follows that $T = DT(x)|_C = \id_C$, whence $\Phi = \Upsilon \circ T = \Upsilon$.
\end{proof}

\begin{definition}\label{symC}
For every $x\in C$ we define a \emph{symmetry of $C$ at }$x$ to be a gauge-reversing bijection $S_x\colon C \to C$ such that $S_x(x) = x$ and $DS_x(x) = -\id_V$.
\end{definition}

By \Cref{!Sx}, if a symmetry $S_x$ of $C$ at $x$ exists, it is unique. Existence is granted by the next lemma which is inspired by \cite[Lemma 3.3]{Walsh18}.

\begin{lemma}\label{SymMake} Let $\Upsilon\colon C \to D$ be any gauge-reversing bijection. Then for every $x \in C$, the map $S_x\colon C \to C$ defined by
$$S_x = -[D\Upsilon(x)]^{-1} \circ \Upsilon$$
is the symmetry of $C$ at $x$.
\end{lemma}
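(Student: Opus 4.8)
The plan is to verify the three defining properties of a symmetry from Definition~\ref{symC}: that $S_x$ is a gauge-reversing bijection $C\to C$, that $S_x(x)=x$, and that $DS_x(x)=-\id_V$. First I would observe that by Theorem~\ref{DPhi} the map $-D\Upsilon(x)\colon(V,V_+,x)\to(W,W_+,\Upsilon(x))$ is an isomorphism of order unit spaces, so in particular $-D\Upsilon(x)$ restricts to an order-isomorphism $C\to D$ that is homogeneous of degree $1$; by Proposition~\ref{gphop} it is therefore a gauge-preserving bijection $D\to C$ upon taking its inverse $-[D\Upsilon(x)]^{-1}$. Composing the gauge-reversing bijection $\Upsilon\colon C\to D$ with the gauge-preserving bijection $-[D\Upsilon(x)]^{-1}\colon D\to C$ yields a gauge-reversing bijection $S_x\colon C\to C$: indeed, reversing gauges followed by preserving gauges reverses gauges, and the composite of bijections is a bijection.

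Next I would compute $S_x(x)$. Since $-D\Upsilon(x)$ sends $x$ to $\Upsilon(x)$ by the last assertion of Theorem~\ref{DPhi} (equivalently $D\Upsilon(x)(x)=-\Upsilon(x)$, shown in Proposition~\ref{DPhiusc}), its inverse sends $\Upsilon(x)$ back to $x$, so
\[
S_x(x) = -[D\Upsilon(x)]^{-1}(\Upsilon(x)) = x.
\]
For the derivative, I would apply the chain rule for Fréchet-derivatives, Lemma~\ref{chr}(1), using that $\Upsilon$ is continuously differentiable by Theorem~\ref{C1Phi} and that the linear map $-[D\Upsilon(x)]^{-1}$ is its own derivative everywhere. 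This gives
\[
DS_x(x) = \bigl(-[D\Upsilon(x)]^{-1}\bigr)\circ D\Upsilon(x) = -\id_V,
\]
which is exactly the remaining requirement. By the uniqueness noted after Definition~\ref{symC} (a consequence of Lemma~\ref{!Sx}), $S_x$ is \emph{the} symmetry of $C$ at $x$.

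I do not expect any serious obstacle here: every ingredient—order-unit-space isomorphism property of $-D\Upsilon(x)$, the identity $D\Upsilon(x)(x)=-\Upsilon(x)$, continuous differentiability of $\Upsilon$, the chain rule, and the characterization of gauge-preserving maps—has already been established in the excerpt, so the proof is essentially a bookkeeping assembly of these facts. The only point requiring a moment's care is confirming that $-[D\Upsilon(x)]^{-1}$ genuinely maps $D$ onto $C$ (not merely $W$ onto $V$), which follows because an isomorphism of order unit spaces carries the open cone of the domain onto that of the codomain.
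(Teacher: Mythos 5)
Your proposal is correct and follows essentially the same route as the paper: invoking \Cref{DPhi} to see that $-D\Upsilon(x)$ is an isomorphism of order unit spaces (hence $S_x$ is a well-defined gauge-reversing bijection of $C$ fixing $x$), and then applying the chain rule to get $DS_x(x)=-\id_V$. Your extra remarks (gauge-preserving inverse via \Cref{gphop}, differentiability from \Cref{C1Phi}, uniqueness from \Cref{!Sx}) just spell out details the paper leaves implicit.
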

\begin{proof}
Recall from \Cref{DPhi} that $-D\Upsilon(x) \colon (V, V_+, x) \to (W, W_+, \Upsilon(x))$ is an isomorphism of order unit spaces. It follows that $S_x: C \to C$ is a well-defined gauge-reversing bijection with $S_x(x)=x$. Since $DS_x(x)= -[D\Upsilon(x)]^{-1} \circ D\Upsilon(x) = -\id_V$ by the chain rule, we conclude that $S_x$ is a symmetry of $C$ at $x$.
\end{proof}

In \Cref{dTsymDef} we defined a $d_T$-symmetry at a point $x\in C$ to be an involutive $d_T$-isometry having $x$ as an isolated fixed point. According to the following lemma, the symmetries of $C$ are $d_T$-symmetries and in particular are involutory.

\begin{lemma}\label{SymEquiv}
Let $x \in C$ and suppose $S_x\colon C\to C$ is the symmetry of $C$ at $x$. Then $S_x$ is a $d_T$-symmetry at $x$.
\end{lemma}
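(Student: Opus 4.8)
The goal is to show that the unique symmetry $S_x$ of $C$ at $x$ (whose existence and uniqueness we already have from Lemmata~\ref{!Sx} and \ref{SymMake}) is a $d_T$-symmetry in the sense of \Cref{dTsymDef}, i.e. that $S_x$ is an involutive $d_T$-isometry with $x$ as an isolated fixed point. Three things therefore have to be established: (a) $S_x$ is a $d_T$-isometry; (b) $S_x$ is involutive, $S_x \circ S_x = \id_C$; (c) $x$ is an isolated fixed point of $S_x$.

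\emph{Step (a): $S_x$ is a $d_T$-isometry.} This is immediate. Every symmetry of $C$ at $x$ is in particular a gauge-reversing bijection $C\to C$ by \Cref{symC}, and by \Cref{dTisom} every gauge-reversing bijection is a $d_T$-isometry. So nothing further is needed here.

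\emph{Step (b): $S_x$ is involutive.} The idea is to apply the uniqueness statement of \Cref{!Sx} to the two gauge-reversing self-bijections $S_x \circ S_x$ and $\id_C$ of $C$. First observe $S_x\circ S_x$ is gauge-reversing: it is the composite of two gauge-reversing bijections $C\to C$, and by \Cref{graor} (homogeneity of degree $-1$ plus order-anti-isomorphism) the composite of two such maps is gauge-\emph{preserving}; but we want to compare with $\id_C$, which is gauge-preserving, so in fact we should apply the uniqueness principle in the gauge-preserving world. More cleanly: set $T := S_x \circ S_x$, a gauge-preserving bijection $C\to C$ with $T(x) = S_x(S_x(x)) = S_x(x) = x$. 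By \Cref{ThmBsa} it extends to a linear automorphism of $V$, and by the chain rule $DT(x) = DS_x(x)\circ DS_x(x) = (-\id_V)\circ(-\id_V) = \id_V$. Arguing exactly as in the last two lines of the proof of \Cref{!Sx}, a gauge-preserving bijection whose linear extension has derivative $\id_V$ at a point equals $\id_C$ on all of $C$ (it coincides with its own linearization). Hence $S_x\circ S_x = \id_C$.

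\emph{Step (c): $x$ is an isolated fixed point.} Suppose $y \in C$ with $S_x(y) = y$. Since $S_x$ is a $d_T$-isometry fixing both $x$ and $y$, for every $t$ on the geodesic or more simply: consider the derivative at $x$. Because $S_x$ is Fréchet-differentiable at $x$ with $DS_x(x) = -\id_V$ (this is part of \Cref{symC}, and differentiability of gauge-reversing bijections is \Cref{C1Phi}), we may write, for $z$ small, $S_x(x+z) = x - z + o(\lVert z\rVert_v)$. If $y = x+z$ were a fixed point close to $x$, then $x+z = x - z + o(\lVert z\rVert_v)$, forcing $2z = o(\lVert z\rVert_v)$, hence $z = 0$ once $\lVert z\rVert_v$ is small enough. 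Thus $x$ is isolated among the fixed points of $S_x$. Combining (a), (b), (c) gives that $S_x$ is a $d_T$-symmetry at $x$.

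\emph{Main obstacle.} None of the three steps is deep; the only point requiring a little care is step (b), namely correctly invoking the uniqueness mechanism. One must notice that $S_x\circ S_x$ is gauge-\emph{preserving} (not gauge-reversing), so that \Cref{!Sx} cannot be cited verbatim; instead one reruns its short argument — extend via \Cref{ThmBsa}, compute the derivative via the chain rule, and conclude the map equals its linearization, here $\id_C$. Step (c) is the only other place one must be slightly careful, to make sure "isolated fixed point" is read in the norm topology, which by the remarks following \Cref{NldT} coincides with the $d_T$-topology on $C$, so either reading is fine.
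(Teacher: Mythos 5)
Your proof is correct and follows essentially the same route as the paper: involutivity is obtained by noting $S_x^2$ is gauge-preserving, extending it to a linear automorphism via the Noll--Sch\"afer theorem, and computing $D(S_x^2)(x) = \Id_V$, while the isolated fixed point is deduced from $DS_x(x) = -\Id_V$. The only differences are cosmetic: the paper cites \cite[Theorem~7.4]{LRW25} for the isolated-fixed-point step where you write out the short first-order estimate directly, and you make explicit the $d_T$-isometry step (via \Cref{dTisom}) that the paper leaves implicit.
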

\begin{proof}
Observe that $S_x^2$ is a gauge-preserving bijection, which extends to a linear automorphism of $V$ by \Cref{NSThmB}. Since $D(S_x^2)(x) = DS_x(x) \circ DS_x(x) = -\Id_V \circ -\Id_V = \Id_V$, we find that $S_x^2 = \Id_C$. As in the proof of \cite[Theorem~7.4]{LRW25}, it follows from $DS_x(x) = -\Id_V$ that $x$ is an isolated fixed point of $S_x$.

\end{proof}

For the remainder of this subsection, we assume that some gauge-reversing bijection $\Phi\colon C \to D$ exists. For $x\in C$, we denote by $S_x$ the unique symmetry of $C$ at $x$. From \Cref{SymEquiv} we see that $S_x$ is involutory, i.e.\ satisfies $S_x^2 = \id_C$. The gauge-preserving bijection $S_x \circ S_v: C \to C$ extends by \Cref{ThmBsa} to an automorphism $P(x)$ of $(V, V_+)$. It follows that $S_x = S_x \circ S_v \circ S_v = P(x) \circ S_v$, so any two symmetries are related by a linear automorphism of $(V, V_+)$.  

\begin{definition}\label{Pdef}
Let $(V, V_+, v)$ be a complete order unit space with open cone $C := V_+^{\circ}$. Assume that for each $x\in C$ there exists a symmetry $S_x\colon C\to C$ of $C$ at $x$.
\begin{enumerate}[label={\normalfont(\arabic*)}]
\item We denote $j: C \to C$ the symmetry $S_v$ of $C$ at the given order unit $v$.
\item We define the \emph{quadratic representation} of an element $x \in C$ to be the automorphism $P(x)$ of $(V, V_+)$ determined by $S_x = P(x) \circ j$.
\end{enumerate}
\end{definition}

\ind Note that $P(x)|_C = S_x \circ j$, hence it follows from \Cref{SymMake} that for all $x \in C$ we have 
\begin{equation}\label{PDjj}
P(x) = [-Dj(x)]^{-1} = -Dj(j(x)).
\end{equation}

Hua's identity \eqref{Hua} for the symmetry $S_x$, which satisfies $S_x(x) = x$ and $-DS_x(x) = \id_V$, reads
\begin{equation}\label{Sxxy}
S_x(x + y) + S_x(x + S_x(y)) = x.
\end{equation}

\begin{corollary}\label{PHua}
For all $x,y \in C$ we have
\begin{equation}\label{Pxy}
P(x)(y) = j(j(x) - j(x+j(y))) - x
\end{equation}
\end{corollary}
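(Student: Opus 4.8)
The plan is to reduce \eqref{Pxy} to a symmetric variant of Hua's identity for the symmetry $S_x$. Recall from the discussion following \Cref{Pdef} that $P(x)|_C = S_x \circ j$, so $P(x)(y) = S_x(j(y))$ for $y\in C$, and that $P(x)$ is a linear automorphism of $(V,V_+)$ with $P(x)^{-1} = -Dj(x)$ by \eqref{PDjj}. I claim that \eqref{Pxy} will follow once we establish the symmetric identity
\begin{equation*}
j(x+w) + j(x+S_x(w)) = j(x)\qquad\text{for all } w\in C.
\end{equation*}
Indeed, given $y\in C$, applying this with $w = S_x(j(y))$ — which lies in $C$ and satisfies $S_x(w) = j(y)$ since $S_x$ is an involution of $C$ by \Cref{SymEquiv} — yields $j(x+S_x(j(y))) = j(x) - j(x+j(y))$; the left-hand side lies in $C$, hence so does the right-hand side, and applying $j$ gives $x + S_x(j(y)) = j\bigl(j(x)-j(x+j(y))\bigr)$, which is \eqref{Pxy} in view of $P(x)(y) = S_x(j(y))$.

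To prove the symmetric identity, I would start from Hua's identity \eqref{Sxxy} for $S_x$, namely $S_x(x+w) + S_x(x+S_x(w)) = x$, valid for all $w\in C$. Writing $S_x = P(x)\circ j$ and using linearity of $P(x)$ to pull it out of the sum gives $P(x)\bigl(j(x+w)+j(x+S_x(w))\bigr) = x$, hence $j(x+w)+j(x+S_x(w)) = P(x)^{-1}(x)$. It then remains only to identify $P(x)^{-1}(x) = -Dj(x)(x)$. But $j$ is homogeneous of degree $-1$, so differentiating $\mu\mapsto j((1+\mu)x) = (1+\mu)^{-1}j(x)$ at $\mu = 0$ shows $Dj(x)(x) = -j(x)$ (this is also the content of the ``sends $x$ to $-\Phi(x)$'' clause of \Cref{DPhiusc}); therefore $P(x)^{-1}(x) = j(x)$, and the symmetric identity follows.

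I do not expect a genuine obstacle here; the computation is short. The one point worth stressing is that the naive route — substituting directly into Hua's identity \eqref{Hua} for the gauge-reversing bijection $j$ and choosing the direction so that $P(x)$ appears — only produces \eqref{Pxy} for $y$ in the proper open subcone $\{y\in C : y\prec j(x)\}$, which is not $\lVert\cdot\rVert_v$-dense in $C$, so this leaves no evident way to extend the formula; routing the argument through the symmetry $S_x$ and the \emph{linear} map $P(x)$ circumvents this. Some care is also needed throughout to keep track of which maps are defined only on the open cones and which have been extended linearly to the ambient Banach spaces.
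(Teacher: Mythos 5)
Your proof is correct and follows essentially the same route as the paper: both start from Hua's identity \eqref{Sxxy} for the symmetry $S_x$, write $S_x = P(x)\circ j$, apply the linear map $P(x)^{-1}$ to obtain the intermediate identity $j(x+y)+j(x+P(x)j(y))=j(x)$, substitute $y\mapsto j(y)$ (your choice $w=S_x(j(y))$ is the same substitution in disguise), and then apply $j$ and subtract $x$. The only cosmetic difference is that you identify $P(x)^{-1}(x)=j(x)$ via homogeneity of $j$ (i.e. $Dj(x)x=-j(x)$), while the paper uses $x=S_x(x)=P(x)j(x)$; your extra remark that $j(x)-j(x+j(y))\in C$ before applying $j$ is a welcome precision the paper leaves implicit.
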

\begin{proof}
Since $S_x = P(x)\circ j$ gives that $x=S_x(x)=P(x)j(x)$, applying $P(x)^{-1}$ to \eqref{Sxxy} gives that
\begin{equation}\label{jxy}
j(x + y) + j(x + P(x)j(y)) = j(x)
\end{equation}
We may replace $y$ by $j(y)$, and rewrite this as
$$j(x + P(x)y) = j(x) - j(x+j(y)).$$
Since $j^2 = \id_{C}$, applying $j$ to both members gives
$$x + P(x)y = j(j(x) -j(x+j(y))).$$
Subtract $x$ to arrive at \eqref{Pxy}.
\end{proof}

\begin{theorem}\label{xysq}
For all $x,y \in C$ we have
\begin{equation}\label{Gstr}
(P(x)-P(y))j(x+y)= x - y.
\end{equation}
\end{theorem}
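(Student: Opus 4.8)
The plan is to deduce \eqref{Gstr} from Hua's identity \emph{applied to the inversion map $j$ itself}, exploiting that the expression produced is symmetric under interchanging the two points. (Equivalently, since $S_x = P(x)\circ j$ one is proving $S_x(x+y) - S_y(x+y) = x-y$, but it is cleaner to argue directly with $P$.)

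First I would record two facts from the present setup. By \Cref{symC} and \Cref{Pdef} we have $S_x = P(x)\circ j$ with $S_x(x)=x$, so that
$$P(x)\,j(x) = x \qquad\text{for all } x\in C,$$
and by \eqref{PDjj} the linear automorphism $P(a)$ satisfies $P(a)^{-1} = -Dj(a)$. Now $j\colon C\to C$ is an involutive gauge-reversing bijection, so Hua's identity \eqref{Hua} of \Cref{HuaPhi}, applied with $\Phi = j$, gives
$$j(a) - j(a+b) = -Dj(a)\big(j(j(a)+j(b))\big)\qquad\text{for all } a,b\in C,$$
using $j^{-1}=j$ and noting $j(a)+j(b)\in C$ (a sum of elements of $C$) so the inner $j$ is defined. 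Rewriting $-Dj(a)=P(a)^{-1}$ and applying the linear map $P(a)$ to both sides yields the key identity
$$P(a)\big(j(a)-j(a+b)\big) = j\big(j(a)+j(b)\big),\qquad a,b\in C.$$

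The punchline is that the right-hand side is unchanged under $a\leftrightarrow b$. Hence, evaluating this identity at $(a,b)=(x,y)$ and at $(a,b)=(y,x)$ and equating gives
$$P(x)\big(j(x)-j(x+y)\big) = P(y)\big(j(y)-j(x+y)\big).$$
Expanding by linearity of $P(x)$ and $P(y)$, substituting $P(x)j(x)=x$ and $P(y)j(y)=y$, and rearranging produces exactly $\big(P(x)-P(y)\big)j(x+y)=x-y$, which is \eqref{Gstr}.

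The only genuinely non-routine step I anticipate is the first one: recognizing that Hua's identity should be invoked for $j$ (not for the ambient bijection $\Phi$) and massaged, via \eqref{PDjj} and $j^{-1}=j$, into the form $P(a)(j(a)-j(a+b)) = j(j(a)+j(b))$ whose right-hand side is manifestly symmetric in its two arguments. Everything after that is linear bookkeeping resting on the relation $P(x)j(x)=x$; a minor point to verify along the way is simply that $j(a)+j(b)\in C$ so that $j(j(a)+j(b))$ makes sense.
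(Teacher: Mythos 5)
Your proof is correct and takes essentially the same route as the paper: the paper applies Hua's identity to the symmetries $S_x$ and $S_y$ at their base points and cancels the two cross terms, both of which equal $j(j(x)+j(y))$, whereas you apply Hua's identity directly to $j$ and exploit the symmetry in $x,y$ of that same quantity $j(j(x)+j(y))$ — an equivalent rearrangement via $S_x = P(x)\circ j$ and $P(x)^{-1} = -Dj(x)$. All the ingredients you invoke ($P(x)j(x)=x$, $j^{-1}=j$, $j(x)+j(y)\in C$, and the applicability of Hua's identity to the gauge-reversing bijection $j\colon C\to C$) are indeed available at this point in the paper, so the argument goes through as written.
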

\begin{proof}
Recall from \Cref{Pdef} and \Cref{SymEquiv} that $P(x) \circ j = S_x = S_x^{-1} = j \circ P(x)^{-1}$ and $S_x(x) = x$. From Hua's identity \eqref{Sxxy} for the symmetries $S_x$ and $S_y$ we obtain that
$$(P(x)-P(y))j(x+y) = S_x(x+y)-S_y(x+y)=x-y-(S_x(x+S_xy)-S_y(S_yx+y)).$$
The final term cancels since
$$S_x(x+S_x(y))=S_x^{-1}(S_x(x)+S_x(y))=(j \circ P(x)^{-1})(P(x)(j(x))+P(x)(j(y)))=j(j(x)+j(y)),$$
and similarly with $x$ and $y$ interchanged.
\end{proof}

The uniqueness of symmetries has the following two consequences, as has been observed by Loos in \cite[Lemma 1.1(a)]{Loos69}.

\begin{lemma}\label{SSS}
For all $x,y \in C$ we have
$S_xS_yS_x=S_{S_x(y)}$.
\end{lemma}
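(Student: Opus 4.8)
The key idea is to use the uniqueness of symmetries (a consequence of Lemma~\ref{!Sx}): a symmetry of $C$ at a point $z$ is the \emph{unique} gauge-reversing bijection $C \to C$ fixing $z$ with Fréchet-derivative $-\id_V$ at $z$. So the plan is to set $T := S_x S_y S_x$, verify that $T$ is a gauge-reversing bijection $C \to C$, and then check the two normalization conditions at the point $z := S_x(y)$, namely $T(z) = z$ and $DT(z) = -\id_V$; uniqueness then forces $T = S_{S_x(y)}$.

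First I would observe that each $S_x$ and $S_y$ is a gauge-reversing bijection of $C$ onto itself (Definition~\ref{symC}), hence so is the composite $T = S_x \circ S_y \circ S_x$ since a composition of two gauge-reversing maps is gauge-preserving and a composition of three is gauge-reversing again (equivalently, use Proposition~\ref{graor}: each factor is homogeneous of degree $-1$ and an order-anti-isomorphism, and the triple composite is homogeneous of degree $-1$ and an order-anti-isomorphism). Next, the fixed-point condition: using $S_x^2 = \id_C$ (Lemma~\ref{SymEquiv}) and $S_y(y) = y$, compute
\begin{align*}
T(S_x(y)) = S_x S_y S_x(S_x(y)) = S_x S_y(y) = S_x(y),
\end{align*}
so $z = S_x(y)$ is a fixed point of $T$. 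For the derivative, apply the chain rule (\Cref{chr}(1), valid since all the maps involved are Fréchet-differentiable by \Cref{C1Phi}, and in fact locally Lipschitz by \Cref{biLip}). Writing $u := S_x(y)$, $w := S_y(u) = S_y S_x(y)$, we have $S_x(u) = S_x S_x(y) = y$ and $S_x(w) = S_x S_y S_x(y) = T(y)$; but since $T$ fixes $u = S_x(y)$ and $T = S_x S_y S_x$ is involutive as well (being a conjugate of $S_y$, or directly: $T^2 = S_x S_y S_x S_x S_y S_x = S_x S_y^2 S_x = S_x^2 = \id_C$), one can organize the chain rule as follows. Since $S_x(u) = y$, $S_y(y) = y$, and $S_x(y) = u$:
\begin{align*}
DT(u) = D S_x(y) \circ D S_y(y) \circ D S_x(u) = (-\id_V) \circ (-\id_V) \circ (-\id_V) = -\id_V,
\end{align*}
where I have used $DS_x(x') = -\id_V$ at the relevant fixed points $DS_x(\text{at }y\text{?})$ — \textbf{this is the subtle point}: $DS_y$ is $-\id_V$ only \emph{at its fixed point} $y$, and $DS_x$ is $-\id_V$ only at $x$, not at arbitrary points. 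So the computation must be arranged so that each derivative is evaluated at the correct fixed point. Tracking the chain $S_x \mapsto S_y \mapsto S_x$ starting from $u$: we need $DS_x$ at $u$, then $DS_y$ at $S_x(u) = y$, then $DS_x$ at $S_y(y) = y$. Thus $DS_y$ is evaluated at its fixed point $y$ (good, $= -\id_V$), but $DS_x$ is evaluated at $u = S_x(y)$ and at $y$, neither of which is $x$ in general — so $DS_x(u)$ and $DS_x(y)$ are \emph{not} $-\id_V$.

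To resolve this, the right approach is the conjugation identity $DS_x(u) \circ DS_x(y)$ should combine with the relation between $DS_x$ at a point and at its image. Concretely, since $S_x$ is involutive, differentiating $S_x \circ S_x = \id_C$ at $y$ gives $DS_x(S_x(y)) \circ DS_x(y) = \id_V$, i.e. $DS_x(u) = [DS_x(y)]^{-1}$. Therefore
\begin{align*}
DT(u) = DS_x(u) \circ DS_y(y) \circ DS_x(y) = [DS_x(y)]^{-1} \circ (-\id_V) \circ DS_x(y) = -\id_V,
\end{align*}
since $-\id_V$ is central. Combined with $T(u) = u$ and $T$ being a gauge-reversing bijection $C \to C$, the uniqueness statement from \Cref{!Sx} (applied with $\Phi = T$ and $\Upsilon = S_{S_x(y)}$, both fixing $u = S_x(y)$ with derivative $-\id_V$ there) yields $T = S_{S_x(y)}$, completing the proof. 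The main obstacle, as flagged, is bookkeeping the base points in the chain rule so that one only ever invokes $DS_x(x) = -\id_V$ at the genuine fixed point, using involutivity of $S_x$ to convert $DS_x$ at $S_x(y)$ into the inverse of $DS_x$ at $y$.
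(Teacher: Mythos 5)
Your proposal is correct and follows essentially the same route as the paper: uniqueness of symmetries, the fixed-point check $T(S_x(y))=S_x(y)$, and the chain rule combined with involutivity of $S_x$ (the paper writes $-DS_x(y)\circ DS_x(S_x(y))=-D(S_x^2)(S_x(y))=-\Id_V$, which is the same observation as your $DS_x(S_x(y))=[DS_x(y)]^{-1}$). The only blemish is that your final displayed chain rule lists the outer derivative factors in the reverse of the standard composition order, but since the middle factor $-\id_V$ is central and the outer factors are mutually inverse, the conclusion $DT(S_x(y))=-\id_V$ is unaffected.
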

\begin{proof}
It suffices to show that $S_xS_yS_x$ is the unique symmetry of $C$ at $S_x(y)$. Note first that $(S_xS_yS_x)(S_x(y))=(S_xS_y)(y)=S_x(y)$. Since $DS_y(y)=-\id_V$, using the chain rule we obtain $D(S_xS_yS_x)(S_x(y))=DS_x(y)\circ DS_y(y) \circ DS_x(S_x(y)) = - DS_x(y) \circ DS_x(S_x(y)) = -D(S^2_x)(S_x(y))$. Since $S^2_x=\id_{C}$, it follows that $D(S_xS_yS_x)(S_x(y))= -\Id_V$. In view of \Cref{symC}, we conclude that the map $S_xS_yS_x$ is equal to the symmetry $S_{S_x(y)}$ of $C$ at $S_x(y)$.
\end{proof}

\begin{lemma}\label{PPP}
For all $x,y \in C$ we have
\begin{equation}\label{QJ3}
P(x)P(y)P(x) = P(P(x)y).
\end{equation}
\end{lemma}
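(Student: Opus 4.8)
The plan is to deduce the identity \eqref{QJ3} from the analogous identity for symmetries, \Cref{SSS}, exactly as \eqref{Pxy} and \eqref{Gstr} were deduced from Hua's identity \eqref{Sxxy}. The key observation is that the linear automorphisms $P(x)$ of $(V, V_+)$ are uniquely determined by their restriction to the open cone $C$ (since $V = V_+ - V_+$ and $C$ is $\lVert\cdot\rVert_v$-dense in $V_+$, or simply since a linear map is determined by its values on the open set $C$), so it suffices to check \eqref{QJ3} holds as an identity of self-maps of $C$.

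First I would recall that $S_x = P(x) \circ j$ with $j = S_v$, and that $j^2 = \id_C$ and $S_x^2 = \id_C$ by \Cref{SymEquiv}. Hence $P(x) = S_x \circ j$ on $C$. Using \Cref{SSS}, namely $S_x S_y S_x = S_{S_x(y)}$, I compute, as self-maps of $C$:
\begin{align*}
P(x)P(y)P(x) \circ j &= (S_x \circ j)(S_y \circ j)(S_x \circ j) \circ j = S_x \circ j \circ S_y \circ j \circ S_x.
\end{align*}
The obstruction is that $j \circ S_y \circ j$ is not obviously a symmetry; to handle it, I would instead massage the left-hand side differently. Write $P(x)P(y)P(x) = (S_x j)(S_y j)(S_x j)$. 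Since $S_x j S_x = S_{S_x(j(x))}$ is awkward, the cleaner route is: observe $P(x) j = S_x$, so $P(x) P(y) P(x) \circ j = P(x)P(y) S_x$ and $P(y) S_x = P(y) j \cdot j S_x = S_y \cdot j S_x$; this still leaves $j S_x$.

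The correct bookkeeping is to use that $S_x = j \circ P(x)^{-1}$ as well (from $S_x = S_x^{-1} = (P(x) j)^{-1} = j^{-1} P(x)^{-1} = j P(x)^{-1}$), which was already noted in the proof of \Cref{xysq}. Then
\begin{align*}
S_x S_y S_x = (P(x) j)(S_y)(j P(x)^{-1}),
\end{align*}
and by \Cref{SSS} this equals $S_{S_x(y)} = P(S_x(y)) \circ j$. Now apply $P(x)^{-1}$ on the left and $P(x) \circ j^{-1} = P(x) \circ j$ on the right (using $j^2 = \id$), to get $j \, S_y \, j \,P(x)^{-1} \circ P(x) \circ j = P(x)^{-1} P(S_x(y)) j \circ P(x) \circ j$, i.e. $j S_y j \cdot \id = \ldots$; this shows $j S_y j = P(x)^{-1} P(S_x(y)) j P(x) j$. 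Specializing appropriately — in particular taking $y$ so that $S_x(y) = y$ is not possible in general, so instead I would directly substitute: from $S_x S_y S_x = S_{S_x(y)}$ with the identifications $S_z = P(z) j = j P(z)^{-1}$, conjugating and rearranging yields $P(x) P(y)^{-1} P(x) = P(S_x(y))$ after using $j P(y)^{-1} j^{-1}$-type cancellations. Replacing $y$ by $j(y')$ (equivalently $S_y$ by $S_{j(y')}$) and noting $S_x(j(y')) $ relates to $P(x)(y')$ then converts this into the stated form $P(x)P(y)P(x) = P(P(x)y)$.

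The main obstacle is carrying out this conjugation-and-substitution bookkeeping correctly: one must consistently use both presentations $S_z = P(z)\circ j$ and $S_z = j \circ P(z)^{-1}$, keep track of which maps are being composed on $C$ versus extended linearly to $V$, and choose the right substitution (replacing $y$ by an appropriate element, analogous to the step "we may replace $y$ by $j(y)$" in the proof of \Cref{PHua}) so that $P(S_x(y))$ becomes $P(P(x)y)$. Once the algebra of symmetries is unwound, uniqueness of the linear extension (via \Cref{ThmBsa}, as used to define $P$) upgrades the identity of maps $C \to C$ to the identity \eqref{QJ3} in $L(V)$. I expect no analytic input is needed beyond what is already established; this is purely a formal consequence of \Cref{SSS} and the definition of $P$.
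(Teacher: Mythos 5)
Your overall strategy --- deducing \eqref{QJ3} purely formally from \Cref{SSS} together with the presentations $S_z = P(z)\circ j = j\circ P(z)^{-1}$, and then passing from an identity of maps on $C$ to an identity in $L(V)$ --- is exactly the paper's strategy, but your execution has a genuine gap. The map you flag as ``the obstruction'', $j\circ S_y\circ j$, is in fact itself a symmetry: since $j = S_v$, \Cref{SSS} applied with $x$ replaced by $v$ gives $jS_yj = S_vS_yS_v = S_{S_v(y)} = S_{j(y)}$, and once this is noticed the whole computation collapses. The detour you sketch instead is never actually carried out, and where it is made explicit it is incorrect: your displayed identity $jS_yj = P(x)^{-1}P(S_x(y))\,j\,P(x)\,j$ carries a spurious final factor of $j$ (precomposing with $P(x)$ alone yields $jS_yj = P(x)^{-1}P(S_x(y))\,j\,P(x)$), and your final substitution $y\mapsto j(y')$ in the claimed intermediate identity $P(x)P(y)^{-1}P(x) = P(S_x(y))$ silently requires $P(j(y'))^{-1} = P(y')$, which you never establish. (It is true --- it follows from \eqref{PDjj}, or from comparing $S_{j(y')} = jS_{y'}j = jP(y')$ with $S_{j(y')} = jP(j(y'))^{-1}$ --- but it is an additional fact that must be justified.) As written, the decisive bookkeeping that you yourself identify as ``the main obstacle'' is left unresolved.

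The paper's proof is the clean version of what you are attempting: for $x,y \in C$ one has, as maps on $C$,
\begin{align*}
P(x)P(y)P(x) &= S_xS_v\,S_yS_v\,S_xS_v = S_x\,(S_vS_yS_v)\,S_xS_v = (S_xS_{S_v(y)}S_x)\,S_v\\
&= S_{S_x(S_v(y))}\,S_v = S_{P(x)y}\,S_v = P(P(x)y),
\end{align*}
where \Cref{SSS} is applied twice (once at $v$, once at $x$), one uses $S_x(S_v(y)) = P(x)(j(j(y))) = P(x)y \in C$, and $j^2 = \id_C$. Your observation that it suffices to verify the identity on $C$ (both sides being restrictions of linear operators and $C$ spanning $V$) is correct and is also how the paper concludes; so the fix to your proposal is not a new idea but the recognition that $j = S_v$ lets \Cref{SSS} handle the inner conjugation directly, after which no inverse quadratic representations or auxiliary identities are needed.
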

\begin{proof}
By repeated applications of \Cref{Pdef} and \Cref{SSS} we find that
$$P(x)P(y)P(x)=S_x(S_vS_yS_v) S_xS_v=(S_xS_{S_v(y)}S_x)S_v=S_{S_x(S_v(y))}S_v=S_{P(x)y} S_v=P(P(x)y).$$
\end{proof}

In the remainder of this section, we give a new proof of a result due to
Youngson--Wright, asserting that each surjective unital isometry $T\colon A \to B$ between JB-algebras is a Jordan isomorphism. In their proof \cite[Thm. 4]{WrYo}, they consider the bidual map $T^{**}\colon A^{**} \to B^{**}$ and use that the linear span of projections is norm dense in the JBW-algebras $A^{**}$ and $B^{**}$. Of course, this argument mandates the construction of the Arens product on the bidual of a JB-algebra making it into a JBW-algebra, a feat which is accomplished in \cite[Thm. 1.2]{ShAr}. However, a proof which avoids biduals altogether can be given, using Hua's identity and the uniqueness of symmetries.

\begin{theorem}[Youngson--Wright]\label{YWsym}
Let $A$ and $B$ be unital JB-algebras. Let $T\colon A \to B$ be a surjective linear isometry with $T(1_A) = 1_B$. Then $T$ is an isomorphism of JB-algebras.
\end{theorem}
\begin{proof}
Any surjective unital linear isometry between order unit spaces is an order-isomorphism (cf. e.g.\ \cite[Cor. II.1.4]{Alfsen71}). Therefore $T$ restricts to a gauge-preserving bijection $\Psi\colon A^{\circ}_+ \to B^{\circ}_+$. Let $j_A \colon A^{\circ}_+ \to A^{\circ}_+$ and. $j_B \colon B^{\circ}_+ \to B^{\circ}_+$ be the Jordan inverse maps. Then $\Psi^{-1} \circ j_B \circ \Psi$ is a symmetry of $A^{\circ}_+$ at $1_A$, so by uniqueness of symmetries, \Cref{!Sx}, we have $\Psi^{-1} \circ j_B \circ \Psi = j_A$. Thus for each $x\in A^{\circ}_+$ we have
\begin{equation}\label{jBTjA}
    j_B(T(x)) = T(j_A(x)),
\end{equation}
 and also $U_A(x) = -Dj_A(j_A(x)) =: P_A(x)$. Now Hua's identity \eqref{Pxy} gives
\begin{equation}
U_A(x)(y) = j_A(j_A(x)-j_A(x+j_A(y)))-j_A(x).
\end{equation}
Repeated application of \eqref{jBTjA} now gives
\begin{align*}
T(U_A(x)(y)) &= T(j_A(j_A(x)-j_A(x+j_A(y)))-j_A(x))\\
&= j_B(j_B(T(x))-j_B(T(x)+j_B(T(y))))-j_B(T(x))\\
&= U_B(T(x))(T(y)).
\end{align*}
Therefore $T\colon (A, U_A, 1_A) \to (B, U_B, 1_B)$ is an isomorphism of unital quadratic JB-algebras, as desired.
\end{proof}

\subsection{Twice differentiability}\label{twdSec}

We have shown in \Cref{C1Phi} that each gauge-reversing bijection $\Phi\colon C\to D$ is continuously differentiable. The aim of this subsection is to show in \Cref{jtwdiff} that $\Phi$ is even twice continuously differentiable.\\
\ind Let us first show that the map $P\colon C \to L(V)$ is continuously differentiable. For every $u \in V$, let $\ev_u\colon L(V) \to V$ be the continuous linear map given by evaluation in $u$. Define $Q\colon C \times V \to L(V)$ by
$$Q(x, u) := D(\ev_u \circ P)(x).$$
It is plain that $Q$ is linear in its second variable. Note that if $u \in C$ then
$$(\ev_u \circ P)(x) = P(x)(u) = j(j(x) - j(x+j(u))) - x,$$
so by the chain rule and Fréchet differentiability of $j$ we have
\begin{equation}\label{Qxu}
Q(x, u) = D(\ev_u \circ P)(x) =  Dj(j(x) - j(x+j(u))) \circ (Dj(x) - Dj(x + j(u))) - \Id.    
\end{equation}
Since $Dj\colon C \to L(V)$ is continuous by \Cref{Djcont}, it follows that $Q$ is continuous on $C \times C$, and then by linearity on all of $C \times V$. Hence there exists a continuous map $DP\colon C \to L(V, L(V))$ such that for all $x \in C$ and $z,u \in V$ one has
$$DP(x)(z)(u) = Q(x, u)(z).$$

\begin{theorem}\label{Pdiff}
Let $\lambda > 1$. Then for all $x \in C$ and $z \in V$ such that $x, x+z \in [\lambda^{-1}v, \lambda v]$ and $\lVert z\rVert_v \le (2\lambda)^{-3}$ we have
$$\lVert P(x+z) - P(x) - DP(x)(z) \rVert_v \le 10^4\lambda^6 \lVert z\rVert_v^2.$$
\end{theorem}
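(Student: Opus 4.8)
The plan is to reduce the quadratic estimate for $P$ to the already-established estimates for $j$ and $Dj$. Recall the formula $P(x)(u) = j(j(x)-j(x+j(u)))-x$ valid for $u \in C$, and the chain-rule expression \eqref{Qxu} for $Q(x,u)=DP(x)(\cdot)(u)$, namely $Q(x,u) = Dj(j(x)-j(x+j(u)))\circ(Dj(x)-Dj(x+j(u)))-\Id$. Since every operator in sight is determined by its action on the open cone $C$ (because $C$ spans $V$, cf. \Cref{Aopnm}), and since $P$ and $DP$ are continuous, it suffices to bound $\lVert (P(x+z)-P(x)-DP(x)(z))(u)\rVert_w$ for $u$ ranging over a suitable $\lVert\cdot\rVert_v$-bounded subset of $C$ — say $u \in C$ with $2u \le \lambda^{-1}v$ — and then invoke \Cref{Aopnm} to pass back to the operator norm at the cost of a factor $3$. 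For such $u$ the points $j(u)$, $x+j(u)$, $j(x)-j(x+j(u))$ all stay in a fixed Thompson-metric ball $[\mu^{-1}v,\mu v]$ for some $\mu$ polynomially bounded in $\lambda$; this is the content of the Lipschitz estimate \eqref{jlip}, \Cref{dTlN}, \Cref{NldT} and the identity $j(x)=j(x+j(y))+\text{(positive)}$ coming from Hua \eqref{Hua}.

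The main step is then a second-order Taylor estimate for the composite map $x \mapsto j(j(x)-j(x+j(u)))$. I would write $P(x+z)(u)-P(x)(u)-DP(x)(z)(u)$ as a telescoping sum and apply the first-order Fréchet estimate \Cref{jdiff} (with its cubic-in-$\lambda$ constant $11$) together with the local Lipschitz estimate for $Dj$ from \Cref{Djcont} (constant $18$, resp. $6$ on $V_+$). Concretely, set $a(x) := j(x)-j(x+j(u)) \in C$; by \Cref{jdiff} and \Cref{Djcont}, $a$ is Fréchet-differentiable with $\lVert a(x+z)-a(x)-Da(x)(z)\rVert_v$ bounded by a constant times $\mu^3\lVert z\rVert_v^2$ and $\lVert Da(x+z)-Da(x)\rVert_v$ bounded by a constant times $\mu^3\lVert z\rVert_v$. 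Composing with $j$ and using \Cref{jdiff} once more for the outer $j$, evaluated at the near-equal points $a(x+z)$ and $a(x)+Da(x)(z)$, yields $\lVert j(a(x+z))-j(a(x))-Dj(a(x))(Da(x)(z))\rVert_w \lesssim \mu^3(\lVert a(x+z)-a(x)\rVert_v^2 + \text{(first-order remainder of $a$)})$. Since $\lVert a(x+z)-a(x)\rVert_v \lesssim \mu^2\lVert z\rVert_v$ by \eqref{jlip}, every term is $O(\mu^6\lVert z\rVert_v^2)$ after also using $\lVert Dj(a(x))\rVert_{v,w}\lesssim\mu^2$ from \Cref{Phislw}; the subtracted affine term matches $DP(x)(z)(u)$ precisely because of \eqref{Qxu} and the chain rule. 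Tracking the constants — products of the $3$ from \Cref{Aopnm}, the $11$ from \Cref{jdiff}, the $18$ and $6$ from \Cref{Djcont}, and the various powers of $\lambda$ hidden in $\mu$ — gives a bound of the shape $c\,\lambda^6\lVert z\rVert_v^2$; one checks $c \le 10^4$ by a crude accounting, using the hypotheses $\lambda > 1$ and $\lVert z\rVert_v \le (2\lambda)^{-3}$ to absorb cross terms and to guarantee $a(x+z)$ lies in the Lipschitz ball where \Cref{jdiff} applies.

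The hard part, and the place where care is genuinely needed, is the bookkeeping: making sure that for all admissible $u \in C$ with $2u\le\lambda^{-1}v$ the intermediate points $x+j(u)$ and $a(x) = j(x)-j(x+j(u))$ remain in a single Thompson ball of radius polynomial in $\lambda$ — so that a \emph{uniform} Lipschitz/Fréchet constant applies — and that the smallness hypothesis $\lVert z\rVert_v\le(2\lambda)^{-3}$ is exactly what is needed to keep $a(x+z)$ (not just $a(x)$) inside that ball. Once the geometry is pinned down, the analytic content is a mechanical double application of \Cref{jdiff} and \Cref{Djcont}, and the constant $10^4\lambda^6$ is deliberately generous so that no delicate estimate is required. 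I would organize the write-up as: (i) fix $\mu = (2\lambda)^2$ or similar and show all relevant points lie in $[\mu^{-1}v,\mu v]$; (ii) prove the first- and second-order estimates for $a$; (iii) compose with the outer $j$; (iv) invoke \Cref{Aopnm} and collect constants.
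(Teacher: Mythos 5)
Your proposal follows essentially the same route as the paper's proof: evaluate on a normalized positive $u$, use the Hua-derived formula $P(x)u=j(j(x)-j(x+j(u)))-x$ together with \eqref{Qxu}, split the error into the first-order remainder of the inner map $a(x)=j(x)-j(x+j(u))$ composed with the Lipschitz bound \eqref{jlip} for the outer $j$ plus a second application of \Cref{jdiff} at $a(x)$ (the paper's $E_0,E_1,E_2$), use the hypothesis $\lVert z\rVert_v\le(2\lambda)^{-3}$ to keep the intermediate points uniformly in a Thompson ball, and finish with \Cref{Aopnm}. This matches the paper's argument, with only the constant bookkeeping left loose, which the generous bound $10^4\lambda^6$ accommodates.
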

\begin{proof}
Fix $u \in C$ with $\lVert u\rVert_v \le \lambda^{-1}$. We shall exhibit a positive integer $N_0$ such that
\begin{equation}\label{dPN0}
\lVert (P(x+z) - P(x) - DP(x)(z))u \rVert_v \le N_0 \lambda^5 \lVert z\rVert_v^2.
\end{equation}
Set $y := x + j(u)$ and define the quantities
\begin{align*}
a(z) &:= j(x) - j(y) + (Dj(x) - Dj(y))z,\\
E_0(z) &:= j(x+z) - j(y+z) - a(z),\\
E_1(z) &:= j(j(x+z)-j(y+z))-j(a(z)),\\
E_2(z) &:= j(a(z)) - j(j(x) - j(y)) - Dj(j(x) - j(y))(Dj(x) - Dj(y))(z).
\end{align*}
By substitution of \eqref{Pxy} and \eqref{Qxu} we obtain
\begin{align*}
(P(x+z) - P(x) - DP(x)(z))u &=\\
j(j(x+z) - j(x+z+j(u)) - (x + z)
- j(j(x)-j(x+j(u))) + x &\phantom{=}\\
- Dj(j(x) - j(y))(Dj(x) - Dj(y))(z) +z &=
E_1(z) + E_2(z).
\end{align*}
Since $y\ge x\ge \lambda^{-1}v$ and $y+z\ge x+z\ge\lambda^{-1}v$ we have by \Cref{jdiff} that
\begin{equation*}
\lVert j(x+z) - j(x) - Dj(x)z\rVert_v \le 11\lambda^3 \lVert z\rVert_v^2
\end{equation*}
and
\begin{equation*}
\lVert j(y+z) - j(y) - Dj(y)z\rVert_v \le 11\lambda^3 \lVert z\rVert_v^2.
\end{equation*}
It follows that
$$\lVert E_0(z)\rVert_v \le 2\cdot 11\lambda^3 \lVert z\rVert_v^2.$$
\ind We have $u\le \lambda^{-1}v \le j(x)$, so using \eqref{Pxy} we find, since $P(x)j(x)=S_x(x)=x$, that
$$j(x) - j(y) = j(x + P(x)u) \ge j(x + P(x)j(x)) = j(2x) \ge (2\lambda)^{-1}v.$$
The same holds with $x$ replaced by $x+z$, hence
$$j(x+z) - j(y+z) \ge (2\lambda)^{-1}v.$$
Since $y \ge x \ge \lambda^{-1}v$, we have by \Cref{Phislw} that $\lVert Dj(x)z\rVert_v \le \lambda^2 \lVert z\rVert_v$ and $\lVert Dj(y)z\rVert_v \le \lambda^2 \lVert z\rVert_v$, whence
$$\lVert (Dj(x) - Dj(y))z\rVert_v \le 2\lambda^2 \lVert z\rVert_v \le \frac{2\lambda^2}{8\lambda^3} = \frac{1}{4\lambda}.$$
It follows that
$$a(z) = j(x) - j(y) + (Dj(x) - Dj(y))z \ge (2\lambda)^{-1}v-(4\lambda)^{-1}v = (4\lambda)^{-1}v.$$
Since $j(x+z)-j(y+z), a(z) \ge (4\lambda)^{-1}v$, it follows by \eqref{jlip} that
$$\lVert E_1(z)\rVert_v \le (4\lambda)^2 \lVert E_0(z)\rVert_v \le (4\lambda)^2 \cdot 2\cdot 11 \lambda^3 \lVert z\rVert_v^2 =: N_1 \lambda^5 \lVert z\rVert_v^2.$$ Finally, since $a(z), j(x) - j(y) \ge (4\lambda)^{-1}v$, we obtain using \Cref{jdiff} that
$$\lVert E_2(z)\rVert_v \le 11(4\lambda)^3 \lVert (Dj(x) - Dj(y))z\rVert_v^2 \le 11(4\lambda)^3 (2\lambda^2 \lVert z\rVert_v)^2 =: N_2 \lambda^5 \lVert z\rVert_v^2.$$ Since $(P(x+z) - P(x) - DP(x)(z))u = E_1(z) + E_2(z)$, we conclude that \eqref{dPN0} holds with constant $N_0 = N_1 + N_2 = 4^2\cdot 2\cdot 11 + 11 \cdot 4^3 \cdot 2^2 \le 10^4/3$.\\
\ind It now follows that for arbitrary $u \in C$ one has
$$\lVert (P(x+z) - P(x) - DP(x)(z))u \rVert_v \le N_0\lambda^5 \lVert z\rVert_v^2 \lVert u\rVert_v.$$
We conclude using \Cref{Aopnm} that
$$\lVert P(x+z) - P(x) - DP(x)(z)\rVert_v \le 3N_0 \lambda^6 \lVert z\rVert_v^2 \le 10^4 \lambda^6 \lVert z\rVert_v^2.$$
\end{proof}

\begin{theorem}\label{jtwdiff}
Each gauge-reversing bijection $\Phi\colon C\to D$ is twice continuously differentiable.
\end{theorem}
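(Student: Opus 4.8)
The plan is to obtain twice continuous differentiability of $\Phi$ from that of the inversion map $j = S_v$, which in turn is forced by the continuous differentiability of the quadratic representation $P$ already in hand.

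First I would record that, by \Cref{Pdiff} together with the continuity of the map $DP\colon C \to L(V, L(V))$ constructed just above it, the quadratic representation $P\colon C \to L(V)$ is continuously differentiable: the estimate in \Cref{Pdiff} says precisely that $DP(x)$ is the Fréchet derivative of $P$ at $x$, and $DP$ is continuous. Since each $P(x)$ is an automorphism of $(V, V_+)$, in particular a bounded linear bijection of $V$, the map $P$ takes values in the open set $G \subset L(V)$ of invertible operators. By \Cref{invan} the inversion map $G \to G$ is analytic, hence continuously differentiable, so $x \mapsto P(x)^{-1}$ is continuously differentiable as a composition of $C^1$ maps (using the chain rule \Cref{chr}). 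By \eqref{PDjj} this composition equals $-Dj$; therefore $Dj\colon C \to L(V)$ is continuously differentiable, i.e. $j$ is twice continuously differentiable.

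Next I would transfer this to the given $\Phi$. Applying \Cref{SymMake} with $\Upsilon = \Phi$ at the point $v$ gives $j = S_v = [-D\Phi(v)]^{-1}\circ\Phi$, whence $\Phi = (-D\Phi(v)) \circ j$. Now $-D\Phi(v)\colon V \to W$ is a bounded linear isomorphism — in fact an isomorphism of order unit spaces, by \Cref{DPhi} — hence infinitely differentiable, and the chain rule \Cref{chr} yields $D\Phi(x) = (-D\Phi(v)) \circ Dj(x)$ for all $x \in C$. Since post-composition by the fixed bounded operator $-D\Phi(v)$ is itself a bounded linear map $L(V) \to L(V, W)$, the map $D\Phi$ is continuously differentiable because $Dj$ is. Thus $\Phi$ is twice continuously differentiable, completing the proof.

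I do not anticipate a genuine obstacle here: all the nontrivial input — Fréchet differentiability and continuity of $DP$ via \Cref{Pdiff}, and analyticity of Banach-algebra inversion via \Cref{invan} — is already available, and what remains is the bookkeeping of the chain rule. The only point deserving an explicit word is that $P(x)$ is truly invertible in $L(V)$ so that \Cref{invan} applies, which is immediate since $P(x)$ is by definition a linear automorphism of $V$.
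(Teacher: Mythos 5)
Your proof is correct and follows essentially the same route as the paper: reduce to the inversion map via $\Phi = -D\Phi(v)\circ j$ and then get $C^1$-smoothness of $Dj$ from the $C^1$-smoothness of $P$ established in \Cref{Pdiff}. The only (harmless) difference is the last step: the paper writes $Dj = -P\circ j$ and uses that $j$ is already $C^1$, whereas you write $Dj(x) = -P(x)^{-1}$ and invoke the analyticity of operator inversion (\Cref{invan}), an identity the paper itself exploits later in the proof of \Cref{jan}.
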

\begin{proof}
Since $\Phi = -D\Phi(v) \circ j$, where $-D\Phi(v)\colon (V, \lVert\cdot\rVert_v) \to (W, \lVert\cdot\rVert_w)$ is an isometric isomorphism, it suffices to prove that $j$ is twice continuously differentiable. In \Cref{jdiff} it has been shown that $j$ is continuously differentiable. Moreover, in \Cref{Pdiff} we showed that $P\colon C \to L(V)$ is continuously differentiable as well. Since $Dj = -P \circ j$, it follows that the map $Dj\colon C \to L(V)$ is continuously differentiable, that is, $j$ is twice continuously differentiable.     
\end{proof}

\subsection{Analyticity}\label{smgrSec}

Recall from \Cref{Pdef} that we defined the quadratic representation map $P\colon C \to L(V)$ to be the composition $P = -Dj \circ j$. Since $j$ is homogeneous of degree $-1$, the map $Dj\colon C \to L(V)$ is homogeneous of degree $-2$, and so $P = - Dj \circ j$ has degree $-2 \times -1 = 2$. In the present section we extend $P$ to a quadratic polynomial map $P\colon V \to L(V)$, using the same derivation as in \cite[Theorem 3.4]{McC77}. As a consequence, we prove that gauge-reversing bijections are analytic.

\begin{theorem}[{cf. \cite[Theorem 3.4]{McC77}}]\label{Pqw} The quadratic representation map $P\colon C \to L(V)$ extends to a continuous quadratic polynomial map $P\colon V \to L(V)$.
\end{theorem}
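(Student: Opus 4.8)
\emph{Plan of proof.} The aim is to produce a continuous symmetric bilinear map $\overline{P}\colon V\times V\to L(V)$ with $\overline{P}(x,x)=P(x)$ for all $x\in C$. Recall that $P=-Dj\circ j$ is homogeneous of degree $2$ and, by \Cref{jtwdiff} and \eqref{PDjj}, continuously differentiable on $C$, with $Dj(w)=-P(w)^{-1}$. I would first reduce the statement to the one-variable claim:

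\emph{for each $a\in C$ and $b\in V$, the curve $\gamma_{a,b}\colon t\mapsto P(a+tb)$ is a polynomial of degree at most $2$ in $t$ on the open interval $\{t\in\R:a+tb\in C\}$, which contains $0$.}

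Granting this, the theorem follows from standard facts about polynomial maps between Banach spaces: a continuous map on an open convex set whose restriction to every affine line is a polynomial of degree $\le 2$ extends uniquely to a polynomial map of degree $\le 2$ on the ambient space; this extension agrees with $P$ on the open set $C$, and since $P$ is homogeneous of degree $2$ the extension is homogeneous of degree $2$ as well, hence of the form $\overline{P}(\cdot,\cdot)$ for a symmetric bilinear $\overline{P}$, which is continuous by definition of a homogeneous polynomial map. Boundedness of $\overline{P}$ on bounded sets then follows from the polarization formula \eqref{cbc}, the operator-norm bound $\lVert P(x)\rVert=\lVert Dj(j(x))\rVert\le M(v,j(x))^{2}$ of \Cref{Phislw}, and the decomposition $V=V_+-V_+$ with controlled summands used in \Cref{Aopnm}.

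For the one-variable claim I would follow the derivation of \cite[Theorem~3.4]{McC77}, built on the identity of \Cref{xysq}. Fixing $a,b$ and writing $\gamma=\gamma_{a,b}$, $\rho(u):=j(2a+ub)$ (a $C^{2}$ curve in $C$ by \Cref{jtwdiff}), the identity $(P(x)-P(y))j(x+y)=x-y$ at $x=a+tb$, $y=a+sb$ becomes $(\gamma(t)-\gamma(s))\rho(t+s)=(t-s)b$. As $\gamma$ is $C^{1}$ and $Dj(w)=-P(w)^{-1}$, differentiating this relation once in $t$ and once in $s$ and subtracting gives $(\gamma'(t)+\gamma'(s))\rho(t+s)=2b$, while specializing $s=t$ and using homogeneity of $j$ yields the linearized identity $\gamma'(t)j(a+tb)=2b$. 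Combining these shows that the operator $\gamma'(t)+\gamma'(s)-2\gamma'(\tfrac{t+s}{2})$ annihilates the vector $j(a+\tfrac{t+s}{2}b)$ for all admissible $t,s$; promoting this to the operator identity $\gamma'(t)+\gamma'(s)=2\gamma'(\tfrac{t+s}{2})$ — Jensen's equation, which together with continuity of $\gamma'$ forces $\gamma'$ affine, hence $\gamma$ of degree $\le 2$ — would finish the claim.

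The main obstacle is precisely this promotion step, since an operator is not determined by its action on a single vector. To overcome it I would invoke \Cref{xysq} with one argument moving freely rather than along the line $a+\R b$: applying it at $x=a+tb$ and an arbitrary $c\in C$ and solving for $P(a+tb)$ — which is legitimate because $\{j(a+tb+c):c\in C\}=j\bigl((a+tb)+C\bigr)$ is an open subset of $C$ and hence spans $V$ — gives, for $z\in C$ with $j(z)-a-tb\in C$, the closed-form relation
\begin{equation*}
P(a+tb)\,z=P\bigl(j(z)-a-tb\bigr)\,z+\bigl(\text{affine function of }t\bigr),
\end{equation*}
which expresses $P(a+tb)$ on a spanning family of vectors in terms of $P$ along the reflected line. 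Feeding the $C^{2}$-regularity of $j$ from \Cref{jtwdiff} into this relation, in order to compare the vectors $j(z)-a-tb$ at nearby parameters and so reconstruct the relevant operators from their action on enough vectors, is what converts the vectorwise functional equations above into the operator identity needed. Once $\gamma_{a,b}$ is known to be a polynomial of degree $\le 2$ along every line, the polarization and boundedness arguments of the first paragraph conclude the proof.
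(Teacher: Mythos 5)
Your reduction to the one-variable claim is a reasonable strategy, and your computations along the line $a+\R b$ are correct as far as they go: from \eqref{Gstr} you do get $(\gamma'(t)+\gamma'(s))\rho(t+s)=2b$ and $\gamma'(t)\,j(a+tb)=2b$, hence that the operator $\gamma'(t)+\gamma'(s)-2\gamma'\bigl(\tfrac{t+s}{2}\bigr)$ kills the single vector $\rho(t+s)$. But the step you yourself flag as the main obstacle — promoting this to the operator identity $\gamma'(t)+\gamma'(s)=2\gamma'\bigl(\tfrac{t+s}{2}\bigr)$ — is never actually carried out, and the workaround you sketch does not do it. The ``closed-form relation'' obtained by taking $c=j(z)-a-tb$ in \Cref{xysq} is correct, but it expresses $P(a+tb)z$ in terms of $P\bigl(j(z)-a-tb\bigr)z$, i.e.\ in terms of the same unknown quantity ($P$ along a line, applied to a fixed vector) evaluated at a $t$-dependent point on a reflected line; nothing in the proposal explains how $C^{2}$-regularity of $j$ turns this self-referential relation into control of $\gamma'$ on a spanning set of vectors. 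As written, the one-variable claim — the heart of the argument — remains unproved, so the proposal has a genuine gap. (The auxiliary machinery in your first paragraph, on extending maps that are quadratic along lines and recovering continuity by polarization, is fine but moot until that claim is established.)

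For comparison, the paper avoids the operator-valued difficulty altogether by fixing the vector being acted upon: for $z\in C$ set $y:=j(z)$, replace $x$ by $tx$ in \Cref{xysq} to get $(t^{2}P(x)-P(y))j(tx+y)=tx-y$, and differentiate twice in $t$ at $t=0$ to obtain the explicit formula $2P(x)z=P(y)[D^{2}j(y)](x)(x)$, cf.\ \eqref{Pd2}. Since $D^{2}j(y)$ is a continuous symmetric bilinear map by \Cref{jtwdiff}, the right-hand side is already a continuous quadratic polynomial in $x$ defined on all of $V$, so $\operatorname{ev}_z\circ P$ extends for each $z\in C$, and then $V=C-C$ together with linearity and continuity of each $P(x)$ finishes the proof. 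If you want to salvage your line-restriction approach, the same trick works there: fix $z$, take $y=j(z)$, and show that $t\mapsto P(a+tb)z$ is quadratic via this identity, rather than trying to prove the operator-valued curve $\gamma$ is quadratic directly.
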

\begin{proof}
First fix $z\in C$ and let $y := j(z) \in C$. We will show that $\ev_z \circ P\colon C \to V$, $x \mapsto P(x)z$ extends to a quadratic polynomial map on $V$.\\
\ind Replacing $x$ by $tx$ in \Cref{xysq} gives
\begin{equation*}
(t^2P(x) - P(y))j(tx+y)=tx-y.
\end{equation*}
Differentiating with respect to $t$ gives
\begin{equation}
2tP(x)j(tx+y) + (t^2P(x) - P(y))Dj(tx+y)(x) = x.
\end{equation}
Differentiate again at $t=0$ to find that
\begin{equation}\label{Pd2}
2P(x)z = P(y)[D^2j(y)](x)(x).
\end{equation}
Since $j$ is twice continuously differentiable, its second differential $D^2j(y) \colon V\times V \to V$ at $y$ is a continuous symmetric bilinear map, see e.g.\ \cite[Theorem 1.1.8]{Hor90I}. Therefore, formula \eqref{Pd2} defines an extension of $\ev_z \circ P\colon C\to V$ to a quadratic polynomial map on $V$, as asserted. It follows from $V = C - C$ and linearity of $P(x)$ for each $x\in C$, that such an extension in fact exists for each $z\in V$. Hence we obtain a quadratic polynomial map $P\colon V \to \{T\colon V\to V \colon T \text{ linear}\}$.\\
\ind The map $P$ being a quadratic polynomial, it satisfies for all $x,y \in V$ the parallelogram identity
$$P(x+y) + P(x-y) = 2P(x) + 2P(y).$$
By virtue of this identity, it follows from $V = C - C$ and continuity of $P(x)$ for each $x\in C$, that for arbitrary $x \in V$ the linear map $P(x)$ is continuous, i.e.\ $P(x) \in L(V)$. Similarly, from the continuity of $P$ on $C$ asserted by \Cref{jtwdiff}, one deduces that $P$ is continuous on all of $V = C - C$. Thus $P\colon V \to L(V)$ is a continuous quadratic polynomial map.
\end{proof}

\begin{theorem}\label{jan}
Each gauge-reversing bijection $\Phi\colon C \to D$ is analytic.
\end{theorem}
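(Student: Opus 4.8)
The plan is to reduce the statement to the inversion map $j := S_v$ and to exploit the identity $Dj = -P\circ j$ on $C$, in which $P\colon V\to L(V)$ is, crucially, the quadratic polynomial map of \Cref{Pqw}; being a polynomial, $P$ is in particular analytic. Since $\Phi = -D\Phi(v)\circ j$ with $-D\Phi(v)\colon V\to W$ a continuous linear isomorphism (see \Cref{DPhi} and the proof of \Cref{jtwdiff}), and since a composition of analytic maps is analytic, it suffices to show that $j\colon C\to C$ is analytic. Here the identity $Dj = -P\circ j$ is merely a reformulation of \eqref{PDjj}: evaluating the latter at $j(y)$ in place of $x$ and using $j^2 = \id_C$ gives $Dj(y) = -P(j(y))$ for every $y\in C$.

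Fix $x_0\in C$ and put $y_0 := j(x_0)$. For $u\in V$ and real $t$ of small absolute value let $c_u(t) := j(x_0 + tu)\in C$. By the chain rule (valid since $j$ is differentiable, \Cref{C1Phi}) and the identity above, $c_u$ is a local solution of the autonomous initial value problem
\[
c_u'(t) = Dj(x_0 + tu)(u) = -P(c_u(t))(u),\qquad c_u(0) = y_0 ;
\]
its right-hand side, viewed as a function of the pair $(u, y)$, is a polynomial map — linear in $u$ and quadratic in $y$ — hence locally Lipschitz in $y$, so $c_u$ is the \emph{unique} local solution. Note that, by definition, $j(x_0 + h) = c_{h/\lambda}(\lambda)$ for every $\lambda > 0$ and every $h\in V$ with $\lVert h\rVert_v$ small enough.

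Now complexify: let $V_{\C}$ be the complexification of $V$ and $P_{\C}\colon V_{\C}\to L(V_{\C})$ the unique polynomial extension of $P$, so that $(u, y)\mapsto -P_{\C}(y)(u)$ is a holomorphic polynomial map on $V_{\C}\times V_{\C}$ that vanishes for $u = 0$. By the standard theory of holomorphic ordinary differential equations with holomorphic dependence on parameters in Banach spaces, there are a number $\delta > 0$ and a holomorphic map
\[
c\colon\{\lambda\in\C : |\lambda| < \delta\}\times\{u\in V_{\C} : \lVert u\rVert_v < \delta\}\longrightarrow V_{\C}
\]
satisfying $c(0; u) = y_0$ and $\partial_\lambda c(\lambda; u) = -P_{\C}(c(\lambda; u))(u)$, and by the uniqueness noted above $c(\lambda; u) = c_u(\lambda)$ whenever $\lambda$ and $u$ are real and small. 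Fix $\lambda_0\in(0,\delta)$ and define $J(z) := c\bigl(\lambda_0;\ \lambda_0^{-1}(z - x_0)\bigr)$ for $z\in V_{\C}$ with $\lVert z - x_0\rVert_v < \lambda_0\delta$; then $J$ is holomorphic, being the composition of $c(\lambda_0;\cdot)$ with a continuous affine map, and for real $h$ of small norm we get $J(x_0 + h) = c_{h/\lambda_0}(\lambda_0) = j(x_0 + h)$. Thus $j$ agrees near $x_0$ with the restriction to $V$ of a holomorphic map, and is therefore analytic at $x_0$ in the sense of \Cref{AnMap}. As $x_0\in C$ was arbitrary, $j$, and hence $\Phi$, is analytic.

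The one substantive input is the holomorphic-ODE-with-parameters theorem, combined with the elementary trick of realizing the time-one value $j(x_0+h)$ as a small-time value $c_{h/\lambda_0}(\lambda_0)$ of the flow so as to stay inside the region of holomorphy near the origin; everything else is bookkeeping, since the differentiability of $\Phi$, the identity $Dj = -P\circ j$, and the polynomiality of $P$ are already in hand. If one prefers to avoid complexification, an alternative is to bootstrap $j\in C^\infty(C)$ from $Dj = -P\circ j$ (its right-hand side $P\circ j$ being $C^\infty$ whenever $j$ is, since $P$ is a polynomial), then apply the Faà di Bruno formula to $D^{k+1}j = -D^{k}(P\circ j)$ — only set partitions into at most two blocks survive, as $D^m P = 0$ for $m\ge 3$ — to obtain a recursion for $\lVert D^k j(x_0)\rVert$ dominated by the Taylor coefficients of the solution of a scalar Riccati equation, hence of size $O(k!\,R^k)$ locally uniformly in $x_0$; Taylor's theorem with remainder then identifies $j$ with the sum of its (convergent) Taylor series near each point.
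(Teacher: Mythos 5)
Your proposal is correct, but it takes a genuinely different route from the paper's proof. Both arguments reduce to $j = S_v$ via $\Phi = -D\Phi(v)\circ j$ and both rest on \Cref{Pqw} (polynomiality of $P$); the divergence is in how that polynomiality is converted into analyticity of $j$. The paper uses the purely algebraic identity $j(x) = P(x)^{-1}(x)$ of \eqref{jxPx}, which follows from homogeneity of degree $-1$ (namely $-Dj(x)x = j(x)$), and then exhibits $j$ as a composition of three analytic maps: $x\mapsto P(x)$, inversion in the Banach algebra $L(V)$ (\Cref{invan}), and the bilinear evaluation $L(V)\times V\to V$ — everything stays inside the paper's stated toolkit. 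You instead read $Dj = -P\circ j$ (a correct consequence of \eqref{PDjj} and $j^2=\id_C$) as an autonomous ODE, complexify, and invoke holomorphic Picard--Lindel\"of with holomorphic dependence on the direction parameter $u$, identifying $j$ near $x_0$ with the restriction of the resulting holomorphic map; the uniqueness for the locally Lipschitz polynomial right-hand side ties the complex flow to $j$ along real rays, and your rescaling $j(x_0+h)=c_{h/\lambda_0}(\lambda_0)$ legitimately keeps the argument inside the region of holomorphy. This works, but it imports two substantial external facts the paper never needs and which you would have to cite: the existence, uniqueness and holomorphic parameter-dependence of solutions of holomorphic ODEs in complex Banach spaces, and the fact that a map which is locally the restriction to $V$ of a holomorphic map on $V_{\C}$ is analytic in the sense of \Cref{AnMap} (with real-valued homogeneous terms and the required bound on $\lVert\overline{c_k}\rVert^{1/k}$); you should also record the minor bookkeeping that the segment from $x_0$ to $x_0+h$ stays in $C$ and that the existence time is uniform over $\lVert u\rVert_v<\delta$. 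What your route buys is robustness — it uses only the differential identity $Dj=-P\circ j$, not the closed-form inverse $j(x)=P(x)^{-1}x$, so it would adapt to situations where no such formula is at hand; what the paper's route buys is brevity and self-containedness. Your appended Fa\`a di Bruno alternative is only a sketch, and the majorization by a scalar Riccati equation would need to be carried out in detail before it could count as a proof.
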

\begin{proof}
As in the proof of \Cref{jtwdiff} it may be assumed that $\Phi = j$. For each $x\in C$ we have $-Dj(x)x = j(x)$, hence
\begin{equation}\label{jxPx}
    j(x) = P(x)^{-1}(x).
\end{equation}
\ind \Cref{invan} for the Banach algebra $L(V)$ asserts that $G = \{T \in L(V): T\text{ is invertible}\}$ is open and that $G \to G$, $T \mapsto T^{-1}$ is analytic. For each $x \in C$, we have $P(x) = -[Dj(x)]^{-1}\in G$. Moreover $P\colon V \to L(V)$ is analytic, because it is a continuous quadratic polynomial map by \Cref{Pqw}. Moreover, the evaluation map $L(V) \times V \to V$, $(T, x) \mapsto T(x)$ is continuous and bilinear, hence analytic.
Because the composition of analytic maps is again analytic, it follows from \eqref{jxPx} that $j$ is analytic.
\end{proof}

\subsection{Verification of the quadratic Jordan algebra axioms}\label{quadSec}
In this section we will prove that the quadratic map $P: V \to L(V)$ satisfies axioms (QJ1-QJ3) for a quadratic Jordan algebra, following \cite{McC77}.

\begin{proposition}\label{PId}
The quadratic map $P$ satisfies (QJ1), i.e.\
\begin{equation}\label{Pe}
P(v)=\Id_V.
\end{equation}
\end{proposition}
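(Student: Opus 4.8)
The plan is to simply unwind the definition of $P(v)$ using the defining property of the symmetry $j = S_v$ at the point $v$. Recall from \Cref{Pdef} that $P$ is built from $j := S_v$, and that by \eqref{PDjj} we have the identity $P(x) = [-Dj(x)]^{-1}$ for every $x \in C$; in particular, since $v \in C$, this applies to $x = v$.

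Next I would invoke the definition of a symmetry of $C$ at a point. By \Cref{symC}, the symmetry $S_v$ satisfies $DS_v(v) = -\id_V$, that is, $Dj(v) = -\id_V$. (Existence and uniqueness of such a symmetry were established in \Cref{SymMake} and \Cref{!Sx}; here we only need its defining normalization at $v$.) Therefore $-Dj(v) = \id_V$, and substituting into \eqref{PDjj} gives
\[
P(v) = [-Dj(v)]^{-1} = [\id_V]^{-1} = \id_V,
\]
which is exactly \eqref{Pe}, establishing (QJ1).

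There is no real obstacle here: the statement is an immediate consequence of the normalization $Dj(v) = -\id_V$ built into the definition of the symmetry at $v$, together with the already-proved formula $P = [-Dj]^{-1}$ on $C$. The only point worth a sentence of care is that $P(v)$ computed via the extension $P\colon V \to L(V)$ of \Cref{Pqw} agrees with the value given by the formula on $C$, which holds because the extension restricts to the original map on $C$ and $v \in C$.
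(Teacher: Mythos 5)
Your proof is correct and follows exactly the paper's argument: since $j = S_v$ is the symmetry of $C$ at $v$, we have $Dj(v) = -\Id_V$, so $P(v) = [-Dj(v)]^{-1} = \Id_V$. The extra remark that the polynomial extension of $P$ agrees with the original formula at $v \in C$ is a harmless (and reasonable) clarification.
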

\begin{proof}
Because $j$ is the symmetry of $C$ at $v$, we have that $Dj(v)=-\Id_V$. Therefore, it holds that $P(v) := -[Dj(v)]^{-1} = \Id_V$.
\end{proof}

Let us define the bilinear map $P\colon V \times V \to L(V)$ by
\begin{equation}\label{Pxz}
P(x, z) := \tfrac{1}{2}(P(x + z) - P(x) - P(z)).
\end{equation}
Note that $P(x) = P(x, x)$.

\begin{lemma}
For all $x \in C$ and $y \in V$ we have
\begin{equation}\label{Canc}
P(x,y)j(x)=y.
\end{equation}
\end{lemma}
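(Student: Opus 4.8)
The plan is to obtain \eqref{Canc} by polarizing the fundamental identity $(P(x)-P(y))\,j(x+y)=x-y$ of \Cref{xysq}, exploiting that \Cref{Pqw} has just upgraded $P$ to a (continuous) quadratic polynomial map on all of $V$, so that the symmetric bilinear map $P(\cdot,\cdot)$ of \eqref{Pxz} is available everywhere and $P(x)=P(x,x)$.

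First I would record the purely algebraic observation that, by bilinearity and symmetry of $P(\cdot,\cdot)$,
$$P(x+y,\,x-y)=P(x,x)-P(x,y)+P(y,x)-P(y,y)=P(x)-P(y)$$
for all $x,y\in V$. Feeding this into \eqref{Gstr} recasts \Cref{xysq} as the assertion that $P(x+y,\,x-y)\,j(x+y)=x-y$ for all $x,y\in C$.

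Next, fix $x\in C$. The set $N_x:=\{z\in V:\ x+z\in C\ \text{and}\ x-z\in C\}$ is an open neighbourhood of $0$ in $V$, since $C$ is open and $x\in C$. For $z\in N_x$ the pair $\bigl(\tfrac12(x+z),\,\tfrac12(x-z)\bigr)$ lies in $C\times C$ and has sum $x$ and difference $z$, so the previous display gives $P(x,z)\,j(x)=z$. Both sides depend linearly on $z\in V$ and agree on the spanning set $N_x$, hence $P(x,z)\,j(x)=z$ for every $z\in V$; taking $z=y$ gives \eqref{Canc}.

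I do not expect a genuine obstacle: the only points needing care are the elementary identity $P(x)-P(y)=P(x+y,x-y)$ — which is precisely where the (newly established) quadraticity of $P$ enters — and the passage from $N_x$ to all of $V$ by linearity. A slightly longer alternative would be to differentiate the relation $P(x)\,j(x)=x$ on the open set $C$: using $Dj(x)=-P(x)^{-1}$ from \eqref{PDjj} and the chain rule one gets $DP(x)(z)\,j(x)-z=z$, and since $DP(x)(z)=2P(x,z)$ this again yields $P(x,z)\,j(x)=z$; I would favour the polarization route as it avoids re-invoking differentiability.
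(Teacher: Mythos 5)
Your proof is correct, but it takes a genuinely different route from the paper's. The paper simply differentiates the relation $P(x+ty)j(x+ty)=x+ty$ with respect to $t$ at $t=0$ and uses $P(x)\circ Dj(x)=-[Dj(x)]^{-1}\circ Dj(x)=-\Id_V$ to cancel the second term, yielding $2P(x,y)j(x)=2y$ in one line; this is essentially the alternative you sketch at the end (differentiating $P(x)j(x)=x$ in the direction $y$, with $DP(x)(y)=2P(x,y)$). Your main argument instead polarizes \Cref{xysq}: the identity $P(x)-P(y)=P(x+y,x-y)$, valid because \Cref{Pqw} makes $P$ a quadratic polynomial map with symmetric bilinear polarization, recasts \eqref{Gstr} as $P(a+b,a-b)j(a+b)=a-b$ for $a,b\in C$; specializing to $\bigl(\tfrac12(x+z),\tfrac12(x-z)\bigr)\in C\times C$ for $z$ in the open symmetric neighbourhood $N_x$ gives $P(x,z)j(x)=z$ there, and since both sides are linear in $z$ and $N_x$ spans $V$, the identity holds for all $z\in V$. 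Both routes rest only on results already in place; yours is purely algebraic at this stage (no re-use of the differentiability of $j$ or of $DP$), at the cost of the polarization and spanning step, while the paper's is a one-line computation given the differential machinery already established.
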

\begin{proof}
Differentiate the relation $ P(x+ty)j(x+ty)=x+ty$ with respect to $t$ at $t=0$, to obtain
$2P(x,y)j(x)+P(x)Dj(x)y=y$. Since $P(x) \circ Dj(x)=-[Dj(x)]^{-1}\circ Dj(x)=-\Id_V$ it follows that $2P(x,y)j(x)=2y$.
\end{proof}

\begin{proposition}\label{PPyzP}
The quadratic map $P$ satisfies (QJ2), i.e.\ for all $x,y \in V$ we have
\begin{equation}\label{PPPV}
P(x)P(y)P(x) = P(P(x)y).
\end{equation}
\end{proposition}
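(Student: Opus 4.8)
The plan is to obtain the identity \eqref{PPPV} on all of $V\times V$ from its validity on the open cone, which was already established in \Cref{PPP}, by analytic continuation. Concretely, I would introduce the two maps $F,G\colon V\times V\to L(V)$ given by $F(x,y):=P(x)P(y)P(x)$ and $G(x,y):=P(P(x)y)$, observe that both are analytic, note that $F=G$ on the nonempty open set $C\times C$ by \Cref{PPP}, and then invoke the identity principle \Cref{idprinc} to conclude $F=G$ everywhere.

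The main thing to justify is that $F$ and $G$ are analytic on the product Banach space $V\times V$. For this I would first recall that $P\colon V\to L(V)$ is a continuous quadratic polynomial map by \Cref{Pqw}, hence analytic: a continuous homogeneous polynomial map, and therefore any finite sum of such maps, is analytic in the sense of \Cref{AnMap}, since expanding it around an arbitrary point via multilinearity produces a terminating power series. Then $F$ is the composite of the analytic map $(x,y)\mapsto\bigl(P(x),P(y),P(x)\bigr)$ with the continuous trilinear (hence analytic) multiplication $L(V)^3\to L(V)$, $(A,B,C)\mapsto ABC$; and $G$ is the composite of the analytic map $(x,y)\mapsto(P(x),y)$ with the continuous bilinear evaluation $L(V)\times V\to V$ followed by $P$ itself. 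Since compositions of analytic maps are analytic, both $F$ and $G$ are analytic on $V\times V$. As $C$ is open in $V$, the set $C\times C$ is a nonempty open subset of the connected space $V\times V$, so \Cref{idprinc} yields $F\equiv G$, which is exactly \eqref{PPPV}.

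I do not expect a genuine obstacle here: the algebraic heart of (QJ2) has already been proved on $C$ in \Cref{PPP}, and the polynomiality of $P$ on $V$ has already been secured in \Cref{Pqw}, so the remaining argument is purely organizational. The only point that needs a little care is the verification that the composite maps $F$ and $G$ are analytic, i.e. that continuous polynomial maps are analytic and that analyticity is stable under composition with continuous multilinear maps; these are standard and follow from the material around \Cref{AnMap}. Should one wish to avoid even invoking analyticity of the composites, an equivalent route is to fix $x\in C$ and extend the identity in the variable $y$ over all of $V$ (both sides being honest polynomial maps in $y$ agreeing on the open set $C$), and then fix $y\in V$ and extend in $x$ over all of $V$ likewise; this reduces everything to the single‑variable identity principle applied to polynomial maps.
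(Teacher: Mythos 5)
Your proposal is correct and follows essentially the same route as the paper: the identity is already known on $C\times C$ by \Cref{PPP}, and since both sides are polynomial maps in $(x,y)$ thanks to \Cref{Pqw}, it extends to all of $V\times V$. The only (cosmetic) difference is that the paper performs the extension algebraically, noting that \eqref{PPPV} is a polynomial identity of degree $4$ in $x$ and $2$ in $y$ and using $V=C-C$, whereas you route it through the analytic identity principle \Cref{idprinc} (or, in your alternative, the one-variable polynomial argument), both of which are sound.
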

\begin{proof}
We have proved in \Cref{PPP} that the equation \eqref{PPPV} holds if $x,y \in C$. In view of \Cref{Pqw} this is a polynomial equation in $L(V)$ of degree $4$ in $x$ and of degree $2$ in $y$. Since $V = C - C$ it follows that \eqref{PPPV} holds for all $x,y \in V$.
\end{proof}

On linearizing \eqref{PPPV} we obtain for all $x,y,z\in V$ the identity
\begin{equation}\label{PPyzPeq}
P(x)P(y,z)P(x)=P(P(x)y,P(x)z).    
\end{equation}

\begin{proposition}\label{PxyzP}
The quadratic map $P$ satisfies (QJ3), i.e.\ for all $x,y,z \in V$ we have
$$P(x)P(y,z)x = P(P(x)y, x)z.$$
\end{proposition}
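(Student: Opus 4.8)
The plan is to reduce \ref{PxyzP} to the case $x\in C$ by a polynomiality argument, and in that case to derive the identity from the cancellation identity \eqref{Canc}, its first derivative, and the linearized fundamental formula \eqref{PPyzPeq}. The key auxiliary identity I would establish first is
\begin{equation*}
P(P(x)z,\,b)\,j(x) = P(x,b)\,z \qquad\text{for all }x\in C,\ b,z\in V. \tag{$\star$}
\end{equation*}

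To obtain $(\star)$, I would differentiate the relation $P(w,b)j(w)=b$ of \eqref{Canc} with respect to $w$ at a point of $C$. Since $P(\cdot,\cdot)$ is bilinear, the map $w\mapsto P(w,b)$ is linear with derivative $u\mapsto P(u,b)$; since $j$ is Fréchet-differentiable by \Cref{C1Phi} with $Dj(w)=-P(w)^{-1}$ (rearranging \eqref{PDjj}); and since evaluation $L(V)\times V\to V$ is continuous bilinear, the product rule gives $P(u,b)j(w)+P(w,b)\bigl(-P(w)^{-1}u\bigr)=0$, i.e. $P(u,b)j(w)=P(w,b)P(w)^{-1}u$ for all $u\in V$. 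Substituting $u=P(x)z$ and renaming $w$ to $x$ yields $(\star)$. The only point needing a word of care here is that one may differentiate the two factors of $P(w,b)j(w)$ separately, which is immediate from the bilinearity of $P(\cdot,\cdot)$ and the differentiability of $j$; I expect this to be the only mildly delicate step.

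Now fix $x\in C$ and $y,z\in V$. Using $P(x)j(x)=x$ (the case $y=x$ of \eqref{Canc}, recalling $P(x,x)=P(x)$) and then the linearized fundamental formula \eqref{PPyzPeq}, I rewrite the left-hand side as $P(x)P(y,z)x=P(x)P(y,z)P(x)\,j(x)=P(P(x)y,\,P(x)z)\,j(x)$. By the symmetry of $P(\cdot,\cdot)$ from \eqref{Pxz} this equals $P(P(x)z,\,P(x)y)\,j(x)$, which by $(\star)$ with $b=P(x)y$ equals $P\bigl(x,\,P(x)y\bigr)z=P\bigl(P(x)y,\,x\bigr)z$, the right-hand side of \ref{PxyzP}. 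Thus the identity holds for all $x\in C$ and all $y,z\in V$. Finally, for fixed $y,z\in V$ both sides of \ref{PxyzP} are polynomial maps $V\to L(V)$ of degree at most $3$ in $x$, because $P\colon V\to L(V)$ is a continuous quadratic polynomial map by \Cref{Pqw} and evaluation is continuous bilinear; since they agree on the nonempty open set $C$, the identity principle \Cref{idprinc} forces them to agree on all of $V$, completing the proof.
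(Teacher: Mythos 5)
Your proof is correct and follows essentially the same route as the paper: you differentiate the cancellation identity \eqref{Canc} using $Dj(x)=-P(x)^{-1}$, combine the result with the linearized fundamental formula \eqref{PPyzPeq} and $P(x)j(x)=x$, and extend from $x\in C$ to all of $V$ by polynomiality — your auxiliary identity $(\star)$ is exactly the paper's intermediate identity after the substitution $u=P(x)z$, up to the symmetry of $P(\cdot,\cdot)$. The only blemish is cosmetic: for fixed $y,z$ both sides are polynomial maps $V\to V$ (vectors), not $V\to L(V)$, which does not affect the argument.
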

\begin{proof}
Since this is a polynomial equation in $V$ (of degree $3$ in $x$, and of degree $1$ in $y$ and in $z$), we may assume $x,y,z\in C$. Let $a, b \in V$. By \eqref{Canc} we have $P(a, x)j(x) = a$. Differentiate with respect $x$ in the direction $b$, to find $P(a, b)j(x) + P(a, x)Dj(x)(b) = 0$, or
\begin{equation}\label{5.7Mc}
P(a, b)j(x) = P(a, x)P(x)^{-1}(b).
\end{equation}
Substituting $a=P(x)y$ and $b=P(x)z$ gives, using \eqref{PPyzPeq} and \eqref{5.7Mc}, the desired identity
\begin{align*}
P(x)P(y, z)x &= P(x)P(y, z)P(x)j(x) = P(P(x)y, P(x)z)j(x) = P(a,b)j(x)\\
&= P(a,x)P(x)^{-1}(b)=P(P(x)y,x)P(x)^{-1}P(x)z\\ &=P(P(x)y,x)z.
\end{align*}
\end{proof}

\begin{corollary}\label{QJACor}
The quadruple $(V, v, P)$ is a quadratic Jordan algebra.
\end{corollary}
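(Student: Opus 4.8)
The plan is to verify directly the three axioms of \Cref{QJADef} for the triple $(V, v, P)$, assembling the results established in the preceding subsections. Recall that a quadratic Jordan algebra consists of a real vector space, a distinguished vector playing the role of a unit, and a quadratic map into the endomorphism algebra satisfying (QJ1), (QJ2) and (QJ3); here the data is $V$, the order unit $v$, and the quadratic representation $P$.

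First I would check that $P$ qualifies as a quadratic map in the sense required by \Cref{QJADef}. By \Cref{Pqw} the quadratic representation extends from $C$ to a continuous quadratic polynomial map $P\colon V \to L(V)$, so $P(x) = \overline{P}(x,x)$ for a symmetric bilinear $\overline{P}\colon V \times V \to L(V)$. Consequently $P$ is homogeneous of degree $2$ and its polarization $P(x,z) = \tfrac{1}{2}\bigl(P(x+z) - P(x) - P(z)\bigr) = \overline{P}(x,z)$ is bilinear, which is exactly the notion of quadratic map used in \Cref{QJADef}; moreover $P$ takes its values in $L(V) \subseteq \End(V)$.

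Next I would simply invoke the three identities already proven: axiom (QJ1), the normalization $P(v) = \Id_V$, is \Cref{PId}; axiom (QJ2), the relation $P(x)P(y,z)x = P(P(x)y, x)z$ for all $x,y,z \in V$, is \Cref{PxyzP}; and axiom (QJ3), the fundamental formula $P(P(x)y) = P(x)P(y)P(x)$ for all $x,y \in V$, is \Cref{PPyzP}. Since all three axioms hold for the quadratic map $P$ with distinguished vector $v$, the quadruple $(V, v, P)$ is a quadratic Jordan algebra.

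I do not expect any genuine obstacle at this stage: all of the analytic and algebraic substance — Fréchet differentiability of $\Phi$ (\Cref{C1Phi}), twice continuous differentiability (\Cref{jtwdiff}), the polynomial extension of $P$ (\Cref{Pqw}), and the identities coming from uniqueness of symmetries together with Hua's identity — has been carried out in the preceding subsections, so this corollary amounts to bookkeeping. The one point worth stating explicitly is that the object furnished by \Cref{Pqw} is a quadratic map in the sense of \Cref{QJADef}, which is immediate from the polarization formula recorded above.
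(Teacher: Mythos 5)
Your proposal is correct and follows the paper's own proof essentially verbatim: cite \Cref{Pqw} for $P$ being a continuous quadratic map, and \Cref{PId}, \Cref{PPyzP}, \Cref{PxyzP} for the axioms (your matching of the identities to (QJ2) and (QJ3) as stated in \Cref{QJADef} is the accurate one, even though the paper's proposition statements swap those two labels). Nothing further is needed.
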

\begin{proof}
The map $P\colon V \to L(V)$ is a quadratic polynomial map by \Cref{Pqw}. It satisfies the quadratic Jordan algebra axioms (QJ1-3) by \Cref{PId}, \Cref{PPyzP} resp. \Cref{PxyzP}. We conclude that the triple $(V, v, P)$ is a quadratic Jordan algebra in the sense of \Cref{QJADef}.
\end{proof}

\subsection{Verification of the quadratic JB-algebra axioms}\label{qJBSec}
In the previous section, we have shown that $(V, V_+, v, P)$ is a quadratic Jordan algebra. To show this quadruple is a quadratic JB-algebra in the sense of \Cref{AltDef}, it remains to be shown that $P(x)$ is positive with norm $\lVert P(x)\rVert_v = \lVert x\rVert_v^2$ for each $x\in V$.

\begin{theorem}\label{PPos}
The linear map $P(x)$ is positive for each $x \in V$.
\end{theorem}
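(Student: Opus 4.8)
The overall plan is to dispose of the case $x\in V_+$ by a closure argument, and then to reduce the general case to it via the continuous functional calculus of the single element $x$ together with the fundamental identity. For the positive case, note that $L(V)_+:=\{T\in L(V):T(V_+)\subseteq V_+\}$ is a norm‑closed cone, being the intersection over $y\in V_+$ and states $\rho$ of the closed half‑spaces $\{T:\rho(Ty)\ge 0\}$. By \Cref{Pqw} the map $P\colon V\to L(V)$ is continuous, and for $x\in C$ the operator $P(x)$ is an automorphism of $(V,V_+)$ by \Cref{Pdef}, so $P(x)\in L(V)_+$. Since $y+n^{-1}v\in C$ for every $y\in V_+$, the cone $C$ is $\lVert\cdot\rVert_v$‑dense in $V_+$, whence $P(x)\in L(V)_+$ for all $x\in V_+$; as $P(-x)=P(x)$ this also covers $x\in -V_+$.

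For arbitrary $x\in V$, let $B$ be the norm‑closure in $V$ of the unital Jordan subalgebra generated by $x$. Because powers of $x$ commute and associate, $B$ is a unital commutative associative real Banach algebra — the product $b\circ b'=P(b,b')v$ being jointly continuous because $P(\cdot,\cdot)$ is continuous bilinear by \Cref{Pqw} — and $(B,\,B\cap V_+,\,v)$ is a complete order unit space. The key step is to identify $B$, \emph{as an ordered algebra}, with $C(\Omega,\R)$ for a compact Hausdorff space $\Omega$, by combining the Gelfand representation of $B$ with its archimedean order; this yields that $B\cap V_+$ is precisely the set of $b\in B$ whose spectrum $\sigma_V(b)=\{t\in\R:b-tv\text{ not invertible}\}$ lies in $[0,\infty)$ (one has $\sigma_V(b)\subseteq[\inf_K b,\sup_K b]\subseteq\R$, since $tv-b\in C$ is invertible once $t>\sup_K b$), that squares in $B$ lie in $V_+$, and that a positive element of $B$ is invertible exactly when it lies in $C$. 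Granting this, write $x=x_+-x_-$ with orthogonal $x_\pm\in B\cap V_+$ and put $|x|:=x_++x_-\in B\cap V_+$. If $x$ is invertible then $|x|\in C$, the sign $s:=\operatorname{sgn}(x)$ is a genuine element of $B$ with $s^2=v$, and $x=P(|x|^{1/2})s$ because $P(|x|^{1/2})$ acts on $B$ as multiplication by $|x|$; the fundamental identity of \Cref{PPyzP} then gives
\begin{equation*}
P(x)=P(|x|^{1/2})\,P(s)\,P(|x|^{1/2}).
\end{equation*}
The outer factors are positive by the first paragraph, while $P(s)$ is a unital Jordan automorphism of $(V,\circ,v)$ — indeed $P(s)^2=P(P(s)v)=P(s^2)=\Id_V$, and $P(s)(b\circ b')=P(s)b\circ P(s)b'$ by the linearized fundamental identity \eqref{PPyzPeq} — hence $P(s)$ preserves the squares, i.e.\ $V_+$; so $P(x)\in L(V)_+$. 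The non‑invertible case is reduced to this by passing to the Peirce $1$‑component of the range projection of $|x|$ — a semicontinuous idempotent living in the extended cone $C_{usc}$ of \Cref{ECSect} — on which $x$ becomes invertible and outside of which $P(x)$ vanishes; since $P$ is a polynomial and $L(V)_+$ is closed, the conclusion also propagates through limits.

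The main obstacle is the functional‑calculus step: showing that $B$ with its inherited order is an ordered copy of $C(\Omega,\R)$, equivalently that $V_+\cap B$ is exactly the set of spectrally positive elements of $B$. This is where the completeness of $V$ and the archimedean axiom are genuinely used, and where the interplay with the semicontinuous cones of \Cref{ECSect} will likely be needed for the range‑projection argument in the non‑invertible case. Everything after it — the decomposition $x=x_+-x_-$, the square root, the fundamental identity, and the positivity of $P(s)$ — is formal, given the positivity of $P$ on $V_+$ already obtained.
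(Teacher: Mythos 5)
Your first paragraph is fine and is consistent with what is available at this point: for $x\in C$ the operator $P(x)$ is an automorphism of $(V,V_+)$ by \Cref{Pdef}, $P$ is continuous by \Cref{Pqw}, and the positive operators form a norm-closed cone, so $P(x)$ is positive for all $x\in V_+$. The remainder, however, has a genuine gap which you yourself flag: the whole argument hinges on identifying the closed unital subalgebra $B$ generated by $x$, \emph{with its inherited order}, as $C(\Omega,\R)$, so that $B\cap V_+$ is the set of spectrally nonnegative elements, squares of elements of $B$ lie in $V_+$, the decomposition $x=x_+-x_-$ and the square root $|x|^{1/2}$ live in $V_+$, and $P(s)$ preserves $V_+$ because it ``preserves squares''. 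None of this is available at this stage of the paper: one only knows that $(V,v,P)$ is a quadratic Jordan algebra (\Cref{QJACor}) and that $P(y)$ is a cone automorphism for $y\in C$; the compatibility of the order with the algebra (in particular that $V_+$ is the cone of squares) is obtained only \emph{after} positivity of $P$ and the norm identity $\lVert P(x)\rVert_v=\lVert x\rVert_v^2$ are proved (\Cref{qJBAlg}, \Cref{MAINTHMC}). Since $x^2=P(x)v$, the assertion ``squares lie in $V_+$'' is already a special case of the theorem you are proving, so the proposed route is circular, and the step you defer is essentially as hard as the goal itself. The treatment of the non-invertible case via a ``range projection'' in $C_{usc}$ is likewise not supported by the paper's framework (no Peirce theory for such extended idempotents is developed, and the claim that $P(x)$ ``vanishes outside'' it is unjustified); even granting the functional calculus, the simpler reduction would be to approximate $x$ by the invertible elements $x-tv$ with $t\notin\sigma(x)$, $t\to0$, and use closedness of the positive operators.

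For comparison, the paper avoids spectral theory entirely. By \eqref{Pd2} one has $2P(x)j(y)=P(y)\,[D^2j(y)](x)(x)$ for $y\in C$; since $V_+$ is the closure of $C=j(C)$ and $P(y)$ is positive, positivity of $P(x)$ reduces to showing $[D^2j(y)](x)(x)\in V_+$, i.e.\ (by Kadison's \Cref{KadThm}) that $f''(0)\ge0$ for $f(t)=\psi(j(y+tx))$ and every state $\psi$. This holds because $j$ is convex (\Cref{Phiconv}) and twice continuously differentiable (\Cref{jtwdiff}). If you want to salvage your approach, you would first have to establish the order--spectrum compatibility for $B$ without assuming the JB axioms, and the natural tool for that is again the convexity of $j$, at which point the paper's direct argument is shorter.
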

\begin{proof}
We have already seen for each $x \in C$ that $P(x)$ is an automorphism of $(V, V_+)$ in \Cref{Pdef}(2). Fix an arbitrary $x\in V$. Since $V_+$ is the closure of $C = j(C)$, it suffices to show that $P(x)(j(y)) \in V_+$ for all $y \in C$. In view of \eqref{Pd2} and the positivity of $P(y)$, we need only show that $[D^2j(y)](x)(x) \in V_+$. Owing to Kadison's \Cref{KadThm}, it is equivalent to prove that $\psi([D^2j(y)](x)(x))\ge 0$ for each state $\psi \in K$.\\
\ind So let $\psi \in K$, and define the function $f\colon I \to \R$ by $f(t) := \psi(j(y + tx))$ on the open interval $I := \{t\in \R: y + tx \in C\}$ around $0$. Because $\Phi$ is convex by \Cref{Phiconv} and $\psi$ is a positive functional, $f$ is convex as well. Since $\psi$ is a continuous linear functional and $j$ is twice continuously differentiable by \Cref{jtwdiff}, so is the map $f$, and $f''(0) = [D^2(\psi \circ j)(y)](x)(x) = \psi([D^2j(y)](x)(x))$. Since $f$ is convex, it follows that $\psi([D^2j(y)](x)(x)) = f''(0) \ge 0$, as desired. 
\end{proof}

The formula in the following theorem has been demonstrated in \Cref{xysq} for all $x,y \in C$, and will now be extended to its natural domain of definition.

\begin{lemma}\label{xysqV} For all $x,y \in V$ with $x+ y \in C$ one has
\begin{equation}
(P(x) - P(y))j(x+y) = x - y.
\end{equation}
\end{lemma}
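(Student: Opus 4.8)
The plan is to deduce this from \Cref{xysq}, which already establishes the identity for $x,y\in C$, by a polynomial‑extension argument resting on the fact (\Cref{Pqw}) that $P\colon V\to L(V)$ is a quadratic polynomial map. First I would fix $z\in C$ and introduce the map $F_z\colon V\to V$ given by
$$F_z(x) := \bigl(P(x)-P(z-x)\bigr)j(z) - (2x-z).$$
For $x\in V$ the equation $F_z(x)=0$ is precisely the asserted identity for the pair $(x,y)$ with $y:=z-x$, so it suffices to show $F_z\equiv 0$ on $V$ for every $z\in C$.

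Next I would observe that $F_z$ is affine in $x$. Writing $\overline{P}\colon V\times V\to L(V)$ for the symmetric bilinear map with $P(u)=\overline{P}(u,u)$, one computes $P(x)-P(z-x)=2\overline{P}(z,x)-P(z)$, so that $(P(x)-P(z-x))j(z)=2\overline{P}(z,x)j(z)-P(z)j(z)$ is the sum of a linear function of $x$ and a constant; subtracting $2x-z$ keeps it affine. (Alternatively one may simply note that $F_z$ is a polynomial map, hence analytic, so that the identity principle \Cref{idprinc} can be invoked instead; this variant avoids the explicit linearization.) By \Cref{xysq} we have $F_z(x)=0$ whenever $x\in C$ and $z-x\in C$, that is, on the set $U_z:=C\cap(z-C)$. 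This set is open, being the intersection of $C$ with the open set $z-C$, and it is nonempty, since $\tfrac12 z$ lies in both $C$ and $z-C$ because $C$ is a cone. An affine map between normed spaces that vanishes on a nonempty open set vanishes identically, so $F_z\equiv 0$ on all of $V$.

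Finally, given arbitrary $x,y\in V$ with $x+y\in C$, I would apply the conclusion $F_z\equiv 0$ with $z:=x+y$: since then $z-x=y$ and $2x-z=x-y$, this reads $(P(x)-P(y))j(x+y)=x-y$, as desired. I do not anticipate any genuine obstacle; the only point needing a moment's care is the verification that $F_z$ is affine (rather than genuinely quadratic in $x$), and even that can be bypassed via the identity principle as indicated.
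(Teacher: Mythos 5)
Your argument is correct, and it takes a genuinely different route from the paper. The paper proves the lemma by working jointly in $(x,y)$ on the connected open set $U=\{(x,y)\in V\times V: x+y\in C\}$, observing that $f(x,y)=(P(x)-P(y))j(x+y)-(x-y)$ is analytic there (this uses the analyticity of $j$ from \Cref{jan}) and vanishes on the nonempty open subset $C\times C$, so the identity principle \Cref{idprinc} finishes the proof. Your slicing trick instead fixes $z\in C$, so that $j(x+y)=j(z)$ becomes a constant vector, and the polarization identity $P(x)-P(z-x)=2\overline{P}(z,x)-P(z)$ (valid because $P$ is a quadratic polynomial map, \Cref{Pqw}) shows $F_z$ is affine in $x$; since it vanishes on the nonempty open set $C\cap(z-C)\ni\tfrac12 z$ by \Cref{xysq}, it vanishes identically, and taking $z=x+y$ gives the claim. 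This is more elementary: it needs neither the analyticity of $j$ nor the identity principle, only \Cref{Pqw} and \Cref{xysq} (and not even continuity of $P$, since an affine map vanishing on an open set is zero by pure linear algebra). What the paper's version buys is brevity and uniformity — the same identity-principle pattern is reused elsewhere (e.g.\ in \Cref{jvx}) — and it avoids the small computation with $\overline{P}$; your variant remark about applying the identity principle to $F_z$ on the slice is also fine, since $F_z$ is a continuous polynomial map, but it is unnecessary given the affine observation. Your proof is complete as it stands.
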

\begin{proof}
Let $U = \{(x,y) \in V \times V \colon x + y \in C\}$. Then $U$ is a connected open subset of the Banach space $(V^2, \lVert\cdot\rVert_{(v,v)})$. The mapping $f\colon U \to V$, $f(x, y) := (P(x) - P(y))j(x+y) - (x - y)$ is analytic on $U$ by \Cref{jan}. By \Cref{xysq} we have $f(x,y) = 0$ for all $(x,y) \in C \times C$. The identity principle for analytic functions \eqref{idprinc} implies that $f$ vanishes identically on $U$.
\end{proof}

We have shown that the map $j$ is analytic in \Cref{jan}. We now show its power series expansion around $v$ is given by the familiar geometric series, following \cite[Theorem 4.4]{McC77}.

\begin{proposition}\label{PExp}
There exists $0 < r \le 1$ such that for all $h \in V$ with $\lVert h\rVert_v < r$ one has
\begin{equation}\label{jvh}
    j(v-h) = \sum_{k=0}^{\infty} h^k.
\end{equation}
\end{proposition}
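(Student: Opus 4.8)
The plan is to recognize the series $\sum_{k\ge0}h^k$ as the unique solution $y$ of the linear equation $P(v-h)\,y=v-h$ for small $h$, and to observe that this unique solution is precisely $j(v-h)$ by virtue of the relation \eqref{jxPx}. So the whole proof reduces to a convergent ``geometric series'' computation in the Banach algebra $L(V)$, with the quadratic representation $P$ playing the role of the operator $1-(\text{multiplication by }h)$.

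First I would collect the ingredients. By \Cref{Pqw} the map $P\colon V\to L(V)$ is a continuous quadratic polynomial, so its associated symmetric bilinear map $P(\cdot,\cdot)$ is bounded; writing $T(h)$ for multiplication by $h$ in the Jordan algebra $(V,\circ,v)$ obtained from \Cref{QJACor} via \Cref{QJAThm}, one has $T(h)y=P(v,h)y=h\circ y$, and boundedness of the polarization gives a constant $c_0\ge1$ with $\lVert T(h)\rVert_v\le c_0\lVert h\rVert_v$. Since $h^{k+1}=T(h)h^k$ (with $h^0:=v$), induction yields $\lVert h^k\rVert_v\le(c_0\lVert h\rVert_v)^k$, so $\sum_{k\ge0}h^k$ converges absolutely in $V$ once $\lVert h\rVert_v<c_0^{-1}$. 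Next, $P(v)=\Id_V$ by \Cref{PId}, so by \Cref{invan} and continuity of $P$ there is $r_1>0$ with $P(v-h)$ invertible for $\lVert h\rVert_v<r_1$; and if $\lVert h\rVert_v<1$ then $v-h\ge(1-\lVert h\rVert_v)v$, whence $v-h\in C$ and $j(v-h)=P(v-h)^{-1}(v-h)$ by \eqref{jxPx}. Put $r:=\min\{1,r_1,c_0^{-1}\}$.

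Then I would carry out the telescoping. Polarizing gives $P(v-h)=P(v)-2P(v,h)+P(h)=\Id_V-2T(h)+U(h)$ with $U=P$. Applying this bounded operator term by term to $s(h):=\sum_{k\ge0}h^k$ (legitimate by absolute convergence) and using the power-associativity identities $h\circ h^k=h^{k+1}$ and $U(h)h^k=h^{k+2}$, which hold in the quadratic Jordan algebra $(V,v,P)$, gives
\[
P(v-h)\,s(h)=\sum_{k\ge0}\bigl(h^k-2h^{k+1}+h^{k+2}\bigr),
\]
whose $N$-th partial sum equals $v-h-h^{N+1}+h^{N+2}$ and hence tends to $v-h$ as $N\to\infty$, since $\lVert h^k\rVert_v\to0$. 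Therefore $P(v-h)\,s(h)=v-h$, and invertibility of $P(v-h)$ forces $s(h)=P(v-h)^{-1}(v-h)=j(v-h)$ for $\lVert h\rVert_v<r$, which is \eqref{jvh}.

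The main obstacle is not the telescoping, which is elementary, but making the formal geometric-series manipulation rigorous inside the Banach space $V$: this needs the bound $\lVert h^k\rVert_v\le(c_0\lVert h\rVert_v)^k$, which rests on $P$ being a \emph{continuous} quadratic polynomial map (\Cref{Pqw}), and the power-associativity identities $h\circ h^k=h^{k+1}$, $U(h)h^k=h^{k+2}$, furnished by the quadratic Jordan algebra structure established in \Cref{QJACor} together with the facts recalled in \Cref{QJAThm}. Once these are in hand the rest is the computation above; in particular no appeal to the analyticity of $j$ (\Cref{jan}) or to the identity principle is required.
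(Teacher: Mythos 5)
Your proof is correct, but it follows a genuinely different route from the paper. The paper first invokes the analyticity of $j$ (\Cref{jan}) to write $j(v+h)=\sum_k(-1)^kc_k(h)$, then restricts to the line $t\mapsto v+th$, uses the extended identity $(P(x)-P(y))j(x+y)=x-y$ of \Cref{xysqV} (itself proved via the identity principle) to get $(I-t^2P(h))j(v+th)=v-th$, inverts by a Neumann series in $L(V)$, and identifies the coefficients $c_k(h)=(-1)^kh^k$ by comparing the two expansions in $t$. You instead verify directly that the candidate series $s(h)=\sum_k h^k$ converges for small $h$ and solves $P(v-h)\,y=v-h$, which characterizes $j(v-h)$ by \eqref{jxPx}; this avoids both the analyticity of $j$ and the identity principle, needing only \eqref{jxPx}, $P(v)=\Id_V$, the continuity of the quadratic map $P$ from \Cref{Pqw} (which gives the bound $\lVert h^k\rVert_v\le(c_0\lVert h\rVert_v)^k$), and the invertibility of $P(v-h)$, which for $\lVert h\rVert_v<1$ is in fact automatic since $v-h\in C$ and $P(x)=-[Dj(x)]^{-1}$. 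Your appeal to the "power-associativity identities" $h\circ h^k=h^{k+1}$ and $U(h)h^k=h^{k+2}$ is the only place where detail is suppressed, but it is easily filled: with powers defined by $h^{k+1}=T(h)h^k$, the identity $U(h)h^k=h^{k+2}$ follows from $U(h)=2T(h)^2-T(h^2)$ together with the commutativity of $T(h)$ and $T(h^2)$ established in \Cref{QJAThm}, by an immediate induction — a level of rigor comparable to the paper's own unproved claim that $h^{2k}=P(h)^kv$ and $h^{2k+1}=P(h)^kh$. What each approach buys: the paper's argument reuses machinery already in place and exhibits the series as the actual Taylor expansion of the analytic map $j$ at $v$, while yours is more elementary and self-contained; note, though, that the subsequent \Cref{jvx} still relies on analyticity and the identity principle, so your shortcut does not remove those tools from the overall development.
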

\begin{proof}
Since $j$ is analytic by \Cref{jan}, there exist $0 < r \le 1$ and for each $k \in \Z_{\ge 0}$ a continuous symmetric homogeneous polynomial map $c_k\colon V \to V$ such that for all $h \in V$ with $\lVert h\rVert < r$ one has
$$j(v + h) = \sum_{k=0}^{\infty} (-1)^kc_k(h).$$
We proceed to show for all $k \in \Z_{\ge 0}$ and $h \in V$ that $$c_k(h) = (-1)^kh^k.$$
If $h = 0$ then $c_0(0) = v = h^0$ and $c_k(0) = 0 = h^k$ for all $k \in \Z_{>0}$. Now assume $h \neq 0$. Let $t$ be a real number with $|t| < \lVert h\rVert_v^{-1}$. Then $v + th \in C$ holds, so \Cref{xysqV} with $x=v$ and $y=th$ gives $(I - t^2P(h))j(v + th) = v - th.$
If in addition $t^2\lVert P(h)\rVert_v < 1$, then $I - t^2P(h)$ is invertible in $L(V)$.  The powers of $h$ in the Jordan algebra $V$ being given by $h^{2k}=P(h)^kv$ and $h^{2k+1}=P(h)^kh$, it follows that whenever $|t| < \min(\lVert h\rVert_v^{-1}, \lVert P(h)\rVert_v^{-1/2})$ one has
$$j(v+th) = (I - t^2P(h))^{-1}(v-th) = \left(\sum_{k=0}^{\infty} t^{2k}P(h)^k\right)(v-th) = \sum_{k=0}^{\infty} (-1)^kt^kh^k.$$
Comparison of both series for $j(v+th)$ yields $c_k(h) = (-1)^kh^k$, whence \eqref{jvh}.
\end{proof}

\begin{remark}
We will shortly see in \Cref{Pnm} that $\lVert P(x)\rVert_v = \lVert x\rVert_v^2$ for each $x\in V$, which implies that \Cref{PExp} holds with $r=1$.    
\end{remark}

\begin{lemma}\label{jvx}
For all $x \in V$ with $\lVert x\rVert_v < 1$ we have
$$v-x^2 = 2j(j(v-x) + j(v+x)).$$
\end{lemma}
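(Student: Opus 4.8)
The plan is to prove the identity first for $x$ in a small ball about $0$ by exploiting the power‑series expansion of $j$ at $v$ provided by \Cref{PExp}, and then to propagate it to the whole ball $B := \{x \in V : \lVert x\rVert_v < 1\}$ by analytic continuation. First I would record that if $\lVert x\rVert_v < 1$ then $v \pm x \ge (1-\lVert x\rVert_v)v$, so $v-x,\,v+x \in C$ and hence $j(v-x)+j(v+x)\in C$; consequently both sides of the claimed formula are analytic maps of $x$ on the connected open set $B$ — the left‑hand side because $x \mapsto x^2 = P(x)v$ is a continuous quadratic polynomial map by \Cref{Pqw}, and the right‑hand side because $j$ is analytic by \Cref{jan}. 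By the identity principle \Cref{idprinc} it then suffices to verify the identity on a nonempty open subset of $B$.

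So let $r \in (0,1]$ be as in \Cref{PExp}, and pick $r' \in (0,r]$ small enough that $\lVert x^2\rVert_v < r$ whenever $\lVert x\rVert_v < r'$. Fix such an $x$. Recalling that the Jordan powers satisfy $h^{2m} = P(h)^m v$ and $h^{2m+1} = P(h)^m h$ (cf. the proof of \Cref{PExp}) and that $P(-x)=P(x)$, one gets $(-x)^k = (-1)^k x^k$, so applying \Cref{PExp} with $h=x$ and with $h=-x$ and adding the two series gives
$$j(v-x) + j(v+x) = \sum_{k\ge 0} x^k + \sum_{k\ge 0} (-1)^k x^k = 2\sum_{m\ge 0} x^{2m}.$$
Next, taking $y=v$ in \Cref{PPP} and using $P(v)=\id_V$ from \Cref{PId} yields $P(x^2)=P(x)^2$, so the $m$‑th Jordan power of $x^2$ equals $x^{2m}$ for every $m$; a third application of \Cref{PExp}, now with $h=x^2$, therefore gives $\sum_{m\ge 0} x^{2m} = \sum_{m\ge 0} (x^2)^m = j(v - x^2)$. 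Hence $j(v-x)+j(v+x) = 2j(v-x^2)$, and since $j$ is homogeneous of degree $-1$ and involutive (being the symmetry $S_v$ of $C$ at $v$, cf. \Cref{SymEquiv}),
$$2\,j\bigl(j(v-x) + j(v+x)\bigr) = 2\,j\bigl(2\,j(v-x^2)\bigr) = j\bigl(j(v-x^2)\bigr) = v - x^2.$$
This establishes the identity for $\lVert x\rVert_v < r'$, and the general case follows from \Cref{idprinc}.

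I do not expect a genuine obstacle here. The only points that need a little care are (i) checking that both sides are honestly analytic on all of $B$ so that \Cref{idprinc} applies, and (ii) the elementary bookkeeping that identifies $x^{2m}$ with the $m$‑th Jordan power of $x^2$ via $P(x^2)=P(x)^2$; both are routine, and the substance of the argument is simply three applications of \Cref{PExp} together with the homogeneity and involutivity of $j$.
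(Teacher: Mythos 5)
Your proposal is correct and follows essentially the same route as the paper: expand $j(v\mp x)$ and $j(v-x^2)$ via the power series of \Cref{PExp} on a small ball (after shrinking the radius so that $\lVert x^2\rVert_v < r$), sum the series, apply $j$ using involutivity and homogeneity, and extend to the whole unit ball by the identity principle. The extra bookkeeping you include ($(-x)^k=(-1)^kx^k$ and $(x^2)^m=x^{2m}$ via $P(x^2)=P(x)^2$) is left implicit in the paper but is exactly the right justification.
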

\begin{proof}
Let $r > 0$ be as in \Cref{PExp}. Because $x \mapsto x^2 = P(x)v$ is continuous by \Cref{Pqw}, there exists $0 < r_1 \le r$ such that $\lVert x\rVert < r_1$ implies $\lVert x^2\rVert < r$ for each $x \in V$. Let $x \in V$ with $\lVert x\rVert < r_1$. Then for each $h \in \{x, -x, x^2\}$ we have $\lVert h\rVert < r$, hence the power series expansion \eqref{jvh} is valid. We obtain that
$$
j(v-x) + j(v+x) = \sum_{k=0}^{\infty}x^k + \sum_{k=0}^{\infty}(-1)^kx^k = 2\sum_{k=0}^{\infty} x^{2k} = 2j(v-x^2).$$
Now apply $j$ to obtain $v - x^2 = 2j(j(v-x)+j(v+x))$ provided that $\lVert x\rVert < r_1$.\\
\ind Both sides of this equation are analytic functions of $x$ on the open unit ball, and have just been shown to be equal on the open ball of radius $r_1 > 0$ around $0$. In view of the identity principle, they are identically equal on the whole open unit ball. 
\end{proof}

\begin{lemma}\label{ASlem}
For all $x\in V$ with $-v \le x \le v$ we have $0 \le x^2 \le v$.
\end{lemma}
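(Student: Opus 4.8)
The plan is to combine the positivity of the quadratic representation with the geometric-series formula of \Cref{jvx}, and then to reach the boundary of the order interval by a scaling argument.

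First, since $-v \le x \le v$ we have $\lVert x\rVert_v \le 1$. The lower bound $0 \le x^2$ is immediate: recall $x^2 = P(x)v$, and $P(x)$ is positive by \Cref{PPos} while $v \in V_+$, so $P(x)v \in V_+$.

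Next I would treat the strict case $\lVert x\rVert_v < 1$. Then $v - x \ge (1 - \lVert x\rVert_v)v$ and $v + x \ge (1 - \lVert x\rVert_v)v$, so both $v-x$ and $v+x$ lie in the open cone $C$. Hence $j(v-x), j(v+x) \in C$, and since $C$ is a cone also $j(v-x) + j(v+x) \in C$ and $2\,j\bigl(j(v-x) + j(v+x)\bigr) \in C \subset V_+$. By \Cref{jvx} this last element equals $v - x^2$, so $x^2 \le v$ in this case.

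Finally, for the general case $\lVert x\rVert_v \le 1$ I would scale. For $0 < t < 1$ we have $\lVert tx\rVert_v < 1$, so the previous step gives $(tx)^2 \le v$; but $(tx)^2 = P(tx)v = t^2 P(x)v = t^2 x^2$ because $P\colon V \to L(V)$ is a quadratic map by \Cref{Pqw}. Thus $v - t^2 x^2 \in V_+$ for every $t \in (0,1)$. Letting $t \uparrow 1$ and using that the positive cone of an (archimedean, hence in particular a complete) order unit space is norm-closed, we get $v - x^2 = \lim_{t \uparrow 1}(v - t^2 x^2) \in V_+$, i.e. $x^2 \le v$. Together with $x^2 \ge 0$ this proves the lemma. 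The only step that is not a direct citation of an earlier result is this passage to the boundary; one could also phrase it without closedness of $V_+$ by observing that $x^2 - v \le (t^{-2} - 1)v$ for all $t \in (0,1)$ and invoking the archimedean property to conclude $x^2 - v \le 0$.
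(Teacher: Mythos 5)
Your proof is correct and follows essentially the same route as the paper: the upper bound $x^2\le v$ comes from \Cref{jvx} together with the fact that $j$ takes values in the cone, and the case $\lVert x\rVert_v=1$ is settled by exactly the kind of scaling/limit (or archimedean) argument the paper leaves implicit. The only harmless deviation is that you obtain $x^2\ge 0$ from the positivity of $P(x)$ in \Cref{PPos}, whereas the paper extracts it from convexity of $j$ applied to the same identity; since \Cref{PPos} precedes this lemma, there is no circularity and your citation is legitimate.
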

\begin{proof}
First assume that $\lVert x\rVert_v < 1$. By convexity of $j$ we have
$$0 \le j(\tfrac{1}{2}j(v-x) + \tfrac{1}{2}j(v+x)) \le j(v) = v.$$
Upon using \Cref{jvx}, we obtain $0 \le v - x^2 \le v$. Now a limit argument shows this also holds if $\lVert x\rVert_v = 1$.
\end{proof}

\begin{proposition}\label{Pnm}
For each $x \in V$ we have $\lVert P(x)\rVert_v = \lVert x\rVert^2_v$.
\end{proposition}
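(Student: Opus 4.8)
The plan is to reduce the claim to the norm identity $\lVert x^2\rVert_v=\lVert x\rVert_v^2$ for the Jordan square $x^2:=P(x)v$, and then prove the two inequalities by separate arguments. For the reduction, recall from \Cref{PPos} that $P(x)$ is a positive operator; any positive operator $T$ on an order unit space satisfies $\lVert T\rVert_v=\lVert Tv\rVert_v$, because $-v\le y\le v$ forces $-Tv\le Ty\le Tv$ with $Tv\ge 0$, whence $\lVert Ty\rVert_v\le\lVert Tv\rVert_v$, the reverse inequality being trivial. Taking $T=P(x)$ and noting $P(x)v=P(x,x)v=x\circ x=x^2$ gives $\lVert P(x)\rVert_v=\lVert x^2\rVert_v$, so it remains to show $\lVert x^2\rVert_v=\lVert x\rVert_v^2$. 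The inequality $\lVert x^2\rVert_v\le\lVert x\rVert_v^2$ is immediate: for $x\ne 0$ the element $\hat x:=\lVert x\rVert_v^{-1}x$ satisfies $-v\le\hat x\le v$, hence $0\le\hat x^2\le v$ by \Cref{ASlem}, i.e.\ $\lVert\hat x^2\rVert_v\le 1$, and homogeneity of the square finishes it (the case $x=0$ being trivial).

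For the reverse inequality the key assertion is that $v-x^2\in C$ implies $\lVert x\rVert_v<1$; equivalently, $\lVert x^2\rVert_v<1$ implies $\lVert x\rVert_v<1$, noting $x^2\ge 0$ always by positivity of $P(x)$. Granting this, for $x\ne 0$ and $0<c<\lVert x\rVert_v$ one has $\lVert x/c\rVert_v>1$, hence $\lVert(x/c)^2\rVert_v\ge 1$, hence $\lVert x^2\rVert_v\ge c^2$; letting $c\uparrow\lVert x\rVert_v$ yields $\lVert x^2\rVert_v\ge\lVert x\rVert_v^2$. To prove the assertion, assume $v-x^2\in C$ and, for contradiction, $\lVert x\rVert_v\ge 1$, and put $T_0:=\lVert x\rVert_v^{-1}\in(0,1]$. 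For $t\in[0,T_0]$ we have $v-t^2x^2=(1-t^2)v+t^2(v-x^2)\in C$ since $C$ is a convex cone, while for $t\in[0,T_0)$ we have $\lVert tx\rVert_v<1$, so $v\pm tx\in C$; applying $j$ to the identity in \Cref{jvx} (and using that $j$ is an involution, homogeneous of degree $-1$) then gives $j(v-tx)+j(v+tx)=2\,j(v-t^2x^2)$ for these $t$. As $t\uparrow T_0$ the right-hand side converges to $2\,j(v-T_0^2x^2)$ by continuity of $j$ on $C$ (\Cref{C1Phi}), using $v-T_0^2x^2\ge v-x^2\in C$; in particular $\lVert j(v-tx)+j(v+tx)\rVert_v$ remains bounded as $t\uparrow T_0$.

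On the other hand $\lVert T_0x\rVert_v=1$, so, viewing elements of $V$ as functions on the state space $K$ via \Cref{KadThm}, either $\max_{\rho\in K}(T_0x)(\rho)=1$ or $\min_{\rho\in K}(T_0x)(\rho)=-1$; in the first case $m(v-T_0x,v)=\min_{\rho\in K}(v-T_0x)(\rho)=0$, and in the second $m(v+T_0x,v)=0$. Say the former holds. Since $a\mapsto m(a,v)$ is norm-continuous on $V_+$ and equals $\min_{\rho\in K}a(\rho)$ there, $m(v-tx,v)\to 0$ as $t\uparrow T_0$. Because $j$ is gauge-reversing and fixes $v$, one has $\lVert j(a)\rVert_v=M(j(a),v)=M(v,a)=m(a,v)^{-1}$ for every $a\in C$; hence $\lVert j(v-tx)\rVert_v\to\infty$. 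Since $j(v-tx)+j(v+tx)\ge j(v-tx)\ge 0$ and $\lVert\cdot\rVert_v$ is monotone on $V_+$, this forces $\lVert j(v-tx)+j(v+tx)\rVert_v\to\infty$, contradicting the previous paragraph. This proves the key assertion, and with it the proposition.

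The main obstacle is this reverse inequality: one must push the identity of \Cref{jvx} to the boundary sphere $\{\lVert\cdot\rVert_v=1\}$ and extract a contradiction from the blow-up of $j$ near $\partial V_+$, exploiting that $j(v-t^2x^2)$ stays bounded precisely because $v-x^2$ is an interior point.
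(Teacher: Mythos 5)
Your proof is correct, but for the crucial inequality $\lVert x^2\rVert_v \ge \lVert x\rVert_v^2$ it takes a genuinely different route from the paper. You share with the paper the reduction $\lVert P(x)\rVert_v = \lVert x^2\rVert_v$ via positivity of $P(x)$ and the easy direction via \Cref{ASlem}. For the reverse direction, the paper argues purely algebraically in two lines: from $-v \le x^2 \le v$ and the positivity of all squares (a consequence of \Cref{PPos}) it sandwiches $x$ itself via the identities $x = \tfrac{1}{2}((x+v)^2 - x^2 - v) = \tfrac{1}{2}(x^2 + v - (x-v)^2)$, giving $-v \le x \le v$ directly. You instead prove the contrapositive-type statement that $v - x^2 \in C$ forces $\lVert x\rVert_v < 1$ by pushing the geometric-series identity of \Cref{jvx}, rewritten as $j(v-tx)+j(v+tx)=2j(v-t^2x^2)$, to the sphere $\lVert tx\rVert_v = 1$: the right-hand side stays bounded because $v - T_0^2x^2$ is interior, while gauge-reversingness of $j$ (with $j(v)=v$, so $\lVert j(a)\rVert_v = m(a,v)^{-1}$) forces one of the terms on the left to blow up, since $\lVert T_0x\rVert_v = 1$ makes $m(v \mp T_0 x, v) = 0$ on the compact state space. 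I checked the steps (the convexity-cone argument for $v-t^2x^2 \in C$, the attainment of the norm on $K$, monotonicity of $\lVert\cdot\rVert_v$ on $V_+$, and the non-circularity of the lemmas you invoke), and they hold. The trade-off: the paper's argument is shorter and needs only the order-algebraic facts already in hand, whereas yours invokes more analytic machinery (continuity of $j$, the Kadison representation, the boundary blow-up of the inversion) but makes transparent the geometric mechanism — the norm identity is tied to $j$ blowing up exactly at $\partial V_+$ — and yields the slightly sharper strict implication $v - x^2 \in C \Rightarrow \lVert x\rVert_v < 1$.
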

\begin{proof}
Since $P \in L(V)$ is positive, we have $\lVert P(x)\rVert_v = \lVert P(x)v\rVert_v = \lVert x^2\rVert_v.$ Thus our objective is to show that $\lVert x^2\rVert_v = \lVert x\rVert_v^2$ for each $x\in V$. Since both members of this equation are homogeneous of degree $2$ in $x$, it suffices to prove that $\lVert x^2\rVert_v \le 1$ if and only if $\lVert x\rVert_v \le 1$.\\
\ind Suppose that $\lVert x\rVert_v \le 1$, i.e.\ $-v \le x \le v$. Then \Cref{ASlem} gives $0 \le x^2 \le v$, whence $\lVert x^2\rVert_v \le 1$. Conversely suppose that $\lVert x^2\rVert_v \le 1$, i.e.\ $-v \le x^2 \le v$. Using that $y^2 \in V_+$ for each $y \in V$, we find that
$$-v \le \tfrac{1}{2}((x+v)^2 - x^2 - v) =x = \tfrac{1}{2}(x^2 + v - (x-v)^2) \le v,$$
whence $\lVert x\rVert_v \le 1$. This concludes the proof of the proposition.
\end{proof}

\begin{corollary}\label{qJBAlg} The quadruple $(V, V_+, v, P)$ is a (quadratic) JB-algebra.
\end{corollary}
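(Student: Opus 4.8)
The plan is to verify directly that the quadruple $(V, V_+, v, P)$ satisfies each clause of \Cref{AltDef} of a (quadratic) JB-algebra, all of the required ingredients having been assembled above. First, $(V, V_+, v)$ is a complete order unit space by the standing hypothesis of this section, and the norm on $V$ is the order unit norm $\lVert\cdot\rVert_v$. Next, \Cref{Pqw} shows that $P\colon V \to L(V)$ is a continuous quadratic polynomial map; in particular $P$ is a quadratic map in the sense demanded by \Cref{AltDef}. By \Cref{QJACor} the triple $(V, v, P)$ is a quadratic Jordan algebra, so $P$ satisfies the axioms (QJ1), (QJ2) and (QJ3). Finally, \Cref{PPos} shows that $P(x)$ is a positive operator on $(V, V_+)$ for every $x \in V$, and \Cref{Pnm} gives the operator-norm identity $\lVert P(x)\rVert_v = \lVert x\rVert_v^2$. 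These conditions are precisely those imposed in \Cref{AltDef}, so $(V, V_+, v, P)$ is a quadratic JB-algebra, which completes the proof.

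There is no genuine obstacle remaining at this stage. The substantive work — verifying the quadratic Jordan axioms in \Cref{quadSec}, establishing positivity of the quadratic representation in \Cref{PPos}, and proving the operator-norm identity in \Cref{Pnm} — has been carried out in the preceding results, and the corollary is simply the bookkeeping step that records that these collectively match the axiomatization of \Cref{AltDef}. If one further wishes to recover the formulation of \Cref{MAINTHM}, one invokes \Cref{QJAThm} and \Cref{HOSAS} to pass from the quadratic JB-algebra $(V, V_+, v, P)$ to the associated linear Jordan product $x\circ y := P(x,y)v$, under which $V$ becomes a JB-algebra in the norm $\lVert\cdot\rVert_v$ with unit element $v$ and cone of squares $V_+$.
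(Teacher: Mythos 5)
Your proof is correct and follows essentially the same route as the paper: it cites \Cref{QJACor} for the quadratic Jordan algebra axioms, \Cref{PPos} for positivity of each $P(x)$, and \Cref{Pnm} for the norm identity $\lVert P(x)\rVert_v = \lVert x\rVert_v^2$, which is exactly how the paper assembles the verification of \Cref{AltDef}. The extra remarks (explicitly invoking \Cref{Pqw} and the passage to the linear product via \Cref{QJAThm} and \Cref{HOSAS}) are harmless additions that do not change the argument.
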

\begin{proof}
In \Cref{QJACor} we have seen that $(V, v, P)$ is a quadratic Jordan algebra. For $(V, V_+, v, P)$ to be a quadratic JB-algebra in the sense of \Cref{AltDef}, we need to verify in addition, for each $x \in V$, that $P(x)$ is a positive operator with norm $\lVert P(x)\rVert_v = \lVert x\rVert_v^2$, which has been done in \Cref{PPos} and \Cref{Pnm}.
\end{proof}

\subsection{Proof of the main theorems}\label{MAINTHMsection}
In this section we collect the main results of this paper concerning complete order unit spaces. In particular, we complete the proof of \Cref{MAINTHM} and \Cref{MAINTHM2} announced in the introduction.\\
\ind The if direction of \Cref{MAINTHM} has been shown in \Cref{JBigr}; the only if direction is part (4) of the following theorem.


\begin{theorem}\label{MAINTHMC}
Let $(V, V_+, v)$ be a complete order unit space with open cone $C := V_+^{\circ}$. Let $\Phi\colon C \to D$ be a gauge-reversing bijection to the open cone $D := W^{\circ}_+$ of a second complete order unit space $(W, W_+, w)$.
\begin{enumerate}
    \item[$(1)$] The map $\Phi\colon C \to D$ is bianalytic.
    \item[$(2)$] The map $-D\Phi(v): (V, V_+, v) \to (W, W_+, w)$ is an isomorphism of order unit spaces.
    \item[$(3)$] The map $S_v := -[D\Phi(v)]^{-1} \circ \Phi$ is the unique symmetry of $C$ at $v$.
    \item[$(4)$] There exists a unique JB-algebra structure on $V$ with cone of squares $V_+$ and unit element $v$.
    \item[$(5)$] Its set of invertible squares is $C$, and inversion on $C$ is given by the symmetry $S_v$ of $C$ at $v$.
\end{enumerate}
\end{theorem}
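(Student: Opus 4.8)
The plan is to assemble the five assertions from results already established above; no genuinely new argument is needed, only the right orchestration.

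\textbf{Parts (1)--(3).} First I would observe that $\Phi^{-1}\colon D\to C$ is again a gauge-reversing bijection (immediately from \eqref{grev}, or via \Cref{graor}) between open cones of complete order unit spaces, so that \Cref{jan} applies both to $\Phi$ and to $\Phi^{-1}$; this yields (1). For (2) I would normalize so that $\Phi(v)=w$ — which one may do, after replacing $w$ by $\Phi(v)$, leaving $W$, $W_+$ and $D$ unchanged and $\lVert\cdot\rVert_w$ by an equivalent norm — and then (2) is precisely \Cref{DPhi} at $x=v$. For (3) I would apply \Cref{SymMake} with $\Upsilon=\Phi$ and $x=v$ to see that $S_v=-[D\Phi(v)]^{-1}\circ\Phi$ is a symmetry of $C$ at $v$, and invoke the remark following \Cref{symC} (which rests on \Cref{!Sx}) for uniqueness.

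\textbf{Part (4), existence.} Writing $j:=S_v$ and letting $P\colon V\to L(V)$ be the associated quadratic representation of \Cref{Pdef}, extended to a quadratic polynomial map by \Cref{Pqw}, I would invoke \Cref{qJBAlg} to conclude that $(V,V_+,v,P)$ is a quadratic JB-algebra. By \Cref{QJAThm} (cf. \Cref{AltDef}) this equips $V$ with the Jordan product $x\circ y:=P(x,y)v$ (notation \eqref{Pxz}), making $(V,\lVert\cdot\rVert_v)$ a JB-algebra with unit $v$; since in any JB-algebra the positive cone coincides with the set of squares by \Cref{HOSAS}, its cone of squares is exactly $V_+$.

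\textbf{Part (4), uniqueness, and part (5).} Let $U$ be the quadratic representation of any JB-algebra structure on $(V,V_+,v)$ with cone of squares $V_+$, and let $i\colon C\to C$ denote the corresponding inversion; its domain, the set of invertible squares, is the open cone $V_+^\circ=C$, which already gives the first half of (5). By \Cref{JBigr}, $i$ is a gauge-reversing bijection with $i(v)=v$ and $Di(v)=-\Id_V$, hence a symmetry of $C$ at $v$, so $i=j$ by \Cref{!Sx}; then \Cref{JBigr} gives $U(x)=-[Di(x)]^{-1}=-[Dj(x)]^{-1}=P(x)$ for every $x\in C$. Since $U$ and $P$ are quadratic polynomial maps agreeing on the open set $C$, they coincide on $V$, so $x\circ y=U(x,y)v=P(x,y)v$ is forced: the JB-algebra structure is unique. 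Finally, inversion is $i(x)=U(x)^{-1}x=P(x)^{-1}x=j(x)=S_v(x)$ by \eqref{jxPx}, which completes (5).

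I expect the only step that needs care is the uniqueness half of (4): one must recognize the inversion map of an arbitrary compatible JB-structure as a symmetry of $C$ at $v$ (via \Cref{JBigr}), exploit the rigidity of symmetries (\Cref{!Sx}) to identify it with $j$, and then use that the quadratic representation determines the linear Jordan product. All the remaining items are direct citations of results proved above.
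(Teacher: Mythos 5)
Your proposal is correct and follows the paper's own route almost verbatim: parts (1)--(3) from \Cref{jan}, \Cref{DPhi}, \Cref{SymMake} and \Cref{!Sx}, and the existence half of (4) from \Cref{Pqw} and \Cref{qJBAlg}, exactly as in the paper (your explicit normalization $\Phi(v)=w$ for (2) is a point the paper passes over silently, and is welcome).

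The only divergence is in the uniqueness half of (4). The paper also reduces to $j=j'$ via \Cref{!Sx}, but then recovers the algebra from $j$ through the algebraic Hua identity $x^2=(x^{-1}-(x+1)^{-1})^{-1}$, i.e. $x^2=j(j(x)-j(x+v))$ for $x\in C$, and polarizes using $V=C-C$ (cf.\ the second paragraph of the proof of \Cref{evThm}). You instead recover the quadratic representation analytically, via $U(x)=-[Di(x)]^{-1}=-[Dj(x)]^{-1}=P(x)$ on $C$ from \Cref{JBigr}, and then extend the equality $U=P$ from the open set $C$ to all of $V$ because both are quadratic maps, forcing $x\circ y=U(x,y)v=P(x,y)v$. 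Both arguments hinge on the rigidity of symmetries (\Cref{!Sx}); yours trades the elementary algebraic identity for the derivative formula of \Cref{JBigr} plus a polynomial-identity argument, which is equally valid and arguably slightly shorter, while the paper's version avoids invoking differentiability of the second structure's inversion beyond what \Cref{JBigr} already supplies. Your handling of (5) (invertible squares equal $C$, inversion equals $S_v$ via \eqref{jxPx}) matches the paper's level of detail.
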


\begin{remark}
Wright-Youngson have shown in \cite[Theorem 4]{WrYo} the uniqueness assertion of part (4), i.e.\ that on a complete order unit space $(V, V_+, v)$ there exists at most one JB-algebra structure with cone of squares $V_+$ and unit element $v$. Equivalently, they showed that if $V$ and $W$ are JB-algebras, then each isomorphism between the associated order unit spaces $(V, V_+, v)$ and $(W, W_+, w)$ is in fact a JB-algebra isomorphism. Below we will give a different proof of the uniqueness of a JB-algebra structure on $(V, V_+, v)$ based on the uniqueness of a symmetry $S_v$ of $C$ at $v$. 
\end{remark}

\begin{proof}
The gauge-reversing bijections $\Phi$ and $\Phi^{-1}$ are analytic by \Cref{jan}, hence $\Phi$ is bianalytic. In particular, $\Phi$ is differentiable, so part (2) is contained in \Cref{DPhi}. Part (3) is shown in \Cref{!Sx} and \Cref{SymMake}. To ease notation set $j = S_v$. In \Cref{Pqw} we have seen that the map $P \colon C \to \Aut((V, V_+))$ sending $x$ to $P(x) := -Dj(j(x))$ extends to a quadratic polynomial map $P\colon V \to L(V)$. In \Cref{qJBAlg} we have established that $(V, V_+, v, P)$ is a (quadratic) JB-algebra, whence the existence assertion in (4). We have encountered part (5) in \eqref{jxPx}. It remains to prove the uniqueness assertion in (4).\\
\ind Suppose that $P'\colon V \to L(V)$ is another structure of quadratic JB-algebra on $(V, V_+, v)$ and let $j'\colon C \to C$ be the corresponding inversion map. Then $j$ and $j'$ are both symmetries of $C$ at $v$, so \Cref{!Sx} concerning the uniqueness of symmetries gives that $j = j'$. Now in any JB-algebra $A$ with unit element $1$, we have for all $x \in A^{
\circ}_+$ Hua's identity $x^2 = (x^{-1} - (x+1)^{-1})^{-1}$. Indeed, this identity may easily be verified in the associative unital JB-subalgebra of $A$ generated by $x$. Using this identity and the fact that $V = C - C$ one shows that $j = j'$ implies $P = P'$ (cf. the second paragraph of the proof of \Cref{evThm}). 
This establishes uniqueness of a JB-algebra structure on $(V, V_+, v)$ with identity element $v$ and cone of squares $V_+$.
\end{proof}

We proceed to prove \Cref{MAINTHM2}. The following theorem has been shown in \cite[Theorem~8.1]{LRW25} for reflexive order unit spaces, and more generally, for complete order unit spaces which satisfy $C = C_{usc}$ and $V = \spn \ext C$.

\begin{theorem}\label{MAINTHMC2}
Let $(V,V_+,v)$ be a complete order unit space and let $C = V_+^{\circ}$ be its open cone. Then the following are equivalent:
\begin{enumerate}[label={\normalfont(\arabic*)}]
    \item we have that $(C, d_T)$ is a symmetric Banach--Finsler manifold;
    \item there exists a $d_T$-symmetry $S_p: C \to C$ at some point $p \in C$;
    \item there exists a gauge-reversing bijection $\Phi\colon C \to D$ to the open cone $D := W^\circ_+$ of a complete order unit space $(W,W_+.w)$;
    \item there exists a unital JB-algebra structure on $(V,\lVert\cdot\rVert_v)$ with cone of squares $V_+$.
\end{enumerate}
\end{theorem}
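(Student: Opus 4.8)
The plan is to prove Theorem~\ref{MAINTHMC2} by closing the cycle of implications $(1)\Rightarrow(2)\Rightarrow(3)\Rightarrow(4)\Rightarrow(1)$, each step being a short deduction from results already in hand. The implication $(1)\Rightarrow(2)$ is immediate from the definitions: a symmetric Banach--Finsler manifold $(C,d_T)$ carries, by definition, a $d_T$-symmetry around each of its points, in particular around some $p\in C$. For $(2)\Rightarrow(3)$ I would invoke \Cref{dTsymgr}, which says that a $d_T$-symmetry $S_p\colon C\to C$ is automatically a gauge-reversing bijection; taking the second complete order unit space to be $(V,V_+,v)$ itself, so that $D=C$, the map $\Phi:=S_p$ witnesses $(3)$. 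The implication $(3)\Rightarrow(4)$ is precisely part~$(4)$ of \Cref{MAINTHMC}: a gauge-reversing bijection out of $C$ equips $V$ with a unital JB-algebra structure with cone of squares $V_+$ (and unit element $v$). This is the step into which all the substantive work of the paper is concentrated; for the present theorem it is simply quoted.

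For the remaining implication $(4)\Rightarrow(1)$ I would argue as follows. Suppose $V$ carries a JB-algebra structure with cone of squares $V_+$; then its open cone of invertible squares is exactly $C$, and by \Cref{JBigr} the Jordan inversion $i\colon C\to C$ is an involutive, Fr\'echet-differentiable gauge-reversing bijection. In particular $i$ is a gauge-reversing bijection of $C$ with itself (take $W:=V$), so the standing hypotheses under which \Cref{DPhi}, \Cref{SymMake} and \Cref{SymEquiv} were proved are satisfied. Applying \Cref{SymMake} with $\Upsilon:=i$, for every $x\in C$ the map $S_x:=-[Di(x)]^{-1}\circ i$ is the symmetry of $C$ at $x$ in the sense of \Cref{symC}, and by \Cref{SymEquiv} each $S_x$ is a $d_T$-symmetry at $x$. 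Since $x\in C$ was arbitrary, $(C,d_T)$ is a symmetric Banach--Finsler manifold, which is $(1)$; this closes the cycle.

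I do not anticipate a genuine obstacle in this argument. The only care required is bookkeeping: one must check that the standing assumption ``$\Phi\colon C\to D$ is a gauge-reversing bijection between the open cones of complete order unit spaces'', under which Sections~\ref{ECSect}--\ref{MTSec} operate, is met by the choice $\Phi=i$, $W=V$. The conceptually hard direction --- that the mere existence of a gauge-reversing self-bijection of $C$ forces a JB-algebra structure --- has already been carried through in \Cref{MAINTHMC}; the converse reduces, via \Cref{JBigr}, \Cref{SymMake} and \Cref{SymEquiv}, to transporting the inversion symmetry at the algebra unit to an arbitrary point of $C$ using the linear automorphism $-[Di(x)]^{-1}$ of $(V,V_+)$ supplied by \Cref{DPhi}.
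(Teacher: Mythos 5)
Your proposal is correct and follows essentially the same route as the paper: the cycle $(1)\Rightarrow(2)\Rightarrow(3)\Rightarrow(4)\Rightarrow(1)$, with $(2)\Rightarrow(3)$ via \Cref{dTsymgr}, $(3)\Rightarrow(4)$ via part (4) of \Cref{MAINTHMC}, and $(4)\Rightarrow(1)$ by using \Cref{JBigr} to get the gauge-reversing inversion map and then \Cref{SymMake} and \Cref{SymEquiv} to produce a $d_T$-symmetry at every point of $C$.
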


\begin{proof}
(1) $\implies$ (2): The definition of a symmetric Banach--Finsler manifold entails that there exists a $d_T$-symmetry of $C$ at each of its points, in particular at $p$.\\
(2) $\implies$ (3): Each $d_T$-symmetry of $C$ is a gauge-reversing bijection by \cite[Thm. 3.5]{LRW25}.\\
(3) $\implies$ (4): This is part (4) of \Cref{MAINTHMC}.\\
(4) $\implies$ (1): Choose a JB-algebra structure on $V$. Denote $j\colon C \to C$ the inversion map and $v$ its multiplicative identity. By \Cref{JBigr} we have that $j\colon C \to C$ is a gauge-reversing bijection. Let $p\in C$ be an arbitrary point. Then the map $S_p := -[Dj(p)]^{-1} \circ p$ is a $d_T$-symmetry at $p$ by \Cref{SymMake} and \Cref{SymEquiv}. We conclude that $(C, d_T)$ is a symmetric Banach--Finsler manifold.
\end{proof}

\subsection{Gauge-reversing bijections on non-complete order unit spaces}\label{ncmplSec}


In this section we will prove \Cref{grevThm}, which asserts that two order unit spaces are necessarily isomorphic if there exists a gauge-reversing bijection between their open cones. For \emph{complete} order unit spaces this has been shown in \Cref{DPhi}; in order to reduce to this case, we will extend a gauge-reversing bijection between possibly non-complete order unit spaces to their completions. Finally, we exploit this extension to deduce from \Cref{MAINTHMC} that a non-complete order unit space that admits a gauge-reversing bijection to a second order unit space is a norm dense subalgebra of a JB-algebra.\\
\ind Let $(V, V_+, v)$ be a, possibly non-complete, order unit space. In \Cref{VhatL} we constructed a complete order unit space $(\hat{V}, \hat{V}_+, v)$ such that $(\hat{V}, \lVert\cdot\rVert_v)$ is the completion of the normed space $(V, \lVert\cdot \rVert_v)$.

\begin{lemma}\label{dTcompl} Let $(V, V_+, v)$ be an order unit space, and let $(\hat{V}, \hat{V}_+, v)$ be its completion. The inclusion map $i \colon (C, d_T) \to (\hat{C}, d_T)$ is a completion of the metric $(C, d_T)$.
\end{lemma}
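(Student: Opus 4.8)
To show that $i\colon (C, d_T) \to (\hat C, d_T)$ is a completion of the metric space $(C, d_T)$, I need to verify two things: that $i$ is an isometric embedding with dense image, and that $(\hat C, d_T)$ is a complete metric space. Density is the easy part: by construction $V$ is $\lVert\cdot\rVert_v$-dense in $\hat V$, and since $\hat C$ is the $\lVert\cdot\rVert_v$-interior of $\hat V_+$, every point of $\hat C$ lies in $[\lambda^{-1}v, \lambda v]$ for some $\lambda > 1$; perturbing an approximating sequence from $V$ by a small multiple of $v$ (exactly as in the proof of \Cref{VhatL}) produces a sequence in $C$ converging in $\lVert\cdot\rVert_v$, hence in $d_T$ by the local bi-Lipschitz equivalence of $d_T$ and $\lVert\cdot\rVert_v$ established after \Cref{NldT}. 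That $i$ is isometric is immediate, since Thompson's metric on $C$ is literally the restriction of Thompson's metric on $\hat C$: the defining infimum $d_T(x,y) = \log\inf\{\lambda \ge 1 : \lambda^{-1}y \le x \le \lambda y\}$ only references the order, and for $x, y \in C$ the relation $x \le \lambda y$ holds in $V$ if and only if it holds in $\hat V$, because $\hat V_+ \cap V = V_+$ (the cone $\hat V_+$ is the closure of $V_+$, and $V_+$ is already norm-closed in $V$ as $(V,V_+)$ is archimedean).

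**The substantive point is completeness of $(\hat C, d_T)$.** Here the plan is to reduce to completeness of $(\hat V, \lVert\cdot\rVert_v)$, which holds by \Cref{VhatL}. Let $(x_n)_n$ be a $d_T$-Cauchy sequence in $\hat C$. I claim it is eventually trapped in a single Thompson ball $[\lambda^{-1}v, \lambda v]$ around $v$: indeed, once $d_T(x_m, x_n)$ is small for $m, n \ge N$, all $x_n$ with $n \ge N$ lie within bounded $d_T$-distance of $x_N$, and $d_T(v, x_N) < \infty$, so by the triangle inequality $\sup_{n \ge N} d_T(v, x_n) =: \log\mu < \infty$, giving $x_n \in [\mu^{-1}v, \mu v]$ for $n \ge N$. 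On this ball $d_T$ and $\lVert\cdot\rVert_v$ are bi-Lipschitz with constant $\mu$ (the consequence of Lemmata \ref{dTlN} and \ref{NldT} recorded in the text), so $(x_n)_{n \ge N}$ is $\lVert\cdot\rVert_v$-Cauchy, hence converges in $\hat V$ to some $x$ with $\mu^{-1}v \le x \le \mu v$; in particular $x \in \hat C$. Applying the bi-Lipschitz bound once more gives $d_T(x_n, x) \le \mu \lVert x_n - x\rVert_v \to 0$, so $(x_n)_n$ converges in $(\hat C, d_T)$.

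**The main obstacle**—modest as it is—is keeping straight that $d_T$-Cauchy sequences cannot "escape to the boundary" of $\hat C$: a priori a sequence could drift toward a point of $\hat V_+ \setminus \hat C$ while remaining $d_T$-Cauchy, which would break the argument. The resolution is precisely the observation above that $d_T$-Cauchy forces eventual confinement to a fixed Thompson ball $[\mu^{-1}v,\mu v]$, which is $\lVert\cdot\rVert_v$-closed in $\hat V$ and contained in $\hat C$; once inside such a ball the two metrics are uniformly comparable and everything transfers from the known completeness of $\hat V$. No genuinely new estimate is needed beyond those already assembled in \Cref{prelSec}.
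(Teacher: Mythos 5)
Your proposal is correct and, for the parts the paper actually writes out, follows the same route: isometry because the order on $V$ is the restriction of the order on $\hat V$ (your observation that archimedeanness makes $V_+$ norm-closed, hence $\hat V_+\cap V=V_+$, is exactly what justifies the paper's one-line claim), and density via approximants from $V$ shifted up by a small multiple of $v$ combined with \Cref{dTlN}. The paper's proof stops there and tacitly takes the completeness of $(\hat C, d_T)$ for granted, whereas you additionally prove it by trapping a $d_T$-Cauchy sequence in a fixed Thompson ball $[\mu^{-1}v,\mu v]$ and transferring to the norm via Lemmata \ref{dTlN} and \ref{NldT}; this extra step is correct and in fact makes your write-up more self-contained than the paper's.
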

\begin{proof}
The map $i$ is an isometry, because the ordering on $V$ is the restriction of the ordering on $\hat{V}$ to $V$. Hence it suffices to show that $C$ is $d_T$-dense in $\hat{C}$. Indeed, let $x \in \hat{C}$ be given and pick $\epsilon > 0$ such that $x \ge \epsilon v$. Let $(y_n)_n$ be a sequence in $V$ such that $\lVert y_n - x \rVert_v \to 0$. Set $x_n := y_n + \lVert y_n - x \rVert_v v$ for each $n$, and note that $x_n \ge x$. Consequently $x_n \ge \epsilon v$, so \Cref{dTlN} gives that $d_T(x_n, x) \le \epsilon^{-1} \lVert x_n - x\rVert_v \le 2\epsilon^{-1} \lVert y_n - x\rVert_v$. We conclude that $d_T(x_n, x) \to 0$ as $n \rightarrow \infty$. Since $x \in \hat{C}$ was arbitrary, this shows that $C$ is $d_T$-dense in $\hat{C}$ as desired.
\end{proof}

\begin{lemma}\label{hatPhi} Let $\Phi\colon C \to D$ be a bijective $d_T$-isometry between the open cones of order unit spaces $V$ resp. $W$. Then there is a unique bijective $d_T$-isometry $\hat{\Phi}\colon \hat{C} \to \hat{D}$ extending $\Phi$. If $\Phi$ is gauge-preversing (resp. gauge-reserving), then $\hat{\Phi}$ is also gauge-preversing (resp. gauge-reserving).
\end{lemma}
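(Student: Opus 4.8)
The plan is to construct $\hat\Phi$ via the universal property of the metric completion supplied by \Cref{dTcompl}, and then to transport the gauge identity from the dense subcone $C$ to all of $\hat C$ by continuity, the crucial point being that the gauge function $m$ is continuous for Thompson's metric.

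First I would use \Cref{dTcompl}, which identifies $(\hat C,d_T)$ and $(\hat D,d_T)$ as the metric completions of $(C,d_T)$ and $(D,d_T)$. Since $\Phi$ is a $d_T$-isometry it is uniformly continuous, so it extends uniquely to a continuous map $\hat\Phi\colon\hat C\to\hat D$; the identity $d_T(\hat\Phi(x),\hat\Phi(y))=d_T(x,y)$ holds on the dense set $C\times C$ and both sides are continuous, so $\hat\Phi$ is again a $d_T$-isometry. Applying the same construction to $\Phi^{-1}$ produces a $d_T$-isometry $\Psi\colon\hat D\to\hat C$ extending $\Phi^{-1}$. The composites $\Psi\circ\hat\Phi$ and $\hat\Phi\circ\Psi$ are continuous and restrict to the identity on the dense subsets $C$ and $D$, hence equal the identity; thus $\hat\Phi$ is a bijection with inverse $\Psi$. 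Uniqueness of $\hat\Phi$ among bijective $d_T$-isometries extending $\Phi$ is immediate, since any two continuous extensions of $\Phi$ agree on the dense set $C$.

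Next I would check that the gauge function $m$ is $d_T$-continuous on the open cone of any order unit space, hence in particular on $C\times C$, on $\hat C\times\hat C$ and on $\hat D\times\hat D$. For fixed $x_0,y_0$ and arbitrary $x,y$, chaining the inequalities $m(x,x_0)x_0\le x$, $m(x_0,y_0)y_0\le x_0$ and $m(y_0,y)y\le y_0$ gives $m(x,y)\ge m(x,x_0)\,m(x_0,y_0)\,m(y_0,y)$; since $m(a,b)\ge e^{-d_T(a,b)}$ always (because $M(b,a)\le e^{d_T(a,b)}$ and $m$, $M$ are multiplicatively inverse), this yields
\[
e^{-d_T(x,x_0)-d_T(y,y_0)}\,m(x_0,y_0)\ \le\ m(x,y)\ \le\ e^{\,d_T(x,x_0)+d_T(y,y_0)}\,m(x_0,y_0),
\]
the upper bound coming from the symmetric estimate with the roles of $(x,y)$ and $(x_0,y_0)$ interchanged. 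As $d_T$ induces the order unit norm topology, this proves continuity of $m$ with an explicit modulus, on $C\times C$ as well as on the completion.

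Finally I would propagate the gauge identity. Suppose $\Phi$ is gauge-reversing, so $m(\Phi(x),\Phi(y))=m(x,y)$ for all $x,y\in C$. The two maps $(x,y)\mapsto m(\hat\Phi(x),\hat\Phi(y))$ and $(x,y)\mapsto m(x,y)$ are continuous on $\hat C\times\hat C$ (the former as a composite of the continuous map $\hat\Phi\times\hat\Phi$ with $m$ on $\hat D\times\hat D$), and they coincide on $C\times C$, which is dense in $\hat C\times\hat C$ because $C$ is $d_T$-dense in $\hat C$; therefore they coincide everywhere, i.e.\ $\hat\Phi$ is gauge-reversing. The gauge-preserving case is verbatim the same, with $m(x,y)$ replaced throughout by $m(y,x)$. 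There is no serious obstacle here: the argument is formal once the continuity of $m$ is in place, and the only point requiring an argument rather than mere bookkeeping is that continuity estimate together with the care that all density statements refer to the Thompson (equivalently, norm) topology — which is precisely what \Cref{dTcompl} and the displayed bound provide.
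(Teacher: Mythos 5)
Your construction and uniqueness of $\hat{\Phi}$ coincide with the paper's (both rest on \Cref{dTcompl} and the universal property of the metric completion), but your transfer of the gauge identity to $\hat{C}$ is a genuinely different argument, and it is correct. The paper never touches the gauge functions directly at this stage: it shows that $\hat{\Phi}$ is order-reversing (resp.\ order-preserving) by sandwiching $x\le y$ in $\hat{C}$ between monotone approximating sequences from $C$ and using \Cref{NldT} together with the closedness of $\hat{W}_+$, proves homogeneity of degree $\pm 1$ by conjugating with the scaling isometries $x\mapsto\lambda x$ and invoking uniqueness of isometric extensions, and only then concludes via the characterizations in \Cref{graor} and \Cref{gphop}. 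You instead prove the quantitative two-sided bound $e^{-d_T(x,x_0)-d_T(y,y_0)}\,m(x_0,y_0)\le m(x,y)\le e^{d_T(x,x_0)+d_T(y,y_0)}\,m(x_0,y_0)$ (which is correct: it follows from the submultiplicativity \eqref{dMeq} and $m(a,b)=M(b,a)^{-1}\ge e^{-d_T(a,b)}$), so that $\log m$ is $1$-Lipschitz in each variable for $d_T$, and then extend the identity $m(\Phi(x),\Phi(y))=m(x,y)$ (resp.\ the gauge-preserving identity) from the dense set $C\times C$ by continuity. This bypasses the order-reversal and homogeneity steps and the appeal to \Cref{graor}/\Cref{gphop}, and yields a continuity statement for the gauges that is of independent use; the paper's route, in exchange, reuses machinery already in place and records explicitly that $\hat{\Phi}$ is an order-anti-isomorphism, which is the form in which the extension is exploited later. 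One small point you leave implicit (as does the paper in \Cref{dTcompl}): for $x,y\in C$ the gauge computed in $V$ agrees with the gauge computed in $\hat{V}$, because the order on $V$ is the restriction of the order on $\hat{V}$; this is needed so that the hypothesis on $\Phi$ really is the restriction to $C\times C$ of the identity you extend.
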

\begin{proof}
By the universal property of the completion of a metric space, there is a unique bijective isometry $\hat{\Phi}\colon (\hat{C}, d_T) \to (\hat{D}, d_T)$ extending $\Phi$.\\
\ind Suppose that $\Phi$ is order-reserving. Let $x, y \in \hat{C}$ be such that $x \le y$. We shall show that $\hat{\Phi}(x) \ge \hat{\Phi}(y)$. As in the proof of \Cref{dTcompl} we can find sequences $(x_n)_n$ and $(y_n)_n$ in $C$ which $d_T$-converge to $x$ resp. $y$, and satisfy $x_n \le x \le y \le y_n$ for each $n$. Since $\Phi$ is order-reversing, we have $\Phi(x_n) \ge \Phi(y_n)$ for each $n$. Moreover, we have that $d_T(x_n, x) = d_T(\Phi(x_n), \hat{\Phi}(x)) \to 0$, so \Cref{NldT} gives that $\lVert \Phi(x_n) - \hat{\Phi}(x)\rVert_w \to 0$ as $n \to \infty$.  Similarly, we have $\lVert \Phi(y_n) - \hat{\Phi}(y)\rVert_w \to 0$ as $n \to \infty$. Because the cone $\hat{V}_+$ is the closure of $V_+$, it follows that $\hat{\Phi}(x) \ge \hat{\Phi}(y)$, which establishes that $\hat{\Phi}$ is order-preserving. Similarly, one may show that $\hat{\Phi}$ is order-preversing whenever $\Phi$ is order-preversing.\\
\ind Assume that $\lambda > 0$ and $\mu > 0$ are such that $\mu \Phi(x) = \Phi(\lambda x)$ for all $x \in C$. The map $m_{C,\lambda}\colon C \to C$ given by $m_{C,\lambda}(x) := \lambda x$ for each $x\in C$ is a bijective $d_T$-isometry. We may reformulate our assumption as $m_{D,\mu} \circ \Phi = \Phi \circ m_{C, \lambda}$. It is plain that $\widehat{m_{C,\lambda}} = m_{\hat{C}, \lambda}$, whence $m_{\hat{D}, \mu} \circ \hat{\Phi} = \hat{\Phi} \circ m_{\hat{C}, \lambda}$. We conclude that $\mu \hat{\Phi}(x) = \hat{\Phi}(\lambda x)$ for all $x \in \hat{C}$. Applying this result with $\lambda > 0$ arbitrary and $\mu = \lambda$ (resp. $\mu = \lambda^{-1}$) shows that $\hat{\Phi}$ is homogeneous of degree $1$ (resp. of degree $-1$) whenever $\Phi$ is.\\
\ind We conclude using \Cref{graor} and \Cref{gphop}.
\end{proof}

\Cref{grevThm} will now be demonstrated.

\begin{theorem}\label{ncThmAB} Let $\Phi\colon C \to D$ be a gauge-reversing bijection between the open cones of order unit spaces $(V, V_+, v)$ resp. $(W, W_+, w)$. Then these order unit spaces are isomorphic.
\end{theorem}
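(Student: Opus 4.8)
The plan is to reduce the non-complete case to the already-proven complete case via the completion construction assembled just above. Concretely, given a gauge-reversing bijection $\Phi\colon C\to D$ between the open cones of order unit spaces $(V,V_+,v)$ and $(W,W_+,w)$, I would first invoke \Cref{dTisom} to note that $\Phi$ is in particular a bijective $d_T$-isometry. Then \Cref{hatPhi} produces a gauge-reversing bijection $\hat\Phi\colon\hat C\to\hat D$ between the open cones of the completions $(\hat V,\hat V_+,v)$ and $(\hat W,\hat W_+,w)$, which are complete order unit spaces by \Cref{VhatL}. Applying \Cref{DPhi} (or \Cref{MAINTHMC}(2)) to $\hat\Phi$ yields an isomorphism of order unit spaces $-D\hat\Phi(v)\colon(\hat V,\hat V_+,v)\to(\hat W,\hat W_+,w)$.

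The second step is to descend this isomorphism $T:=-D\hat\Phi(v)\colon\hat V\to\hat W$ back to the dense subspaces $V$ and $W$. Since $T$ restricts to a gauge-preserving bijection $\hat C\to\hat D$ (being homogeneous of degree $1$ and an order-isomorphism, cf. \Cref{gphop}), and $\hat\Phi$ extends $\Phi$ while $T=-D\hat\Phi(v)$, one expects $T$ to map $C$ into $D$; but the crux is whether $T(V)=W$, i.e. whether $T$ carries the dense subspace $V\subset\hat V$ exactly onto $W\subset\hat W$. This is the point where some genuine work is needed: an isometric isomorphism of the completions need not respect arbitrarily chosen dense subspaces. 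I would argue it does here because $T$ is built from the \emph{given} map $\Phi$, not an abstract extension. Precisely, recall from \Cref{GPhi} that $\Phi(v+y)-\Phi(v)-D\Phi(v)(y)$ is controlled quadratically, so for $y\in V_+$ the Gateaux derivative direction $D\Phi(v)(y)=\lim_{\mu\downarrow 0}\mu^{-1}(\Phi(v+\mu y)-\Phi(v))$ lies in the $\lVert\cdot\rVert_w$-closure of the $\mathbb R$-span of $\{\Phi(x)-\Phi(x'):x,x'\in C\}\subseteq W$. One must check this closure is already contained in $W$; since $\Phi(C)=D\subseteq W$ and $W=W_+-W_+=\ldots$ is norm-dense but not closed in $\hat W$, a more careful argument is required: I would instead use that $-D\hat\Phi(v)\colon\hat C\to\hat D$ maps $C$ to $D$ and conversely $-D\widehat{\Phi^{-1}}(w)$ maps $D$ to $C$, and that these two maps are mutually inverse, so that $T$ restricts to a bijection $C\to D$, hence to a bijection $C-C\to D-D$, i.e. $V\to W$ (using $V=C-C$, $W=D-D$).

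To make the last sentence rigorous, the key sublemma is: the isomorphism $-D\hat\Phi(v)\colon\hat V\to\hat W$ sends $C$ onto $D$. I would prove this by observing that $-D\hat\Phi(v)$ and $-D\widehat{\Phi^{-1}}(w)$ are both order-isomorphisms homogeneous of degree $1$ between the relevant open cones of completions (\Cref{gphop} applied to the restrictions), and by the chain rule (\Cref{chr}) together with \Cref{!Sx}-type uniqueness they compose to the identity; since each is determined by $\Phi$ on the original cone and $\Phi(C)=D$, a density and continuity argument pins down that the restriction of $-D\hat\Phi(v)$ to $C$ has image exactly $D$. Once $T|_C\colon C\to D$ is a bijection, linearity and $V=C-C$, $W=D-D$ give $T(V)=W$, so $T|_V\colon(V,V_+,v)\to(W,W_+,w)$ is the desired order unit space isomorphism.

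The main obstacle, as indicated, is the descent step: controlling that the abstract isomorphism of completions restricts to an isomorphism of the original, possibly non-complete, order unit spaces. Everything else---producing $\hat\Phi$, differentiating it, and recognizing $-D\hat\Phi(v)$ as an isomorphism---is immediate from \Cref{hatPhi}, \Cref{VhatL}, and \Cref{DPhi}. I expect the cleanest route is to track images of cones rather than of abstract vectors, using the mutually-inverse pair $\hat\Phi,\widehat{\Phi^{-1}}$ and the fact that both restrict compatibly to the given $\Phi,\Phi^{-1}$ on $C,D$, so that $-D\hat\Phi(v)$ and $-D\widehat{\Phi^{-1}}(w)$ are mutually inverse cone isomorphisms carrying $C\leftrightarrow D$.
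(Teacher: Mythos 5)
Your reduction to the completions coincides with the paper's first step: extend $\Phi$ to $\hat\Phi\colon\hat C\to\hat D$ via \Cref{hatPhi} (after \Cref{VhatL} and \Cref{dTisom}), and apply \Cref{DPhi} to obtain the order unit space isomorphism $-D\hat\Phi(v)\colon(\hat V,\hat V_+,v)\to(\hat W,\hat W_+,\Phi(v))$. The genuine gap is in the descent step. Your ``key sublemma'' -- that $-D\hat\Phi(v)$ carries $C$ onto $D$, equivalently $V$ onto $W$ -- is precisely the nontrivial point, and the justification you offer (that $-D\hat\Phi(v)$ and $-D\widehat{\Phi^{-1}}(w)$ are mutually inverse gauge-preserving bijections of the \emph{completed} cones, ``determined by $\Phi$'', so that ``a density and continuity argument pins down'' the image) does not prove it. Density and continuity are exactly what is insufficient here: $-D\hat\Phi(v)(z)$ is a norm limit of the difference quotients $\mu^{-1}\bigl(\Phi(v)-\Phi(v+\mu z)\bigr)$, each of which does lie in $W$, but $W$ is only dense, not closed, in $\hat W$, so the limit has no reason to lie in $W$. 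You flagged this very obstruction yourself when abandoning your first attempt via \eqref{GPhi}, and the second route simply reasserts the conclusion without supplying the ``more careful argument'' you promised; an abstract isomorphism of completions mapping $\hat C$ onto $\hat D$ need not match up prescribed dense subcones, and nothing in your argument uses more than that abstract structure.

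The paper closes this gap with Hua's identity, \Cref{HuaPhi}, which your proposal never invokes: for $z\in C$ with $z\le\tfrac12 v$ one has $\Phi(z)-\Phi(v)\in D$, and applying \eqref{Hua} at $x=v$ with $y=\Phi^{-1}\bigl(\Phi(z)-\Phi(v)\bigr)$ gives the closed formula
\begin{equation*}
-D\hat\Phi(v)(z)\;=\;\Phi(v)-\Phi\bigl(v+\Phi^{-1}(\Phi(z)-\Phi(v))\bigr),
\end{equation*}
an expression built from $\Phi$, $\Phi^{-1}$ and elements of $C$ and $D$ with no limiting process, hence manifestly an element of $W$. Since $\{z\in C:z\le\tfrac12 v\}$ spans $V$ and $-D\hat\Phi(v)$ is linear, this yields $-D\hat\Phi(v)[V]\subset W$, and the symmetric argument applied to $\widehat{\Phi^{-1}}$ yields $-D(\widehat{\Phi^{-1}})(w')[W]\subset V$ for $w'=\Phi(v)$, so the restriction of $-D\hat\Phi(v)$ is the desired isomorphism. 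To repair your proposal you need this (or an equivalent closed-form expression for the derivative, such as \Cref{HuaDPhi}) in place of the density-and-continuity appeal; the rest of your outline then goes through as in the paper.
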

\begin{proof}
\Cref{hatPhi} yields a gauge-reserving bijection $\hat{\Phi}\colon \hat{C} \to \hat{D}$ between the open cones of the completions that extends $\Phi$. Now \Cref{DPhi} furnishes the isomorphism of order unit spaces $$-D\hat{\Phi}(v)\colon (\hat{V}, \hat{V}_+, v) \to (\hat{W}, \hat{W}_+, w),$$
whose inverse is given by $-D(\hat{\Phi}^{-1})(w)\colon (\hat{W}, \hat{W}_+, w) \to (\hat{V}, \hat{V}_+, v).$\\
\ind Let us show that $D\hat{\Phi}(v)[V] \subset W$. First, consider $z \in C$ with $z \le \tfrac{1}{2}v$. Then we obtain that $\Phi(z) - w \ge \Phi(\tfrac{1}{2}v) - w = 2w - w = w,$ so $\Phi(z) - w \in D$. Applying 
\Cref{HuaPhi} with $x = v$ and $y = \Phi^{-1}(\Phi(z) - w)$ gives
$$\Phi(v+y) - D\hat{\Phi}(v)(\Phi^{-1}(\Phi(v)+\Phi(y))) = \Phi(v).$$
Since $\Phi(v) + \Phi(y) = \Phi(z)$ and $\hat{\Phi}$ extends $\Phi$, we obtain
$$-D\hat{\Phi}(v)(z) = w - \Phi(v+y) \in D.$$
Since $\{z \in C: z \le \tfrac{1}{2}v\}$ spans $V$ and $-D\hat{\Phi}(v)$ is linear, it follows that $-D\hat{\Phi}(v)[V] \subset W$. Similarly one shows that $-D(\widehat{\Phi}^{-1})(w)[W] \subset V$. We conclude that $-D\widehat{\Phi}(v)$ restricts to an isomorphism of order unit spaces from $(V, V_+, v)$ onto $(W, W_+, w)$.
\end{proof}

\begin{theorem}\label{evThm}
Let $(V, V_+, v)$ be an order unit space. Suppose there exists a gauge-reversing bijection $\Phi\colon C \to D$ to a second order unit space $(W, W_+, w)$. Then there exists a unique Jordan algebra structure on $V$ with identity element $v$ such that $V_+$ equals the closure of $\{x^2: x \in V\}$ in $V$ and $V$ embeds isometrically as a subalgebra of a JB-algebra.
\end{theorem}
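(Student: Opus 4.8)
The plan is to bootstrap the statement from the completed case, i.e.\ from \Cref{MAINTHMC} and \Cref{ncThmAB}. By \Cref{VhatL} the completions $(\hat V,\hat V_+,v)$ and $(\hat W,\hat W_+,w)$ are complete order unit spaces, and by \Cref{remAKE} we have $V_+=\hat V_+\cap V$ and $C=\hat C\cap V$, the inclusion $V\hookrightarrow\hat V$ being isometric; moreover the square map $x\mapsto x^2=\hat P(x)v$ and the inversion are well-behaved on $\hat V$ once we have the JB-structure there. By \Cref{hatPhi} the gauge-reversing bijection $\Phi\colon C\to D$ extends to a gauge-reversing bijection $\hat\Phi\colon \hat C\to\hat D$, so \Cref{MAINTHMC} equips $\hat V$ with its unique JB-algebra structure with unit $v$, cone of squares $\hat V_+$, quadratic representation $\hat P\colon\hat V\to L(\hat V)$ (a continuous quadratic polynomial map) and inversion $\hat j=S_v\colon\hat C\to\hat C$, the unique symmetry of $\hat C$ at $v$. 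The entire task is to show that these operations restrict to $V$.

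The key step is to show that for every $x\in C$ the symmetry $S_x\colon\hat C\to\hat C$ of $\hat C$ at $x$ restricts to a self-bijection of $C$. Since the open cone $C$ and the gauge-reversing property of $\Phi$ are independent of the choice of order unit, \Cref{ncThmAB} applies verbatim to the order unit space $(V,V_+,x)$, whose completion is $(\hat V,\hat V_+,x)$: running the argument there (i.e.\ using Hua's identity \Cref{HuaPhi} at $x$ and the fact that $\{z\in C:z\le\tfrac12 x\}$ spans $V$, so that for such $z$ one has $-D\hat\Phi(x)(z)=\Phi(x)-\Phi\bigl(x+\Phi^{-1}(\Phi(z)-\Phi(x))\bigr)\in D$) shows that $-D\hat\Phi(x)$ restricts to an isomorphism $(V,V_+,x)\to(W,W_+,\Phi(x))$ whose inverse $[-D\hat\Phi(x)]^{-1}$ restricts to a map $W\to V$. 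Since $S_x=[-D\hat\Phi(x)]^{-1}\circ\hat\Phi$ by \Cref{SymMake} and $\hat\Phi[C]=D$, we get $S_x[C]\subseteq[-D\hat\Phi(x)]^{-1}[W]\subseteq V$; as $S_x[\hat C]=\hat C$ this gives $S_x[C]\subseteq\hat C\cap V=C$, and $S_x$ being involutive (\Cref{SymEquiv}) it restricts to a bijection $C\to C$.

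From this it follows that $\hat j=S_v$ maps $C$ onto $C$, and for $x\in C$ we have $\hat P(x)|_{\hat C}=S_x\circ\hat j$ (using $S_x=\hat P(x)\circ\hat j$ and $\hat j^2=\id$), hence $\hat P(x)[C]\subseteq C\subseteq V$; by linearity and $V=C-C$ this gives $\hat P(x)[V]\subseteq V$ for all $x\in C$, and then by polarizing the quadratic polynomial $\hat P$ and again using $V=C-C$ one obtains $\hat P(x)[V]\subseteq V$ for every $x\in V$. Consequently $V$ is closed under the Jordan product $x\circ y=\tfrac12\bigl(\hat P(x+y)-\hat P(x)-\hat P(y)\bigr)v$ of $\hat V$ and contains $v$, so $V$ is a unital Jordan subalgebra of the JB-algebra $\hat V$, into which it embeds isometrically; this proves existence. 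For the cone: $\{x^2:x\in V\}=\{\hat P(x)v:x\in V\}\subseteq\hat V_+\cap V=V_+$, and $V_+$ is norm-closed in $V$; conversely every $y\in V_+\subseteq\hat V_+$ equals $z^2$ for some $z\in\hat V$, and approximating $z$ by elements of $V$ and using continuity of $\hat P$ shows $\{x^2:x\in V\}$ is dense in $V_+$, so $V_+=\overline{\{x^2:x\in V\}}^{\,V}$.

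For uniqueness, suppose $\circ'$ is any Jordan product on $V$ with unit $v$ such that $V_+=\overline{\{x^{\circ'2}:x\in V\}}^{\,V}$ and $(V,\circ',\lVert\cdot\rVert_v)$ embeds isometrically as a subalgebra of a JB-algebra $B$. The closure of (the image of) $V$ inside $B$ is a JB-subalgebra, hence a JB-algebra, which is a completion of $(V,\lVert\cdot\rVert_v)$ and so is identified with $\hat V$; its cone of squares is the closure of $\{x^{\circ'2}:x\in V\}$, equal to $\overline{V_+}^{\hat V}=\hat V_+$ by \Cref{VhatL}. Thus $\circ'$ extends to a JB-algebra structure on the order unit space $(\hat V,\hat V_+,v)$, which by the uniqueness clause of \Cref{MAINTHMC}(4) coincides with the one produced above, whence $\circ'$ equals the product constructed. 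The main obstacle is the middle step — transporting the property ``$S_x$ preserves $V$'', established for $S_v$ in \Cref{ncThmAB}, to every point $x\in C$ by re-running the Hua-identity argument with an arbitrary order unit; the rest is routine density and polynomial-extension bookkeeping.
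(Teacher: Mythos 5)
Your proposal is correct and follows the paper's overall strategy: complete the spaces, extend $\Phi$ to $\hat\Phi$ via \Cref{hatPhi}, obtain the JB-algebra structure on $\hat V$ from \Cref{MAINTHMC}, show that $V$ is a Jordan subalgebra of $\hat V$, identify $V_+$ with the closure of the set of squares, and deduce uniqueness from the uniqueness clause of \Cref{MAINTHMC}; the cone-of-squares and uniqueness steps are essentially verbatim the paper's. The one genuine divergence is the subalgebra step. The paper gets by with the single symmetry $j=S_v$: the proof of \Cref{ncThmAB} shows $-D\hat\Phi(v)$ restricts to an isomorphism $(V,V_+,v)\to(W,W_+,w)$, so $j=-[D\hat\Phi(v)]^{-1}\circ\hat\Phi$ maps $C$ into $C$, and the Hua-type formula \eqref{Pxy} then expresses $x^2=P(x)v$ for $x\in C$ purely in terms of $j$, giving $x^2\in V$ and hence $x\circ y\in V$ via $V=C-C$. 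You instead re-run the Hua-identity argument of \Cref{ncThmAB} at every base point $x\in C$ (legitimate, since gauge-reversal, the $d_T$-completion and the Fr\'echet derivative are unchanged when the order units $v,w$ are replaced by $x,\Phi(x)$, the norms being equivalent), conclude that every symmetry $S_x$ with $x \in C$ restricts to a bijection of $C$, and then obtain $\hat P(x)[C]\subseteq C$ from $\hat P(x)|_{\hat C}=S_x\circ j$ before polarizing $\hat P$. Both routes rest on the same mechanism (Hua's identity forcing the linearization to preserve $V$); yours yields the somewhat stronger intermediate fact that all symmetries at points of $C$ preserve $C$, at the cost of repeating the argument of \Cref{ncThmAB} with a varying order unit, whereas the paper needs it only at $v$ and lets identity \eqref{Pxy} do the rest.
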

\begin{proof}
Let $(\hat{V}, \hat{V}_+, v)$ be the completion of the order unit space $(V, V_+, v)$. According to \Cref{hatPhi} we can extend $\Phi$ to a gauge-reversing bijection $\hat{\Phi}\colon \hat{C} \to \hat{D}.$ Invoke \Cref{MAINTHMC} to obtain a unique JB-algebra structure on $\hat{V}$ with cone of squares $\{x^2: x \in \hat{V}\}=\hat{V}_+$ and unit element $v$. Let us show that $V$ is a Jordan subalgebra of $\hat{V}$, i.e.\ that $x,y \in V$ implies $x \circ y \in V$.\\
\ind Suppose first that $x,y \in C$, so that $x+y \in C$ as well. Identity \eqref{Pxy} gives that
$$x^2 = P(x)v = j(j(x) - j(x+v)) \in V,$$
and similarly $y^2, (x+y)^2 \in V$. It follows that $x 
\circ y = \tfrac{1}{2}((x+y)^2-x^2-y^2) \in V.$ Since $V = C - C$ and the product $\circ$ is bilinear, we obtain that $x \circ y \in V$ for all $x,y\in V$. Therefore $V$ is a norm dense Jordan subalgebra of the JB-algebra $\hat{V}$ containing the identity element $v$.\\
\ind Let us now show that the closure of $\{x^2: x \in V\}$ in $V$ is equal to $V_+$. \Cref{MAINTHMC} asserts that $\{x^2: x \in \hat{V}\} = \hat{V}_+$. By consequence, $\{x^2: x \in V\} \subset \{x^2: x \in \hat{V}\} \cap V = \hat{V}_+ \cap V = V_+$. Since $\hat{V}_+$ is closed in $\hat{V}_+$, it follows that $V_+ = V \cap \hat{V}_+$ it closed in $V$ and the closure of $\{x^2: x \in V\}$ in $V$ is a subset of $V_+$. Conversely, suppose that $y \in V_+$. Then there exists $x \in \hat{V}_+$ with $x^2 = y$. Since $V$ is dense in $\hat{V}$, we can choose a sequence $(x_n)_n$ in $V$ converging to $x$. Because the squaring map is continuous in any JB-algebra, it follows that $(x_n^2)_n$ converges to $x^2 = y$. Thus $y$ belongs to the closure of $
\{x^2: x\in V\}$ in $V$, as desired. This concludes the existence part of the theorem.\\
\ind We now turn to the uniqueness part. Let $B$ be a JB-algebra containing $V$ isometrically as a Jordan subalgebra such that $V_+$ equals the closure of $\{x^2: x\in V\}$ in $V$ and $v$ is an identity element of $V$. Because the embedding of $V$ into $B$ is isometric, we may and will identify the completion $\hat{V}$ of $(V, \lVert\cdot\rVert_v)$ with the closure of $V$ in $B$. Since the multiplication of $B$ is continuous, one sees that $\hat{V}$ is a JB-subalgebra of $B$ with identity element $v$. For the same reason, we have that the set $\{x^2: x \in \hat{V}\}$, which is closed in $\hat{V}$ since $\hat{V}$ is a JB-algebra, is equal to the closure of $\{x^2: x \in V\}$ in $\hat{V}$. Now we have assumed $V_+$ to be the closure of $\{x^2: x\in V\}$ in $V$, and the closure of $V_+$ in $\hat{V}$ is given by $\hat{V}_+$, from where $\{x^2: x \in \hat{V}\} = \hat{V}_+$. According to \Cref{MAINTHMC} there is at most one structure of JB-algebra on $\hat{V}_+$ having cone of squares $\hat{V}_+$ and unit element $v$, and it follows that the structure of Jordan algebra on $V$ having the properties stated in the theorem is unique as well.
\end{proof}

\subsection{Non-unital JB-algebras}\label{nuJBSec}
The main result of this section is \Cref{MAINTHMNU}, which provides a characterization of not necessarily unital JB-algebras among the ordered Banach spaces. As apposed to the rest of this article, JB-algebras are not assumed to possess an identity element.\\

Let us first note two properties of a JB-algebra $A$. First, the positive cone $A_+ := \{x^2: x \in A\}$ is norm closed. Second, the spectral norm $\lVert \cdot \rVert$ is \emph{full}, meaning that $x \le y \le z$ implies $\lVert y\rVert \le \max\{\lVert x\rVert, \lVert z\rVert\}$ for all $x,y,z\in A$. Therefore, the only ordered Banach spaces which potentially admit a compatible structure of JB-algebra have a closed cone and a full norm.\\
\ind We will make essential use of the \emph{order unitization} of an ordered normed space $A$, considered in
\cite{vDdB25}[Thm. 1.3], which is an initial object $h\colon A \to V$ in the category of positive contractive maps of $A$ into an order unit space.
The map $h$ is a bipositive isometry if and only if $A_+$ is closed and the norm is full \cite{vDdB25}[Cor. 3.11 and Cor. 5.7]. In view of the preceding paragraph, we may restrict our attention to this case.

\begin{proposition}\label{hAdj}
Let $A$ be a ordered normed space with closed cone and full norm. Then there exists an isometric order embedding $h\colon A \to V$ of $A$ as a closed hyperplane in an order unit space $(V, V_+, v)$ such that for any order unit space $(W, W_+, w)$ and positive contractive linear map $k\colon A \to W$ there exists a unique positive linear map $T\colon V \to W$ such that $T(v) = w$ and $T \circ h = k$.
\end{proposition}
\begin{proof}
See \cite[Thm. 1.3, Cor. 3.11 and Cor. 5.7]{vDdB25}.
\end{proof}

For the convenience of the reader, we give a brief description of the construction of the order unitization $(V, V_+, v)$ in \cite[Def. 3.1]{vDdB25}. The vector space $V$ is the direct sum
$$V := A \oplus \R v$$
and $h\colon A \to V$, $h(x) = x$ is the inclusion map. Denote the open unit ball of $A$ by $B_A^{\circ}$ and set
\begin{equation}\label{Gdef}
G := B_A^{\circ} + A_+.
\end{equation}
By virtue of our assumption that $A_+$ is closed and $\lVert\cdot\rVert$ is full, the open cone $C := V_+^{\circ}$ is given by
\begin{equation}\label{GDf}
C := \bigcup_{\lambda > 0} \lambda (G + v).
\end{equation}

The following theorem characterizes the ordered Banach spaces which are isometrically order-isomorphic to a JB-algebra. 
\begin{theorem}\label{MAINTHMNU}
Let $A$ be an ordered Banach space with closed cone and full norm. We put $G := B_A^{\circ} + A_+$. Then $A$ is a JB-algebra with cone of squares $A_+$ and norm $\lVert\cdot\rVert$ if and only if there exists a bijection $\phi\colon G \to G$ such that for all $x,y \in G$ and $\mu > \lambda > 0$ one has
\begin{equation}\label{mlyx}
    \frac{\mu y - \lambda x}{\mu - \lambda} \in G \iff \frac{\mu \phi(x) - \lambda \phi(y)}{\mu - \lambda} \in G.
\end{equation}
\end{theorem}
\begin{proof}
Suppose that $A$ is a JB-algebra. We may view $A$ as a JB-subalgebra of its \emph{unitization} $$V := A \oplus \R v,$$
endowed with the product
$$(a + \lambda v) \circ (b + \mu v) = a \circ b + \lambda b + \mu a + \lambda \mu v,$$
which is is a JB-algebra with identity element $v$ (see \cite[Section 3.3]{HOSt84}). The linear map $\chi\colon V \to \R$ given by $\chi(x+\lambda v)=\lambda$ for all $x\in A$ and $\lambda \in \R$
is a \emph{character} of $V$, i.e.\ a unital Jordan homomorphism of $V$ onto $\R$, whose kernel is $\ker \chi = A$.\\
\ind Let $C := V_+^{\circ}$ be the open cone of $V$, and note that $C \cap \chi^{-1}(1) = C \cap (v + A) = v + G$. Denote $j\colon C \to C$ the inversion map of the unital JB-algebra $V$. By multiplicativity of $\chi$ we have $\chi(j(u))=\chi(u^{-1})=(\chi(u))^{-1}$, hence $j(C\cap \chi^{-1}(1)) = C \cap \chi^{-1}(1)$. Therefore, we may define a map $\phi\colon G \to G$ by $\phi(x) = j(v+x) - v$. Since $j^2 = \Id_C$, it follows that $\phi^2 = \Id_G$, hence $\phi$ is a bijection. 
Finally, let $\mu > \lambda > 0$ and $x,y \in G$. It follows from
$$
(\mu - \lambda)\left(v + \frac{\mu y - \lambda x}{\mu - \lambda}\right) =
(\mu - \lambda)v + \mu y - \lambda x = 
\mu(v+y) - \lambda(v+x)$$
that
\begin{equation}\label{mliff}
\mu(v+y) - \lambda(v+x) \in C \iff \frac{\mu y - \lambda x}{\mu - \lambda} \in G.   
\end{equation}
Since $j$ is a gauge-reversing bijection, by \Cref{graor}(2)$\iff$(3) we have
\begin{align*}
\mu(v+y) - \lambda(v+x) \in C &\iff j(\lambda(v+x)) - j(\mu(v+y)) \in C\\
&\iff
\lambda^{-1}(v+\phi(x)) - \mu^{-1}(v+ \phi(y)) \in C\\
&\iff
\mu(v+ \phi(x)) - \lambda(v + \phi(y)) \in C.
\end{align*}
The equivalence \eqref{mliff} with $\phi(x)$ in the role of $y$, and $\phi(y)$ in the role of $x$, gives
\begin{equation}\label{mliiff}
\mu(v+ \phi(x)) - \lambda(v + \phi(y)) \in C \iff \frac{\mu \phi(x) - \lambda \phi(y)}{\mu - \lambda} \in G.
\end{equation}
Stringing these equivalences together yields \eqref{mlyx}.\\
\ind Now assume that $\phi\colon G \to G$ is a bijection satisfying \eqref{mlyx}. Let $(V, V_+, v)$ be the order unit space constructed in the proof of \Cref{hAdj}, and let $C$ be its open cone given by \eqref{GDf}. We define $\Phi\colon C \to C$ by 
\begin{equation}
\Phi(\lambda(v+x))=\lambda^{-1}(v+\phi(x)).
\end{equation} Then clearly $\Phi$ is homogeneous of degree $-1$. Since $\phi$ is bijective, and the union \eqref{GDf} is disjoint, it is easy to see that $\Phi$ is bijective. Finally, let us show that $\Phi$ is an order-anti-isomorphism.
Let $\mu(v+y), \lambda(v+x) \in C$. By \Cref{graor}(2)$\iff$(3) it suffices to show that $\mu(v+y) - \lambda(v+x) \in C$ if and only if $\Phi(\lambda(v+x)) - \Phi(\mu(v+y)) \in C$. Using our assumption \eqref{mlyx}, this follows from the equivalences \eqref{mliff} and \eqref{mliiff} as before.\\
\ind Now by \Cref{MAINTHMC}, the map $j := -[D\Phi(v)]^{-1} \circ \Phi$ is the inversion map for a unique JB-algebra structure on $V$ with cone of squares $V_+$ and unit element $v$. It remains to be shown that $A$ is a JB-subalgebra of $V$.\\  
\ind Let us first show that $j(v + G) = v + G$. By construction, the map $\Phi$ satisfies $\Phi(v + G) = v + G$. Since $G$ is an open neighbhorhood of $0$ in $A$, it follows that $D\Phi(v)[A] \subset A$.
Let $w := \Phi(v) \in v + G$. Since $-D\Phi(v)(v)=w$ by \Cref{DPhiusc}, it follows that $-D\Phi(v)$ sends $v + A$ into $v+A$. Applying the same reasoning to $\Phi^{-1}$ and $w$, instead of $\Phi$ and $v$, shows that $-[D\Phi(v)]^{-1}=-D(\Phi^{-1})(w)$ sends $v+A$ into $v+A$. We conclude that $-[D\Phi(v)]^{-1}(v+A) = v+A$. Since $-D\Phi(v)$ is an automorphism of $(V, V_+, v)$ by \Cref{MAINTHMC}(2), we have that $-[D\Phi(v)]^{-1}[C]=C$, hence $-[D\Phi(v)]^{-1}[v+G]=v+G$. Recalling that $j = -[D\Phi(v)]^{-1} \circ \Phi$, we arrive at $j(v+G) = v+G$, as desired. Since $j$ is homogeneous of degree $-1$, we find that $j(\lambda(v+G)) = \lambda^{-1}(v+G)$ for all $\lambda \in G$. In other words, the map $\chi\colon V \to \R$ given by $\chi(x+\lambda v)=\lambda$ for all $x\in A$ and $\lambda \in \R$ satisfies $\chi(j(u))=\chi(u)^{-1}$ for all $u\in C$.\\
\ind We will now prove that $A$ is an ideal of $V$, by proving that $\chi\colon V \to \R$ is a character. 
It suffices to show for all $x \in V$ with $\lVert x\rVert < 1$ that $\chi(x^2)=\chi(x)^2$. But this follows from the fact that $\chi$ intertwines the inversion on the open cones of the unital JB-algebras $V$ and $\R$, in which \Cref{jvx} holds valid:
\begin{align*}
1 - \chi(x^2) &= \chi(v-x^2)\\
&= \chi(2j(j(v-x)+j(v+x)))\\
&= 2(\chi(v-x)^{-1}+\chi(v+x)^{-1})^{-1}\\
&= 2((1-\chi(x))^{-1}+(1+\chi(x))^{-1})^{-1}\\
&= 1-\chi(x)^2,
\end{align*}
as desired. We conclude that $\chi(x\circ y)=\chi(x)\chi(y)$ for all $x,y \in V$. Since $A = \ker \chi$, it follows that $A$ is an ideal of $V$, so in particular $A$ is a subalgebra of $V$. 
Since $A$ is a closed hyperplane in $V$ by \Cref{hAdj}, it follows that $A$ is a JB-subalgebra of $V$. By the same proposition, the embedding of $(A, \lVert\cdot\rVert)$ into $(V, \lVert\cdot\rVert_v)$ is isometric, whence the assertion concerning norms follows. Finally, since $V_+$ is the cone of squares in $V$ we have $$\{x^2: x \in A\} = \{x^2: x \in V\} \cap A = V_+ \cap A = A_+.$$
The proof of the theorem is complete.
\end{proof}

\begin{remarks}
(1) In the backward implication of \Cref{MAINTHMNU}, assume in addition that $\phi(0)=0$ and $-D\Phi(0) = \Id_A$. Then the map $j\colon C \to C$ constructed in the course of proof satisfies $j(v) = v$ and $-Dj(v) = \Id_V$, so $j$ is the symmetry of $C$ at $v$. Then by \Cref{MAINTHMC}, the inversion on $C$ is given by $j$. It follows from \Cref{jvx} that the Jordan product obtained on $A$  is given explicitly, for $x\in A$ with $\lVert x\rVert < 1$, by
\begin{equation}\label{xsq}
x^2 = -\phi\left(\tfrac{1}{2}\phi(x)+ \tfrac{1}{2}\phi(-x)\right).
\end{equation}
Indeed, we have \begin{align*}
x^2 &= v - j(\tfrac{1}{2}j(v-x) + \tfrac{1}{2}j(v+x))\\
&= v - j(\tfrac{1}{2}(v+\phi(-x))+\tfrac{1}{2}(v+\phi(x)))\\
&= v - j(v + \tfrac{1}{2}(\phi(x)+\phi(-x)))\\
&= v - (v + \phi(\tfrac{1}{2}(\phi(x)+\phi(-x)))\\
&= -\phi(\tfrac{1}{2}\phi(x) + \tfrac{1}{2}\phi(-x)).
\end{align*}
(2) Suppose that $A$ is an ordered non-complete normed space with closed cone and full norm, and that $\phi\colon G \to G$ is a bijection satisfying \eqref{mlyx}. Then the same proof in conjunction with \Cref{evThm} shows that $A$ order-isometrically embeds as a non-closed Jordan subalgebra of a unital JB-algebra.
\end{remarks}

\bibliography{referenties.bib}

@Book{Alfsen71,
 Author = {Alfsen, E. M.},
 Title = {Compact convex sets and boundary integrals},
 FSeries = {Ergebnisse der Mathematik und ihrer Grenzgebiete},
 Volume = {57},
 Series = {Ergeb. Math. Grenzgeb.},
 Year = {1971},
 Publisher = {Springer-Verlag, Berlin},
 
 Keywords = {46-01,46A55,52A07,46A03,46G10},
 zbMATH = {3333596},
 Zbl = {0209.42601}
}

@Book{AS01,
 Author = {Alfsen, E. M. and Shultz, F. W.},
 Title = {State spaces of operator algebras. {Basic} theory, orientations, and {{\(C^*\)}}-products},
 ISBN = {0-8176-3890-3},
 Year = {2001},
 Publisher = {Birkh{\"a}user, Boston, MA},
 Keywords = {46L30,46-02,46Lxx,47-02,81P10},
 zbMATH = {1614150},
 Zbl = {0983.46047}
}

@Book{Beer,
 Author = {Beer, G.},
 Title = {Topologies on closed and closed convex sets},
 FSeries = {Mathematics and its Applications (Dordrecht)},
 ISSN = {0921-3791},
 Volume = {268},
 ISBN = {0-7923-2531-1},
 Year = {1993},
 Publisher = {Kluwer Academic Publishers, Dordrecht},
 Series = {Math. Appl.},
 Keywords = {54B20,52-02,54-02,54C60,54C65,54E35,54E15,54E05,46B20,46B10,52A07},
 zbMATH = {490143},
 Zbl = {0792.54008}
}

@article{Bochnak,
 author = {Bochnak, J.},
 title = {Analytic functions in {Banach} spaces},
 fjournal = {Studia Mathematica},
 journal = {Studia Math.},
 issn = {0039-3223},
 volume = {35},
 pages = {273--292},
 year = {1970},
 
 doi = {10.4064/sm-35-3-273-292},
 url = {https://eudml.org/doc/217448},
 zbMATH = {3317388},
 Zbl = {0199.18402}
}

@article{AraChar,
 author = {Araki, H.},
 title = {On a characterization of the state space of quantum mechanics},
 fjournal = {Communications in Mathematical Physics},
 journal = {Comm. Math. Phys.},
 issn = {0010-3616},
 volume = {75},
 pages = {1--24},
 year = {1980},
 
 doi = {10.1007/BF01962588},
 keywords = {46L99,17C65,46H99,81P10,46L30},
 zbMATH = {3692043},
 Zbl = {0443.46046}
}

@article{ISvnA,
 author = {Iochum, B.  and Shultz, F. W.},
 title = {Normal state spaces of {Jordan} and von {Neumann} algebras},
 fjournal = {Journal of Functional Analysis},
 journal = {J. Funct. Anal.},
 issn = {0022-1236},
 volume = {50},
 pages = {317--328},
 year = {1983},
 
 doi = {10.1016/0022-1236(83)90008-3},
 keywords = {46L99,17C65,47C15,17C50},
 zbMATH = {3800357},
 Zbl = {0507.46055}
}

@article{AHOS,
 author = {Alfsen, E. M. and Hanche-Olsen, H. and Shultz, F. W.},
 title = {State spaces of {C}*-algebras},
 fjournal = {Acta Mathematica},
 journal = {Acta Math.},
 issn = {0001-5962},
 volume = {144},
 pages = {267--305},
 year = {1980},
 
 doi = {10.1007/BF02392126},
 keywords = {46L99,46L30,17C65,46A55},
 zbMATH = {3717977},
 Zbl = {0458.46047}
}

@article{ASS78,
author = {Alfsen, E. M. and Shultz, F. W. and St{\o}rmer, E.},
year = {1978},
pages = {11–56},
title = {A {G}elfand–{N}aimark theorem for {J}ordan algebras},
volume = {28},
fjournal = {Advances in Mathematics},
journal = {Adv. Math.},
doi = {10.1016/0001-8708(78)90044-0}
}

@misc{vDdB25,
      title={The Archimedean order unitization of seminormed ordered vector spaces}, 
      author={van Dobben de Bruyn, J.},
      year={Preprint, 2025},
      eprint={arXiv:2204.13688v2},
      archivePrefix={arXiv},
      primaryClass={math.FA},
      url={https://arxiv.org/abs/2204.13688}, 
}

@book{SpVe,
 author = {Springer, T. A. and Veldkamp, F. D.},
 title = {Octonions, {Jordan} algebras and exceptional groups},
 note = {Revised {English} version of the original {German} notes},
 fseries = {Springer Monographs in Mathematics},
 series = {Springer Monogr. Math.},
 issn = {1439-7382},
 isbn = {3-540-66337-1},
 year = {2000},
 publisher = {Springer, Berlin},
 
 keywords = {17-02,17A75,17C10,17C30,20G15},
 zbMATH = {1465056},
 Zbl = {1087.17001}
}

@book{McC04,
 author = {McCrimmon, K.},
 title = {A taste of {Jordan} algebras},
 fseries = {Universitext},
 series = {Universitext},
 issn = {0172-5939},
 isbn = {0-387-95447-3},
 year = {2004},
 publisher =  {Springer, New York, NY},
 
 doi = {10.1007/b97489},
 keywords = {17-01,17Cxx},
 zbMATH = {2043992},
 Zbl = {1044.17001}
}

@book{JacStruRep,
 author = {Jacobson, N.},
 title = {Structure and representations of {Jordan} algebras},
 fseries = {Colloquium Publications. American Mathematical Society},
 series = {Amer. Math. Soc. Colloq. Publ.},
 issn = {0065-9258},
 volume = {39},
 isbn = {0-8218-1039-1; 0-8218-3179-8},
 year = {1968},
 publisher = {American Mathematical Society, Providence, RI},
 
 keywords = {17C10,17C20,17-02},
 url = {www.ams.org/online_bks/coll39/},
 zbMATH = {3346614},
 Zbl = {0218.17010}
}

@book{JacStrJA,
 author = {Jacobson, N.},
 title = {Structure theory of {Jordan} algebras},
 fseries = {The University of Arkansas Lecture Notes in Mathematics},
 series = {Univ. Arkansas Lect. Notes in Math.},
 volume = {5},
 year = {1981},
 publisher = {The University of Arkansas, Fayetteville, AR},
 
 keywords = {17C10,17-02},
 zbMATH = {3773843},
 Zbl = {0492.17009}
}

@book{BraKoe,
 author = {Braun, H. and Koecher, M.},
 title = {Jordan-Algebren},
 fseries = {Grundlehren der Mathematischen Wissenschaften},
 series = {Grundlehren Math. Wiss.},
 issn = {0072-7830},
 volume = {128},
 year = {1966},
 publisher = {Springer-Verlag, Berlin},
 
 keywords = {17Cxx,17-02,17C10,17C05,17C20,17C27,17C30,17C55,17A05,17D05},
 zbMATH = {3234486},
 Zbl = {0145.26001}
}

@article{Connes74,
 author = {Connes, A.},
 title = {Caract{\'e}risation des espaces vectoriels ordonn{\'e}s sous-jacents aux alg{\`e}bres de {von} {Neumann}},
 fjournal = {Annales de l'Institut Fourier},
 journal = {Ann. Inst. Fourier (Grenoble)},
 issn = {0373-0956},
 volume = {24},
 number = {4},
 pages = {121--155},
 year = {1974},
 
 doi = {10.5802/aif.534},
 keywords = {46L10,46A40,17B99},
 url = {https://eudml.org/doc/74194},
 zbMATH = {3450927},
 Zbl = {0287.46078}
}

@Book{Springer73,
 Author = {Springer, T. A.},
 Title = {Jordan algebras and algebraic groups},
 FSeries = {Ergebnisse der Mathematik und ihrer Grenzgebiete},
 Volume = {75},
 Year = {1973},
 Publisher = {Springer-Verlag, Berlin},
  Series = {Ergeb. Math. Grenzgeb.},

 Keywords = {17Cxx,17-02,20-02,17C40,20G30},
 zbMATH = {3409568},
 Zbl = {0259.17003}
}

@Book{AS03,
 Author = {Alfsen, E. M. and Shultz, F. W.},
 Title = {Geometry of state spaces of operator algebras},
 ISBN = {0-8176-4319-2},
 Year = {2003},
 Publisher = {Birkh{\"a}user, Boston, MA},
 
 Keywords = {46-02,46L05,46L10,46L70,46L30,17C65},
 zbMATH = {1885140},
 Zbl = {1042.46001}
}

@book{Koe,
 author = {Koecher, M.},
 title = {An elementary approach to bounded symmetric domains},
 year = {1969},
 
 publisher = {{Rice} {University}, {H}ouston, TX},
 keywords = {32M15,32-02,17B99},
 zbMATH = {3344839},
 Zbl = {0217.10901}
}

@article{LRW25,
      title={Infinite dimensional symmetric cones and gauge-reversing maps}, 
      author={B. Lemmens and M. Roelands and M. Wortel},
      year={Preprint, 2025},
      eprint={arXiv:2504.12487},
      archivePrefix={arXiv},
      primaryClass={math.FA},
      url={https://arxiv.org/abs/2504.12487}, 
}

@article{KaHa1,
 author = {Kaup, W. and Upmeier, H.},
 title = {Jordan algebras and symmetric {Siegel} domains in {Banach} spaces},
 fjournal = {Mathematische Zeitschrift},
 journal = {Math. Z.},
 issn = {0025-5874},
 volume = {157},
 pages = {179--200},
 year = {1977},
 
 doi = {10.1007/BF01215150},
 keywords = {32M15,46B99,17C99},
 url = {https://eudml.org/doc/172606},
 zbMATH = {3556445},
 Zbl = {0357.32018}
}

@article{Whitt,
 author = {Whittlesey, E. F.},
 title = {Analytic functions in {Banach} spaces},
 fjournal = {Proceedings of the American Mathematical Society},
 journal = {Proc. Am. Math. Soc.},
 issn = {0002-9939},
 volume = {16},
 pages = {1077--1083},
 year = {1965},
 
 doi = {10.2307/2035620},
 zbMATH = {3230715},
 Zbl = {0143.15302}
}

@article{Chu21,
 author = {Chu, C.-H.},
 title = {Siegel domains over {Finsler} symmetric cones},
 fjournal = {Journal f{\"u}r die Reine und Angewandte Mathematik},
 journal = {J. Reine Angew. Math.},
 issn = {0075-4102},
 volume = {778},
 pages = {145--169},
 year = {2021},
 
 doi = {10.1515/crelle-2021-0027},
 keywords = {46G20,46L70,53C60,58B20,17B65,32M15,46H70},
 zbMATH = {7392172},
 Zbl = {1481.46037}
}

@book{HOSt84,
 author = {Hanche-Olsen, H. and St{\o}rmer, E.},
 title = {Jordan operator algebras},
 fseries = {Monographs and Studies in Mathematics},
 series = {Monogr. Stud. Math.},
 volume = {21},
 year = {1984},
 publisher = {Pitman, Boston, MA},
 
 keywords = {46L99,46L05,17C65,46-02},
 zbMATH = {3893796},
 Zbl = {0561.46031}
}

@article{KaHa2,
 author = {Braun, R. and Kaup, W. and Upmeier, H.},
 title = {A holomorphic characterization of {Jordan} {{\(C^*\)}}-algebras},
 fjournal = {Mathematische Zeitschrift},
 journal = {Math. Z.},
 issn = {0025-5874},
 volume = {161},
 pages = {277--290},
 year = {1978},
 
 doi = {10.1007/BF01214510},
 keywords = {32A07,17C50,46L05,46H20,46H99},
 url = {https://eudml.org/doc/172707},
 zbMATH = {3598748},
 Zbl = {0385.32002}
}

@Article{McC77,
 Author = {McCrimmon, K.},
 Title = {Axioms for inversion in {Jordan} algebras},
 FJournal = {Journal of Algebra},
 Journal = {J. Algebra},
 ISSN = {0021-8693},
 Volume = {47},
 Pages = {201--222},
 Year = {1977},
 
 DOI = {10.1016/0021-8693(77)90221-6},
 Keywords = {17C50,17C10},
 zbMATH = {3654345},
 Zbl = {0421.17013}
}

@Article{Chu17,
 Author = {Chu, C.-H.},
 Title = {Infinite dimensional {Jordan} algebras and symmetric cones},
 FJournal = {Journal of Algebra},
 Journal = {J. Algebra},
 ISSN = {0021-8693},
 Volume = {491},
 Pages = {357--371},
 Year = {2017},
 
 DOI = {10.1016/j.jalgebra.2017.08.017},
 Keywords = {17C65,22E65,46B40,46H70},
 zbMATH = {6778037},
 Zbl = {1420.17033}
}

@Article{NollSch77,
 Author = {Noll, W. and Sch{\"a}ffer, J. J.},
 Title = {Orders, gauge, and distance in faceless linear cones; with examples relevant to continuum mechanics and relativity},
 FJournal = {Archive for Rational Mechanics and Analysis},
 Journal = {Arch. Ration. Mech. Anal.},
 ISSN = {0003-9527},
 Volume = {66},
 Pages = {343--377},
 Year = {1977},
 
 DOI = {10.1007/BF00248901},
 Keywords = {74A99},
 zbMATH = {3581664},
 Zbl = {0373.73011}
}

@article{WrJBstarornot,
 author = {Wright, J. D. Maitland},
 title = {Jordan {{\(C^*\)}}-algebras},
 fjournal = {Michigan Mathematical Journal},
 journal = {Michigan Math. J.},
 issn = {0026-2285},
 volume = {24},
 pages = {291--302},
 year = {1977},
 
 doi = {10.1307/mmj/1029001946},
 keywords = {46L05,17C99},
 zbMATH = {3597386},
 Zbl = {0384.46040}
}

@Book{Con90,
 Author = {Conway, J. B.},
 Title = {A course in functional analysis},
 Edition = {2nd},
 FSeries = {Graduate Texts in Mathematics},
 ISSN = {0072-5285},
 Volume = {96},
 ISBN = {0-387-97245-5},
 Year = {1990},
 Publisher = {Springer, New York, NY},
 Series = {Grad. Texts Math.},
 Keywords = {46-02,47-02,47A53},
 zbMATH = {47995},
 Zbl = {0706.46003}
}

@Article{Sch78,
 Author = {Sch{\"a}ffer, J. J.},
 Title = {Orders, gauge, and distance in faceless linear cones. {II}: {Gauge}-preserving bijections are cone-isomorphisms},
 FJournal = {Archive for Rational Mechanics and Analysis},
 Journal = {Arch. Ration. Mech. Anal.},
 ISSN = {0003-9527},
 Volume = {67},
 Pages = {305--313},
 Year = {1978},
 
 DOI = {10.1007/BF00247661},
 Keywords = {74-02,74A99,83C99},
 zbMATH = {3621207},
 Zbl = {0399.73001}
}

@Book{Jac69,
 Author = {Jacobson, N.},
 Title = {Lectures on quadratic {Jordan} algebras},
 Year = {1969},
 Publisher = {Tata Institute of Fundamental Research, Bombay},
 Keywords = {17C10,17-02,17C05,17C50,16W10,17C20,17C40},
 Volume = {45},
 Series = {Lect. Math. Phys.},
 zbMATH = {3399446},
 Zbl = {0253.17013}
}

@Article{JNW,
 Author = {Jordan, P. and von Neumann, J. and Wigner, E. P.},
 Title = {On an algebraic generalization of the quantum mechanical formalism},
 Journal = {Ann. of Math. (2)},
 ISSN = {0003-486X},
 Volume = {35},
 Pages = {29--64},
 Year = {1934},
 
 DOI = {10.2307/1968117},
 zbMATH = {3012646},
 Zbl = {0008.42103}
}

@book{FaKor,
 author = {Faraut, J. and Kor{\'a}nyi, A.},
 title = {Analysis on symmetric cones},
 isbn = {0-19-853477-9},
 year = {1994},
 publisher = {Clarendon Press, Oxford},
 
 keywords = {43-02,43A85,43A80,43A90},
 zbMATH = {715155},
 Zbl = {0841.43002}
}

@Article{Koecher75,
 Author = {Koecher, M.},
 Title = {Positivätsbereiche im {{\(\mathbb R^n\)}}},
 FJournal = {American Journal of Mathematics},
 Journal = {Amer. J. Math.},
 ISSN = {0002-9327},
 Volume = {79},
 Pages = {575--596},
 Year = {1957},
 
 DOI = {10.2307/2372563},
 Keywords = {46-XX},
 zbMATH = {3127562},
 Zbl = {0078.01205}
}

@article {Vinberg60,
    AUTHOR = {Vinberg, E. B.},
     TITLE = {Homogeneous cones},
Journal = {Soviet Math. Dokl.},
volume = {133},
year = {1960},
     PAGES = {787--790},
   MRCLASS = {22.70 (32.32)},
  MRNUMBER = {141680},
MRREVIEWER = {O.\ Bor\ocirc uvka},
}

@article{LRvI17,
 Author = {Lemmens, B. and Roelands, M. and van Imhoff, H. B.},
 Title = {An order theoretic characterization of spin factors},
 Journal = {Q. J. Math.},
 ISSN = {0033-5606},
 Volume = {68},
 Number = {3},
 Pages = {1001--1017},
 Year = {2017},
 
 DOI = {10.1093/qmath/hax010},
 Keywords = {17C65,46L70,53C23},
 URL = {kar.kent.ac.uk/57539/1/1609.08304},
 zbMATH = {6835342},
 Zbl = {1422.17027}
}

@article {FaFe77,
    AUTHOR = {Faulkner, J. R. and Ferrar, J. C.},
     TITLE = {Exceptional {L}ie algebras and related algebraic and geometric
              structures},
   JOURNAL = {Bull. Lond. Math. Soc.},
  FJOURNAL = {The Bulletin of the London Mathematical Society},
    VOLUME = {9},
      YEAR = {1977},
    NUMBER = {1},
     PAGES = {1--35},
      ISSN = {0024-6093,1469-2120},
   MRCLASS = {17B60},
  MRNUMBER = {444729},
MRREVIEWER = {Bernard\ Kolman},
       DOI = {10.1112/blms/9.1.1},
       URL = {https://doi.org/10.1112/blms/9.1.1},
}

@book {Jac71,
    AUTHOR = {Jacobson, N.},
     TITLE = {Exceptional {L}ie algebras},
    SERIES = {Lect. Notes Pure Appl. Math.},
    VOLUME = {1},
 PUBLISHER = {Marcel Dekker, New York, NY},
      YEAR = {1971},
   MRCLASS = {17.30},
  MRNUMBER = {284482},
MRREVIEWER = {N.\ R.\ Wallach},
}

@article{Kaup02,
 author = {Kaup, W.},
 title = {Bounded symmetric domains and derived geometric structures},
 fjournal = {Atti della Accademia Nazionale dei Lincei. Classe di Scienze Fisiche, Matematiche e Naturali. Serie IX. Rendiconti Lincei. Matematica e Applicazioni},
 journal = {Atti Accad. Naz. Lincei Rend. Lincei Mat. Appl.},
 issn = {1120-6330},
 volume = {13},
 number = {3-4},
 pages = {243--257},
 year = {2002},
 keywords = {32M15,32V05},
 zbMATH = {2217219},
 Zbl = {1098.32008}
}

@book {Ko99,
    AUTHOR = {Koecher, M.},
     TITLE = {The {M}innesota notes on {J}ordan algebras and their
              applications},
    SERIES = {Lecture Notes in Math.},
    VOLUME = {1710},
      NOTE = {Edited, annotated and with a preface by Aloys Krieg and
              Sebastian Walcher},
 PUBLISHER = {Springer-Verlag, Berlin},
      YEAR = {1999},
     PAGES = {x+175},
      ISBN = {3-540-66360-6},
   MRCLASS = {17C36 (32M15)},
  MRNUMBER = {1718170},
MRREVIEWER = {Harald\ Upmeier},
       DOI = {10.1007/BFb0096285},
       URL = {https://doi.org/10.1007/BFb0096285},
}

@book{Loos69,
 author = {Loos, O.},
 title = {Symmetric spaces. {I}: {General} theory.},
 fseries = {Mathematics Lecture Note Series},
 series = {Math. Lect. Note Ser.},
 year = {1969},
 publisher = {Benjamin, Reading, MA},
 
 keywords = {53C35,53-02,53C30,17A40,32M15,22E10},
 zbMATH = {3280666},
 Zbl = {0175.48601}
}

@book {Sa80,
    AUTHOR = {Satake, I.},
     TITLE = {Algebraic structures of symmetric domains},
    SERIES = {Kan\^o{} Memorial Lectures},
    VOLUME = {4},
 PUBLISHER = {Iwanami Shoten, Tokyo; Princeton University Press, Princeton,
              NJ},
      YEAR = {1980},
     PAGES = {xvi+321},
   MRCLASS = {32-02 (17C35 32Mxx 53C35)},
  MRNUMBER = {591460},
MRREVIEWER = {S.\ Murakami},
}

@book {Up85,
    AUTHOR = {Upmeier, H.},
     TITLE = {Symmetric {B}anach manifolds and {J}ordan {$C\sp
              \ast$}-algebras},
    SERIES = {North-Holland Mathematics Studies},
    VOLUME = {104},
 PUBLISHER = {North-Holland Publishing Co., Amsterdam},
      YEAR = {1985},
      ISBN = {0-444-87651-0},
   MRCLASS = {58B12 (17C35 17C65 32M15 46L70)},
  MRNUMBER = {776786},
MRREVIEWER = {W.\ Kaup},
}

@Article{Walsh18,
 Author = {Walsh, C.},
 Title = {Gauge-reversing maps on cones, and {Hilbert} and {Thompson} isometries},
 FJournal = {Geometry \& Topology},
 Journal = {Geom. Topol.},
 ISSN = {1465-3060},
 Volume = {22},
 Number = {1},
 Pages = {55--104},
 Year = {2018},
 
 DOI = {10.2140/gt.2018.22.55},
 Keywords = {53C23,52A20},
 zbMATH = {6805076},
 Zbl = {1398.53050}
}

@article{WrYo,
 author = {Wright, J. D. Maitland and Youngson, M. A.},
 title = {On isometries of {Jordan} algebras},
 fjournal = {Journal of the London Mathematical Society. Second Series},
 journal = {J. Lond. Math. Soc. (2)},
 issn = {0024-6107},
 volume = {17},
 pages = {340--344},
 year = {1978},
 
 doi = {10.1112/jlms/s2-17.2.339},
 keywords = {46L05,17C99},
 zbMATH = {3597387},
 Zbl = {0384.46041}
}

@book{Hor90I,
 author = {H{\"o}rmander, L.},
 title = {The analysis of linear partial differential operators {I}. {Distribution} theory and {Fourier} analysis.},
 edition = {2nd},
 issn = {1431-0821},
 isbn = {3-540-00662-1},
 year = {1990},
 publisher = {Springer-Verlag, Berlin},
 
 keywords = {35-02,42-02,46-02,35A22,35B40,35A05,35L99,42Bxx,46Fxx,43-02},
 zbMATH = {1950198},
 Zbl = {1028.35001}
}

@article{Jordan33,
 author = {Jordan, P.},
 title = {{\"U}ber {Verallgemeinerungsm{\"o}glichkeiten} des {Formalismus} der {Quantenmechanik}},
 fjournal = {Nachrichten von der Gesellschaft der Wissenschaften zu G{\"o}ttingen. Mathematisch-Physikalische Klasse},
 journal = {Nachr. Akad. Wiss. G{\"o}ttingen II: Math. Phys. Kl.},
 volume = {1933},
 pages = {209--217},
 year = {1933},
 language = {German},
 url = {https://eudml.org/doc/59421},
 zbMATH = {3009519},
 Zbl = {0007.08502}
}

@article {Zel291,
    AUTHOR = {Zelmanov, E. I.},
     TITLE = {Solution of the restricted {B}urnside problem for
              {$2$}-groups},
   JOURNAL = {Mat. Sb.},
  FJOURNAL = {Matematicheski\u i\ Sbornik},
    VOLUME = {182},
      YEAR = {1991},
    NUMBER = {4},
     PAGES = {568--592},
      ISSN = {0368-8666},
   MRCLASS = {20F50 (20F40 20F45)},
  MRNUMBER = {1119009},
MRREVIEWER = {J.\ Wiegold},
}

@article {ASchar78,
    AUTHOR = {Alfsen, E. M. and Shultz, F. W.},
     TITLE = {State spaces of {J}ordan algebras},
   JOURNAL = {Acta Math.},
  FJOURNAL = {Acta Mathematica},
    VOLUME = {140},
      YEAR = {1978},
     PAGES = {155--190},
      ISSN = {0001-5962,1871-2509},
   MRCLASS = {17C99 (46K10)},
  MRNUMBER = {472949},
MRREVIEWER = {Gerhard\ Janssen},
       DOI = {10.1007/BF02392307},
       URL = {https://doi.org/10.1007/BF02392307},
}

@book{Loos1977lecturenotes,
  title={Bounded {S}ymmetric {D}omains and {J}ordan {P}airs},
  author={Loos, O.},
  series={Mathematical lectures},
  url={https://books.google.ca/books?id=CL4rAAAAYAAJ},
  year={1977},
  publisher={Department of Mathematics, University of California, Irvine, CA}
}

@article{ShAr,
 author = {Shultz, F. W.},
 title = {On normed {Jordan} algebras which are {Banach} dual spaces},
 fjournal = {Journal of Functional Analysis},
 journal = {J. Funct. Anal.},
 issn = {0022-1236},
 volume = {31},
 pages = {360--376},
 year = {1979},
 
 doi = {10.1016/0022-1236(79)90010-7},
 keywords = {46H99,17C65,46L99},
 zbMATH = {3654813},
 Zbl = {0421.46043}
}

@article{ZelPII,
 author = {Zelmanov, E. I.},
 title = {Prime {Jordan} algebras {II}},
 fjournal = {Siberian Mathematical Journal},
 journal = {Sib. Math. J.},
 issn = {0037-4466},
 volume = {24},
 pages = {73--85},
 year = {1983},
 
 doi = {10.1007/BF00968798},
 keywords = {17C10,17C20},
 zbMATH = {3847628},
 Zbl = {0534.17009}
}

@article{Car35,
 author = {Cartan, E.},
 title = {Sur les domaines born{\'e}s homog{\`e}nes de l'espace des {{\(n\)}} variables complexes.},
 fjournal = {Abhandlungen aus dem Mathematischen Seminar der Universit{\"a}t Hamburg},
 journal = {Abh. Math. Semin. Univ. Hambg.},
 issn = {0025-5858},
 volume = {11},
 pages = {116--162},
 year = {1935},
 
 doi = {10.1007/BF02940719},
 zbMATH = {2532214},
 JFM = {61.0370.03}
}

@article{WalshSelecta,
	abstract = {We show that an order antimorphism on a finite-dimensional cone having no one-dimensional factors is homogeneous of degree {\$}{\$}-{$\backslash$},1{\$}{\$}. A consequence is that the only finite-dimensional cones admitting an order antimorphism are the symmetric cones.},
	author = {Walsh, C.},
	date = {2020/07/20},
	date-added = {2026-01-28 9:58:22AM +0100},
	date-modified = {2026-01-28 9:58:22AM +0100},
	doi = {10.1007/s00029-020-00583-5},
	id = {Walsh2020},
	isbn = {1420-9020},
	journal = {Selecta Math. (N.S.)},
	title = {Order antimorphisms of finite-dimensional cones},
	url = {https://doi.org/10.1007/s00029-020-00583-5},
	volume = {26},
	year = {2020},
	bdsk-url-1 = {https://doi.org/10.1007/s00029-020-00583-5},
}

@article {McC66,
    AUTHOR = {McCrimmon, K.},
     TITLE = {A general theory of {J}ordan rings},
   JOURNAL = {Proc. Natl. Acad. Sci. USA},
  FJOURNAL = {Proceedings of the National Academy of Sciences of the United
              States of America},
    VOLUME = {56},
      YEAR = {1966},
     PAGES = {1072--1079},
      ISSN = {0027-8424},
   MRCLASS = {17.40},
  MRNUMBER = {202783},
       DOI = {10.1073/pnas.56.4.1072},
       URL = {https://doi.org/10.1073/pnas.56.4.1072},
}

@article {BelIoch78,
    AUTHOR = {B\'ellissard, J. and Iochum, B.},
     TITLE = {Homogeneous self-dual cones versus {J}ordan algebras. {T}he
              theory revisited},
   JOURNAL = {Ann. Inst. Fourier (Grenoble)},
  FJOURNAL = {Universit\'e{} de Grenoble. Annales de l'Institut Fourier},
    VOLUME = {28},
      YEAR = {1978},
    NUMBER = {1},
     PAGES = {27--67},
      ISSN = {0373-0956,1777-5310},
   MRCLASS = {46L99 (17C65 46A40)},
  MRNUMBER = {500916},
MRREVIEWER = {H.\ Araki},
       DOI = {10.5802/aif.680},
       URL = {https://doi.org/10.5802/aif.680},
}

@article{Neeb02,
 author = {Neeb, K.-H.},
 title = {A {Cartan}--{Hadamard} theorem for {Banach}-{Finsler} manifolds},
 fjournal = {Geometriae Dedicata},
 journal = {Geom. Dedicata},
 issn = {0046-5755},
 volume = {95},
 pages = {115--156},
 year = {2002},
 
 doi = {10.1023/A:1021221029301},
 keywords = {58B20,53C35,53C60},
 zbMATH = {1879689},
 Zbl = {1027.58003}
}

@article {Kaup83,
    AUTHOR = {Kaup, W.},
     TITLE = {A {R}iemann mapping theorem for bounded symmetric domains in
              complex {B}anach spaces},
   JOURNAL = {Math. Z.},
  FJOURNAL = {Mathematische Zeitschrift},
    VOLUME = {183},
      YEAR = {1983},
    NUMBER = {4},
     PAGES = {503--529},
      ISSN = {0025-5874,1432-1823},
   MRCLASS = {46G20 (17C65 32M05 32M15 46K99)},
  MRNUMBER = {710768},
MRREVIEWER = {Harald\ Upmeier},
       DOI = {10.1007/BF01173928},
       URL = {https://doi.org/10.1007/BF01173928},
}

@article {Kai,
    AUTHOR = {Kai, C.},
     TITLE = {A characterization of symmetric cones by an order-reversing
              property of the pseudoinverse maps},
   JOURNAL = {J. Math. Soc. Japan},
  FJOURNAL = {Journal of the Mathematical Society of Japan},
    VOLUME = {60},
      YEAR = {2008},
    NUMBER = {4},
     PAGES = {1107--1134},
      ISSN = {0025-5645,1881-1167},
   MRCLASS = {32M15},
  MRNUMBER = {2467872},
MRREVIEWER = {Jacques\ Faraut},
       URL = {http://projecteuclid.org.leidenuniv.idm.oclc.org/euclid.jmsj/1225894035},
}

\end{document}